\newtheorem{theorem}{Theorem}[section]
\newtheorem{lemma}[theorem]{Lemma}
\newtheorem{proposition}[theorem]{Proposition}
\theoremstyle{definition}
\newtheorem{definition}[theorem]{Definition}
\newtheorem{remark}[theorem]{Remark}
\newcommand{\T}{\mathbb{T}}
\newcommand{\R}{\mathbb{R}}
\newcommand{\Z}{\mathbb{Z}}
\newcommand{\N}{\mathbb{N}}
\newcommand{\C}{\mathbb{C}}
\def\Xint#1{\mathchoice
{\XXint\displaystyle\textstyle{#1}}%
{\XXint\textstyle\scriptstyle{#1}}%
{\XXint\scriptstyle\scriptscriptstyle{#1}}%
{\XXint\scriptscriptstyle\scriptscriptstyle{#1}}%
\!\int}
\def\XXint#1#2#3{{\setbox0=\hbox{$#1{#2#3}{\int}$ }
\vcenter{\hbox{$#2#3$ }}\kern-.6\wd0}}
\def\dashint{\Xint-}
\begin{document}
\title[Oscillatory integral operators with homogeneous phase functions]{Oscillatory integral operators with homogeneous phase functions}
\author{Robert Schippa}
\address{Fakult\"at f\"ur Mathematik, Karlsruher Institut f\"ur Technologie,
Englerstrasse 2, 76131 Karlsruhe, Germany}
\email{robert.schippa@kit.edu}
\begin{abstract}
Oscillatory integral operators with $1$-homogeneous phase functions satisfying a convexity condition are considered. For these we show the $L^p - L^p$-estimates for the Fourier extension operator of the cone due to Ou--Wang via polynomial partitioning. For this purpose, we combine the arguments of Ou--Wang with the analysis of Guth--Hickman--Iliopoulou, who previously showed sharp $L^p-L^p$-estimates for non-homogeneous phase functions with variable coefficients under a convexity assumption. Furthermore, we provide examples exhibiting Kakeya compression, which shows the estimates to be sharp. We apply the oscillatory integral estimates to show new local smoothing estimates for wave equations on compact Riemannian manifolds $(M,g)$ with $\dim M \geq 3$. This generalizes the argument for the Euclidean wave equation due to Gao--Liu--Miao--Xi.
\end{abstract}

\maketitle

\section{Introduction}
\label{section:Introduction}

In the following we consider oscillatory integral operators which naturally generalize the Fourier extension operator for the cone
\begin{equation}
\label{eq:ConeExtensionOperator}
\mathcal{E} f(x) = \int_{A^{n-1}} e^{i (\langle x',\omega \rangle + x_n |\omega|)} f(\omega) d\omega.
\end{equation}
We consider operators with $\lambda \geq 1$,
\begin{equation}
\label{eq:OscillatoryIntegralOperators}
T^\lambda f(x) = \int e^{i \phi^\lambda(x;\omega)} a^\lambda(x;\omega) f(\omega) d\omega
\end{equation}
and $a \in C_c^\infty(\R^{n} \times \R^{n-1},\R)$, $\phi \in C^\infty(\R^n \times \R^{n-1} \backslash 0;\R)$, $\phi^\lambda(x;\omega) = \lambda \phi(x/\lambda;\omega)$, $a^\lambda(x;\omega) = a(x/\lambda;\omega)$. We suppose that $\phi$ is $1$-homogeneous in $\omega$: it holds for $\mu >0$
\begin{equation}
\label{eq:HomogeneityPhaseFunction}
\phi(x;\mu \omega) = \mu \phi(x;\omega).
\end{equation}
For the support of $a$ we suppose that
\begin{equation*}
\text{supp}(a) \subseteq A^{n-1} = B_{n-1}(0,2) \backslash B_{n-1}(0,1/2).
\end{equation*}
Above $B_d(c,r) \subseteq \R^d$ denotes the open ball with center $c$ and radius $r>0$.

We write $x=(x',x_n) \in \R^{n-1} \times \R$ and impose the following conditions on $\phi$ in $\text{supp}(a)$:
\begin{align*}
&C1) \quad \text{rank}(\partial^2_{x \omega} \phi) = n-1, \\
&C2^+) \quad \partial^2_{\omega \omega} \langle \partial_x \phi, G(x;\omega_0) \rangle \big|_{\omega = \omega_0} \text{ has } n-2 \text{ non-vanishing eigenvalues of the same sign},
\end{align*}
where $G$ denotes the Gauss map
\begin{equation}
\label{eq:GaussMap}
G_0(x;\omega) = \bigwedge_{j=1}^{n-1} \partial^2_{x \omega_j} \phi(x;\omega), \qquad G = G_0 / |G_0|
\end{equation}
of the embedded surface $\omega \mapsto \partial_x \phi(x;\omega)$. We identify $\bigwedge^{n-1} \R^n \simeq \R^n$.

In this note we prove new estimates
\begin{equation}
\label{eq:LpLpBoundVariableCoefficient}
\| T^\lambda f \|_{L^p(\R^n)} \lesssim_{\varepsilon,\phi,a} \lambda^\varepsilon \Vert f \Vert_{L^p(A^{n-1})}
\end{equation}
for operators \eqref{eq:OscillatoryIntegralOperators} like described above. Firstly, we recall that the conjectured range of $L^p$-estimates 
\begin{equation}
\label{eq:FourierExtensionCone}
\| \mathcal{E} f \|_{L^p(\R^n)} \lesssim \| f \|_{L^p(A^{n-1})}
\end{equation}
 is given by $p>\frac{2(n-1)}{n-2}$. This prominent open problem is known as \emph{restriction conjecture for the cone} and goes back to Stein. The corresponding conjecture for the paraboloid is given by
 \begin{equation*}
 \| \int_{B_{n-1}(0,1)} e^{i( \langle x',\omega \rangle + x_n |\omega|^2)} f(\omega) d\omega \|_{L^p(\R^n)} \lesssim \| f \|_{L^p(B_{n-1})}
 \end{equation*}
 for $p>\frac{2n}{n-1}$. Note that there is a shift of dimension by one, which is commonly explained by the number of principal curvatures being reduced by one.
 More pictorially, disregarding the null direction of the cone, the conical hypersurface in $n$ dimensions locally looks like a parabolic hypersurface in $n-1$ dimensions.
 
The restriction conjecture for the cone was solved for $n=3$ by Taberner \cite{Taberner1985}, for $n=4$ by Wolff \cite{Wolff2001} via bilinear estimates, and for $n=5$ by Ou--Wang \cite{OuWang2022} via polynomial partitioning. Let
\begin{equation}
\label{eq:PolynomialPartitioningRange}
p_n = \begin{cases}
&4, \quad n=3, \\
&2 \cdot \frac{3n+1}{3n-3}, \quad n > 3 \text{ odd}, \\
&2 \cdot \frac{3n}{3n-4}, \quad n>3 \text{ even}.
\end{cases}
\end{equation}

Ou--Wang showed \eqref{eq:FourierExtensionCone} for $p > p_n$, which is also currently the widest range in higher dimensions to the best of the author's knowledge. Notably, in the case of Carleson-Sj\"olin phase functions (cf. \cite{CarlesonSjoelin1972,Hoermander1973}), which are not $1$-homogeneous anymore, where $C2^+)$ is replaced with
\begin{equation*}
H2^+) \quad \partial^2_{\omega \omega} \langle \partial_x \phi(x;\omega) , G(x;\omega_0) \rangle \big|_{\omega = \omega_0} \text{ has } n-1 \text{ eigenvalues of the same sign},
\end{equation*}
Guth--Hickman--Iliopoulou \cite{GuthHickmanIliopoulou2019} showed the sharp range of $L^p - L^p$ estimates, in the sense that there are phase functions for which the estimate fails for lower values of $p$. The deviation from the corresponding generalized restriction conjecture for the paraboloid occurs due to \emph{Kakeya compression}. This means that wave packets, which we find after decomposing $T^\lambda f$ into non-oscillating components, can cluster in low-dimensional varieties. This is not known to happen in the constant-coefficient case: The restriction conjecture for the sphere implies the \emph{Kakeya conjecture}.

 Kakeya compression was initially observed by Bourgain \cite{Bourgain1991}, see also Wisewell \cite{Wisewell2005} and Bourgain--Guth \cite{BourgainGuth2011}. Related phenomena were discussed by Minicozzi--Sogge \cite{MinicozziSogge1997} and Sogge \cite{Sogge1999}. In this note we point out Kakeya compression for $1$-homogeneous phases with variable coefficients, which shows that the following $L^p$-estimates are sharp up to endpoints:
\begin{theorem}
\label{thm:LpLpEstimatesVariableCoefficients}
Let $\phi:\R^n \times \R^{n-1} \backslash \{ 0 \} \rightarrow \R$ be a $1$-homogeneous phase satisfying $C1)$ and $C2^+)$ and $a \in C^\infty_c(A^{n-1})$ be an amplitude. Then, the estimate \eqref{eq:LpLpBoundVariableCoefficient} holds for $p \geq p_n$ with $p_n$ as in \eqref{eq:PolynomialPartitioningRange}.
\end{theorem}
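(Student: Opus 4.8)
The plan is to transplant the polynomial partitioning scheme of Guth--Hickman--Iliopoulou \cite{GuthHickmanIliopoulou2019} to the $1$-homogeneous setting, carrying the cone geometry exploited by Ou--Wang \cite{OuWang2022} through the curved, variable-coefficient induction. After the standard reductions one fixes $R\leq\lambda$ and aims for
\[
\|T^\lambda f\|_{L^p(B_R)}\lesssim_{\ep}R^{\ep}\|f\|_{L^p(A^{n-1})},
\]
with a constant uniform in $R$ and $\lambda$; taking $R\sim\lambda$ and using that $T^\lambda f$ is supported in an $O(\lambda)$-ball then recovers \eqref{eq:LpLpBoundVariableCoefficient}. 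One argues by a simultaneous induction on $R$, on the dimension $n$, and (inside the polynomial partitioning step) on the dimension of the auxiliary varieties. The first ingredient is the wave packet decomposition: cover $A^{n-1}$ by $R^{-1/2}$-caps $\theta$ and decompose $f=\sum_{\theta}\sum_{T}f_{T}$ along a dual lattice so that each $T^\lambda f_{T}$ is, up to rapidly decaying tails, concentrated on a curved plank $T$ of dimensions $\sim R^{1/2}\times\cdots\times R^{1/2}\times R$. The structural point absent in the Carleson--Sj\"olin case is that the long axis of $T$ is dictated by the homogeneity \eqref{eq:HomogeneityPhaseFunction}: Euler's relation gives $\omega^{\top}\partial^2_{\omega\omega}\phi(x;\omega)=0$ and $\partial^2_{\omega x}\phi(x;\omega)\,\omega=\partial_x\phi(x;\omega)$, so for fixed $x$ the map $\omega\mapsto\partial_x\phi(x;\omega)$ parametrises a variable-coefficient cone whose ruling direction plays the role of the null direction of the light cone; this is exactly why $C2^+)$ only asks for $n-2$ non-vanishing eigenvalues. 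Condition $C1)$ then fixes the $n-1$ short directions, and $C2^+)$ supplies $n-2$ units of curvature transverse to the ruling, so the planks carry the metric geometry of genuine light-cone planks --- in particular the transversality estimate that, for $R^{-1/2}\leq\nu\leq1$, two planks whose caps are $\nu$-separated meet in a set of measure $\lesssim\nu^{-1}R^{n/2}$.

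With the wave packets fixed one runs a Bourgain--Guth \cite{BourgainGuth2011} broad--narrow decomposition of $\|T^\lambda f\|_{L^p(B_R)}$. On the narrow part the caps contributing at a given point cluster in the $R^{-1/2}$-neighbourhood of a hyperplane through the origin; exploiting the $1$-homogeneity one performs the conical rescaling adapted to that slab, which --- this being the ``shift of dimension by one'' recorded in the introduction --- reduces the estimate either to the same estimate at a scale $R'\leq R^{1/2}$ (closing the induction on $R$) or to a lower-dimensional restriction estimate of the same type. Here one has to check that the rescaled phase remains admissible, i.e.\ stays in the class governed by $C1)$ and $C2^+)$; this is routine but essential. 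On the broad part one invokes the $k$-broad (multilinear) restriction estimate for variable-coefficient phases satisfying $C1)$, established in \cite{GuthHickmanIliopoulou2019} from the Bennett--Carbery--Tao multilinear theorem via Guth's polynomial approximation argument; only $C1)$ enters at this stage.

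To upgrade the $k$-broad bound to the full $L^p$ bound one iterates polynomial partitioning following \cite{GuthHickmanIliopoulou2019,OuWang2022}: at each stage one chooses a polynomial of degree $\lesssim_{\ep}1$ equidistributing $\|T^\lambda f\|_{L^p(B_R)}^p$ among $\sim D^n$ cells; in the cellular case each cell meets only a $\sim D^{-(n-1)}$-fraction of the planks and one recurses with an improved packing bound, while in the algebraic case the mass concentrates in the $R^{1/2+\delta}$-neighbourhood of a variety $Z$ of dimension $m$ with $1\leq m\leq n-1$, and one recurses on $m$ after splitting the planks into those tangent to $Z$ and those transverse to $Z$. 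Balancing the cellular contribution against the tangential one at each dimension $m$ is precisely what produces the exponent $p_n$ of \eqref{eq:PolynomialPartitioningRange}. The main obstacle is the tangential algebraic case, where \emph{Kakeya compression} can occur: planks tangent to a low-dimensional $Z$ may pile up, and one must control their overlap. The resolution is that the ruling direction of each plank is pinned by the homogeneity to the cap it comes from, hence is essentially forced to be tangent to $Z$; consequently tangency to $Z$ imposes one constraint fewer than in the Carleson--Sj\"olin analysis of \cite{GuthHickmanIliopoulou2019}, and combining this count with the transversality estimate above yields the ``polynomial Wolff axiom'' for cone planks in the curved framework --- the key geometric lemma of \cite{OuWang2022}, now available with variable coefficients. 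Tracking the finitely many $R^{\ep}$ losses through the iteration and summing the narrow contributions then delivers \eqref{eq:LpLpBoundVariableCoefficient} for $p\geq p_n$.
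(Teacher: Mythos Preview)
Your proposal has the right cast of characters but the architecture is scrambled, and the key geometric lemma is misidentified.

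First, the placement of polynomial partitioning is inverted. In the paper (and in \cite{GuthHickmanIliopoulou2019,OuWang2022}) polynomial partitioning is the engine that \emph{proves} the $k$-broad estimate (Theorem~\ref{thm:kBroadEstimate}); the passage from $k$-broad to the linear $L^p$ bound is then a separate Bourgain--Guth step using narrow decoupling and parabolic rescaling (Section~\ref{section:LinearEstimates}). You describe it the other way round: you claim the $k$-broad bound is available off the shelf from Bennett--Carbery--Tao plus polynomial approximation with ``only $C1)$'' needed, and that polynomial partitioning is what upgrades it. That is not correct: BCT only supplies the $n$-linear (i.e.\ $k=n$) transversal estimate, which is far weaker than the $k$-broad bound for $2\leq k\leq n-1$ that the argument needs. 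The $k$-broad estimate for intermediate $k$ genuinely requires the full partitioning induction and, crucially, the convexity hypothesis $C2^+)$.

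Second, within the partitioning induction the mechanism controlling the algebraic (wall) case is not a polynomial Wolff axiom. The paper follows \cite{GuthHickmanIliopoulou2019} and uses \emph{transverse equidistribution}: for $g$ concentrated on wave packets $R^{-1/2+\delta_m}$-tangent to an $m$-variety $Z$, one shows $\int_{B\cap N_{\rho^{1/2+\delta_m}}(Z)}|T^\lambda g|^2\lesssim R^{1/2+O(\delta_m)}(\rho/R)^{(n-m)/2}\|g\|_2^2$. The new twist in the homogeneous setting is that this estimate can \emph{fail} when the tangent plane $V=T_zZ$ is nearly tangent to the generalised cone $\Sigma_{\bar x}$; in that case the relevant frequencies are trapped in $O(1)$ sectors of aperture $K^{-2}$ (Lemma~\ref{lem:NegligibleContributionBall}) and hence give zero contribution to the $k$-broad norm. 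This dichotomy between ``narrow'' and ``broad'' space-time frequencies (Section~\ref{section:TransverseEquidistribution}) is the heart of the adaptation and is absent from your sketch. The constraint-counting you propose (``tangency to $Z$ imposes one constraint fewer'') does not by itself yield the required $L^2$ gain; one really needs the equidistribution argument, which in turn uses $C2^+)$ to build the quantitatively transverse subspace $W$ in Lemma~\ref{lem:QuantitativeTransverse}.

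Third, the narrow part of the Bourgain--Guth iteration is not handled by reducing to a lower-dimensional restriction problem. The paper stays in dimension $n$ and instead proves a \emph{narrow decoupling} inequality (Proposition~\ref{prop:DecouplingNarrowTerm}): after approximating $T^\lambda$ on a $K^2$-ball by the constant-coefficient extension operator $E_{\bar x}$ for the frozen cone $\Sigma_{\bar x}$, one applies the constant-coefficient narrow decoupling of \cite{GaoLiuMiaoXi2023}, which in turn requires the phase to be ``$K$-flat''. Parabolic rescaling (Lemma~\ref{lem:ParabolicRescaling}) then feeds the decoupled pieces back into the induction on $R$ at the same spatial dimension. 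Your proposed conical rescaling to a lower-dimensional problem would not close, because the rescaled phase is again $1$-homogeneous and of the same type; there is no genuine dimension drop available.
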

We remark that for $p>p_n$ the $\lambda^\varepsilon$-factor can be dropped. Guth--Hickman--Iliopoulou showed the $\varepsilon$-removal lemma for oscillatory integral operators in \cite[Section~12]{GuthHickmanIliopoulou2019}, albeit with a stronger non-degeneracy hypothesis than presently considered. The idea goes back to Tao \cite{Tao1998,Tao1999}. In Section \ref{section:epsRemoval} we prove the following global estimates for $p > p_n$ by a small variation of the argument in \cite{GuthHickmanIliopoulou2019}:
\begin{equation*}
\| T^\lambda f \|_{L^p(\R^n)} \lesssim_{\phi,a} \| f \|_{L^p(A^{n-1})}.
\end{equation*}

\bigskip

The proof of Theorem \ref{thm:LpLpEstimatesVariableCoefficients} combines ideas from the case of constant-coefficient homogeneous phases due to Ou--Wang \cite{OuWang2022} and Gao--Liu--Miao--Xi \cite{GaoLiuMiaoXi2023} and variable-coefficient non-homogeneous phases due to Guth--Hickman--Iliopoulou \cite{GuthHickmanIliopoulou2019}.

\medskip

 We digress for a moment to describe the tools we will use and put them into context. Bennett--Carbery--Tao \cite{BennettCarberyTao2006} delivered an important contribution with sharp $n$-multilinear restriction estimates:
The $k$-restriction conjecture for the cone reads
\begin{equation}
\label{eq:kRestrictionConjecture}
\big\| \prod_{i=1}^k |\mathcal{E} f_i |^{\frac{1}{k}} \big\|_{L^{p_k}(B_n(0,R))} \lesssim_\varepsilon R^\varepsilon  \prod_{i=1}^k \| f_i \|^{\frac{1}{k}}_2
\end{equation}
with $|\mathfrak{n}(\xi_1) \wedge \ldots \wedge \mathfrak{n}(\xi_k)| \gtrsim 1$ with $\mathfrak{n}(\xi) = (-\frac{\xi}{|\xi|},1)$, $\xi_i \in \text{supp}(f_i) \subseteq A_{n-1}$, and $p_k = \frac{2(n+k)}{n+k-2}$. For $k=2$, this is due to Wolff \cite{Wolff2001} (see Tao \cite{Tao2001} for the endpoint without $R^\varepsilon$-loss), and for $k=n$ this is an instance of the Bennett--Carbery--Tao theorem \cite{BennettCarberyTao2006}. There are only few partial results for $3 \leq k \leq n-1$, see \cite{Bejenaru2017} and references therein.

\medskip
 
We note that in \cite{BennettCarberyTao2006} the multilinear estimates were shown as well for constant-coefficient phase functions as smooth perturbations thereof. Bourgain--Guth \cite{BourgainGuth2011} devised an iteration to deduce linear estimates from multilinear estimates. Guth \cite{Guth2016} observed that the full strength of $k$-multilinear estimates is not required, but a slightly weaker variant given by $k$-broad norms suffices to run the iteration. He used polynomial partitioning to improve on the previous results in \cite{Guth2016,Guth2018}. The idea is to equipartition the broad norm with polynomials of controlled degree: After wave packet decomposition, one finds that either the broad norm is concentrated on ``cells" or on the ``wall", which is a neighbourhood of a variety. To oversimplify matters for a moment, if the broad norm is concentrated on the cells, then sharp bounds follow from induction on scales. If the broad norm is concentrated along the wall, then we are morally dealing with a restriction problem in lower dimensions, which is amenable to another induction hypothesis.

\medskip

We introduce the $k$-broad norms in the present context: For the definition decompose $A^{n-1}$ into finitely overlapping sectors $\tau$ of aperture $\sim K^{-1}$ and length $\sim 1$, where $K$ is a large constant. Given $f: A^{n-1} \to \C$, write $f = \sum f_\tau$, where $f_\tau$ is supported in $\tau$. In view of the rescaling $\phi^\lambda$ of the phase, we define the rescaled Gauss map
\begin{equation*}
G^\lambda(x;\omega) = G(\frac{x}{\lambda};\omega) \text{ for } (x;\omega) \in \text{supp} (a^\lambda).
\end{equation*}
For each $x \in B(0,\lambda)$ let 
\begin{equation*}
G^\lambda(x;\tau) = \{ G^\lambda(x;\omega) \; : \; \omega \in \tau \text{ and } (x;\omega) \in \text{supp} (a^\lambda) \}.
\end{equation*}
For $V \subseteq \R^n$ a linear subspace, let $\angle(G^\lambda(x;\tau),V)$ denote the smallest angle between any non-zero vector $v \in V$ and $G^\lambda(x;\tau)$.

The spatial ball $B(0,\lambda)$ is decomposed into relatively small balls $B_{K^2}$ of radius $K^2$. We fix $\mathcal{B}_{K^2}$ a collection of finitely-overlapping $K^2$-balls, which are centred in and cover $B(0,\lambda)$. For $B_{K^2} \in \mathcal{B}_{K^2}$ centred at $\bar{x} \in B(0,\lambda)$, define
\begin{equation}
\label{eq:BroadNorm}
\mu_{T^\lambda f}(B_{K^2}) = \min_{V_1,\ldots,V_A \in Gr(k-1,n)} \big( \max_{\tau: \angle(G^\lambda(\bar{x};\tau),V_a) > K^{-2} \; \forall a} \| T^\lambda f_{\tau} \|^p_{L^p(B_{K^2})} \big),
\end{equation}
where $Gr(k-1,n)$ denotes the Grassmannian manifold of $(k-1)$-dimensional subspaces in $\R^n$. We stress the deviation from \cite{GuthHickmanIliopoulou2019}, in which the angle threshold $K^{-1}$ was considered. In case of the Fourier extension operator for the cone, the angle condition was narrowed to $K^{-2}$ to further confine the narrow part in \cite{OuWang2022}. Hence, we presently use the same threshold for the variable-coefficient operator.

We write $\tau \not \in V_a$ as shorthand for $\angle(G^\lambda(\bar{x};\tau),V_a) > K^{-2}$ provided that $\bar{x}$ is clear from context. Thus, we can write as well
\begin{equation*}
\mu_{T^\lambda f}(B_{K^2}) = \min_{V_1,\ldots,V_A \in Gr(k-1,n)} \big( \max_{\substack{\tau: \tau \not \in V_a, \\ \text{ for } 1 \leq a \leq A}} \| T^\lambda f_\tau \|^p_{L^p(B_{K^2})} \big).
\end{equation*}
For $U \subseteq \R^n$ the $k$-broad norm is defined as
\begin{equation*}
\| T^\lambda f \|_{BL^p_{k,A}(U)} = \big( \sum_{\substack{B_{K^2} \in \mathcal{B}_{K^2}, \\ B_{K^2} \cap U \neq \emptyset}} \mu_{T^\lambda f}(B_{K^2}) \big)^{1/p}.
\end{equation*}
A key step in the proof of the $L^p$-$L^p$-estimate is to show $k$-broad estimates, which are a substitute for the $k$-restriction conjecture.
\begin{theorem}
\label{thm:kBroadEstimate}
For $2 \leq k \leq n$ and all $\varepsilon > 0$, there exists a constant $C_\varepsilon > 1$ and an integer $A$ such that, whenever $T^\lambda$ is an oscillatory integral operator with reduced $(\phi,a)$ and $\phi$ a $1$-homogeneous phase  satisfying $C1)$ and $C2^+)$, the estimate
\begin{equation}
\| T^\lambda f \|_{BL^p_{k,A}(\R^n)} \lesssim_\varepsilon K^{C_\varepsilon} \lambda^\varepsilon \| f \|_{L^2(A^{n-1})}
\end{equation}
holds for all $\lambda \geq 1$ and $K \geq 1$ whenever
\begin{equation}
p \geq \bar{p}(k,n) = \frac{2(n+k)}{n+k-2}.
\end{equation}
\end{theorem}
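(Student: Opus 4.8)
The plan is to prove Theorem~\ref{thm:kBroadEstimate} by a grand induction — on the scale $\lambda$ (dyadically) and, for fixed $\lambda$, by a downward induction on the ambient dimension $n$ — following the architecture of Guth--Hickman--Iliopoulou \cite{GuthHickmanIliopoulou2019} together with its adaptation to the cone by Ou--Wang \cite{OuWang2022}. By the nesting properties of broad norms it suffices to treat $p = \bar p(k,n)$. The base case of small $\lambda$ is trivial (the operator is then bounded on $L^2$ up to $K$-powers), and the base case $k=n$ is where hard input enters: on a $K^2$-ball the condition $\tau \not\in V_a$ for $1 \le a \le A$ forces, for $A$ large enough depending only on $n$, the existence of $n$ caps $\tau_1,\dots,\tau_n$ whose rescaled Gauss images $G^\lambda(\bar x;\tau_i)$ are quantitatively transverse, so the $n$-linear restriction estimate of Bennett--Carbery--Tao \cite{BennettCarberyTao2006} — valid for smooth perturbations of the constant-coefficient model, which by $C1)$ and $C2^+)$ the reduced phase is on a $K^2$-ball — applies at exponent $\frac{2n}{n-1}=\bar p(n,n)$; a parabolic (sectoral) rescaling, available precisely because $\phi$ is $1$-homogeneous by \eqref{eq:HomogeneityPhaseFunction}, transfers this from the $K^2$-scale on which the variable-coefficient multilinear estimate is available to the global scale $\lambda$, at the cost of $K^{C_\varepsilon}\lambda^\varepsilon$.

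For the inductive step I would first perform the wave packet decomposition adapted to $\phi^\lambda$ at scale $\lambda^{1/2}$: decompose $A^{n-1}$ into caps $\theta$ of radius $\lambda^{-1/2}$ and $\R^{n-1}$ into a lattice of cubes of side $\lambda^{1/2}$, so that $f=\sum_{(\theta,v)} f_{\theta,v}$ and each $T^\lambda f_{\theta,v}$ is, up to rapidly decaying tails, concentrated on a curved plank $T_{\theta,v}$ of dimensions $\sim \lambda^{1/2+\delta}\times\dots\times\lambda^{1/2+\delta}\times\lambda$ whose axis follows the co-curves of the phase; one records the usual $L^2$-orthogonality $\sum_{\theta,v}\|f_{\theta,v}\|_2^2\sim\|f\|_2^2$ and the local constancy of $|T^\lambda f_\theta|$ on $\lambda^{1/2}$-balls. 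Here the role of $C2^+)$, with $n-2$ nonvanishing eigenvalues — one fewer than in the Carleson--Sj\"olin hypothesis $H2^+)$ — is to force these planks to curve exactly like the light rays of the cone, while homogeneity (Euler's relation $\phi=\langle\omega,\partial_\omega\phi\rangle$) supplies the extra radial/null direction along which the geometry degenerates; these two facts are what let the Ou--Wang bookkeeping run. I would then apply polynomial partitioning in the form of \cite{GuthHickmanIliopoulou2019} to the broad mass $\{\mu_{T^\lambda f}(B_{K^2})\}$, obtaining a polynomial $P$ of controlled degree $D$ and the familiar dichotomy between the \emph{cellular case}, where the broad mass is spread roughly evenly over $\sim D^n$ cells of $\R^n\setminus Z(P)$, and the \emph{algebraic case}, where it concentrates in the $\lambda^{1/2+\delta}$-neighbourhood $W$ of $Z(P)$. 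In the cellular case each plank meets $O(D)$ cells, so $L^2$-orthogonality gives $\sum_i\|f_{O_i}\|_2^2\lesssim D\|f\|_2^2$, and one closes by the induction on scales applied cell by cell after the standard rescaling. In the algebraic case one separates the planks meeting $W$ into those \emph{transversal} to $Z(P)$ — which meet $W$ in $O(D)$ sub-planks of length $\lambda^{1-\delta}$ and so are again handled by the induction on scales — and those \emph{tangential} to $Z(P)$, for which one invokes the transverse equidistribution estimate: tangential wave packets have their reassembled $L^2$-mass essentially uniformly distributed across the directions transverse to $Z(P)$, which localises the problem to a neighbourhood of the $(n-1)$-dimensional variety $Z(P)$, where the $(k-1)$-broad estimate in dimension $n-1$ supplied by the downward induction applies. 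Summing the contributions and optimising $D$ and the small parameters $\delta,\dots$ against $\varepsilon$ yields the bound with constant $K^{C_\varepsilon}\lambda^\varepsilon$.

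The hard part will be the transverse equidistribution estimate for the homogeneous variable-coefficient planks — the step that in the constant-coefficient cone case is the technical contribution of Ou--Wang and in the variable-coefficient Carleson--Sj\"olin case is the core of Guth--Hickman--Iliopoulou. One must show that wave packets of $T^\lambda$ which are $\lambda^{-1/2+\delta}$-tangent to a variety of degree $\le D$ cannot cluster: their reassembled $L^2$-density is essentially constant across the $\sim D$ transverse directions at scale $\lambda^{1/2}$. In the present setting this requires controlling the second fundamental form of the curved planks of a $1$-homogeneous phase — governed by $\partial^2_{\omega\omega}\langle\partial_x\phi,G\rangle$ in $C2^+)$ — reconciling the radial degeneracy coming from homogeneity with the polynomial-Wolff-type incidence counting, and carrying this out relative to the narrower angular threshold $K^{-2}$ used in \eqref{eq:BroadNorm} (rather than the $K^{-1}$ of \cite{GuthHickmanIliopoulou2019}), which is what confines the tangential part tightly enough to reach the cone exponent $\bar p(k,n)$. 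A secondary technical point is to check that the notion of a reduced $(\phi,a)$ is stable under the sectoral rescalings employed in the cellular and transversal cases, so that the induction on scales remains within the class of operators to which the theorem applies.
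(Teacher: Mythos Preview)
Your outline has the right general architecture (wave packets, polynomial partitioning, cellular/algebraic split, transverse equidistribution), but there are three genuine gaps relative to what the argument actually requires.

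\medskip

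\textbf{Wrong induction parameter and base case.} The induction is not on the ambient dimension $n$ but on the dimension $m$ of a transverse complete intersection $Z$ to which $f$ is assumed tangent (this is Theorem~\ref{thm:MainInductionTheorem}). The ambient space stays $\R^n$ throughout; what decreases is $\dim Z$. Correspondingly, the base case is not $k=n$ via Bennett--Carbery--Tao, but $m\le k-1$: when $f$ is concentrated on wave packets tangent to an $(k-1)$-dimensional variety, on each $K^2$-ball one may take $V_a$ to contain the tangent space of $Z$, so every $\tau$ satisfies $\tau\in V_a$ and the broad norm vanishes. Your proposed reduction to a $(k-1)$-broad estimate in $\R^{n-1}$ does not close numerically, since $\bar p(k-1,n-1)>\bar p(k,n)$.

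\medskip

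\textbf{Mislocated transverse equidistribution.} In the algebraic case one finds a lower-dimensional variety $Y^l\subset Z$. The \emph{tangential} subcase (wave packets tangent to $Y^l$) is handled directly by the dimensional induction hypothesis for dimension $l<m$ --- no equidistribution is needed here. Transverse equidistribution is used in the \emph{transverse} subcase: wave packets tangent to $Z$ but transverse to $Y^l$ are broken, on each $\rho$-ball, into pieces tangent to translates $Z+b$ at the finer scale $\rho$, and equidistribution gives the gain $(\rho/R)^{(n-m)/2}$ in $L^2$ needed to close the radial induction. Your sketch has these roles swapped.

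\medskip

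\textbf{Missing the narrow/broad space-time frequency dichotomy.} This is the paper's main technical point and the place where $1$-homogeneity genuinely bites. For a fixed $R^{1/2+\delta_m}$-ball $B$ with tangent space $V=T_zZ$, the set $L=\{u: G^\lambda(\bar x;\Psi(u))\in V\}$ of tangential frequencies may degenerate to an $O(K^{-2})$-neighbourhood of $O(1)$ radial lines (Lemma~\ref{lem:NegligibleContributionBall}). When this happens the transverse subspace $W$ from the Guth--Hickman--Iliopoulou construction does \emph{not} exist with quantitative transversality, and the equidistribution estimate \emph{fails}. The resolution is that such ``narrow'' balls contribute nothing to the $k$-broad norm (all relevant $\tau$ lie in $O(1)$ thin sectors, hence in some $V_a$), so one separates $g=g_{\mathrm{ess}}+g_{\mathrm{tail}}$ with $g_{\mathrm{tail}}$ supported on medium tubes hitting only narrow balls, and runs equidistribution only on $g_{\mathrm{ess}}$, for which Lemma~\ref{lem:QuantitativeTransverse} supplies a $W$ with $\angle(V,W)\gtrsim K^{-4}$. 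Your phrase ``reconciling the radial degeneracy'' flags the issue, but the actual mechanism --- discarding the degenerate balls using the broad norm --- is absent from the proposal, and without it the transverse subcase does not close.
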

Reduced phase functions are introduced in Section \ref{subsection:BasicReductions}. These phases are basically small $C^N$ perturbations of $1$-homogeneous phases with constant coefficients satisfying the convexity assumption. These reduced phase functions were previously used by Beltran--Hickman--Sogge \cite{BeltranHickmanSogge2020} to derive decoupling estimates. As in \cite{BeltranHickmanSogge2020}, general phase functions satisfying $C1)$ and $C2^+)$ are transformed by partitioning the support of the amplitude and parabolic rescaling to reduced phases. Additionally, we impose size conditions on the amplitude and a margin condition on its support.

 The scheme to deduce Theorem \ref{thm:LpLpEstimatesVariableCoefficients} from Theorem \ref{thm:kBroadEstimate} is essentially due to Bourgain--Guth \cite{BourgainGuth2011} and in the variable-coefficient context, see also Guth--Hickman--Iliopoulou \cite{GuthHickmanIliopoulou2019}. We find like in \cite[p.~263]{GuthHickmanIliopoulou2019}
\begin{equation}
\label{eq:BroadNarrowEstimate}
\| T^\lambda f\|^p_{L^p(B_{K^2})} \lesssim_A K^{O(1)} \mu_{T^\lambda f}(B_{K^2}) + \sum_{a=1}^A \| \sum_{\tau \in V_a} T^\lambda f_\tau \|^p_{L^p(B_{K^2})}.
\end{equation}
The first term is captured by the broad estimate; the second term is estimated by $\ell^p$-decoupling (cf. \cite{BourgainDemeter2015,BeltranHickmanSogge2020}) and induction on scales \cite{OuWang2022,GaoLiuMiaoXi2023}. A suitable narrow decoupling, taking into account a reduced number of sectors $\tau \in V_a$, is discussed in detail in the variable coefficient case. For the constant-coefficient case we refer to \cite{OuWang2022} and \cite{GaoLiuMiaoXi2023}.

\medskip

Very recently, Gao--Liu--Miao--Xi \cite{GaoLiuMiaoXi2023} proved an extension of Ou--Wang's result for the circular cone $\phi(x,\omega) = x' \cdot \omega+x_n|\omega|$ for more general conic surfaces, but still with constant coefficients. For these constant coefficient phase functions, the Kakeya compression described in the present work cannot happen. Gao \emph{et al.} \cite{GaoLiuMiaoXi2023} used $k$-broad estimates to derive new local smoothing estimates for the wave equation in Euclidean space. At small spatial scales, the variable coefficient phases are approximated with extension operators for conic surfaces. Then we can use arguments from \cite{GaoLiuMiaoXi2023}. Furthermore, Hickman and Iliopoulou showed sharp $L^p$-estimates for non-homogeneous phases with indefinite signature in \cite{HickmanIliopoulou2022}. This suggests to study also homogeneous phase functions with indefinite signature by the present methods.

\medskip

Notably, we do not use the usual wave packet decomposition for the cone as e.g. in \cite{OuWang2022} or \cite{GaoLiuMiaoXi2023} to prove the broad estimate. Instead, we stick to the wave packet decomposition commonly used for the Fourier extension operator of the paraboloid or its variable coefficient counterpart \cite{GuthHickmanIliopoulou2019}. This allows us to use many arguments from \cite{GuthHickmanIliopoulou2019} without change and hints at the possibility of a unified approach. A major change happens for the transverse equidistribution estimates, to be analyzed in Section \ref{section:TransverseEquidistribution}. Secondly, the narrow decoupling requires additional considerations, see Section \ref{section:LinearEstimates}. %Since the polynomial partitioning approach is involved, we elected to elaborate on the argument in Sections \ref{section:Preliminaries} - \ref{section:LinearEstimates}.

\medskip

We remark that the idea to use the same wave packet decomposition for homogeneous and inhomogeneous phase functions in the variable coefficient context is not new: In \cite{Lee2006} S. Lee considered linear and bilinear estimates for oscillatory integral operators and could treat variable coefficient versions of the Fourier extension operator of the paraboloid and the cone with the same wave packet decomposition. He generalized bilinear estimates due to Tao \cite{Tao2003} and Wolff \cite{Wolff2001} to variable coefficient phases. Lee \cite{Lee2006} pointed out for the first time that a convexity condition as $H2^+)$ or $C2^+)$ allows to go beyond Tomas--Stein $L^2-L^p$-estimates, which are sharp for phases without convexity condition. Bourgain \cite{Bourgain1991} showed in the context of non-homogeneous phases without convexity conditions that the Tomas-Stein range is sharp (see also \cite{BourgainGuth2011}). In the present work, the $L^p$-$L^p$-estimates for general oscillatory integral operators with phase satisfying $C1)$ and $C2^+)$ due to Lee \cite{Lee2006} are improved to the sharp range up to the endpoint for $n \geq 5$, and the previously known sharp $L^p$-$L^p$-estimates in lower dimensions $3 \leq n \leq 4$ are recovered.

\medskip

Furthermore, Ou--Wang \cite{OuWang2022} proved $L^q$-$L^p$-estimates for the cone extension operator $\mathcal{E}$; see \cite[Theorem~1.2]{OuWang2022}. The conjectured boundedness range of $\mathcal{E}:L^q \to L^p$ is given by $p>\frac{2(n-1)}{n-2}$ and $q' \leq \frac{n-2}{n} p$. For $n=5$ Ou--Wang proved this more general Fourier restriction conjecture for the cone and made progress for $n \geq 6$. The proof of \cite[Theorem~1.2]{OuWang2022} is a minor variant of the proof of $L^p$-$L^p$-estimates. It is based on the $k$-broad estimate and narrow decoupling. At one point, also interpolation with the bilinear estimates due to Wolff \cite{Wolff2000} is invoked. In the present work the $k$-broad estimate and narrow decoupling are extended to the variable coefficient setting, and Wolff's bilinear estimate was already generalized by Lee \cite{Lee2006}. For this reason, the arguments to show $L^q$-$L^p$-estimates in \cite{OuWang2022} can be extended to the variable coefficient case and yield the same range as stated in \cite[Theorem~1.2]{OuWang2022} for the estimate
\begin{equation*}
\| T^\lambda f \|_{L^p} \lesssim_{\varepsilon,\phi,a} \lambda^\varepsilon \| f \|_{L^q}.
\end{equation*}

\medskip

In Section \ref{section:LocalSmoothing} we apply the new estimates for oscillatory integral operators to prove new local smoothing estimates for solutions to wave equations on compact Riemannian manifolds $(M,g)$ with $\dim(M) \geq 3$. To avoid confusion with the number of space-time dimensions, which was previously denoted by $n$, we denote the space dimension in the following by $d$.
We consider
\begin{equation}
\label{eq:WaveEquationCompactManifoldIntroduction}
\left\{ \begin{array}{clcll}
\partial_t^2 u - \Delta_g u &= 0, &\quad (x,t) &\in& M \times \R, \\
u(\cdot,0) &= f_0, &\quad \dot{u}(\cdot,0) &=& f_1.
\end{array} \right.
\end{equation}
$\Delta_g$ denotes the Laplace--Beltrami operator, and the solution $u$ to \eqref{eq:WaveEquationCompactManifoldIntroduction} is given by
\begin{equation*}
u(t) = \cos(t \sqrt{- \Delta_g}) f_0 + \frac{\sin(t \sqrt{- \Delta_g})}{\sqrt{- \Delta_g}} f_1.
\end{equation*}
By results due to Seeger--Sogge--Stein \cite{SeegerSoggeStein1991} relying on the parametrix representation (see also \cite{Peral1980,Miyachi1980} in the Euclidean case), it is known that the fixed-time estimate
\begin{equation*}
\| u(\cdot,t) \|_{L^p(\R^d)} \lesssim \| f_0 \|_{L^p_{\bar{s}_p}(\R^d)} + \| f_1 \|_{L^p_{\bar{s}_p-1}(\R^d)}
\end{equation*}
with 
\begin{equation}
\label{eq:SharpFixedTimeRegularity}
\bar{s}_p = (d-1) \big| \frac{1}{2} - \frac{1}{p} \big|
\end{equation}
is sharp for all $1<p<\infty$ provided that $t$ avoids a discrete set. The local smoothing conjecture due to Sogge \cite{Sogge1991} for the Euclidean wave equation, i.e., $(M,g) = (\R^d,(\delta^{ij}))$ in \eqref{eq:WaveEquationCompactManifoldIntroduction}, states that
\begin{equation}
\label{eq:LocalSmoothingEstimate}
\big( \int_1^2 \| u(\cdot,t) \|_{L^p(\R^d)}^p \big)^{\frac{1}{p}} \lesssim \| f_0 \|_{L^p_{\bar{s}_p-\sigma}(\R^d)} + \| f_1 \|_{L^p_{\bar{s}_p-1-\sigma}(\R^d)}
\end{equation}
for $\sigma < \frac{1}{p}$ and $\frac{2d}{d-1} \leq p < \infty$. (Note that $\bar{s}_p - \frac{1}{p} = 0$ for $p = \frac{2d}{d-1}$.) This conjecture stands on top of prominent open problems in Harmonic Analysis as it implies as well the restriction conjecture for the paraboloid as the Bochner--Riesz conjecture. Initial progress was due to Sogge \cite{Sogge1991} and Mockenhaupt--Seeger--Sogge \cite{MockenhauptSeegerSogge1993}. Wolff identified decoupling inequalities \cite{Wolff2000} to yield sharp local smoothing estimates. Further progress in this direction was made in \cite{GarrigosSeeger2009,Lee2020,LeeVargas2012}. Bourgain--Demeter \cite{BourgainDemeter2015} covered the sharp range for decoupling inequalities, which implies sharp local smoothing estimates for $p \geq \frac{2(d+1)}{d-1}$. We refer to the survey by Beltran--Hickman--Sogge \cite{BeltranHickmanSogge2018} for local smoothing estimates for FIOs. Guth--Wang--Zhang \cite{GuthWangZhang2020} verified the Euclidean local smoothing conjecture for $d=2$ by a sharp $L^4$-square function estimate. Gao \emph{et al.} \cite{GaoLiuMiaoXi2020} extended this to compact Riemannian surfaces. We remark that for $d \geq 3$, counterexamples due to Minicozzi--Sogge \cite{MinicozziSogge1997} show that \eqref{eq:LocalSmoothingEstimate} fails if one replaces $\R^d$ with general compact Riemannian manifolds for $\sigma < 1/p$, if $p < p_{d,+}$ with
\begin{equation}
\label{eq:LpCompactManifold}
p_{d,+} =
\begin{cases}
\frac{2 \cdot (3d+1)}{3d-3}, \text{ if } d \text{ is odd}, \\
\frac{2 \cdot (3d+2)}{3d-2}, \text{ if } d \text{ is even}.
\end{cases}
\end{equation}
Hence, local smoothing estimates for solutions to wave equations on compact Riemannian manifolds are only conjectured for $p \geq p_{d,+}$ with $\sigma < 1/p$.

Local parametrices for $e^{it \sqrt{- \Delta_g}}$ are given by
\begin{equation*}
\mathcal{F} f(x',x_n) = \int_{\R^{n-1}} e^{i \phi(x',x_n;\omega)} a(x;\omega) \hat{f}(\omega) d\omega
\end{equation*}
with $\phi \in C^\infty(\R^n \times \R^{n-1} \backslash \{ 0 \})$ a phase function satisfying $C1)$ and $C2^+)$ and $a \in S^0(\R^{n})$ with compact support in $x$. Hence, it suffices to prove local smoothing estimates of rescaled Fourier integral operators $\mathcal{F}^\lambda$. In Theorem \ref{thm:ImprovedLocalSmoothing} we extend the recent results due to Gao \emph{et al.} \cite{GaoLiuMiaoXi2023} for the Euclidean wave equation to wave equations on compact Riemannian manifolds. This improves on the previously best local smoothing estimates due to Beltran--Hickman--Sogge \cite{BeltranHickmanSogge2020} in some range $p \leq \frac{2(d+1)}{d-1}$ for wave equations on compact manifolds. 

\medskip

\emph{Outline of the paper:} In Section \ref{section:NecessaryConditions} we show the necessary conditions for $L^p$-estimates for variable-coefficient $1$-homogeneous phases. Preliminaries for the polynomial partitioning argument to show Theorem \ref{thm:kBroadEstimate} are given in Section \ref{section:Preliminaries}. In this section we introduce the notion of a reduced homogeneous phase function and collect geometric consequences. This will simplify the proof of Theorem \ref{thm:kBroadEstimate}. We recall the wave packet analysis in the context of variable coefficients \cite{Lee2006,GuthHickmanIliopoulou2019} and collect facts on the $k$-broad norms. In Section \ref{section:PolynomialPartitioning} we recall the polynomial partitioning tools. In Section \ref{section:TransverseEquidistribution} transverse equidistribution estimates are proved. These differ from the transverse equidistribution estimates shown in \cite{GuthHickmanIliopoulou2019} for Carleson--Sj\"olin phase functions. 
In Section \ref{section:MainInductiveArgument} we deduce Theorem \ref{thm:kBroadEstimate} from Theorem \ref{thm:MainInductionTheorem}, which is suitable for induction on dimension and radius. The proof relies on polynomial partitioning and transverse equidistribution estimates play a key role. In Section \ref{section:LinearEstimates} we show how Theorem \ref{thm:kBroadEstimate} implies Theorem \ref{thm:LpLpEstimatesVariableCoefficients}. In Section \ref{section:epsRemoval} we show how the $\lambda^\varepsilon$-factor can be removed away from the endpoint. In Section \ref{section:LocalSmoothing} we apply the oscillatory integral estimates and narrow decoupling to show new local smoothing estimates for solutions to wave equations on compact Riemannian manifolds.

\medskip

\textbf{Basic Notations:}
\begin{itemize}
\item For $x,y \in \R^n$ we denote the Euclidean inner product by
\begin{equation*}
\langle x, y \rangle = x \cdot y = \sum_{i=1}^n x_i y_i.
\end{equation*}
\item $|x| = \sqrt{\langle x, x \rangle}$ denotes the Euclidean norm.
\item For a Lebesgue-measurable set $A \subseteq \R^d$, we denote by $|A|$ the $d$-dimensional Lebesgue measure.
\item For $x \in \R$, we denote Gauss brackets by
\begin{equation*}
 \lfloor x \rfloor = \max \{ k \in \Z : k \leq x \}.
 \end{equation*}
 \item For $x,y \in \R^n \backslash \{ 0 \}$ we denote the angle between $x$ and $y$ by
 \begin{equation*}
 \angle (x,y) = \arccos \frac{x \cdot y}{|x| \, |y|}.
 \end{equation*}
 \item For $x \in \R^n \backslash \{ 0 \}$ and $A \subseteq \R^n$ with $A \backslash \{0 \} \neq \emptyset$, we let
 \begin{equation*}
 \angle (x,A) = \inf_{y \in A \backslash \{0\}} \angle (x,y).
 \end{equation*}
 Similarly, we define $\angle (A_1,A_2)$ for $A_1, A_2 \subseteq \R^n$ with $A_i \backslash \{0 \} \neq \emptyset$:
 \begin{equation*}
 \angle (A_1,A_2) = \inf_{\substack{x_1 \in A_1 \backslash \{0\}, \\ x_2 \in A_2 \backslash \{0\}}} \angle (x_1,x_2).
 \end{equation*}
\end{itemize}

\section{Kakeya compression}
\label{section:NecessaryConditions}
In the following we modify the example due to Guth--Hickman--Iliopoulou \cite[Section~2]{GuthHickmanIliopoulou2019} (see also \cite{BourgainGuth2011}) for homogeneous phase functions. This yields the necessary conditions:
\begin{proposition}
\label{prop:Necessary}
Necessary for the estimate \eqref{eq:LpLpBoundVariableCoefficient} to hold for $n \geq 4$ is
\begin{equation*}
p \geq p_n.
\end{equation*}
\end{proposition}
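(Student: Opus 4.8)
The plan is to construct, for each $n \geq 4$, a $1$-homogeneous phase $\phi$ satisfying $C1)$ and $C2^+)$ together with an amplitude $a$ and a family of inputs $f = f_\lambda$ for which the left-hand side of \eqref{eq:LpLpBoundVariableCoefficient} is polynomially larger than the right-hand side unless $p \geq p_n$. Following Guth--Hickman--Iliopoulou \cite[Section~2]{GuthHickmanIliopoulou2019} and Bourgain--Guth \cite{BourgainGuth2011}, the mechanism is Kakeya compression: one designs $\phi$ so that the wave packets associated to $T^\lambda$ are forced to lie inside a low-dimensional affine subspace (a slab) rather than filling out an $n$-dimensional region. Concretely, I would take a phase that near $\omega_0$ looks like the circular-cone phase $\langle x', \omega\rangle + x_n|\omega|$ in the ``transverse'' variables but into which one inserts a quadratic variable-coefficient perturbation in the remaining directions, of the schematic form $\langle x'', \omega''\rangle + x_n\bigl(|\omega'|^2/2 - |\omega''|^2/(2\lambda^{?})\bigr)\cdot(\text{curvature terms})$, tuned so that over a ball $B(0,\lambda)$ the tube directions collapse onto a subspace of dimension roughly $(n+?)/2$. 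The homogeneity constraint \eqref{eq:HomogeneityPhaseFunction} is the new feature relative to \cite{GuthHickmanIliopoulou2019}: the cone already has a null direction, so the compression example has to be built respecting the conical scaling, which is exactly why the exponent $p_n$ in \eqref{eq:PolynomialPartitioningRange} differs (in the odd case) from the Carleson--Sj\"olin exponent $p_{d,+}$ in \eqref{eq:LpCompactManifold}.

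The key steps, in order, are: (i) write down the explicit homogeneous phase and amplitude and verify $C1)$ and $C2^+)$ hold on $\mathrm{supp}(a)$ — $C1)$ is immediate from the form of the linear part, and $C2^+)$ reduces to checking that the Hessian $\partial^2_{\omega\omega}\langle \partial_x\phi, G(x;\omega_0)\rangle$ has $n-2$ nonzero eigenvalues of one sign, which the quadratic perturbation is chosen to guarantee; (ii) perform the wave packet / stationary phase analysis of $T^\lambda f$ for $f$ a sum of (appropriately modulated) bumps on the $\tau$-sectors, showing that each wave packet is an essentially constant-modulus tube of dimensions $\sim 1 \times \cdots \times 1 \times \lambda^{1/2} \times \lambda$ whose core lies within $O(1)$ of a fixed affine subspace $H \subset \R^n$ of dimension $m(n) < n$; (iii) choose $f$ so that all $\sim \lambda^{(n-1)/2}$ (or the relevant count of) wave packets overlap on a set of measure $\sim \lambda^{m(n)}$, compute $\|f\|_{L^p}$ and $\|T^\lambda f\|_{L^p}$ directly, and read off the constraint on $p$; (iv) check the borderline computation gives exactly $p_n$ in both the odd and even cases of \eqref{eq:PolynomialPartitioningRange}, and note the construction needs $n \geq 4$ (for $n=3$ the cone restriction conjecture is the sharp statement and no compression occurs beyond it).

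The main obstacle I anticipate is step (i)–(ii): getting a genuinely $1$-homogeneous phase — not merely a Carleson--Sj\"olin phase — for which the convexity hypothesis $C2^+)$ is satisfied and for which the wave packets still compress into a subspace of the \emph{right} dimension. In \cite{GuthHickmanIliopoulou2019} one has the full $H2^+)$ convexity ($n-1$ eigenvalues) and more freedom; here the homogeneity in $\omega$ rigidly couples the radial direction to the $x_n$-dependence, so the perturbation must be inserted only in the $(n-2)$-dimensional ``spherical'' directions while preserving positivity of the relevant Hessian, and then one must re-verify that the rescaled Gauss map $G^\lambda$ still degenerates onto a subspace over $B(0,\lambda)$. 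The counting of wave packets and the measure of their common overlap — which together produce the exponent — then has to be done carefully with the conical normalization, and matching it against \eqref{eq:PolynomialPartitioningRange} (distinguishing $n$ odd from $n$ even, where the compressed dimension jumps) is the quantitative crux. Everything downstream (the $L^p$ norm computations, the choice of modulation to force overlap) is routine once the phase is in hand.
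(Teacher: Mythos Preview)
Your framework is right --- adapt the Bourgain--Guth/Guth--Hickman--Iliopoulou Kakeya compression to the $1$-homogeneous setting --- but two concrete pieces of the mechanism are wrong and would not produce the correct exponent $p_n$.

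First, the compression is \emph{not} into a fixed affine subspace. The whole point of the variable-coefficient construction is that the phase has $x_n$-dependent quadratic part, say $\phi(x;\omega) = \langle x',\omega\rangle + \tfrac{1}{2\omega_{n-1}}\langle A(x_n)\omega',\omega'\rangle$ with $A(x_n)$ a polynomial matrix (in the paper, a block-diagonal assembly of $2\times 2$ blocks $\bigl(\begin{smallmatrix} x_n & x_n^2 \\ x_n^2 & x_n+x_n^3\end{smallmatrix}\bigr)$). The core curves $x_n \mapsto v_\theta - A(x_n)\omega_\theta$ are then genuinely curved, and no affine subspace of the right dimension can contain them all. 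Instead one chooses the initial positions $v_\theta$ so that every core curve lies in a common \emph{algebraic variety} $Z = Z(P_1,\ldots,P_{n-1-d})$ of low degree and dimension $d = (n-1) - \lfloor\tfrac{n-2}{2}\rfloor$, and then invokes Wongkew's theorem to bound $|N_{\lambda^{1/2}}(Z) \cap B(0,\lambda)|$. If you insist on an affine subspace the curves will not fit and the compression count fails.

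Second, the lower bound is not obtained by making all wave packets overlap on a common set. The slabs have dimensions $1 \times \lambda^{1/2} \times \cdots \times \lambda^{1/2} \times \lambda$ (only \emph{one} thin direction, coming from the null direction of the homogeneity --- your ``$1\times\cdots\times 1$'' is off), there are $\sim \lambda^{(n-2)/2}$ of them, and they are spread out inside $N_{\lambda^{1/2}}(Z)$ rather than stacked. The argument is: randomize signs, use Khintchine to replace $|T^\lambda f|$ by $(\sum_\theta \chi_{T_{\theta,v_\theta}})^{1/2}$, then apply H\"older against $|\bigcup_\theta T_{\theta,v_\theta}|^{1/2-1/p}$ and bound that union by the Wongkew estimate for the variety neighborhood. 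The odd/even dichotomy in $p_n$ comes out of the parity of $\lfloor (n-2)/2\rfloor$ in the dimension of $Z$, not from a separate case analysis of overlap.
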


\medskip

Recall that $p_n$ was defined in \eqref{eq:PolynomialPartitioningRange} for $n \geq 4$ by
\begin{equation*}
p_n =
\begin{cases}
&2 \cdot \frac{3n+1}{3n-3}, \quad n \text{ odd}, \\
&2 \cdot \frac{3n}{3n-4}, \quad n \text{ even}.
\end{cases}
\end{equation*}
This shows that the $L^p$-$L^p$-estimates proved in Theorem \ref{thm:LpLpEstimatesVariableCoefficients} are sharp up to endpoints.

\medskip

We only consider $n \geq 4$ because the construction in \eqref{eq:PhaseFunction} degenerates for $n=3$ to the translation-invariant case. Moreover, the range, for which the variable coefficient estimate holds, does not differ for $3 \leq n \leq 5$ from the translation-invariant case.

Let 
\begin{equation*}
x=(\underbrace{x^{\prime \prime},x_{n-1}}_{x^\prime},x_n) \in \R^{n-2} \times \R \times \R \text{ and } \omega = (\omega^\prime,\omega_{n-1}) \in \R^{n-2} \times \R.
\end{equation*}
 We consider the phase functions
\begin{equation}
\label{eq:PhaseFunction}
\phi(x;\omega) = x^\prime \cdot \omega + \frac{\langle A(x_n) \omega^\prime, \omega^\prime \rangle}{2 \omega_{n-1}}, \quad \omega_{n-1} \in (1/2,1).
\end{equation}
$A(x_n)$ denotes the $(n-2) \times (n-2)$--positive definite matrix
\begin{equation}
\label{eq:PhaseFunctionConstruction}
A(x_n) = 
\begin{cases}
\bigoplus_{i=1}^{\frac{n-2}{2}}
\begin{pmatrix}
x_n & x_n^2 \\
x_n^2 & x_n +x_n^3
\end{pmatrix}, &\quad n-2 \text{ even}, \\
\bigoplus_{i=1}^{\frac{n-3}{2}}
\begin{pmatrix}
x_n & x_n^2 \\
x_n^2 & x_n +x_n^3
\end{pmatrix}
\oplus (x_n), &\quad n-2 \text{ odd}.
\end{cases}
\end{equation}
The main idea is to construct many wave packets which are concentrated in the neighbourhood of a lower dimensional algebraic variety. Whereas the direction governed by the frequency $\omega_\theta$ below varies, for fixed $\omega_\theta$ we consider precisely one starting position $v_\theta$. This concentration in a low dimensional algebraic variety does not happen in the linear case \eqref{eq:ConeExtensionOperator}.

We consider wave packets adapted to $\phi$ as follows: $\Xi =  B^{n-2}(0,c_1) \times (1/2,1)$ is covered by essentially disjoint elongated caps 
\begin{equation*}
\Xi_{\theta} = \{ (\omega',\omega_{n-1}) \in \Xi : | \omega'/\omega_{n-1} - \omega_{\theta}| \lesssim \lambda^{-\frac{1}{2}} \}
\end{equation*}
with $\omega_{\theta} \in B^{n-2}(0,c_1)$ for $|c_1| \ll 1$. These regions will be referred to as `sectors' in the following.
 Apparently, $\Xi$ can be covered by $\sim \lambda^{\frac{ n-2}{2}}$ finitely overlapping sets $\Xi_{\theta}$. We consider a corresponding smooth partition of unity $(\psi_\theta)_{\omega_\theta \in \Xi}$ and wave packets
\begin{equation*}
f_{\theta,v}(\omega) = e^{-i \lambda \langle v, \omega^\prime \rangle} \psi_\theta(\omega), \quad v = (v_1,\ldots,v_{n-2}) \in \R^{n-2}.
\end{equation*}
We have by non-oscillation of the phase
\begin{equation*}
| T^\lambda f_{\theta,v}(x^{\prime \prime},x_{n-1},x_n) | \gtrsim \lambda^{- \frac{n-2}{2}} \chi_{T_{\theta,v}}(x).
\end{equation*}
$\chi_{T_{\theta,v}}$ denotes the characteristic function of $T_{\theta,v}$. The $T_{\theta,v}$ are curved `slabs' of size $( 1 \times  \underbrace{\lambda^{1/2} \times \ldots \times \lambda^{1/2}}_{ n-2 \text{ times}}  \times \lambda )$ with
\begin{equation*}
T_{\theta,v} \subseteq \{ x \in B(0,\lambda) \, : \, |x^{\prime \prime} - \lambda \gamma_{\theta,v} \left( \frac{x_n}{\lambda} \right) | < c_2 \lambda^{\frac{1}{2}+\varepsilon} \text{ and } |x_{n-1} - \lambda \gamma^\prime_{\theta}(x_n / \lambda) | < c_2 \lambda^{\frac{1}{2}+\varepsilon} \},
\end{equation*}
for any $\varepsilon >0$, which follows from non-stationary phase; $c_2$ denotes a small constant and $\gamma_{\theta,v}$, $\gamma_{\theta}'$ denote curves:
\begin{equation*}
\gamma_{\theta,v}(x_n) = v - A(x_n) \omega_\theta, \quad \gamma^\prime_\theta (x_n) = \frac{1}{2} \langle A(x_n) \omega_\theta, \omega_\theta \rangle.
\end{equation*}
Furthermore, note that the conditions on $\omega'$ and $\omega_{n-1}$
\begin{equation*}
\big| \frac{\omega'}{\omega_{n-1}} - \omega_\theta \big| \lesssim \lambda^{-\frac{1}{2}}, \; \omega_{n-1} \in (1/2,1), \; \omega' \in B_{n-2}(0,c_1 )
\end{equation*}
correspond to considering $\lambda^{-\frac{1}{2}}$-sectors in direction $(\omega_\theta,1)$. The degeneracy of $\partial_{\omega \omega}^2 \phi$ in the radial direction, which is immediate from the $1$-homogeneity of $\phi$, gives the localization of slabs to size $\lambda^\varepsilon$ in this direction: We have
\begin{equation*}
\partial_\omega \phi(x;\omega) \cdot \frac{(\omega_\theta,1)}{|(\omega_\theta,1)|} = \partial_\omega \phi(x;(\omega_\theta,1)) \cdot \frac{(\omega_\theta,1)}{|(\omega_\theta,1)|} + O(\lambda^{-1}) \text{ for } | \frac{\omega}{|\omega|} - \frac{(\omega_\theta,1)}{|(\omega_\theta,1)|} | \lesssim \lambda^{-\frac{1}{2}}.
\end{equation*}
The non-degeneracy of $\partial_{\omega \omega}^2 \phi$ gives localization to size less than $\lambda^{\frac{1}{2}+\varepsilon}$ in the remaining directions. We argue in the following why the curved tubes $\chi_{T_{\theta,v}}$ are in fact of size $1 \times \lambda^{\frac{1}{2}} \times \ldots \times \lambda^{\frac{1}{2}} \times \lambda$ (and not significantly less): Consider the oscillatory integral
\begin{equation*}
F(x) = \int e^{i(x' \cdot \omega + \lambda \tilde{\phi}(x_n/\lambda,\omega))} \psi_{\theta}(\omega) d\omega
\end{equation*}
with $\psi_{\theta} \in C^\infty_c(A^{n-1})$ localizing to a slab into direction $\theta \in \mathbb{S}^{n-2}$ and
\begin{equation*}
\tilde{\phi}(x_n,\mu \omega)= \mu \tilde{\phi}(x_n,\omega) \text{ for } \mu > 0.
\end{equation*}
We use Taylor expansion in $\omega$ to write
\begin{equation*}
\begin{split}
\lambda \tilde{\phi}(x_n/\lambda,\omega) &= |\omega| (\lambda \tilde{\phi}(x_n/\lambda,\omega/|\omega|)) \\
&= |\omega| (\lambda \tilde{\phi}(x_n/\lambda,\theta) + \lambda \nabla_{\omega} \tilde{\phi}(x_n/\lambda,\theta) ( \frac{\omega}{|\omega|} - \theta)) + O(c).
\end{split}
\end{equation*}
For $\omega \in \text{supp}( \psi_{\theta})$ we have
\begin{equation*}
|\omega| = \omega \cdot \theta + O(c \lambda^{-1}).
\end{equation*}
Hence, we can write
\begin{equation*}
\lambda \tilde{\phi}(x_n/\lambda,\omega) = \lambda \tilde{\phi}(x_n/\lambda,\theta)(\omega \cdot \theta) + \lambda \nabla_{\omega} \tilde{\phi}(x_n/\lambda,\theta)(\omega - (\omega \cdot \theta) \theta) + O(c).
\end{equation*}
Let $\{\theta^1_{\perp},\ldots,\theta_{\perp}^{n-2},\theta \}$ be an orthonormal basis of $\R^{n-1}$. Then,
\begin{equation*}
\lambda \tilde{\phi}(x_n/\lambda,\omega) = \lambda \tilde{\phi}(x_n/\lambda,\theta)(\omega \cdot \theta) + \lambda \sum_{i=1}^{n-2} (\nabla_\omega \tilde{\phi}(x_n/\lambda,\theta) \cdot \theta_{\perp}^i ) (\omega \cdot \theta_{\perp}^i) + O(c).
\end{equation*}
Consequently,
\begin{equation*}
\begin{split}
x' \cdot \omega + \lambda \tilde{\phi}(x_n/\lambda,\omega) &= (x' \cdot \theta + \lambda \tilde{\phi}(x_n/\lambda,\theta))(\omega \cdot \theta) \\
&\quad + \sum_{i=1}^{n-2} (x' \cdot \theta_{\perp}^i + \lambda \nabla_{\omega} \tilde{\phi}(x_n/\lambda,\theta) \cdot \theta_{\perp}^i )(\omega \cdot \theta_{\perp}^i) + O(c).
\end{split}
\end{equation*}
And for $|x' \cdot \theta + \lambda \tilde{\phi}(x_n/\lambda,\theta)| \ll c$ and $|x' \cdot \theta_{\perp}^i + \lambda \nabla_{\omega} \tilde{\phi}(x_n/\lambda,\theta) \cdot \theta_{\perp}^i| \ll c \lambda^{1/2}$, we see that the modulus of the whole phase is $O(c)$. Hence, there is no oscillation within $\text{supp}(\psi_{\theta})$ and for fixed $x_n$ this defines a region $A_{x_n}$ for $x'$ of size $1 \times \lambda^{1/2} \times \ldots \times \lambda^{1/2}$, for which $|T^\lambda f(x',x_n)|$ is roughly constant. Taking $T_{\theta} = \bigcup_{x_n} A_{x_n}$ yields the $1 \times \lambda^{1/2} \times \ldots \times \lambda^{1/2} \times \lambda$-tube. Note that the factor $e^{-i \lambda \langle v,\omega' \rangle}$ amounts to a shift in $x'$ by $\lambda v$, but does not change the size of the tube.

\medskip

\noindent We prepare the initial data with randomized signs:
\begin{equation*}
f = \sum_{\theta} \varepsilon_\theta f_{\theta,v}.
\end{equation*}
By Khintchine's theorem, the expected value of $|T^\lambda f(x)|$ is given by the square sum:
\begin{equation*}
\mathbf{E}[|T^\lambda f(x)|] \sim \big( \sum_\theta |T^\lambda f_{\theta,v_\theta} |^2 \big)^{1/2} \gtrsim \lambda^{-\frac{n-2}{2}} \big( \sum_\theta \chi_{T_{\theta,v_\theta}}(x) \big)^{1/2}.
\end{equation*}
Taking $L^p$-norms yields by Minkowski's inequality
\begin{equation*}
\lambda^{- \frac{n-2}{2}} \big( \int \big( \sum_{\theta} \chi_{T_{\theta,v_{\theta}}} \big)^{\frac{p}{2}} \big)^{\frac{1}{p}} \lesssim \mathbf{E}[ \| T^\lambda f \|_{L^p} ].
\end{equation*}
Next, we find by applying H\"older's inequality
%As in \cite{GuthHickmanIliopoulou2019}, we find by H\"older's and Minkowski's inequality
\begin{align*}
\lambda^{-\frac{n-2}{2}} \big( \int \sum_\theta \chi_{T_{\theta,v_\theta}} \big)^{1/2} &\lesssim \left| \bigcup_{\theta} T_{\theta,v_\theta} \right|^{1/2 - 1/p} \mathbf{E}[ \| T^\lambda f \|_{L^p} ] \\
 &\lesssim \left| \bigcup_{\theta} T_{\theta,v_\theta} \right|^{1/2-1/p} \| f \|_p \lesssim \left| \bigcup_{\theta} T_{\theta,v_\theta} \right|^{1/2 - 1/p}.
\end{align*}
The penultimate estimate is by hypothesis, and the final estimate follows from $|f| = 1$ and $|\text{supp } f| \sim 1$. Since the tubes $T_{\theta,v_\theta}$ are $( 1 \times \lambda^{1/2} \times \ldots \times \lambda^{1/2} \times \lambda)$-slabs, $\int \chi_{T_{\theta, v_\theta}} \sim \lambda^{\frac{n}{2}}$. Moreover, there are about $\lambda^{\frac{n-2}{2}}$ slabs. Hence,
\begin{equation*}
\lambda^{-\frac{n-2}{2}} \big( \int \sum \chi_{T_{\theta, v_\theta}} \big)^{1/2} \sim \lambda^{\frac{1}{2}}.
\end{equation*}
Thus, we arrive at
\begin{equation}
\label{eq:EstimateLp}
1 \lesssim \left| \bigcup_{\theta} T_{\theta,v_{\theta}} \right|^{1/2 - 1/p} \lambda^{-\frac{1}{2} }.
\end{equation}
Next, we shall see how to choose $v_\theta$ such that the curved slabs are concentrated in a neighbourhood of a low-dimensional algebraic variety inspired by \cite{GuthHickmanIliopoulou2019}.\\
 For $\Xi_{\theta}$, we set
\begin{equation}
\label{eq:StartingPosition}
v_{\theta,2j-1} = - (\omega_{\theta})_{2j} \text{ and } v_{\theta,2j} = v_{\theta,n-2} = 0 \text{ for } 1 \leq j \leq \lfloor \frac{n-2}{2} \rfloor.
\end{equation}
%and
%\begin{equation*}
%v_{\omega,n-1} = \sum_{i=1}^{n-2} v_i \omega_i.
%\end{equation*}

Let $d = n-1 - \lfloor \frac{n-2}{2} \rfloor$ and $Z = Z(P_1,\ldots,P_{n-1-d})$ be the common zero set of the polynomials
\begin{equation}
\label{eq:Polynomials}
P_j(x_1,\ldots,x_{n-2},x_n) = \lambda x_{2j} - x_{2j-1} x_n \text{ for } 1 \leq j \leq \lfloor \frac{n-2}{2} \rfloor.
\end{equation}
%and
%\begin{equation}
%\label{eq:LastComponent}
%P_{n-m}(x_1,\ldots,x_{n}) = \sum_{i=1}^{n-2} \frac{x_i \omega_i}{2} - x_{n-1} = 0.
%\end{equation}
It is straight-forward to show that the image of $x_n \mapsto (\lambda \gamma_{\theta,v_\theta}(x_n/\lambda),x_n)$ is contained in $Z(P_1,\ldots,P_{n-1-d})$. $Z$ is an algebraic variety of dimension 
\begin{equation}
\label{eq:DimensionVariety}
d=(n-1) - \lfloor \frac{n-2}{2} \rfloor
\end{equation}
in $\R^{n-1}$ and of degree $O_n(1)$. Thus, Wongkew's theorem (cf. \cite{Wongkew1993}) on the size of neighbourhoods of algebraic varieties applies, and we find
\begin{equation}
\label{eq:SizeEstimateNeighbourhood}
\tilde{N}_{\lambda^{\frac{1}{2}}} = |N_{\lambda^{\frac{1}{2}}}(Z) \cap B_{n-1}(0,\lambda) | \lesssim \lambda^{d+\frac{n-1-d}{2}}.
\end{equation}
We find by \eqref{eq:DimensionVariety} and \eqref{eq:SizeEstimateNeighbourhood}
\begin{equation}
\label{eq:EstimateVarietyII}
\tilde{N}_{\lambda^{\frac{1}{2}}} \lesssim 
\begin{cases}
\lambda^{\frac{3n-2}{4}}, \quad n \text{ even}, \\
\lambda^{\frac{3n-1}{4}}, \quad n \text{ odd.}
\end{cases}
\end{equation}
Moreover, for $(x_1,\ldots,x_n) \in T_{\theta,v_\theta}$ we have $x_{n-1} \in B(\lambda \gamma'_\theta(x_n/\lambda),\lambda^{\frac{1}{2}+\varepsilon})$.

This yields
\begin{equation}
\label{eq:EstimateTubeUnion}
\left| \bigcup_{\theta} T_{\theta,v_\theta} \right|^{1/2 - 1/p} \lesssim (\tilde{N}_{\lambda^{\frac{1}{2}}} \cdot \lambda^{\frac{1}{2}})^{\frac{1}{2} - \frac{1}{p}}.
\end{equation}

Plugging \eqref{eq:EstimateTubeUnion} into \eqref{eq:EstimateLp} with the estimate from \eqref{eq:EstimateVarietyII}, we find
\begin{equation*}
p \geq
\begin{cases}
&2 \cdot \frac{3n}{3n-4}, \quad n \text{ even}, \\
&2 \cdot \frac{3n+1}{3n-3}, \quad n \text{ odd}.
\end{cases}
\end{equation*}
This finishes the proof of Proposition \ref{prop:Necessary}. $\hfill \Box$

\section{Preliminaries}
\label{section:Preliminaries}

\subsection{Basic reductions of the phase and amplitude function}
\label{subsection:BasicReductions}
In this paragraph we shall normalize $1$-homogeneous phase functions, which satisfy $C1)$ and $C2)$, and the amplitude. This will highlight that the class of considered phase functions are indeed $C^N$-perturbations of the translation-invariant case
\begin{equation*}
\phi_*(x;\omega) = \langle x', \omega \rangle + \frac{x_n (\omega')^2}{2 \omega_{n-1}}, \quad \omega' \in B(0,c), \; \omega_{n-1} \in (1,2).
\end{equation*}
Constant-coefficient perturbations were analyzed in \cite{GaoLiuMiaoXi2023}.

\bigskip

The arguments were provided in \cite[Section~2]{BeltranHickmanSogge2020}, and details are omitted here (see also \cite{Lee2006}). It is important to work with a class of phase functions, which is stable under rescaling. After localisation and translation, we may assume that $a$ is supported inside $X \times \Xi$, where $X = X' \times T$ for $X' \subseteq B(0,1) \subseteq \R^{n-1}$ and $T \subseteq (-1,1) \subseteq \R$ are small open neighbourhoods of the origin and $\Xi \subseteq A^{n-1}$ is a small sector of dimensions $1 \times c \times \ldots \times c$ with $0<c\ll 1$ centred at $e_{n-1} = (0,\ldots,0,1) \in \R^{n-1}$.

Firstly, we can suppose that
\begin{align*}
&C1^\prime) \quad \det \partial^2_{\omega x'} \phi(x;\omega) \neq 0 \text{ for all } (x,\omega) \in X \times \Xi, \\
&C2^\prime) \quad \partial^2_{\omega^\prime \omega^\prime} \partial_{x_n} \phi(x,\omega) \text{ has eigenvalues of the same sign for all } (x,\omega) \in X \times \Xi.
\end{align*}
This follows as in \cite{BeltranHickmanSogge2020}. By rotation in the $x$-variables, we can also suppose that
\begin{equation*}
G(0;e_{n-1}) = e_n \text{ and } \partial^2_{x_n \omega} \phi(0;e_{n-1}) = 0.
\end{equation*}
Hence, by making $\Xi$ small enough, we find that
\begin{equation}
\label{eq:XiXnDerivativeBound}
|\partial^2_{x_n \omega} \phi(x;\omega)| \leq c_{cone} \text{ for } (x,\omega) \in X \times \Xi.
\end{equation}

 By non-degeneracy $C1')$ and the implicit function theorem, we find a smooth mapping $\Phi_{x_n,\omega} : X' \to \R^{n-1}$ such that
\begin{equation*}
\partial_{\omega} \phi(\Phi_{x_n,\omega}(x'),x_n;\omega) = x'.
\end{equation*}
We shall also write $\Phi_{x_n,\omega}(x') = \Phi(x',x_n;\omega)$. There is also a smooth mapping $\Psi(x,\cdot)$ with
\begin{equation*}
\partial_{x^\prime} \phi(x;\Psi(x;\omega)) = \omega.
\end{equation*}
For $\lambda \geq 1$, we consider the rescaled versions $\Phi^\lambda(x;\omega) = \lambda \Phi(x/\lambda;\omega)$ and $\Psi^\lambda(x;\omega) = \Psi(x/\lambda;\omega)$. We assume that $X$ and $\Xi$ are such that the above mappings are defined on the whole support of $a$.

In the following we shall quantify the deviation from $\phi_*$ by restricting the values of second and third derivatives and bounding higher order derivatives:
Let $c_{cone} > 0$ denote a small constant and $I_m \in \R^{m \times m}$, $(I_m)_{ij} = \delta_{ij}$ denote the unit matrix. Firstly note that there are (possibly large) constants $A_1,A_2,A_3 \geq 1$ such that
\begin{align*}
&C1^{\prime \prime}) \quad |\partial^2_{\omega x^\prime} \phi(x;\omega) - I_{n-1}| \leq c_{cone} A_1 \text{ for } (x;\omega) \in X \times \Xi,\\
&C2^{\prime \prime}) \quad |\partial^2_{\omega^\prime \omega^\prime} \partial_{x_n} \phi(x;\omega) - \frac{I_{n-2}}{\omega_{n-1}} | \leq c_{cone} A_2 \text{ for } (x;\omega) \in X \times \Xi.
\end{align*}
In the above display and in the following we abuse notation and denote the Euclidean norm of a vector $v \in \R^m$ or the Hilbert-Schmidt norm of a matrix $A \in \R^{m \times m}$ by $|v|$ or $|A|$.

For technical reasons, we shall impose in the following for the higher order derivatives:
\begin{align*}
&D1) \; \| \partial^{\beta}_\omega \partial_{x_k} \phi \|_{L^\infty(X \times \Xi)} \leq c_{cone} A_1 \text{ for } 1 \leq k \leq n-1 \text{ and } \beta = (\beta',\beta_{n-1}) \in \mathbb{N}_0^{n-1}, \\
&\qquad \text{ which satisfies } 2 \leq |\beta| \leq 3 \text{ and } |\beta^\prime| \geq 2; \\
&\qquad \| \partial^{\beta}_{\omega} \partial_{x_n} \phi \|_{L^\infty(X \times \Xi)} \leq \frac{c_{cone}}{2n} A_1 \text{ for all } \beta \in \mathbb{N}_0^{n-1} \text{ with } |\beta^\prime| \geq 3 \text{ and } |\beta| \leq N. \\
&D2) \; \text{For some large integer } N \in \mathbb{N}, \text{ one has} \\
&\qquad \| \partial_\omega^\beta \partial_x^\alpha \phi \|_{L^\infty(X \times \Xi)} \leq \frac{c_{cone}}{2n} A_3 \text{ for all } (\alpha,\beta) \in \mathbb{N}_0^{n} \times \mathbb{N}_0^{n-1} \text{ with } 2 \leq |\alpha| \leq 4N \\
&\qquad \text{and } 1 \leq |\beta| \leq 4N+2 \text{ satisfying } 1 \leq |\beta| \leq 4N \text{ or } |\beta^\prime| \geq 2.
\end{align*}

By parabolic rescaling (cf. Lemma \ref{lem:ParabolicRescaling}), we see that we can reduce to phases with $A_i = 1$; these phases are said to be reduced.

\medskip

Moreover, we can suppose that uniform bounds hold for the amplitude $a \in C^\infty_c(X \times \Xi)$:
\begin{equation}
\label{eq:UniformBoundsAmplitude}
\| \partial_\omega^\alpha a \|_{L^\infty(Z \times \Xi)} \leq C_{\text{amp}} \text{ for } 0 \leq |\alpha| \leq N.
\end{equation}
$C_{\text{amp}}$ denotes a universal constant. \eqref{eq:UniformBoundsAmplitude} is accomplished by dividing $a$ through a large constant depending on $a$ and its derivatives.
Furthermore, we can suppose by Fourier series expansion (see \cite{BeltranHickmanSogge2020}) that $a$ is of product form 
\begin{equation*}
a(x,\omega) = a_1(x) a_2(\omega).
\end{equation*}
For the spatial part, we require a \emph{margin condition}:
\begin{equation}
\label{eq:MarginCondition}
\text{dist} (\text{supp } a_1, \R^{n+1} \backslash X) \geq \frac{1}{4 A_3}.
\end{equation}
This will become useful for variable-coefficient decoupling in Section \ref{section:LinearEstimates} and can always be achieved by finite decomposition and re-centering (cf. \cite{BeltranHickmanSogge2020}). An amplitude $a$ is said to be reduced, if it is of product form, satisfies \eqref{eq:MarginCondition} with $A_3 = 1$ and uniform bounds on the derivatives \eqref{eq:UniformBoundsAmplitude}.

\begin{definition}
\label{def:TypeData}
We say that $(\phi,a)$ are of type $(A_1,A_2,A_3)$, if
\begin{itemize}
\item $\phi$ satisfies $C1'')$, $C2'')$, $D1)$, $D2)$, and $a$ satisfies \eqref{eq:MarginCondition},
\item $a$ is of product form and satisfies \eqref{eq:UniformBoundsAmplitude}.
\end{itemize}

Moreover, $(\phi,a)$ is said to be reduced if $\phi$ is a reduced phase function and $a$ is a reduced amplitude.
\end{definition}

\medskip

The following observation will be useful at a later point:
\begin{lemma}
\label{lem:ReducedPhaseTimeDerivatives}
Suppose that $\phi$ is a reduced phase function. Then we have
\begin{equation}
\label{eq:ReducedPhaseTimeDerivatives}
\| \partial_{x_n} \partial^\beta_\omega \phi \|_{L^\infty(X \times \Xi)} \lesssim_{N,c_{\text{cone}}} 1 \text{ for } 1 \leq | \beta | \leq N. 
\end{equation}
\end{lemma}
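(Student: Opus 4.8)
The plan is to prove Lemma~\ref{lem:ReducedPhaseTimeDerivatives} by reducing the bound on $\partial_{x_n}\partial_\omega^\beta\phi$ to the hypotheses $C2'')$, $D1)$ and $D2)$ that are built into the definition of a reduced phase function, splitting the argument according to how many of the derivatives in $\beta$ fall in the $\omega'$-directions versus the $\omega_{n-1}$-direction. Write $\beta=(\beta',\beta_{n-1})\in\mathbb{N}_0^{n-2}\times\mathbb{N}_0$ and distinguish the cases $|\beta'|=0$, $|\beta'|=1$, $|\beta'|=2$ and $|\beta'|\geq 3$. The case $|\beta'|\geq 2$ (with $|\beta|\leq N$) is immediate: $D1)$ directly gives $\|\partial_\omega^\beta\partial_{x_n}\phi\|_{L^\infty(X\times\Xi)}\leq c_{\mathrm{cone}}$ when $2\leq|\beta|\leq 3$, and $\leq\frac{c_{\mathrm{cone}}}{2n}$ when $|\beta'|\geq 3$, so only the low-order and low-$|\beta'|$ pieces need separate treatment.

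For $|\beta'|=0$, i.e.\ $\beta=(0,\dots,0,\beta_{n-1})$ with $1\leq\beta_{n-1}\leq N$, I would use $D2)$: taking $\alpha=e_n$ (so $|\alpha|=1$) is not allowed there, so instead I invoke the $1$-homogeneity of $\phi$ in $\omega$ together with $C1'')$. Concretely, by Euler's relation $\langle\omega,\partial_\omega\phi\rangle=\phi$, differentiating in $\omega_{n-1}$ repeatedly expresses $\partial_{\omega_{n-1}}^{\beta_{n-1}}\phi$ in terms of lower-order $\omega$-derivatives of $\phi$; combined with $C1'')$, which controls $\partial_{\omega x'}^2\phi$ near $I_{n-1}$, and the fact that $\Xi$ is a small sector with $\omega_{n-1}\in(1,2)$ bounded away from zero, one obtains that all pure $\omega_{n-1}$-derivatives of $\partial_{x_n}\phi$ are $O_{N,c_{\mathrm{cone}}}(1)$ on $X\times\Xi$. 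The case $|\beta'|=1$ is handled the same way: writing $\beta=(e_i,\beta_{n-1})$ and again differentiating Euler's relation, $\partial_{\omega_i}\partial_{\omega_{n-1}}^{\beta_{n-1}}\partial_{x_n}\phi$ is a combination of terms each containing either a factor with $|\beta'|\geq 2$ in the $\omega$-variables (covered by $D1)$) or a single $\omega_i$-derivative of $\partial_{x_n}\phi$, and the latter is controlled via $C2'')$ after one integration or via \eqref{eq:XiXnDerivativeBound} and smallness of $\Xi$. Finally $|\beta'|=2$ with $|\beta|=2$ is exactly the content of $C2'')$, which bounds $\partial^2_{\omega'\omega'}\partial_{x_n}\phi$ near $I_{n-2}/\omega_{n-1}$, hence by $O(1)$; and $|\beta'|=2$ with $|\beta|=3$ is the first line of $D1)$.

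Assembling these cases gives the claimed uniform bound for all $1\leq|\beta|\leq N$, with the implied constant depending only on $N$ and $c_{\mathrm{cone}}$ (the $A_i$ being equal to $1$ since the phase is reduced). The main obstacle is the bookkeeping in the cases $|\beta'|\in\{0,1\}$: there the hypotheses $D1),D2)$ do not apply verbatim (they require $|\beta'|\geq 2$ or forbid $|\alpha|=1$), so one genuinely has to exploit the $1$-homogeneity of $\phi$ in $\omega$ to trade a radial $\omega$-derivative for tangential ones, reducing matters to $C1''),C2'')$ and the fact that the sector $\Xi$ is small and keeps $\omega_{n-1}$ away from $0$. Once that reduction is set up the estimates are routine, so I would state the homogeneity identity carefully, apply it to $\partial_{x_n}\phi$ (which is itself $1$-homogeneous in $\omega$), and then quote $C1''),C2''),D1)$ to close each remaining term.
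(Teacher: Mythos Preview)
Your overall strategy---split on $|\beta'|$ and use homogeneity for the low-$|\beta'|$ cases---matches the paper's, but there are two concrete errors in how you read the hypotheses.

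First, you misread $D1)$. Its first clause bounds $\partial_\omega^\beta\partial_{x_k}\phi$ only for $1\le k\le n-1$, \emph{not} for $k=n$. For $\partial_{x_n}$ the only part of $D1)$ that applies is the second clause, which requires $|\beta'|\ge 3$. Hence your claim that ``$|\beta'|=2$ with $|\beta|=3$ is the first line of $D1)$'' is false, and more generally the case $|\beta'|=2$ with $|\beta|\ge 3$ is not among the assumed bounds; it must be \emph{derived}. Second, invoking $C1'')$ in the $|\beta'|\in\{0,1\}$ cases does not help: $C1'')$ controls $\partial^2_{\omega x'}\phi$, whereas here you need bounds on $\partial_{x_n}$-derivatives.

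The paper closes these gaps by using homogeneity in the explicit form $\partial_{x_n}\phi(x;\omega',\omega_{n-1})=\omega_{n-1}\,\partial_{x_n}\phi(x;\omega'/\omega_{n-1},1)$ and differentiating it directly. One differentiation in $\omega_{n-1}$ gives \eqref{eq:TimeDerivativesI}, and two give \eqref{eq:TimeDerivativesII}; both express $\omega_{n-1}$-derivatives of $\partial_{x_n}\phi$ as combinations of $\omega'$-derivatives of $\partial_{x_n}\phi$ evaluated at $(\omega'/\omega_{n-1},1)$. Since $|\omega'|/\omega_{n-1}$ is small on $\Xi$, every term that appears has $|\beta'|\ge 2$ on the $\omega'$-side and is then controlled by $C2'')$ (when $|\beta'|=2$) or the second clause of $D1)$ (when $|\beta'|\ge 3$), together with \eqref{eq:XiXnDerivativeBound} for $|\beta|=1$. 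Your Euler-relation idea is the right instinct, but you should carry it out in this explicit form and feed the output back into $C2'')$ and the $\partial_{x_n}$-part of $D1)$ rather than into $C1'')$.
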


\begin{proof}
For $\beta = (\beta',\beta_{n-1}) \in \N_0^{n-1}$ with $|\beta'| \geq 3$ this is covered by $D1)$ and for $|\beta| = 1$ this is $C2'')$. So it remains to show \eqref{eq:ReducedPhaseTimeDerivatives} for $|\beta|=2$, and $|\beta| \geq 3$ with $|\beta'| \leq 2$:
We obtain by homogeneity
\begin{equation*}
\partial_{x_n} \phi(x;\omega',\omega_{n-1}) = \omega_{n-1} \partial_{x_n} \phi(x;\omega'/\omega_{n-1},1).
\end{equation*}
From this follows
\begin{equation}
\label{eq:TimeDerivativesI}
\partial^2_{x_n \omega_{n-1}} \phi(x;\omega',\omega_{n-1}) = \partial_{x_n} \phi(x;\omega' / \omega_{n-1},1) - \partial^2_{x_n \omega'} \phi(x;\omega'/\omega_{n-1},1) \cdot \frac{\omega'}{\omega_{n-1}}.
\end{equation}
And moreover,
\begin{equation}
\label{eq:TimeDerivativesII}
\partial^3_{x_n, \, \omega_{n-1}, \, \omega_{n-1}} \phi = \frac{1}{\omega_{n-1}} \langle \partial^3_{x_n, \, \omega', \, \omega'} \phi(x;\omega'/\omega_{n-1},1) \frac{\omega'}{\omega_{n-1}}, \frac{\omega'}{\omega_{n-1}} \rangle .
\end{equation}
Hence, by $C2'')$ this is uniformly bounded.

For $|\beta| = 2$ and $|\beta'| = 1$, the claim follows from taking an additional derivative in \eqref{eq:TimeDerivativesI} and estimates \eqref{eq:XiXnDerivativeBound} and $C1'')$.

For $3 \leq |\beta| \leq N$ and $|\beta'| \leq 1$, the claim is evident from taking derivatives in \eqref{eq:TimeDerivativesII}, and the estimates $C2'')$ and $D1)$.
\end{proof}

\subsection{Geometric consequences}
Let $\phi$ be a reduced phase function in the above sense. We shall see how the corresponding hypersurfaces $\Sigma_x$ parametrized by $\omega \mapsto \partial_x \phi(x;\omega)$ resemble the ones from $\phi_*$. To see this, recall that $\Psi: U \rightarrow \Xi$ satisfies $\partial_{x^\prime} \phi(x;\Psi(x;\omega)) = \omega$. Hence, $\Sigma_x$ is the graph of the function $h_x(\omega) = \partial_{x_n} \phi(x;\Psi(x;\omega))$ over the fibre $U_x$.\\
Each $h_x$ is a perturbation of the translation invariant case in the following sense:
\begin{lemma}
The following estimate holds for all $\omega \in U_x$:
\begin{equation}
\label{eq:HPerturbation}
\| \partial^2_{\omega^\prime \omega^\prime} h_x(\omega) - I_{n-1}/ \omega_{n-1} \|_{L^\infty} \lesssim c_{cone}.
\end{equation}
Here $c_{cone}>0$ denotes the constant from the definition of a reduced phase function.
\end{lemma}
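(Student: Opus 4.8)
The plan is to differentiate the defining relation $\partial_{x'}\phi(x;\Psi(x;\omega))=\omega$ twice in $\omega$ to express $\partial^2_{\omega'\omega'}h_x$ in terms of the first and second $\omega$-derivatives of $\phi$ evaluated at $\Psi(x;\omega)$, and then compare with the translation-invariant profile $h^*(\omega)=(\omega')^2/(2\omega_{n-1})$, whose Hessian is exactly $I_{n-2}/\omega_{n-1}$ (in the $\omega'$-block). First I would record the chain-rule identities for $\Psi$: since $\partial_{x'}\phi(x;\Psi(x;\omega))=\omega$, differentiating once gives $\partial^2_{x'\omega}\phi(x;\Psi)\cdot\partial_\omega\Psi=I_{n-1}$, so $\partial_\omega\Psi=\big(\partial^2_{x'\omega}\phi\big)^{-1}$, and by $C1'')$ this is $I_{n-1}+O(c_{cone})$ (the inverse of a matrix $c_{cone}$-close to $I_{n-1}$ is again $c_{cone}$-close to $I_{n-1}$, since $A_1=1$ for a reduced phase). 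Then $h_x(\omega)=\partial_{x_n}\phi(x;\Psi(x;\omega))$, so $\partial_{\omega}h_x=\partial^2_{x_n\omega}\phi(x;\Psi)\cdot\partial_\omega\Psi$, and a second differentiation produces
\begin{equation*}
\partial^2_{\omega_i\omega_j}h_x=\sum_{k,l}\partial^3_{x_n\omega_k\omega_l}\phi(x;\Psi)(\partial_{\omega_i}\Psi_k)(\partial_{\omega_j}\Psi_l)+\sum_k\partial^2_{x_n\omega_k}\phi(x;\Psi)\,\partial^2_{\omega_i\omega_j}\Psi_k.
\end{equation*}

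Next I would estimate each of the two terms on the $\omega'$-block $1\le i,j\le n-2$. For the first sum, the factors $\partial_{\omega_i}\Psi_k$ are $\delta_{ik}+O(c_{cone})$, so the leading contribution is $\partial^3_{x_n\omega_i\omega_j}\phi(x;\Psi)$; by $C2'')$ this equals $\delta_{ij}/\Psi_{n-1}+O(c_{cone})$, and since $\Psi_{n-1}=\omega_{n-1}+O(c_{cone})$ (from $\partial_\omega\Psi=I+O(c_{cone})$ together with $\Psi(0;e_{n-1})=e_{n-1}$ after the normalizations in Section~\ref{subsection:BasicReductions}), $1/\Psi_{n-1}=1/\omega_{n-1}+O(c_{cone})$; the cross terms in this sum carry at least one factor $O(c_{cone})$ and a third-derivative bound which is $O(1)$ by $C2'')$ and $D1)$, hence contribute $O(c_{cone})$. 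For the second sum, I need $\partial^2_{\omega\omega}\Psi_k$ bounded; this follows by differentiating the relation $\partial^2_{x'\omega}\phi(x;\Psi)\,\partial_\omega\Psi=I_{n-1}$ once more in $\omega$ and solving, using $C1'')$, $D1)$ (third $\omega$-derivatives of $\partial_{x'}\phi$), and the already-established $\partial_\omega\Psi=O(1)$. Crucially, each such term is multiplied by $\partial^2_{x_n\omega_k}\phi(x;\Psi)$, which by \eqref{eq:XiXnDerivativeBound} is $O(c_{cone})$ after the rotation normalization $\partial^2_{x_n\omega}\phi(0;e_{n-1})=0$; so the whole second sum is $O(c_{cone})$.

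Assembling, $\partial^2_{\omega_i\omega_j}h_x(\omega)=\delta_{ij}/\omega_{n-1}+O(c_{cone})$ for $1\le i,j\le n-2$, which is \eqref{eq:HPerturbation} (with the understanding that $I_{n-1}/\omega_{n-1}$ there refers to the $\omega'$-block, consistent with the degeneracy of $\partial^2_{\omega\omega}\phi$ in the radial direction noted earlier). I would close by noting that all implicit constants depend only on $N$ and $c_{cone}$ through the fixed finite list of derivative bounds $C1'')$, $C2'')$, $D1)$ invoked. The main obstacle is bookkeeping rather than conceptual: one must be careful that $\Psi$ is only defined on the fibre $U_x$ and that its derivatives up to order three are controlled purely by the reduced-phase hypotheses (no loss of powers of $A_i$, since $A_i=1$), and that the comparison point $\Psi(x;\omega)$ stays inside $\Xi$ so that $C2'')$ and $D1)$ apply there; this is exactly what the smallness of $X$ and $\Xi$ and the margin setup guarantee. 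The only genuinely delicate point is verifying that the second-order derivatives of $\Psi$ do not secretly reintroduce a non-negligible term through the first sum — but they enter only the second sum, which is killed by the $O(c_{cone})$ prefactor $\partial^2_{x_n\omega}\phi$.
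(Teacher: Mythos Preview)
Your proposal is correct and follows essentially the same chain-rule strategy as the paper: differentiate the defining relation for $\Psi$ to obtain $\partial_\omega\Psi=(\partial^2_{x'\omega}\phi)^{-1}=I_{n-1}+O(c_{cone})$, then expand $\partial^2_{\omega'\omega'}h_x$ and compare with $I_{n-2}/\omega_{n-1}$ via $C2'')$. The only cosmetic difference is in the second sum: the paper actually proves the sharper bound $\|\partial^2_{\omega'\omega'}\Psi_k\|_{L^\infty}=O(c_{cone})$ (using $D1)$ on $\partial_{x_k}\partial^\beta_{\omega}\phi$ with $|\beta'|\ge 2$) and pairs it with a mere $O(1)$ bound on $\partial^2_{x_n\omega_k}\phi$, whereas you keep $\partial^2_{\omega'\omega'}\Psi_k=O(1)$ and extract the small factor from $\partial^2_{x_n\omega_k}\phi=O(c_{cone})$ via \eqref{eq:XiXnDerivativeBound}; either allocation closes the estimate.
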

\begin{proof}
This is a consequence of properties of $\Psi$. Firstly, we record that $\Psi(x;e_{n-1}) = 1$. By the implicit function theorem and non-degeneracy of $\phi$, we find
\begin{equation*}
\partial_\omega \Psi(x;\omega) = \partial^2_{x^\prime \omega} \phi(x;\Psi(x;\omega))^{-1}.
\end{equation*}
Hence,
\begin{equation*}
\| \partial_\omega \Psi(x;\omega) - I_{n-1} \|_{L^\infty} = O(c_{cone}).
\end{equation*}
As a consequence of this identity (and choosing $c_{cone}$ to be sufficiently small),
\begin{equation*}
|\Psi(x;\omega) - \Psi(x;\omega^\prime)| \sim |\omega - \omega^\prime| \text{ for all } \omega, \omega^\prime \in U_x
\end{equation*}
with implicit constant only depending on $n$.\\
Additionally, if $1 \leq k \leq n-1$, then by twice differentiating the identity
\begin{equation*}
\partial_{x_k} \phi(x;\Psi(x;\omega)) = \omega_k
\end{equation*}
in the $\omega$-variables, it follows that
\begin{equation*}
\| \partial^2_{\omega \omega} \Psi_k(x;\omega) \|_{L^\infty} = O(c_{cone}).
\end{equation*}
By the previous estimate, \eqref{eq:HPerturbation} follows from $C2'')$.
\end{proof}
By similar means, we infer estimates for the generalized Gauss map associated with $T^\lambda$. To give the results, let
\begin{equation*}
X^\lambda = \{ x \in \R^n \; | \; \frac{x}{\lambda} \in X \}
\end{equation*}
denote the $\lambda$-dilate of $X$, so that $a^\lambda$ is supported in $X^\lambda \times \Xi$.
\begin{lemma}
\label{lem:ConsequencesReductionGaussMap}
For all $x, \bar{x} \in X^\lambda$ and $\omega, \bar{\omega} \in \Xi$, the estimates
\begin{equation}
\label{eq:EstimatesReducedGaussMap}
\begin{split}
\angle(G^\lambda(x;\omega),G^\lambda(x;\bar{\omega})) &\sim |\frac{\omega}{|\omega|} - \frac{\bar{\omega}}{|\bar{\omega}|}| \sim \angle (\omega, \bar{\omega}), \\
\angle(G^\lambda(x;\omega),G^\lambda(\bar{x};\omega)) &\lesssim \lambda^{-1} |x-\bar{x}|
\end{split}
\end{equation}
hold true.
\end{lemma}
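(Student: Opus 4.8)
The plan is to establish both estimates in \eqref{eq:EstimatesReducedGaussMap} by first controlling the Gauss map of the \emph{unrescaled} phase $\phi$ on $X \times \Xi$ and then inserting the dilations. Recall that $G^\lambda(x;\omega) = G(x/\lambda;\omega)$ and that $G = G_0/|G_0|$ with $G_0(x;\omega) = \bigwedge_{j=1}^{n-1} \partial^2_{x\omega_j}\phi(x;\omega)$, so that $G_0$ is the normal vector to the graph hypersurface $\Sigma_x$ at the point $\partial_x \phi(x;\omega)$. For the first estimate, the key point is that $\omega \mapsto G(x;\omega)$ is, up to a perturbation of size $O(c_{\mathrm{cone}})$, the Gauss map of the model cone: using $C1'')$, $C2'')$ and $D1)$ one writes $\partial^2_{x'\omega}\phi(x;\omega) = I_{n-1} + O(c_{\mathrm{cone}})$ and $\partial^2_{x_n\omega'}\phi(x;\omega) = \partial^2_{x_n\omega'}\phi(x;\omega'/\omega_{n-1},1) + O(\lambda^{-1})$-type expansions (homogeneity), and one computes that the unit normal $G(x;\omega)$ depends on $\omega$ with derivative comparable to that of the model cone normal $(-\omega/|\omega|,1)/\sqrt{2}$. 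Since the model cone Gauss map satisfies $\angle(G_*(\omega),G_*(\bar\omega)) \sim |\omega/|\omega| - \bar\omega/|\bar\omega||$ (this is the elementary statement that the map $\omega \mapsto (-\omega/|\omega|,1)$ is a diffeomorphism with bounded derivatives onto a piece of the cone, whose derivative is invertible on $\Xi$ thanks to $\Xi$ being a genuine sector of aperture $\sim c$), the perturbation bound upgrades this to $\angle(G(x;\omega),G(x;\bar\omega)) \sim |\omega/|\omega| - \bar\omega/|\bar\omega||$ uniformly in $x \in X$; replacing $x$ by $x/\lambda$ changes nothing. Finally $|\omega/|\omega| - \bar\omega/|\bar\omega|| \sim \angle(\omega,\bar\omega)$ is a standard comparison valid since all vectors in $\Xi$ lie in a fixed narrow sector bounded away from the origin.

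For the second estimate, I would fix $\omega \in \Xi$ and study $x \mapsto G^\lambda(x;\omega) = G(x/\lambda;\omega)$. Writing $y = x/\lambda$, $\bar y = \bar x/\lambda$, both in $X \subseteq B(0,1)$, it suffices to prove $\angle(G(y;\omega),G(\bar y;\omega)) \lesssim |y - \bar y|$, which then gives the factor $\lambda^{-1}$ after undoing the scaling. By the mean value theorem along the segment from $\bar y$ to $y$ (contained in $X$ by convexity, or after the margin reduction), this reduces to the pointwise bound $|\partial_x G(y;\omega)| \lesssim 1$ on $X \times \Xi$. Since $G = G_0/|G_0|$ and $|G_0| \sim 1$ on $X \times \Xi$ (because $C1'')$ forces $\partial^2_{x'\omega}\phi$ to be invertible with bounded inverse, so the wedge of its rows is bounded below), it is enough to bound $|\partial_x G_0(y;\omega)|$, i.e. $|\partial_{x_\ell} \bigwedge_{j=1}^{n-1}\partial^2_{x\omega_j}\phi|$. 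Expanding the derivative of the wedge by the Leibniz rule produces terms each containing one factor $\partial_{x_\ell}\partial^2_{x\omega_j}\phi = \partial^3_{x_\ell x_m \omega_j}\phi$ (some $m$) and $n-2$ factors $\partial^2_{x\omega_i}\phi$; the latter are bounded by $C1'')$ and \eqref{eq:XiXnDerivativeBound}, and the third-order factor is bounded by $D1)$ together with Lemma \ref{lem:ReducedPhaseTimeDerivatives} (which handles exactly the $x_n$-derivatives of low-order $\omega$-derivatives) and $D2)$ for the remaining mixed third derivatives. Hence $|\partial_x G_0| \lesssim 1$, and the claim follows.

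The main obstacle, as in \cite{GuthHickmanIliopoulou2019} and \cite{BeltranHickmanSogge2020}, is bookkeeping the homogeneity: the Hessian $\partial^2_{\omega\omega}\phi$ degenerates in the radial direction, so one cannot directly compare $G(x;\omega)$ to a smooth nondegenerate model on all of $\Xi$ without first using the $1$-homogeneity to reduce to the slice $\omega_{n-1} = 1$ (as in the proof of Lemma \ref{lem:ReducedPhaseTimeDerivatives}) and being careful that the normalization $G_0/|G_0|$ does not amplify errors. Once one accepts that $|G_0| \sim 1$ uniformly — which is where $C1'')$ is used in an essential way — the rest is a routine chain-rule and perturbation argument, and the constants in \eqref{eq:EstimatesReducedGaussMap} depend only on $n$ (and harmlessly on $c_{\mathrm{cone}}, N$).
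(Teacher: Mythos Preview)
Your proposal is correct and follows the route the paper indicates: the paper does not give a detailed proof of this lemma but prefaces it with ``By similar means, we infer estimates for the generalized Gauss map,'' referring back to the perturbation argument for $h_x$ in the preceding lemma; your argument is precisely such a perturbation-of-the-model-cone computation using the reduced bounds $C1'')$, $C2'')$, $D1)$, $D2)$ and \eqref{eq:XiXnDerivativeBound}.

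One minor correction on bookkeeping in the second estimate: the third-order terms you need are $\partial^3_{x_\ell x_m \omega_j}\phi$, which have two $x$-derivatives and one $\omega$-derivative ($|\alpha|=2$, $|\beta|=1$), and these are covered entirely by $D2)$; the references to $D1)$ and Lemma~\ref{lem:ReducedPhaseTimeDerivatives} are misplaced here, since those concern a single $x$-derivative. This does not affect the validity of the argument. For the first estimate, you may want to make explicit that the lower bound $\angle(G(x;\omega),G(x;\bar\omega)) \gtrsim |\omega/|\omega| - \bar\omega/|\bar\omega||$ hinges on the Weingarten map of $\Sigma_x$ (restricted to the non-radial directions on the sphere) being bounded below, which is exactly what $C2'')$ --- or equivalently \eqref{eq:HPerturbation} --- provides; this is implicit in your phrase ``derivative comparable to that of the model cone normal'' but is the substantive point.
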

This will be helpful to understand the wave packet analysis in the following sections.
\subsection{Wave packet decomposition}
\label{subsection:WavePacket}
We carry out the wave packet decomposition with respect to some spatial parameter $1 \ll R \ll \lambda$. For this purpose, we follow \cite{GuthHickmanIliopoulou2019} and use that the construction only depends on the non-degeneracy condition $C1)$. We do not use the usual wave packet decomposition for the cone as e.g. in \cite{OuWang2022}, but adapt the parabolic case, as previously done by Lee \cite{Lee2006}. The reason is that in Section \ref{section:MainInductiveArgument} we would sort the smaller cone tubes into larger tubes anyway. It appears that the present choice of wave packet decomposition allows us to transfer arguments from \cite{GuthHickmanIliopoulou2019} to the homogeneous setting more directly. In the following we introduce notations from \cite{GuthHickmanIliopoulou2019}.

Cover $A^{n-1}$ by finitely overlapping balls of radius $R^{-1/2}$, and let $\psi_\theta$ be a smooth partition of unity adapted to this cover. These $\theta$ will frequently be referred to as $R^{-1/2}$-balls. For a ball $\theta$, cover $\R^{n-1}$ with finitely overlapping balls of size $R^{\frac{1+\delta}{2}} \times \ldots \times R^{\frac{1+\delta}{2}}$ with center $v \in R^{\frac{1+\delta}{2}} \Z^{n-1}$. Let $\eta_v=\eta(\cdot - v)$ denote a bump function adapted to $B(v,R^{\frac{1+\delta}{2}})$ such that
\begin{equation*}
\sum_{v \in \Z^{n-1}} \eta_v = 1
\end{equation*}
with $\hat{\eta}_v$ essentially supported in $B(0,C R^{-\frac{1+\delta}{2}})$. This is possible by the Poisson summation formula.

 Let $\mathbb{T}$ denote the collection of all pairs $(\theta,v)$. Then, for $f:\R^{n-1} \to \C$ with support in $A^{n-1}$ and sufficiently regular, we find
\begin{equation*}
f = \sum_{(\theta,v) \in \mathbb{T}} (\eta_v(\psi_\theta f)\check{\;})\hat{\;} = \sum_{(\theta,v) \in \mathbb{T}} \hat{\eta}_v * (\psi_\theta f).
\end{equation*}
For each $R^{-1/2}$-ball $\theta$, let $\omega_\theta$ denote its centre. Choose a real-valued smooth function $\tilde{\psi}$ so that $\tilde{\psi}_\theta$ is supported in $\theta$, and $\tilde{\psi}_\theta(\omega) = 1$ whenever $\omega$ belongs to a $cR^{-1/2}$-neighbourhood of the support of $\psi_\theta$ for some small $c>0$. Finally, define
\begin{equation*}
f_{\theta,v} = \tilde{\psi}_\theta \cdot [ \hat{\eta}_v * (\psi_\theta f)].
\end{equation*}
The function $\hat{\eta}_v$ is rapidly decaying outside $B(0,CR^{-\frac{1+\delta}{2}})$ and, consequently,
\begin{equation*}
\| f_{\theta,v} - (\hat{\eta}_v * (\psi_\theta f)) \|_{L^\infty(\R^{n-1})} \leq \text{RapDec}(R) \| f \|_{L^2(A^{n-1})}.
\end{equation*}
The functions $f_{\theta,v}$ are almost orthogonal: if $\mathbb{S} \subseteq \mathbb{T}$, then
\begin{equation*}
\Vert \sum_{(\theta,v) \in \mathbb{S}} f_{\theta,v} \|^2_{L^2(\R^{n-1})} \sim \sum_{(\theta,v) \in \mathbb{S}} \| f_{\theta,v} \|^2_{L^2(\R^{n-1})}.
\end{equation*}
Let $T^\lambda$ be an oscillatory integral operator with reduced phase $\phi$ satisfying $C1')$ and amplitude $a$ supported in $X \times \Xi$. For $(\theta,v) \in \mathbb{T}$ define the curve $\gamma^1_{\theta,v}: I^1_{\theta,v} \to \R^{n-1}$ by setting $\gamma^1_{\theta,v}(t)=\Phi(v,t;\omega_\theta)$, where $\Phi$ is the function introduced in Subsection \ref{subsection:BasicReductions} and
\begin{equation*}
I^1_{\theta,v} = \{ x_n \in T \, | \, \partial_\omega \phi(x^\prime,x_n;\omega_\theta) = v \text{ for some } x^\prime \in X^\prime \}.
\end{equation*}
Hence, $\partial_\omega \phi(\gamma^1_{\theta,v}(x_n),x_n;\omega_\theta) = v$ for all $x_n \in I^1_{\theta,v}$. For the rescaled curve
\begin{equation*}
\gamma^\lambda_{\theta,v}(t) = \lambda \gamma^1_{\theta,v/\lambda}(t/\lambda),
\end{equation*}
we find
\begin{equation*}
\partial_\omega \phi^\lambda(\gamma^\lambda_{\theta,v}(x_n),x_n;\omega_\theta) = v \text{ for all } t \in I^\lambda_{\theta,v} = \{t \in \R \, : \, \frac{t}{\lambda} \in I^1_{\theta,v} \}.
\end{equation*}
Let $\Gamma^\lambda_{\theta,v}: I^\lambda_{\theta,v} \to \R^n$ denote the graph mapping $\Gamma^\lambda_{\theta,v}(x_n) = (\gamma^\lambda_{\theta,v}(x_n),x_n)$; for the sake of brevity, the image of this mapping is denoted by $\Gamma^\lambda_{\theta,v}$, too.
\begin{lemma}[{\cite[Lemma~5.2]{GuthHickmanIliopoulou2019}}]
The tangent space $T_{\Gamma^\lambda_{\theta,v}(x_n)} \Gamma^\lambda_{\theta,v}$ lies in the direction of the unit vector $G^\lambda(\Gamma^\lambda_{\theta,v}(x_n);\omega_\theta)$ for all $x_n \in I^\lambda_{\theta,v}$.
\end{lemma}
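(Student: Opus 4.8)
The plan is to differentiate the defining relation of the curve $\gamma^\lambda_{\theta,v}$ in the $x_n$-variable and to match the resulting linear constraint on the tangent vector with the defining property of the Gauss map $G^\lambda$.

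First, recall from the construction that $\partial_\omega \phi^\lambda(\Gamma^\lambda_{\theta,v}(x_n);\omega_\theta) = v$ for every $x_n \in I^\lambda_{\theta,v}$, where $\Gamma^\lambda_{\theta,v}(x_n) = (\gamma^\lambda_{\theta,v}(x_n),x_n)$. Differentiating this identity componentwise with respect to $x_n$ via the chain rule, and writing $\tau(x_n) = (\dot\gamma^\lambda_{\theta,v}(x_n),1) \in \R^n$ for the tangent vector to $\Gamma^\lambda_{\theta,v}$ at the point $\Gamma^\lambda_{\theta,v}(x_n)$, one obtains
\[
\langle \partial^2_{x\omega_j}\phi^\lambda(\Gamma^\lambda_{\theta,v}(x_n);\omega_\theta),\, \tau(x_n) \rangle = 0, \qquad j = 1,\ldots,n-1,
\]
where $\partial^2_{x\omega_j}\phi^\lambda(x;\omega) \in \R^n$ denotes the $x$-gradient of $\partial_{\omega_j}\phi^\lambda$. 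In other words, $\tau(x_n)$ is orthogonal to each of the $n-1$ columns of the mixed Hessian $\partial^2_{x\omega}\phi^\lambda$ evaluated at $(\Gamma^\lambda_{\theta,v}(x_n);\omega_\theta)$.

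Second, I invoke $C1)$: since $\partial^2_{x\omega}\phi^\lambda(x;\omega) = \partial^2_{x\omega}\phi(x/\lambda;\omega)$ has rank $n-1$, its columns $\partial^2_{x\omega_1}\phi^\lambda,\ldots,\partial^2_{x\omega_{n-1}}\phi^\lambda$ are linearly independent in $\R^n$, so their common orthogonal complement is one-dimensional. Under the identification $\bigwedge^{n-1}\R^n \simeq \R^n$, the generalized cross product $G_0(x/\lambda;\omega) = \bigwedge_{j=1}^{n-1}\partial^2_{x\omega_j}\phi^\lambda(x;\omega)$ is a non-zero vector orthogonal to each factor, hence spans that complement. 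Since $\tau(x_n) \neq 0$ (its last coordinate equals $1$) and spans the tangent line $T_{\Gamma^\lambda_{\theta,v}(x_n)}\Gamma^\lambda_{\theta,v}$, it follows that $\tau(x_n)$ is parallel to $G_0(\Gamma^\lambda_{\theta,v}(x_n)/\lambda;\omega_\theta)$, hence to the unit vector $G^\lambda(\Gamma^\lambda_{\theta,v}(x_n);\omega_\theta) = G(\Gamma^\lambda_{\theta,v}(x_n)/\lambda;\omega_\theta)$, which is the assertion.

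I do not expect a genuine obstacle here: the argument is one differentiation plus the elementary fact that the $(n-1)$-fold wedge of linearly independent vectors in $\R^n$ spans their orthogonal complement. The only point deserving a little care is bookkeeping the $\lambda$-dilation through $\phi^\lambda$, $\gamma^\lambda_{\theta,v}$ and $G^\lambda$; since the $\omega$-derivatives commute with the spatial dilation $x \mapsto x/\lambda$, every object above is consistently evaluated at $(\Gamma^\lambda_{\theta,v}(x_n);\omega_\theta)$, with the Gauss map carrying its built-in $x/\lambda$.
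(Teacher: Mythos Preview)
Your proof is correct and is precisely the standard argument (the one given in \cite[Lemma~5.2]{GuthHickmanIliopoulou2019}, which the paper simply cites without reproducing): differentiate the defining relation $\partial_\omega\phi^\lambda(\Gamma^\lambda_{\theta,v}(x_n);\omega_\theta)=v$ in $x_n$, read off orthogonality of the tangent vector to the $n-1$ columns of $\partial^2_{x\omega}\phi^\lambda$, and invoke $C1)$ together with the defining property of the generalized cross product to identify that one-dimensional orthogonal complement with the span of $G^\lambda$. The $\lambda$-bookkeeping is handled correctly.
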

%\begin{proof}
%Differentiating the equations $\partial_\xi \phi(\Gamma^\lambda_{\theta,v}(x_n);\xi_\theta) = v$, we find
%\begin{equation*}
%\partial^2_{x \xi} \phi^\lambda(\Gamma^\lambda_{\theta,v}(x_n);\xi_\theta)((\Gamma^\lambda_{\theta,v})^\prime(x_n) = 0.
%\end{equation*}
%Hence, $(\Gamma^\lambda_{\theta,v})^\prime(t)$ must be parallel to $G^\lambda(\Gamma^\lambda_{\theta,v}(t);\xi_\theta)$ because $G^\lambda$ spans the kernel of $\partial^2_{x \xi} \phi^\lambda(\Gamma^\lambda_{\theta,v}(t);\xi_\theta)$.
%\end{proof}
We consider curved tubes
\begin{equation*}
T_{\theta,v} = \{(x^\prime,x_n) \in B(0,R) \, : \, x_n \in I^\lambda_{\theta,v} \text{ and } x^\prime \in B(\gamma^\lambda_{\theta,v}(x_n),R^{\frac{1}{2}+\delta}) \}.
\end{equation*}
We refer to the curve $\Gamma^\lambda_{\theta,v}$ as the core of $T_{\theta,v}$. Since $\phi$ is of reduced form, we find by the diffeomorphism property of $\Phi$ (writing $x' = \Phi^{-1}_{x_n,\omega_\theta} \circ \Phi_{x_n,\omega_\theta} (x')$)
\begin{equation*}
|x^\prime - \gamma^\lambda_{\theta,v}| \sim |\partial_\omega \phi^\lambda(x;\omega_\theta) - v|,
\end{equation*}
for all $x = (x^\prime,x_n) \in X_\lambda$ with $x_n \in I^\lambda_{\theta,v}$ uniformly in $\lambda$. This has the following consequence:

\begin{lemma}[{\cite[Lemma~5.4]{GuthHickmanIliopoulou2019}}]
If $1 \ll R \ll \lambda$ and $x \in B(0,R) \backslash T_{\theta,v}$, then
\begin{equation*}
|T^\lambda f_{\theta,v}(x)| \leq (1+ R^{-1/2} |\partial_\omega \phi^\lambda(x;\omega_\theta) -v |)^{-(n+1)} \text{RapDec}(R) \| f \|_{L^2(A^{n-1})}.
\end{equation*}
\end{lemma}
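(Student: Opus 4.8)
The plan is to show rapid decay of $T^\lambda f_{\theta,v}$ away from the tube $T_{\theta,v}$ by a non-stationary phase argument, exploiting the fact that $f_{\theta,v}$ is essentially localized to a cap of radius $R^{-1/2}$ around $\omega_\theta$ and (via $\hat{\eta}_v$) behaves like a function whose Fourier support forces spatial concentration near the core curve $\Gamma^\lambda_{\theta,v}$. First I would write, up to a $\mathrm{RapDec}(R)\|f\|_{L^2}$ error, $T^\lambda f_{\theta,v}(x) = \int e^{i\phi^\lambda(x;\omega)} a^\lambda(x;\omega)\tilde\psi_\theta(\omega)\,(\hat\eta_v * (\psi_\theta f))(\omega)\,d\omega$, and then insert the definition of the convolution, so that the relevant oscillatory factor becomes $e^{i(\phi^\lambda(x;\omega) - \langle v',\cdot\rangle)}$ after peeling off $\hat\eta_v$; more precisely one uses that $\hat\eta_v(\omega) = e^{-i\langle v,\omega\rangle}\hat\eta(\omega)$ so convolution against it amounts to a modulation. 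The combined phase is then $\psi(x;\omega) = \phi^\lambda(x;\omega) - \langle v,\omega\rangle$ (thinking of $v$ as the shift parameter), and its $\omega$-gradient is $\partial_\omega\psi = \partial_\omega\phi^\lambda(x;\omega) - v$.

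The key step is a lower bound on $|\partial_\omega\psi|$ on the support of the amplitude when $x \notin T_{\theta,v}$. On that support $\omega$ lies within $O(R^{-1/2})$ of $\omega_\theta$, so by the mean value theorem and the uniform bounds $C1'')$ on $\partial^2_{\omega x'}\phi$ (which give $\partial^2_{\omega\omega}\phi^\lambda = O(\lambda^{-1})$ type control after rescaling, hence negligible over a cap of size $R^{-1/2} \ll \lambda$) one has $\partial_\omega\phi^\lambda(x;\omega) = \partial_\omega\phi^\lambda(x;\omega_\theta) + O(R^{-1/2}\cdot(\text{small}))$. Combined with the displayed identity $|x' - \gamma^\lambda_{\theta,v}(x_n)| \sim |\partial_\omega\phi^\lambda(x;\omega_\theta) - v|$ valid for $x_n \in I^\lambda_{\theta,v}$, the hypothesis $x \notin T_{\theta,v}$ (i.e. $|x'-\gamma^\lambda_{\theta,v}(x_n)| \geq R^{1/2+\delta}$, or $x_n \notin I^\lambda_{\theta,v}$, the latter case being handled separately by essential support of $\eta_v$) forces $|\partial_\omega\psi(x;\omega)| \gtrsim R^{-1/2}|\partial_\omega\phi^\lambda(x;\omega_\theta) - v| \gtrsim R^{-1/2}(1 + |\partial_\omega\phi^\lambda(x;\omega_\theta)-v|)$ once $|\partial_\omega\phi^\lambda(x;\omega_\theta)-v| \gtrsim R^{1/2+\delta}$, so in particular $|\partial_\omega\psi| \gtrsim R^{\delta} \gg 1$ relative to the scale $R^{-1/2}$ of the cap.

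With this non-degeneracy in hand, I would apply the standard non-stationary phase / repeated integration by parts lemma: each integration by parts in $\omega$ gains a factor $|\partial_\omega\psi|^{-1}$ times bounds on derivatives of the amplitude $a^\lambda\tilde\psi_\theta\hat\eta$, which on a cap of radius $R^{-1/2}$ cost $R^{1/2}$ per derivative; the net gain per step is $R^{1/2}/|\partial_\omega\psi| \lesssim \min\{R^{-\delta}, R^{1/2}(1+|\partial_\omega\phi^\lambda(x;\omega_\theta)-v|)^{-1}\}$. Iterating $n+1$ times extracts the stated polynomial factor $(1 + R^{-1/2}|\partial_\omega\phi^\lambda(x;\omega_\theta)-v|)^{-(n+1)}$, and iterating further (up to $\sim \log R / \delta$ more times) turns the remaining uniform gain $R^{-\delta}$ into $\mathrm{RapDec}(R)$; the $\|f\|_{L^2}$ on the right comes from Cauchy--Schwarz applied to $\|\psi_\theta f\|_{L^1} \lesssim R^{-(n-1)/4}\|f\|_{L^2}$ absorbed into constants. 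The main obstacle is bookkeeping the interplay between the three scales $R^{-1/2}$ (cap), $R^{1/2+\delta}$ (tube radius), and $\lambda$ (rescaling): one must verify that the curvature and variable-coefficient corrections coming from $\partial^2_{\omega\omega}\phi^\lambda$ and higher $\omega$-derivatives of $\phi^\lambda$, controlled via $C1''), C2''), D1), D2)$ after rescaling by $\lambda$, are genuinely lower order on a cap of size $R^{-1/2}$, so that the effective phase is governed by the frozen gradient $\partial_\omega\phi^\lambda(x;\omega_\theta) - v$; this is exactly where the reduced form of $\phi$ and the derivative bounds are used, and it is essentially the content of \cite[Lemma~5.4]{GuthHickmanIliopoulou2019} transplanted verbatim, since its proof only invokes $C1)$ and the diffeomorphism property of $\Phi$.
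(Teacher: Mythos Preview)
The paper does not prove this lemma itself; it is quoted from \cite[Lemma~5.4]{GuthHickmanIliopoulou2019}, where it is established by precisely the non-stationary phase argument you sketch, so your overall approach matches the source.

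One quantitative point in your sketch is off, however: you write that $C1'')$ yields ``$\partial^2_{\omega\omega}\phi^\lambda = O(\lambda^{-1})$ type control after rescaling''. This is not correct --- $C1'')$ concerns the mixed derivative $\partial^2_{\omega x'}\phi$, not $\partial^2_{\omega\omega}\phi$, and in fact $\partial^2_{\omega\omega}\phi^\lambda(x;\omega) = \lambda\,\partial^2_{\omega\omega}\phi(x/\lambda;\omega)$ picks up a factor of $\lambda$, not $\lambda^{-1}$. The bound one actually uses (after the harmless normalisation $\phi(0;\omega)=0$, which amounts to multiplying $f$ by a unimodular factor) is $|\partial^2_{\omega\omega}\phi^\lambda(x;\omega)| \lesssim |x| \lesssim R$ for $x \in B(0,R)$, obtained from the mean value theorem in $x$ together with the bounds on $\partial_x\partial^2_{\omega\omega}\phi$ coming from $C2''),D1)$. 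Over a cap of radius $R^{-1/2}$ this still gives variation $O(R^{1/2}) \ll R^{1/2+\delta} \lesssim |\partial_\omega\phi^\lambda(x;\omega_\theta)-v|$ when $x\notin T_{\theta,v}$, so the phase gradient is indeed essentially frozen on the cap as you need, and the rest of your integration-by-parts bookkeeping goes through unchanged. The same $O(R)$ bound is what controls the term arising when you differentiate $\nabla_\omega\psi/|\nabla_\omega\psi|^2$ in each step of the iteration.
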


\subsection{$L^2$-$L^2$-estimate}

We recall the following generalization of Parseval's theorem, only depending on non-degeneracy $C1')$ of the phase function (cf. \cite[Section~2.1]{Sogge2017}):
\begin{lemma}[{\cite[Lemma~5.5]{GuthHickmanIliopoulou2019}}]
\label{lem:Hoermander}
If $1 \leq R \leq \lambda$ and $B_R$ is any ball of radius $R$, then
\begin{equation}
\label{eq:L2Estimate}
\| T^\lambda f \|_{L^2(B_R)} \lesssim R^{1/2} \| f \|_{L^2(A^{n-1})}.
\end{equation}
\end{lemma}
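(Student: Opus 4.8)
\textbf{Proof plan for Lemma \ref{lem:Hoermander} ($L^2$–$L^2$ estimate).} The plan is to reduce the bound to a standard $TT^*$-type argument using only the non-degeneracy condition $C1')$ on $\partial^2_{\omega x'}\phi$. First I would write out $T^\lambda (T^\lambda)^* g(x) = \int K_\lambda(x,y)\, g(y)\, dy$, where the kernel is
\begin{equation*}
K_\lambda(x,y) = \int e^{i(\phi^\lambda(x;\omega) - \phi^\lambda(y;\omega))}\, a^\lambda(x;\omega)\, \overline{a^\lambda(y;\omega)}\, d\omega.
\end{equation*}
Rescaling $\omega \mapsto \omega$ (the $\omega$-variable is not dilated) and using $\phi^\lambda(x;\omega) = \lambda\phi(x/\lambda;\omega)$, the phase is $\lambda[\phi(x/\lambda;\omega) - \phi(y/\lambda;\omega)]$. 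I would then integrate by parts in $\omega$: the gradient $\partial_\omega[\phi(x/\lambda;\omega) - \phi(y/\lambda;\omega)]$ is comparable to $|x/\lambda - y/\lambda| = |x-y|/\lambda$ in magnitude by $C1')$ (more precisely, by the mean value theorem applied to $\partial_\omega\phi$ together with invertibility of $\partial^2_{\omega x'}\phi$ and the control on $\partial^2_{\omega x_n}\phi$ from \eqref{eq:XiXnDerivativeBound}), so each integration by parts in $\omega$ gains a factor $(\lambda \cdot |x-y|/\lambda)^{-1} = |x-y|^{-1}$, at the cost of derivatives of $a^\lambda$, which are uniformly bounded since $a^\lambda(x;\omega) = a(x/\lambda;\omega)$ and $a$ has bounded derivatives. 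This yields $|K_\lambda(x,y)| \lesssim_N (1+|x-y|)^{-N}$ for all $N$, uniformly in $\lambda \geq 1$.

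Next I would apply Schur's test. Given a ball $B_R$ of radius $R$, restrict to $x,y \in B_R$ (using that $a^\lambda$ is supported in $X^\lambda$, but this is not even needed here since we only test against $g$ supported in $B_R$ and $f$ supported in $A^{n-1}$). From the kernel bound,
\begin{equation*}
\sup_x \int_{B_R} |K_\lambda(x,y)|\, dy \lesssim 1 \quad \text{and} \quad \sup_y \int_{B_R} |K_\lambda(x,y)|\, dx \lesssim 1,
\end{equation*}
so $\|T^\lambda (T^\lambda)^*\|_{L^2(B_R) \to L^2(B_R)} \lesssim 1$. Wait — this would give $\|T^\lambda f\|_{L^2(B_R)}^2 = \langle T^\lambda f, T^\lambda f\rangle$, but to dualize against $f$ rather than against a function on $B_R$ I should instead run $TT^*$ in the other order or, more transparently, use the following. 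Write $\|T^\lambda f\|_{L^2(B_R)}^2 = \langle (T^\lambda)^* \chi_{B_R} T^\lambda f, f\rangle \leq \|(T^\lambda)^* \chi_{B_R} T^\lambda f\|_{L^2} \|f\|_{L^2}$. The operator $(T^\lambda)^* \chi_{B_R} T^\lambda$ has kernel $\widetilde{K}_\lambda(\omega,\omega') = \int_{B_R} e^{i(\phi^\lambda(x;\omega') - \phi^\lambda(x;\omega))} a^\lambda(x;\omega') \overline{a^\lambda(x;\omega)}\, dx$; integrating by parts in $x$ using the non-degeneracy $C1')$ of $\partial^2_{x'\omega}\phi$ (so that $\partial_{x'}[\phi(x/\lambda;\omega') - \phi(x/\lambda;\omega)]$ is $\gtrsim |\omega-\omega'|/\lambda$ in size) and noting the $x$-integration is over a set of measure $\lesssim R^n$, one gets $|\widetilde{K}_\lambda(\omega,\omega')| \lesssim R^n (1 + (R/\lambda)|\omega-\omega'| \cdot \lambda)^{-N}$... which requires care. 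The cleaner route, and the one I would actually take, is the first one: $\|T^\lambda f\|_{L^2(B_R)}^2 \leq \|T^\lambda (T^\lambda)^*\|_{L^2(B_R)\to L^2(B_R)}^{1/2}$ is not the right phrasing either.

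Let me state the argument I would commit to. By duality, $\|T^\lambda f\|_{L^2(B_R)} = \sup \{ |\langle T^\lambda f, g\rangle| : g \in L^2(B_R),\ \|g\|_2 = 1\}$, and $\langle T^\lambda f, g\rangle = \langle f, (T^\lambda)^* g\rangle$, so $\|T^\lambda f\|_{L^2(B_R)} \leq \|f\|_{L^2(A^{n-1})} \cdot \sup_{\|g\|_2=1} \|(T^\lambda)^* g\|_{L^2(A^{n-1})}$. Now $(T^\lambda)^* g(\omega) = \int_{B_R} e^{-i\phi^\lambda(x;\omega)} \overline{a^\lambda(x;\omega)} g(x)\, dx$, and $\|(T^\lambda)^* g\|_{L^2}^2 = \langle T^\lambda (T^\lambda)^* g, g\rangle \leq \|T^\lambda (T^\lambda)^* g\|_{L^2(B_R)} \|g\|_{L^2(B_R)}$. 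The operator $T^\lambda (T^\lambda)^*$ acting on functions supported in $B_R$ has the kernel $K_\lambda(x,y)$ above with the extra localization $\chi_{B_R}(y)$; by the pointwise bound $|K_\lambda(x,y)| \lesssim_N (1+|x-y|)^{-N}$ and Schur's test, $\|T^\lambda(T^\lambda)^* g\|_{L^2(B_R)} \lesssim \|g\|_{L^2(B_R)}$. Hence $\|(T^\lambda)^* g\|_{L^2}^2 \lesssim \|g\|_{L^2(B_R)}^2$; but this only gives the bound $\|T^\lambda f\|_{L^2(B_R)} \lesssim \|f\|_2$ without the $R^{1/2}$ gain — so the $R^{1/2}$ must come from the volume of $B_R$ at the point where we convert an $L^\infty$-in-$x$ statement into an $L^2(B_R)$ statement. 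Concretely, the sharp form is: $(T^\lambda)^*$ maps $L^2(B_R) \to L^2(A^{n-1})$ with norm $\lesssim R^{1/2}$, because its kernel, after integration by parts in $x$ (legitimate by $C1')$: the $x'$-gradient of the phase difference is $\sim|\omega - \omega'|$ up to the $\lambda$-scaling, which after the $\lambda$ in front of $\phi$ becomes $\sim|\omega-\omega'|$ so non-stationary phase in $x'$ costs nothing new) decays like $(1+|\omega-\omega'|)^{-N}$ in $\omega$ but the $x$-integral runs over $B_R$ contributing the volume factor — dualizing, $\|(T^\lambda)^* g\|_{L^2(A^{n-1})}^2 = \iint \widetilde K_\lambda(\omega,\omega') \cdot (\text{stuff})$, and Schur in the $\omega$-variables gives $\lesssim (\sup_\omega\int|\widetilde K| d\omega')^{1/2}(\cdots) \lesssim R \|g\|_2^2 / R = $ — the bookkeeping here is exactly \cite[Section~2.1]{Sogge2017} and \cite[Lemma~5.5]{GuthHickmanIliopoulou2019}.

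\textbf{Summary of steps.} (i) Reduce by duality/$TT^*$ to estimating a kernel. (ii) Integrate by parts in the appropriate variable, using only $C1')$ (invertibility of $\partial^2_{\omega x'}\phi$) together with \eqref{eq:XiXnDerivativeBound}, to obtain rapid decay of the kernel; the $\lambda$-uniformity is automatic because the rescaled phase $\phi^\lambda(x;\omega) = \lambda\phi(x/\lambda;\omega)$ has $x$-gradient equal to $(\partial_x\phi)(x/\lambda;\omega)$, whose relevant lower bounds are scale-invariant. (iii) Apply Schur's test, where the single power $R^{1/2}$ enters through the measure $|B_R| \lesssim R^n$ balanced against the off-diagonal decay. \textbf{The main obstacle} is the bookkeeping in step (ii)–(iii): verifying that the phase $\phi(x/\lambda;\omega) - \phi(y/\lambda;\omega)$ is genuinely non-stationary in $\omega$ with gradient of size comparable to $|x-y|/\lambda$ on the support of the amplitude — this uses $C1')$ to control $\partial^2_{\omega x'}\phi$ and \eqref{eq:XiXnDerivativeBound} to control $\partial^2_{\omega x_n}\phi$, ensuring $|\partial_\omega(\phi(x/\lambda;\cdot) - \phi(y/\lambda;\cdot))| \gtrsim |x'-y'|/\lambda$ dominates the $x_n$-contribution since $c_{cone}$ is small — and then tracking that exactly one factor of $R^{1/2}$ survives. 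Since this is the content of \cite[Section~2.1]{Sogge2017}, cited in the statement, I would carry it out in the form given there, adapted to the present rescaled operator, rather than from scratch.
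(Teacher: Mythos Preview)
Your $TT^*$ kernel bound $|K_\lambda(x,y)|\lesssim_N(1+|x-y|)^{-N}$ is false, and this is precisely the place where the $R^{1/2}$ must enter. The gradient $\partial_\omega[\phi(x/\lambda;\omega)-\phi(y/\lambda;\omega)]$ is, by the mean value theorem, controlled by $\partial^2_{\omega x}\phi\cdot(x-y)/\lambda$. Condition $C1')$ gives invertibility of $\partial^2_{\omega x'}\phi$, but \eqref{eq:XiXnDerivativeBound} only says $|\partial^2_{\omega x_n}\phi|\leq c_{cone}$ is \emph{small}, not that it is bounded below. Hence the $\omega$-gradient of the phase difference is comparable to $|x'-y'|/\lambda$, not to $|x-y|/\lambda$: if $x'=y'$ and $x_n\neq y_n$ there is essentially no oscillation to integrate by parts against. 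You therefore get decay only in the $x'-y'$ directions, and the kernel is integrable in $x'-y'$ but not in $x_n-y_n$. Your subsequent confusion about where the factor $R^{1/2}$ comes from is a symptom of this: with the (incorrect) full decay you would indeed prove the too-strong bound $\|T^\lambda f\|_{L^2(B_R)}\lesssim\|f\|_2$.

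The paper's route (following \cite[Section~2.1]{Sogge2017}) makes this transparent: one first proves the \emph{fixed-time} estimate $\|T^\lambda f\|_{L^2(\R^{n-1}\times\{x_n\})}\lesssim\|f\|_{L^2(A^{n-1})}$ (stated here as the next lemma), which is essentially Plancherel after the change of variables $\omega\mapsto\partial_{x'}\phi^\lambda(x;\omega)$ justified by $C1')$ --- equivalently, your $TT^*$ argument run only in the $x'$ variables for fixed $x_n$. Then one simply integrates in $x_n$ over an interval of length $O(R)$, and the single ``bad'' direction with no kernel decay contributes exactly the factor $R^{1/2}$. Your plan can be salvaged by inserting this slicing in $x_n$, but as written it overclaims the kernel decay.
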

This is based on the following estimate:
\begin{lemma}[{\cite[Lemma~5.6]{GuthHickmanIliopoulou2019}}]
For any fixed $x_n \in \R$, we find the estimate
\begin{equation}
\label{eq:FixedTimeL2Estimate}
\| T^\lambda f \|_{L^2(\R^{n-1} \times \{ x_n \})} \lesssim \| f \|_{L^2(A^{n-1})} .
\end{equation}
\end{lemma}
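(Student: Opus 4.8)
The plan is to prove the fixed-time $L^2$ bound \eqref{eq:FixedTimeL2Estimate} and then integrate in $x_n$ to obtain Lemma \ref{lem:Hoermander}. The second step is immediate: once \eqref{eq:FixedTimeL2Estimate} is known, we write
\begin{equation*}
\| T^\lambda f \|_{L^2(B_R)}^2 \leq \int_{I} \| T^\lambda f \|_{L^2(\R^{n-1} \times \{x_n\})}^2 \, dx_n \lesssim |I| \, \| f \|_{L^2(A^{n-1})}^2 \lesssim R \, \| f \|_{L^2(A^{n-1})}^2,
\end{equation*}
where $I$ is the projection of $B_R$ onto the $x_n$-axis, an interval of length $\lesssim R$. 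Taking square roots gives \eqref{eq:L2Estimate}.

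For the fixed-time estimate, fix $x_n$ and regard $x' \mapsto T^\lambda f(x',x_n)$ as an oscillatory integral operator acting on functions of $\omega \in A^{n-1} \subseteq \R^{n-1}$, with phase $\phi^\lambda(x',x_n;\omega)$ and amplitude $a^\lambda(x',x_n;\omega)$, both compactly supported in $\omega$. The key point is that by the non-degeneracy condition $C1)$ (equivalently $C1')$ after reduction, which gives $\det \partial^2_{x'\omega}\phi \neq 0$), the mixed Hessian $\partial^2_{x'\omega}\phi^\lambda$ has rank $n-1$ on the support of the amplitude; after the rescaling $\phi^\lambda(x;\omega) = \lambda\phi(x/\lambda;\omega)$ this non-degeneracy is preserved with uniform bounds in $\lambda$ because $\partial^2_{x'\omega}\phi^\lambda(x;\omega) = \partial^2_{x'\omega}\phi(x/\lambda;\omega)$. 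Thus we are in the setting of Hörmander's classical $L^2$ theory for non-degenerate oscillatory integral operators (the relevant statement being exactly the one referenced in \cite[Section~2.1]{Sogge2017}): a $TT^*$ argument reduces matters to bounding the kernel
\begin{equation*}
K(\omega,\tilde\omega) = \int e^{i(\phi^\lambda(x',x_n;\omega) - \phi^\lambda(x',x_n;\tilde\omega))} a^\lambda(x',x_n;\omega)\overline{a^\lambda(x',x_n;\tilde\omega)} \, dx',
\end{equation*}
and non-stationary phase in $x'$—available precisely because $\partial^2_{x'\omega}\phi$ is invertible, so $|\partial_{x'}(\phi^\lambda(\cdot;\omega) - \phi^\lambda(\cdot;\tilde\omega))| \gtrsim |\omega - \tilde\omega|$—yields $|K(\omega,\tilde\omega)| \lesssim_N (1+|\omega-\tilde\omega|)^{-N}$. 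Schur's test then gives $\|TT^*\| \lesssim 1$, hence \eqref{eq:FixedTimeL2Estimate}. One should note that the bound is uniform in $x_n$ because the constants in $C1'')$, $D1)$, $D2)$ and \eqref{eq:UniformBoundsAmplitude} are uniform over $X \times \Xi$, and the rescaling does not affect $\partial^2_{x'\omega}\phi$.

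The main obstacle, such as it is, is purely bookkeeping: making sure the non-stationary phase estimate is uniform in $x_n$ and in $\lambda$, which comes down to checking that all the derivative bounds entering Hörmander's theorem (lower bound on $|\det \partial^2_{x'\omega}\phi|$, upper bounds on finitely many higher derivatives of $\phi$ and $a$ in the $x'$ and $\omega$ variables) are controlled by the reduced-phase conditions uniformly over the compact set $X^\lambda \times \Xi$ after rescaling. Since $\partial^\alpha_{x'}\partial^\beta_\omega \phi^\lambda(x;\omega) = \lambda^{1-|\alpha|}\partial^\alpha_{x'}\partial^\beta_\omega\phi(x/\lambda;\omega)$ for $|\alpha|\geq 1$, higher $x'$-derivatives only improve with $\lambda\geq 1$, and the relevant quantities are all bounded by $c_{cone}$, $A_i$, $C_{\text{amp}}$ via $C1'')$, $C2'')$, $D1)$, $D2)$; so there is no genuine difficulty, only the need to cite \cite{GuthHickmanIliopoulou2019,Sogge2017} for the precise form of Hörmander's lemma. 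No curvature hypothesis ($C2^+)$) is used here—only $C1)$—consistent with the remark preceding the lemma.
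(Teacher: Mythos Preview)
Your approach is the standard one and coincides with what the paper intends: the paper does not give its own proof here but simply cites \cite[Lemma~5.6]{GuthHickmanIliopoulou2019} and \cite[Section~2.1]{Sogge2017}, and your $T^*T$ argument via non-stationary phase in $x'$ is exactly the argument found in those references.

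One inaccuracy worth correcting: the kernel bound $|K(\omega,\tilde\omega)| \lesssim_N (1+|\omega-\tilde\omega|)^{-N}$ is not quite right, because the $x'$-integration runs over a region of volume $\sim \lambda^{n-1}$ (the support of $a^\lambda$) while each integration by parts gains a factor $(\lambda|\omega-\tilde\omega|)^{-1}$ (the phase gradient has size $\sim|\omega-\tilde\omega|$, and differentiating the amplitude or the phase once more in $x'$ costs a factor $\lambda^{-1}$). The correct estimate is
\[
|K(\omega,\tilde\omega)| \lesssim_N \lambda^{n-1}(1+\lambda|\omega-\tilde\omega|)^{-N},
\]
and Schur's test then gives $\sup_\omega \int |K(\omega,\tilde\omega)|\,d\tilde\omega \lesssim \lambda^{n-1}\cdot\lambda^{-(n-1)} = O(1)$, which is the desired bound. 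Your stated kernel bound happens to also pass Schur's test on the bounded domain $A^{n-1}$, but it is false for $|\omega-\tilde\omega|\ll\lambda^{-1}$, where the kernel is genuinely of size $\sim\lambda^{n-1}$. With this correction the sketch is complete. (Also, what you wrote is the kernel of $T^*T$, not $TT^*$; this is harmless.)
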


\subsection{$k$-broad norms}
\label{subsection:kBroadNorms}
Here we recall basic properties of the $k$-broad norms. Although the naming is misleading as $k$-broad norms are, strictly speaking, no norms, the properties are similar enough to make the following arguments work. We shall also see that $U \mapsto \| T^\lambda f \|^p_{BL^p_{k,A}(U)}$ behaves as a measure.

\begin{lemma}[Finite (sub-)additivity, {\cite[Lemma~6.1]{GuthHickmanIliopoulou2019}}]
Let $U_1,U_2 \subseteq \R^n$ and $U = U_1 \cup U_2$. If $1 \leq p < \infty$ and $A$ is a non-negative integer, then
\begin{equation}
\label{eq:SubadditivityBroadNorm}
\| T^\lambda f \|^p_{BL^p_{k,A}(U)} \leq \| T^\lambda f \|^p_{BL^p_{k,A}(U_1)} + \| T^\lambda f \|^p_{BL^p_{k,A}(U_2)}
\end{equation}
holds for all integrable $f: A^{n-1} \to \C$.
\end{lemma}
Secondly, we have the following variant of the triangle inequality:
\begin{lemma}[Triangle inequality, {\cite[Lemma~6.2]{GuthHickmanIliopoulou2019}}]
If $U \subseteq \R^n$, $1 \leq p < \infty$ and $A = A_1 + A_2$ for $A_1$ and $A_2$ non-negative integers, then
\begin{equation}
\| T^\lambda(f_1 + f_2) \|_{BL^p_{k,A}(U)} \lesssim \| T^\lambda f_1 \|_{BL^p_{k,A_1}(U)} + \| T^\lambda f_2 \|_{BL^p_{k,A_2}(U)}
\end{equation}
holds for all integrable $f_1,f_2: A^{n-1} \to \C$.
\end{lemma}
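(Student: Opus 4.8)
The plan is to reduce the claim to a pointwise (in $B_{K^2}$) inequality for the quantities $\mu_{T^\lambda f}(B_{K^2})$ and then sum over all $K^2$-balls meeting $U$. First I would fix a ball $B_{K^2} \in \mathcal{B}_{K^2}$ with centre $\bar{x}$ and show
\begin{equation*}
\mu_{T^\lambda(f_1+f_2)}(B_{K^2}) \lesssim_p \mu_{T^\lambda f_1}(B_{K^2}) + \mu_{T^\lambda f_2}(B_{K^2}),
\end{equation*}
where on the right we allow $A_1$ subspaces in the first term and $A_2$ in the second. To do this, let $V_1,\ldots,V_{A_1}$ be a family of $(k-1)$-planes nearly achieving the minimum in the definition of $\mu_{T^\lambda f_1}(B_{K^2})$, and similarly $V_{A_1+1},\ldots,V_{A_1+A_2}$ for $\mu_{T^\lambda f_2}(B_{K^2})$; together they form a family of $A = A_1 + A_2$ planes, which is a competitor in the minimum defining $\mu_{T^\lambda(f_1+f_2)}(B_{K^2})$. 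Hence it suffices to bound, for each sector $\tau$ with $\tau \notin V_a$ for all $1 \le a \le A$, the quantity $\|T^\lambda(f_1+f_2)_\tau\|_{L^p(B_{K^2})}^p$. Since $(f_1+f_2)_\tau = (f_1)_\tau + (f_2)_\tau$ (the cutoffs $\psi_\tau$ are linear), the triangle inequality in $L^p(B_{K^2})$ combined with the elementary bound $(a+b)^p \le 2^{p-1}(a^p+b^p)$ gives
\begin{equation*}
\|T^\lambda(f_1+f_2)_\tau\|_{L^p(B_{K^2})}^p \lesssim_p \|T^\lambda (f_1)_\tau\|_{L^p(B_{K^2})}^p + \|T^\lambda (f_2)_\tau\|_{L^p(B_{K^2})}^p.
\end{equation*}
Now for such a $\tau$ we have in particular $\tau \notin V_a$ for $1 \le a \le A_1$, so $\|T^\lambda (f_1)_\tau\|_{L^p(B_{K^2})}^p \le \max_{\tau' \notin V_1,\ldots,V_{A_1}} \|T^\lambda (f_1)_{\tau'}\|_{L^p(B_{K^2})}^p = \mu_{T^\lambda f_1}(B_{K^2})$ by our choice of the $V_a$ (up to an arbitrarily small error from the near-minimality, which can be absorbed), and likewise $\|T^\lambda (f_2)_\tau\|_{L^p(B_{K^2})}^p \le \mu_{T^\lambda f_2}(B_{K^2})$. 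Taking the maximum over all admissible $\tau$ yields the pointwise bound above.

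Having established the pointwise inequality, I would sum over all $B_{K^2} \in \mathcal{B}_{K^2}$ with $B_{K^2} \cap U \neq \emptyset$ and take $p$-th roots. Using again $(a+b)^{1/p} \le a^{1/p} + b^{1/p}$ for $a,b \ge 0$ and the definition of the $k$-broad norm, this gives
\begin{equation*}
\|T^\lambda(f_1+f_2)\|_{BL^p_{k,A}(U)} \lesssim_p \|T^\lambda f_1\|_{BL^p_{k,A_1}(U)} + \|T^\lambda f_2\|_{BL^p_{k,A_2}(U)},
\end{equation*}
which is the assertion. I would note that the integrability of $f_1,f_2$ is only needed to ensure $T^\lambda f_i$ is well-defined pointwise so that the $L^p(B_{K^2})$ norms make sense, exactly as in the cited \cite[Lemma~6.2]{GuthHickmanIliopoulou2019}.

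The only slightly delicate point — and the one I would handle carefully — is the bookkeeping around the \emph{minimum} in the definition of $\mu$: since $\mu_{T^\lambda(f_1+f_2)}$ is a minimum over families of $A$ planes, we only get an \emph{upper} bound by plugging in one specific competing family, which is exactly what we want; conversely, for $\mu_{T^\lambda f_i}$ we must \emph{choose} a near-optimal family, which forces the argument to carry a harmless $\varepsilon$-slack (or, if one prefers and the Grassmannian compactness is invoked, an exact minimizer). No genuine analytic obstacle arises; the estimate is purely a consequence of the definitions together with the scalar inequality $(a+b)^p \lesssim_p a^p + b^p$, and the proof is identical in structure to the one in \cite{GuthHickmanIliopoulou2019}, the $K^{-2}$ versus $K^{-1}$ angle threshold playing no role since it is the same on both sides.
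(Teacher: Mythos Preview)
Your proposal is correct and follows exactly the standard argument from \cite[Lemma~6.2]{GuthHickmanIliopoulou2019}: combine near-optimal families of $(k-1)$-planes for $f_1$ and $f_2$ into a single competing family of $A=A_1+A_2$ planes, apply the $L^p$ triangle inequality sectorwise, and sum over balls. The present paper does not supply its own proof but merely cites that reference, so there is nothing further to compare.
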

We further have the following variant of H\"older's inequality:
\begin{lemma}[Logarithmic convexity, {\cite[Lemma~6.3]{GuthHickmanIliopoulou2019}}]
Suppose that $U \subseteq \R^n$, $1 \leq p,p_1,p_2 < \infty$ and $0 \leq \alpha_1,\alpha_2 \leq 1$ satisfy $\alpha_1 + \alpha_2 = 1$ and
\begin{equation*}
\frac{1}{p} = \frac{\alpha_1}{p_1} + \frac{\alpha_2}{p_2}.
\end{equation*}
If $A = A_1 + A_2$ for $A_1$, $A_2$ non-negative integers, then
\begin{equation*}
\| T^\lambda f \|_{BL^p_{k,A}(U)} \lesssim \| T^\lambda f \|^{\alpha_1}_{BL^{p_1}_{k,A_1}(U)} \| T^\lambda f \|^{\alpha_2}_{BL^{p_2}_{k,A_2}(U)}.
\end{equation*}
\end{lemma}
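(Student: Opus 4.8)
The statement we must prove is the $BL^p_{k,A}$-analogue of H\"older's inequality: if $\tfrac1p=\tfrac{\alpha_1}{p_1}+\tfrac{\alpha_2}{p_2}$ with $\alpha_1+\alpha_2=1$ and $A=A_1+A_2$, then
\begin{equation*}
\| T^\lambda f \|_{BL^p_{k,A}(U)} \lesssim \| T^\lambda f \|^{\alpha_1}_{BL^{p_1}_{k,A_1}(U)} \| T^\lambda f \|^{\alpha_2}_{BL^{p_2}_{k,A_2}(U)}.
\end{equation*}
The approach is entirely local-to-global: first establish the corresponding inequality for the quantity $\mu_{T^\lambda f}(B_{K^2})$ on a single $K^2$-ball, and then sum over the $B_{K^2}$ meeting $U$ using the classical $\ell^p$-H\"older inequality on sequences.

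\emph{Step 1: pointwise (single-ball) estimate.} Fix $B_{K^2}\in\mathcal B_{K^2}$ centred at $\bar x$. For $i=1,2$ choose near-minimizing families $V^{(i)}_1,\dots,V^{(i)}_{A_i}\in Gr(k-1,n)$ so that
\begin{equation*}
\max_{\tau:\ \tau\notin V^{(i)}_a\ \forall a}\ \| T^\lambda f_\tau\|_{L^{p_i}(B_{K^2})}^{p_i} \le 2\,\mu_{T^\lambda f}(B_{K^2})\quad\text{(in the }p_i\text{-normalization)},
\end{equation*}
and consider the concatenated family $\{V^{(1)}_a\}_a\cup\{V^{(2)}_a\}_a$, which has $A_1+A_2=A$ members and is therefore admissible in the $\min$ defining $\mu$ at level $A$. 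Any $\tau$ that avoids all $A$ of these subspaces avoids in particular all the $V^{(1)}_a$ and all the $V^{(2)}_a$, so for such $\tau$ we may apply the ordinary H\"older inequality $\|g\|_{L^p(B_{K^2})}\le\|g\|_{L^{p_1}(B_{K^2})}^{\alpha_1}\|g\|_{L^{p_2}(B_{K^2})}^{\alpha_2}$ with $g=T^\lambda f_\tau$, and then bound each factor by the corresponding maximum over admissible $\tau$. Taking the maximum over the (smaller) set of $\tau$ avoiding all $A$ subspaces yields
\begin{equation*}
\mu_{T^\lambda f}(B_{K^2})^{1/p}\ \lesssim\ \mu_{T^\lambda f}(B_{K^2})^{\alpha_1/p_1}\,\mu_{T^\lambda f}(B_{K^2})^{\alpha_2/p_2},
\end{equation*}
where on the left we may use this particular admissible choice of $A$ subspaces as an upper bound for the true minimum. (Here I am writing $\mu_{T^\lambda f}(B_{K^2})$ loosely for the $p$-, $p_1$- and $p_2$-versions; strictly one should carry three symbols, but the argument is the same.)

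\emph{Step 2: sum over balls.} Raise the single-ball inequality to the $p$-th power and sum over $B_{K^2}$ with $B_{K^2}\cap U\ne\emptyset$. Writing $b_{B}=\mu^{(p_1)}_{T^\lambda f}(B)$ and $c_{B}=\mu^{(p_2)}_{T^\lambda f}(B)$, the right side is $\sum_B b_B^{\,p\alpha_1/p_1}c_B^{\,p\alpha_2/p_2}$; since $p\alpha_1/p_1+p\alpha_2/p_2=1$, H\"older's inequality for sequences with exponents $\tfrac{p_1}{p\alpha_1}$ and $\tfrac{p_2}{p\alpha_2}$ gives
\begin{equation*}
\sum_B b_B^{\,p\alpha_1/p_1} c_B^{\,p\alpha_2/p_2}\ \le\ \Big(\sum_B b_B\Big)^{p\alpha_1/p_1}\Big(\sum_B c_B\Big)^{p\alpha_2/p_2}
= \| T^\lambda f\|_{BL^{p_1}_{k,A_1}(U)}^{p\alpha_1}\,\| T^\lambda f\|_{BL^{p_2}_{k,A_2}(U)}^{p\alpha_2}.
\end{equation*}
Taking $p$-th roots finishes the proof, with an implied constant depending only on the normalizations (the factor $2$ above and the overlap constant of $\mathcal B_{K^2}$).

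\emph{Main obstacle.} There is no genuine analytic difficulty; the one point that needs care is the bookkeeping in Step 1, namely that $\mu$ at level $A$ is a minimum over $A$-tuples while $\mu$ at levels $A_1,A_2$ are minima over $A_1$- and $A_2$-tuples. The inequality goes the right way precisely because concatenating the minimizers for $p_1$ and $p_2$ produces an admissible (not necessarily optimal) competitor for the level-$A$ minimum, and because the set of $\tau$ avoiding the larger family is contained in each of the two sets of $\tau$ avoiding the individual families, so the $\max$ over the smaller set is dominated by each of the $\max$'s appearing in $\mu^{(p_i)}$. This is exactly the mechanism used in \cite[Lemma~6.3]{GuthHickmanIliopoulou2019}, and it transfers verbatim to the present $K^{-2}$-threshold definition \eqref{eq:BroadNorm} since that definition differs from the one in \cite{GuthHickmanIliopoulou2019} only in the numerical angle threshold, which plays no role here.
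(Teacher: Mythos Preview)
Your proof is correct and is precisely the standard argument from \cite[Lemma~6.3]{GuthHickmanIliopoulou2019}, which the present paper simply cites without reproducing. The key bookkeeping point you identify---that concatenating near-minimizing $A_1$- and $A_2$-tuples yields an admissible competitor for the level-$A$ minimum, while the set of $\tau$ avoiding the concatenated family is contained in each of the two individual avoidance sets---is exactly the mechanism used there, and as you note the change from $K^{-1}$ to $K^{-2}$ in the angle threshold is immaterial for this lemma.
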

Later on, we shall only consider $A \gg 1$, which allows us to use H\"older's and Minkowski's inequality for $k$-broad norms.

\subsection{Overview of parameters}

For the reader's convenience, we provide an overview of the parameters to be used in Sections \ref{section:Preliminaries}-\ref{section:LinearEstimates}:
\begin{itemize}
\item $0<\varepsilon\ll 1$ denotes the parameter, for which we aim to prove the estimates in Theorem  \ref{thm:LpLpEstimatesVariableCoefficients} and \ref{thm:kBroadEstimate},
\item $m$ will denote the dimension of the algebraic variety, which is used for the polynomial partitioning (see Section \ref{section:PolynomialPartitioning}),
\item we have the hierarchy of parameters:
\begin{equation*}
\delta \ll \delta_n \ll \ldots \delta_1 \ll \varepsilon
\end{equation*}
with $\delta_i = \delta_i(\varepsilon)$ and $\delta = \delta(\varepsilon)$. The parameters $\delta_i$ will quantify tangentiality of wave packets with respect to a variety of dimension $i$ and will be specified in the iteration in Section \ref{section:MainInductiveArgument}. 
\item $\delta$ is the parameter from wave packet decomposition and eventually, $\delta = \delta(\varepsilon)$, and $N=N(\delta)=N(\varepsilon)$ the parameter $N$ for reduced data will be chosen only depending on $\varepsilon$.
\item In Section \ref{section:LinearEstimates} we need the derivative bounds for reduced phase functions up to $N$ for a possibly larger value of $N$ depending on a parameter $\delta_1=\delta_1(\varepsilon)$ used in a narrow decoupling estimate.
\end{itemize}

\section{Polynomial partitioning}
\label{section:PolynomialPartitioning}
A key tool in the proof will be polynomial partitioning following previous work by Guth \cite{Guth2016,Guth2018} (see also Guth--Katz \cite{GuthKatz2015}) and in the variable coefficient case Guth--Hickman--Iliopoulou \cite{GuthHickmanIliopoulou2019}. The idea is to divide the ball $B_R$ by the zero set of a polynomial into cells, which equidistribute the broad norm. Either $\mu_{T^\lambda f}$ will be concentrated in the cells or at the wall, i.e., an appropriate neighbourhood of the zero locus of the polynomial. Both cases will be handled by induction. We recall some facts from \cite{GuthHickmanIliopoulou2019}, which we will use in the following.

\subsection{Tools from algebraic geometry}
Given a polynomial $P$ in $\R^n$, its zero set is denoted by $Z(P)$. To make the varieties $Z(P_1,\ldots,P_{n-m})$ smooth $m$-dimensional manifolds, we consider transverse complete intersections:
\begin{definition}
Let $m \in \N$, $ m \leq n$, and let $P_1,\ldots,P_{n-m}$ be polynomials on $\R^n$ whose common zero set is denoted by $Z(P_1,\ldots,P_{n-m})$. The variety $Z(P_1,\ldots,P_{n-m})$ is called a transverse complete intersection if
\begin{equation*}
\nabla P_1(x) \wedge \ldots \wedge \nabla P_{n-m}(x) \neq 0 \qquad \forall x \in Z(P_1,\ldots,P_{n-m}).
\end{equation*}
The degree of the transverse complete intersection $\overline{\deg} Z$ is defined as \\
 $\max_{j=1,\ldots,n-m} \deg P_j$.
\end{definition}
 We have the following partitioning argument:
\begin{theorem}[{\cite[Theorem~7.3]{GuthHickmanIliopoulou2019}}]
\label{thm:EquidistributionPolynomialPartitioning}
Suppose that $W \geq 0$ is a non-zero $L^1$-function on $\R^n$. Then, for any degree $D \in \N$, there exists a non-zero polynomial $P$ of degree $\deg P \lesssim D$ such that the following holds:
\begin{itemize}
\item The set $Z(P)$ is a finite union of $\lesssim \log D$ transverse complete intersections.
\item If $(O_i)_{i \in \mathcal{I}}$ denotes the set of connected components of $\R^n \backslash Z(P)$, then $\# \mathcal{I} \lesssim D^n$ and
\begin{equation}
\label{eq:EquidistributionPartitioning}
\int_{O_i} W \sim D^{-n} \int_{\R^n} W \text{ for all } i \in \mathcal{I}.
\end{equation} 
\end{itemize}
\end{theorem}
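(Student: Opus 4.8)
The plan is to construct $P$ by iterating the polynomial ham sandwich theorem, which goes back to Stone--Tukey and was exploited in this circle of problems by Guth--Katz: for any $N$ non-negative densities $W_1,\dots,W_N\in L^1(\R^n)$ there is a non-zero polynomial $Q$ with $\deg Q\lesssim N^{1/n}$ whose zero set simultaneously bisects all of them, i.e.\ $\int_{\{Q>0\}}W_i=\int_{\{Q<0\}}W_i$ for each $i$. Since $W\,dx$ is absolutely continuous with respect to Lebesgue measure and $Z(Q)$ is Lebesgue-null for $Q\neq 0$, these bisections are exact equalities, and in particular $\int_{Z(Q)}W=0$.

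First I would run a dyadic refinement scheme. Put $s:=\lceil n\log_2 D\rceil$ and $\mathcal C_0:=\{\R^n\}$. Inductively, given a family $\mathcal C_{j-1}$ of $2^{j-1}$ pairwise disjoint open cells, each of $W$-mass exactly $2^{-(j-1)}\int_{\R^n}W$, apply the ham sandwich theorem to the $2^{j-1}$ densities $\{W\chi_C:C\in\mathcal C_{j-1}\}$ to obtain $Q_j$ with $\deg Q_j\lesssim 2^{(j-1)/n}$ bisecting every one of them; let $\mathcal C_j$ be the collection of the $2^j$ sign regions $C\cap\{Q_j>0\}$ and $C\cap\{Q_j<0\}$ with $C\in\mathcal C_{j-1}$, each now carrying $W$-mass exactly $2^{-j}\int_{\R^n}W$. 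After $s$ stages set $P:=\prod_{j=1}^{s}Q_j$. The degree is then controlled by a geometric series, $\deg P=\sum_{j=1}^{s}\deg Q_j\lesssim\sum_{j=1}^{s}2^{(j-1)/n}\lesssim 2^{s/n}\lesssim D$; this is precisely why $s\sim\log D$ stages (and hence $2^s\sim D^n$ cells) is an affordable amount of subdivision.

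It then remains to read off the two conclusions. The zero set decomposes as $Z(P)=\bigcup_{j=1}^{s}Z(Q_j)$, a union of $s\lesssim\log D$ hypersurfaces; replacing each $Q_j$ by $Q_j-\varepsilon_j$ for a generic small regular value $\varepsilon_j$ of $Q_j$ (Sard's theorem) arranges $\nabla Q_j\neq 0$ on $Z(Q_j)$, so that each $Z(Q_j)$ becomes a transverse complete intersection of a single polynomial, the perturbation being small enough to change the cell masses by at most a bounded factor. For the cell count, the number of connected components of $\R^n\setminus Z(P)$ is $\lesssim(\deg P)^n\lesssim D^n$ by the Milnor--Thom bound on the complement of a real algebraic hypersurface, while conversely each of the $2^s$ sign regions $\bigcap_{j=1}^{s}\{\pm Q_j>0\}$ has strictly positive $W$-mass and hence is non-empty, so $\#\mathcal I\gtrsim 2^s\gtrsim D^n$. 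Finally, every connected component $O_i$ of $\R^n\setminus Z(P)$ lies inside exactly one sign region, of $W$-mass $\sim 2^{-s}\int_{\R^n}W\sim D^{-n}\int_{\R^n}W$, which yields the upper bound in \eqref{eq:EquidistributionPartitioning}; the matching lower bound comes from the standard variant in which, at each stage, one bisects only those cells whose current $W$-mass still exceeds $\sim D^{-n}\int_{\R^n}W$, a procedure that terminates after $\lesssim\log D$ stages and leaves every remaining cell with $W$-mass comparable to $D^{-n}\int_{\R^n}W$.

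The main difficulty is the simultaneous bookkeeping rather than any single step: demanding $\deg P\lesssim D$ forces the number of ham sandwich stages down to $O(\log D)$, which is exactly the number needed to produce $\sim D^n$ cells, and one must then carry out the genericity perturbation turning the $Z(Q_j)$ into transverse complete intersections without spoiling (up to constants) the delicately balanced masses produced by the exact bisections. Beyond that, the argument is a routine assembly of the polynomial ham sandwich theorem, Sard's theorem, and the Milnor--Thom component bound, as in \cite{GuthKatz2015,GuthHickmanIliopoulou2019}.
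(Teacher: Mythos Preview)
The paper does not prove this theorem; it is quoted verbatim from \cite[Theorem~7.3]{GuthHickmanIliopoulou2019} as an established tool, so there is no in-paper argument to compare against. Your sketch follows the standard route (iterated polynomial ham sandwich, Sard-type perturbation of each $Q_j$ for transversality, Milnor--Thom for the component count), and the degree bound, the $\lesssim\log D$ decomposition of $Z(P)$, and the upper bound in \eqref{eq:EquidistributionPartitioning} come out exactly as you describe.

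The soft spot is the lower bound in \eqref{eq:EquidistributionPartitioning}. Your exact-bisection construction produces $2^s$ \emph{sign regions} $\bigcap_j\{\pm Q_j>0\}$, each of mass $2^{-s}\int W$, but a single sign region can break into several connected components of $\R^n\setminus Z(P)$, some of which may carry arbitrarily little mass; so ``each $O_i$ sits inside a sign region of mass $\sim D^{-n}\int W$'' yields only the upper bound. You then appeal to a ``standard variant'' that bisects only heavy cells, but that is a \emph{different} polynomial $P$, and the statement requires one $P$ giving both inequalities simultaneously. Even in that variant, bisecting a heavy connected component again returns sign-pieces that can themselves disconnect into light fragments, so the stopping rule alone does not preclude small-mass components. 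The actual treatments in \cite{Guth2016,GuthHickmanIliopoulou2019} resolve this by a more careful single iteration working with connected components throughout (tracking that the total number stays $\lesssim D^n$ via Milnor--Thom while the heavy ones keep halving), or, in some formulations, by allowing a null set of light cells that are discarded. As written your sketch conflates two constructions and leaves the two-sided estimate for a single $P$ unjustified.
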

The connected components are called \emph{cells}.

\medskip

We further need the following lemma on transverse intersections of tubes with varieties:
\begin{lemma}[{\cite[Lemma~5.7]{Guth2018}}]
\label{lem:TransverseIntersectionsStraightLines}
Let $T$ be a cylinder of radius $r$ with central line $\ell$ and suppose that $Z=Z(P_1,\ldots,P_{n-m}) \subseteq \R^n$ is a transverse complete intersection, where the polynomials $P_j$ have degree at most $D$. For $\alpha > 0$, let
\begin{equation*}
Z_{> \alpha} = \{ z \in Z: \angle(T_z Z, \ell) > \alpha \}.
\end{equation*}
Then $Z_{> \alpha} \cap T$ is contained in a union of $\lesssim D^n$ balls of radius $\lesssim r \alpha^{-1}$.
\end{lemma}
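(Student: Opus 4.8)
The plan is to reduce everything to a one-variable polynomial statement via a projection along the axis of the cylinder. After rotating and translating, I may assume the central line $\ell$ is the $x_n$-axis, so that $T = \{ |x'| \leq r \}$ (intersected with some bounded piece, but the argument is local in nature and the final count is scale-free up to the degree $D$). Writing $\pi: \R^n \to \R^{n-1}$, $\pi(x',x_n) = x'$, the key geometric observation is that if $z \in Z_{>\alpha}$, then the tangent space $T_z Z$ makes angle $>\alpha$ with $\ell = \R e_n$, which forces the restriction $\pi|_Z$ to be a submersion-like map near $z$ with quantitative control: concretely, $e_n$ is \emph{not} in $T_z Z$, and in fact $e_n$ is at angle $>\alpha$ from it, so the differential of $\pi|_Z$ at $z$ is invertible with inverse of norm $\lesssim \alpha^{-1}$. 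The heuristic is then that $Z_{>\alpha}$ is locally a graph over $\pi(T) = B_{n-1}(0,r)$ with Lipschitz constant $\lesssim \alpha^{-1}$, hence sits inside a slab of height $\lesssim r\alpha^{-1}$ over each point of the base, and can be covered by $\lesssim D^n$ balls of that radius because $Z$ meets each vertical line $\pi^{-1}(y)$ in $\lesssim D$ points (by Bézout, since a generic line meets a transverse complete intersection of degree $\leq D$ in $\lesssim D^n$... actually $\lesssim D^{n-m}\cdots$, in any case $O_{n}(D^n)$ points) and each such point contributes, via the Lipschitz-graph structure, one ball of radius $\lesssim r\alpha^{-1}$.

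In more detail, the steps I would carry out are: (1) Normalize coordinates so $\ell = \R e_n$, $T = B_{n-1}(0,r) \times \R$. (2) For $z \in Z_{>\alpha} \cap T$, show using $\angle(T_z Z, e_n) > \alpha$ together with $\nabla P_1(z) \wedge \cdots \wedge \nabla P_{n-m}(z) \neq 0$ that one can select, among the $n-m$ polynomials, a subset whose gradients, together with $e_1,\ldots,e_{n-1}$, span $\R^n$ quantitatively; equivalently, the component of (the normal space to $Z$ at $z$) along $e_n$ is bounded below in terms of $\alpha$. This is the place to be careful: the cleanest route is to note $T_zZ = (\mathrm{span}\{\nabla P_j(z)\})^\perp$ and $\angle(T_zZ,e_n)>\alpha$ means $\mathrm{dist}(e_n, T_zZ) \gtrsim \alpha$, i.e.\ the orthogonal projection of $e_n$ onto $\mathrm{span}\{\nabla P_j(z)\}$ has length $\gtrsim \alpha$. (3) Consequently $\pi|_{Z}$ is, near each such $z$, a local diffeomorphism onto its image with derivative bounds depending only on $\alpha$; a compactness/covering argument (or a direct graph argument) shows $Z_{>\alpha}\cap T$ is contained in the union over $y \in B_{n-1}(0,r)$ of at most $C_n D^{n}$ vertical segments, each of length $\lesssim r\alpha^{-1}$ — the length bound because two points of $Z_{>\alpha}$ on the same vertical line at height difference $h$ would, by the Lipschitz-graph property and the fact that the base has diameter $2r$, be impossible once $h \gg r\alpha^{-1}$; the cardinality bound $\lesssim D^n$ because a vertical line meets $Z$ in $O_n(D^n)$ points. (4) Cover each such segment by $O(1)$ balls of radius $\lesssim r\alpha^{-1}$ (note $r\alpha^{-1} \geq r$, so a ball of that radius already contains the full width of $T$), and observe that the whole of $Z_{>\alpha}\cap T$ is then covered by $\lesssim D^n$ such balls: indeed the balls centered along one vertical fiber already cover a full $r\alpha^{-1}$-neighborhood in $T$, and the number of distinct "sheets" of $Z$ is $\lesssim D^n$.

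The main obstacle is Step~(2)–(3): turning the qualitative transversality $\angle(T_zZ,\ell)>\alpha$ into a \emph{uniform} Lipschitz bound on the local graphs that is independent of $z$ and of the particular polynomials, depending only on $\alpha$ and the ambient dimension. Once one has that $Z$ is locally a graph $x_n = g(x')$ over (a piece of) $B_{n-1}(0,r)$ with $|\nabla g| \lesssim \alpha^{-1}$ uniformly, the rest is bookkeeping with Bézout's bound for the number of sheets. I would handle this either by (a) a direct implicit-function-theorem estimate, controlling $\|(D(\pi|_Z))^{-1}\|$ purely in terms of $\mathrm{dist}(e_n,\mathrm{span}\{\nabla P_j(z)\}) \gtrsim \alpha$, which is a linear-algebra fact, or (b) citing the corresponding statement in Guth \cite{Guth2018} as the excerpt does, since this is precisely \cite[Lemma~5.7]{Guth2018}. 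Given that the lemma is quoted verbatim from the literature, in the paper itself the honest move is simply to attribute it; the sketch above indicates how one would prove it from scratch if needed.
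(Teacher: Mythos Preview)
The paper does not prove this lemma; it simply states it with the citation \cite[Lemma~5.7]{Guth2018} and moves on, exactly as you anticipate in your final paragraph. So your bottom line matches the paper.

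Your sketch, however, tacitly assumes $m=n-1$ in several places. When you say ``$\pi|_Z$ is a local diffeomorphism'' and ``$Z$ is locally a graph $x_n=g(x')$ over $B_{n-1}(0,r)$'', this only makes sense when $\dim Z = n-1$; for $m<n-1$ the map $\pi|_Z:Z\to\R^{n-1}$ is an immersion, not a local diffeomorphism, and $Z$ is not a graph over any open subset of $\R^{n-1}$. The angle hypothesis still yields the correct quantitative statement---for $v\in T_zZ$ one has $|v'|\geq \sin\alpha\,|v|$, so $d\pi|_{T_zZ}$ is injective with inverse norm $\lesssim\alpha^{-1}$---which gives the Lipschitz control you want along any curve in $Z_{>\alpha}$. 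The counting step also needs adjustment: for $m<n-1$ a vertical line generically misses $Z$ entirely, so ``number of sheets over a point'' is not the right invariant; one instead bounds the number of connected components of $Z_{>\alpha}\cap T$ (or of a suitable algebraic slice) by $O(D^n)$ via a Milnor--Thom type bound, and then uses the bi-Lipschitz control to see each component has diameter $\lesssim r\alpha^{-1}$. Since you ultimately recommend citing \cite{Guth2018}, these gaps in the sketch are harmless for the paper, but you should be aware of them if you ever need the argument in earnest.
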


For the application, we are interested in $r = R^{(1+\delta)/2}$, as this will be the radius of the (thin) tubes and $\alpha = R^{-\frac{1}{2}+\delta}$. 
\subsection{Polynomial approximation}
\label{subsection:PolynomialApproximation}
However, with smooth core curves, Lemma \ref{lem:TransverseIntersectionsStraightLines} is not applicable directly. We approximate the core curves by polynomials such that algebraic methods can still be applied to the curved tubes. We follow \cite[Section~7.2]{GuthHickmanIliopoulou2019}. Let $\varepsilon > 0$ be a small parameter and let $N=N_\varepsilon:= \lceil 1/(2 \varepsilon) \rceil \in \N$. Suppose that $\Gamma:(-1,1) \to \R^n$ is a smooth curve with
\begin{equation*}
\| \Gamma \|_{C^{N+1}(-1,1)} = \max_{0 \leq k \leq N+1} \sup_{|t| < 1} | \Gamma^{(k)}(t)| \lesssim 1.
\end{equation*}
After the reductions of Section \ref{subsection:BasicReductions}, we find the following estimates:
\begin{lemma}
\label{lem:PolynomialApproximation}
The curves $\Gamma^1_{\theta,v}$ satisfy
\begin{equation*}
|(\Gamma^1_{\theta,v})'(t)| \sim 1 \text{ for all } t \in I^1_{\theta,v},
\end{equation*}
and
\begin{equation*}
\sup_{t \in I^1_{\theta,v}} | ( \Gamma^1_{\theta,v})^{(k)}(t)| \lesssim c_{par} \text{ for } 2 \leq k \leq N.
\end{equation*}
\end{lemma}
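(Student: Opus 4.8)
The plan is to exploit that $\gamma := \gamma^1_{\theta,v}$ is, by its definition via $\Phi$, characterised implicitly by the identity $\partial_\omega\phi(\gamma(t),t;\omega_\theta) = v$ on $I^1_{\theta,v}$, and to differentiate this identity in $t$, controlling every term that appears by the reduction hypotheses — principally $C1'')$, \eqref{eq:XiXnDerivativeBound} and $D2)$, with $A_1 = A_3 = 1$ since $\phi$ is reduced. Differentiating once yields
\[
\partial^2_{x'\omega}\phi(\gamma(t),t;\omega_\theta)\,\gamma'(t) + \partial^2_{x_n\omega}\phi(\gamma(t),t;\omega_\theta) = 0 ,
\]
and since $C1'')$ places $\partial^2_{x'\omega}\phi$ within $c_{cone}$ of $I_{n-1}$ — hence invertible with $\big|(\partial^2_{x'\omega}\phi)^{-1}\big|\lesssim 1$ by Neumann series — while \eqref{eq:XiXnDerivativeBound} gives $|\partial^2_{x_n\omega}\phi|\leq c_{cone}$, one obtains $\gamma'(t) = -(\partial^2_{x'\omega}\phi)^{-1}\partial^2_{x_n\omega}\phi$ with $|\gamma'(t)|\lesssim c_{cone}$. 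Since $(\Gamma^1_{\theta,v})'(t) = (\gamma'(t),1)$, this already gives $|(\Gamma^1_{\theta,v})'(t)| = (1+|\gamma'(t)|^2)^{1/2}\sim 1$, the first assertion.

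For the higher derivatives I would induct on $2\leq k\leq N$: differentiating the identity $\partial_\omega\phi(\gamma(t),t;\omega_\theta)\equiv v$ a total of $k$ times in $t$ and applying the Fa\`a di Bruno/Leibniz formulae isolates
\[
\partial^2_{x'\omega}\phi(\gamma(t),t;\omega_\theta)\,\gamma^{(k)}(t) = -R_k(t),
\]
where $R_k$ is a finite sum, with coefficients depending only on $n$ and $k$, of products of (i) lower-order derivatives $\gamma^{(j)}$ with $1\leq j\leq k-1$ and (ii) mixed derivatives $\partial^\alpha_x\partial_\omega\phi$ with $2\leq|\alpha|\leq k$, evaluated along $(\gamma(t),t;\omega_\theta)$. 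The key structural point is that the only contribution of a single $x$-derivative of $\partial_\omega\phi$ paired with a $k$-th derivative of the inner curve is exactly $\partial^2_{x'\omega}\phi\cdot\gamma^{(k)}$ — the analogous term with the final coordinate drops out because $\tfrac{d^k}{dt^k}t = 0$ for $k\geq 2$ — so every summand of $R_k$ carries at least one factor of type (i) or (ii). Each factor of type (ii) is $\lesssim c_{cone}$ by $D2)$ (which controls $\partial^\alpha_x\partial_\omega\phi$ for $2\leq|\alpha|\leq 4N$ and a single $\omega$-derivative), each $\gamma^{(j)}$ with $1\leq j\leq k-1$ is $\lesssim c_{cone}$ by the inductive hypothesis together with the first paragraph, and all remaining factors are $\lesssim 1$; hence $|R_k(t)|\lesssim_{n,N}c_{cone}$. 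Inverting $\partial^2_{x'\omega}\phi$ as before yields $|\gamma^{(k)}(t)|\lesssim_{n,N}c_{cone}$, and since $(\Gamma^1_{\theta,v})^{(k)} = (\gamma^{(k)},0)$ for $k\geq 2$, this is the claimed bound provided $c_{cone}$ is taken small relative to the fixed constant $c_{par}$, which is admissible at the reduction step.

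The only real work is the combinatorial bookkeeping in the Fa\`a di Bruno expansion: one must verify that every mixed $\phi$-derivative appearing in $R_k$ has $\omega$-order exactly one and $x$-order between $2$ and $k\leq N$, so that $D2)$ applies and supplies the gain $c_{cone}$, and that the unique term with a single $x$-derivative is precisely the leading term being solved for. There is no analytic difficulty beyond the implicit function theorem and the Neumann series, and all orders involved stay within the range $2\leq|\alpha|\leq N$ with $N = N_\varepsilon = \lceil 1/(2\varepsilon)\rceil$ on which the reduction hypotheses provide control.
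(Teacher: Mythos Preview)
Your proof is correct and essentially reproduces the argument of \cite[Lemma~7.4]{GuthHickmanIliopoulou2019}, which is precisely what the paper invokes: differentiate the defining relation $\partial_\omega\phi(\gamma(t),t;\omega_\theta)=v$ repeatedly, use $C1'')$ to invert $\partial^2_{x'\omega}\phi$, and control the remainder via \eqref{eq:XiXnDerivativeBound} and $D2)$. The paper gives no independent argument beyond observing that these two bounds are exactly what make the cited proof transfer verbatim to the present class of phases.
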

The proof from \cite[Lemma~7.4]{GuthHickmanIliopoulou2019} applies verbatim because of bounds \eqref{eq:XiXnDerivativeBound} and $D2)$ from Subsection \ref{subsection:BasicReductions} although the phase functions are from different classes.

We denote by $[\Gamma]_\varepsilon: \R \to \R^n$ the polynomial curve given by the degree-$N$ Taylor approximation of $\Gamma$ around zero. Observe that
\begin{equation*}
\| [ \Gamma ]_\varepsilon \|_{C^\infty(-2,2)} \leq e^2 \| \Gamma \|_{C^N(-1,1)} \lesssim 1.
\end{equation*}
Furthermore, for $\lambda \gg 1$, noting that $\lambda^{-\varepsilon N} \leq \lambda^{-1/2}$, Taylor's theorem yields
\begin{equation*}
| \Gamma^{(i)}(t) - [ \Gamma ]^{(i)}_\varepsilon(t) | \lesssim_\varepsilon \lambda^{-\frac{1}{2}} |t|^{1-i} \text{ for all } |t| \lesssim_\varepsilon \lambda^{-\varepsilon} \text{ and } i = 0,1.
\end{equation*}
Letting $\Gamma^\lambda:(-\lambda,\lambda) \to \R^n$ denote the rescaled curve $\Gamma^\lambda(t) = \lambda \Gamma(t/\lambda)$, the above inequalities imply that
\begin{equation}
\label{eq:RescaledBoundsPolynomialApproximation}
\| [ \Gamma^\lambda]'_\varepsilon \|_{C^\infty(-2\lambda,2\lambda)} \lesssim 1 \text{ and } \| [ \Gamma^\lambda]''_\varepsilon \|_{C^\infty(-2\lambda,2\lambda)} \lesssim \lambda^{-1},
\end{equation}
and
\begin{equation*}
| (\Gamma^\lambda)^{(i)}(t) - ([ \Gamma^\lambda]_\varepsilon )^{(i)}(t) | \lesssim_\varepsilon \lambda^{-\frac{1}{2}} |t|^{1-i} \text{ for all } |t| \lesssim_\varepsilon \lambda^{1-\varepsilon} \text{ and } i=0,1.
\end{equation*}
As a consequence of $|(\Gamma^\lambda)'(t)| \sim |[\Gamma^\lambda]'_\varepsilon(t) | \sim 1$, the tangent spaces to the curves $\Gamma^\lambda$ and $[\Gamma^\lambda]_\varepsilon$ have a small angular separation, i.e.,
\begin{equation}
\label{eq:TangentSpaceApproximant}
\angle (T_{\Gamma^\lambda(t)} \Gamma^\lambda, T_{[\Gamma^\lambda]_\varepsilon(t)} [ \Gamma^\lambda]_\varepsilon ) \lesssim_\varepsilon \lambda^{-\frac{1}{2}} \text{ for all } |t| \lesssim_\varepsilon \lambda^{1-\varepsilon}.
\end{equation}
\subsection{Transverse interactions between curved tubes and varieties}
We have the following generalization of Lemma \ref{lem:TransverseIntersectionsStraightLines}:
\begin{lemma}[{\cite[Lemma~7.5]{GuthHickmanIliopoulou2019}}]
\label{lem:TransverseIntersectionPolynomialApproximant}
Let $n \geq 2$, $1 \leq m \leq n$ and $Z=Z(P_1,\ldots,P_{n-m}) \subseteq \R^n$ be a transverse complete intersection. Suppose that $\Gamma:\R \to \R^n$ is a polynomial graph satisfying
\begin{equation}
\label{eq:DerivativeConditionsPolynomialGraph}
\| \Gamma' \|_{L^\infty(-2\lambda,2\lambda)} \lesssim 1 \text{ and } \| \Gamma'' \|_{L^\infty(-2\lambda,2\lambda)} \leq \delta
\end{equation}
for some $\lambda,\delta>0$. There exists a dimensional constant $\bar{C} > 0$ such that, for all $\alpha > 0$ and $0<r<\lambda$ satisfying $\alpha \geq \bar{C} \delta r$, the set $Z_{>\alpha,r,\Gamma} \cap B(0,\lambda)$ is contained in a union of
\begin{equation*}
O((\overline{\deg} Z \cdot \deg \Gamma)^n)
\end{equation*}
balls of radius $r/\alpha$.
\end{lemma}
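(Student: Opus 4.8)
The plan is to reduce the curved situation to the straight-line case handled by Lemma \ref{lem:TransverseIntersectionsStraightLines}. The key geometric observation is that a polynomial graph $\Gamma$ with $\|\Gamma''\|_{L^\infty} \leq \delta$ is, on any sub-interval of length $\ell$, within distance $O(\delta \ell^2)$ of the affine line tangent to it at one endpoint; equivalently, on a sub-interval of length $\sim r/\delta$ the graph stays inside an $r$-neighbourhood of a fixed line. So first I would partition the parameter interval $(-2\lambda, 2\lambda)$ into $O(\lambda \delta / r) + O(1)$ sub-intervals $J_k$ each of length $\sim r/\delta$ (this is where the hypothesis $\alpha \geq \bar C \delta r$, equivalently $r/\delta \geq r^2/\alpha$ up to constants, will be used to keep the error terms below the relevant scale). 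On each piece $\Gamma(J_k)$ is contained in the cylinder $T_k$ of radius $\lesssim r$ about the affine tangent line $\ell_k$ at the left endpoint of $J_k$.

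Next, on each piece I would compare the tangent directions: since $\|\Gamma''\|_{L^\infty} \leq \delta$, for $t \in J_k$ one has $|\Gamma'(t) - \Gamma'(t_k)| \lesssim \delta |J_k| \lesssim r$, while $|\Gamma'| \sim 1$ (this follows from $\|\Gamma'\|_{L^\infty} \lesssim 1$ together with the normalization forcing the graph direction to have a nondegenerate component, as in the setup), so $\angle(T_{\Gamma(t)}\Gamma, \ell_k) \lesssim r$. Hence if $z \in Z$ lies within distance $r$ of $\Gamma(t)$ for some $t \in J_k$ and satisfies $\angle(T_z Z, T_{\Gamma(t)}\Gamma) > \alpha$, then — provided $\alpha \geq \bar C \delta r$ so that the $O(r)$ angular error and the choice of $\bar C$ are compatible — we still get $\angle(T_z Z, \ell_k) > \alpha/2$. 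Therefore $Z_{>\alpha, r, \Gamma} \cap B(0,\lambda) \cap (r\text{-nbhd of }\Gamma(J_k))$ is contained in $(Z_{>\alpha/2})_{\ell_k} \cap T_k'$ where $T_k'$ is a cylinder of radius $\lesssim r$ about $\ell_k$, and Lemma \ref{lem:TransverseIntersectionsStraightLines} bounds this by $O((\overline{\deg} Z)^n)$ balls of radius $\lesssim r/\alpha$.

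Finally I would sum over the pieces. A priori the number of pieces is $O(1 + \lambda\delta/r)$, which is too large. The fix is to note that the pieces $J_k$ over which $\Gamma$ actually enters $B(0,\lambda)$ are controlled by $\deg\Gamma$: since $\Gamma$ is a polynomial of degree $\deg \Gamma$, its intersection with any line or slab — in particular the condition of remaining in $B(0,\lambda)$, or the number of distinct tangent-line directions realized — is governed by Bézout-type bounds, so the relevant number of pieces is $O(\deg\Gamma)$ (alternatively, one partitions $\Gamma \cap B(0,\lambda)$ into $O(\deg\Gamma)$ arcs on each of which the above straightening works at scale $r/\delta$; this is exactly the point where $\deg\Gamma$ enters the final count). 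Multiplying $O(\deg\Gamma)$ pieces by $O((\overline{\deg}Z)^n)$ balls per piece gives $O((\overline{\deg}Z \cdot \deg\Gamma)^n)$ balls of radius $r/\alpha$, as claimed.

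The main obstacle I anticipate is the bookkeeping in the last step: getting the number of pieces down to $O(\deg\Gamma)$ rather than the crude $O(\lambda\delta/r)$, and making sure the constant $\bar C$ in the hypothesis $\alpha \geq \bar C \delta r$ is chosen large enough to absorb all the accumulated angular and positional errors from the straightening on each piece. Everything else is a direct reduction to Lemma \ref{lem:TransverseIntersectionsStraightLines}.
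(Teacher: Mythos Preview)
The paper does not give its own proof of this lemma; it simply quotes it from \cite[Lemma~7.5]{GuthHickmanIliopoulou2019}. So there is no in-paper argument to compare against, but your proposal can still be assessed on its own merits and against the approach actually used in \cite{GuthHickmanIliopoulou2019}.

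Your reduction to Lemma~\ref{lem:TransverseIntersectionsStraightLines} on pieces of length $\sim r/\delta$ is sound through the angle comparison: the hypothesis $\alpha \geq \bar{C}\delta r$ is exactly what absorbs the $O(\delta r)$ drift in tangent direction over a piece. The genuine gap is the final counting step. The number of pieces is $\sim \lambda\delta/r$, and nothing in the hypotheses forces this to be $O(\deg\Gamma)$. Your proposed fix via B\'ezout does not work: the fact that $\Gamma\cap B(0,\lambda)$ has $O(\deg\Gamma)$ connected components says nothing about the \emph{length} of each component, and a single component of length $\sim\lambda$ already requires $\sim\lambda\delta/r$ sub-intervals to straighten at scale $r$. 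Likewise ``the number of distinct tangent-line directions realised'' is not a finite quantity for a genuinely curved polynomial graph, so no B\'ezout bound is available there. In the generality of the lemma (arbitrary $r<\lambda$ and $\alpha\geq\bar{C}\delta r$) the quantity $\lambda\delta/r$ can be arbitrarily large, so the piecewise scheme as written gives the wrong dependence.

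The argument in \cite{GuthHickmanIliopoulou2019} avoids this by straightening $\Gamma$ \emph{globally} with a single polynomial change of variables rather than piecewise. Writing $\Gamma(t)=(\gamma(t),t)$, the shear $\Phi(x',x_n)=(x'-\gamma(x_n),x_n)$ is a polynomial diffeomorphism with polynomial inverse; it sends $\Gamma$ to the vertical axis and sends $Z(P_1,\dots,P_{n-m})$ to a transverse complete intersection of degree at most $\overline{\deg}Z\cdot\deg\Gamma$. The bounds $\|\Gamma'\|_{L^\infty}\lesssim 1$ and $\|\Gamma''\|_{L^\infty}\leq\delta$ control the Jacobian of $\Phi$ and its variation over an $r$-tube, and the hypothesis $\alpha\geq\bar{C}\delta r$ ensures the transversality condition $\angle(T_zZ,\Gamma')>\alpha$ survives the coordinate change up to a factor of $2$. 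One then applies Lemma~\ref{lem:TransverseIntersectionsStraightLines} \emph{once} to the pushed-forward variety and the straight line, yielding $O((\overline{\deg}Z\cdot\deg\Gamma)^n)$ balls directly. This is where $\deg\Gamma$ legitimately enters: it inflates the degree of the variety, not the number of pieces.
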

This will be used when proving the $k$-broad estimate via polynomial partitioning.

\section{Transverse equidistribution estimates}
\label{section:TransverseEquidistribution}
%\subsection{Linearizing the phase function}
%\label{subsection:Linearizing}

\subsection{Outline of the argument.}
\label{subsection:OutlineTransverseEquidistribution}
In this section transverse equidistribution estimates for wave packets tangential to varieties will be examined. Functions having wave packets tangential to a variety arise in the \emph{algebraic case} when applying polynomial partitioning in the main induction argument. Contrary to \cite{OuWang2022} or \cite{GaoLiuMiaoXi2023}, however, we stick to the wave packet decomposition used in \cite{GuthHickmanIliopoulou2019}. We make the following definition (see \cite{GuthHickmanIliopoulou2019,Guth2018}):
\begin{definition}
Let $Z = Z(P_1,\ldots,P_{n-m})$ be a transverse complete intersection. A wave packet $(\theta,v)$ is said to be $R^{-\frac{1}{2}+\delta_m}$-tangent to $Z$ in $B(0,R)$ if
\begin{equation}
\label{eq:TangentialConditionI}
T_{\theta,v} \cap B_R \subseteq N_{R^{\frac{1}{2} + \delta_m}}(Z)
\end{equation}
and
\begin{equation}
\label{eq:TangentialConditionII}
\angle(G^\lambda(x;\omega_\theta), T_z Z) \leq \bar{c}_{tang} R^{-\frac{1}{2}+\delta_m}
\end{equation}
for any $x \in T_{\theta,v}$ and $z \in Z \cap B(0,2R)$ with $|x-z| \leq \bar{C}_{tang} R^{\frac{1}{2}+\delta_m}$.
\end{definition}
We want to study functions concentrated on the collection of wave packets
\begin{equation*}
\mathbb{T}_Z = \{ (\theta,v) \in \mathbb{T} : T_{\theta,v} \text{ in } R^{-\frac{1}{2}+\delta_m}-\text{tangent to } Z \text{ in } B(0,R)\}.
\end{equation*}
Precisely, we make the following definition:
\begin{definition}
If $\mathbb{S} \subseteq \mathbb{T}$, then $f$ is said to be concentrated on wave packets from $\mathbb{S}$ if
\begin{equation*}
f = \sum_{(\theta,v) \in \mathbb{S}} f_{\theta,v} + \text{RapDec}(R) \| f \|_{L^2}.
\end{equation*}
\end{definition}
Let $B \subseteq \R^n$ be a ball of radius $R^{\frac{1}{2}+\delta_m}$ with centre $\bar{x} \in B(0,R)$. We study $\eta_B \cdot T^\lambda g$, where $\eta_B$ is a suitable choice of Schwartz function adapted to $B$. A stationary phase argument yields that the Fourier transform of $ \eta_B \cdot T^\lambda g_{\theta,v}$ is concentrated near the surface
$\Sigma = \{ \Sigma(\omega) \, : \omega \in A^{n-1} \}$, where $\Sigma(\omega) = \partial_x \phi^\lambda(\bar{x};\omega)$. This leads to the refined set of wave packets
\begin{equation*}
\mathbb{T}_{Z,B} = \{(\theta,v) \in \mathbb{T}_Z : T_{\theta,v} \cap B \neq \emptyset \}.
\end{equation*}
For $(\theta,v) \in \mathbb{T}_{Z,B}$, the direction $G^\lambda(\bar{x};\omega_\theta)$ of $T_{\theta,v}$ must make a small angle with each of the tangent spaces $T_z Z$ for all $z \in Z \cap B$. This constrains $\Sigma(\omega_\theta)$ to lie in a small neighbourhood of some typically $m$-dimensional manifold $S_\xi$. But in the homogeneous case, $S_\xi$ might only be one-dimensional, or ``close" to a one-dimensional manifold. This will be quantified below. We refer to this as \emph{case of narrow space-time frequencies}. This case does not contribute in the broad norm, for which reason it is referred to as \emph{narrow}. The role of space-time frequencies is emphasized, as in the analysis we are rather considering sectors on the generalized cone (and not the projection to $\R^{n-1}$).

To linearize $S_\xi$, if it is not a ``narrow", essentially one-dimensional set, let $R^{\frac{1}{2}} < \rho \ll R$ and for the remainder of this section, let $\tau \subseteq A^{n-1}$ denote a sector of aperture $O(\rho^{-\frac{1}{2}+\delta_m})$. We define
\begin{equation*}
\mathbb{T}_{Z,B,\tau} = \{ (\theta,v) \in \mathbb{T}_Z : \theta \cap \tau \neq \emptyset \text{ and } T_{\theta,v} \cap B \neq \emptyset \}.
\end{equation*}

\medskip

We recall the constant-coefficient case. Suppose that $Z$ is an $m$-dimensional affine plane so that $T_z Z = V$ for all $z \in Z$, where $V \parallel Z$. 

The extension operator for the cone has the unnormalized Gauss map $G_0(\omega) = \big( - \frac{\omega}{|\omega|}, 1 \big)$. Let $V^+ = \{ \omega \in \R^{n-1} \backslash \{ 0 \} : G_0(\omega) \in V \}$. By a crucial observation due to Ou--Wang \cite{OuWang2022}, if $V^+$ is tangent to $\mathcal{C} = \{ (\omega, \frac{\omega}{|\omega|}) : \omega \in A^{n-1} \}$ up to an angle $R^{-\delta_m}$, then $N_{R^{-\frac{1}{2}+\delta_m}} V^+ \cap \mathcal{C}$ is a $O(R^{-\delta_m})$-neighbourhood of $O(1)$ radial lines. In the easier case of the extension operator of the paraboloid (see \cite{Guth2018}), this \emph{narrow} case does not happen. As an example in case of the cone, we can consider $V= \langle e_1 + e_n, e_{i_1}, \ldots, e_{i_{m-1}} \rangle \subseteq \R^n$ with $e_{i_j} \neq e_{i_k}$ for $j \neq k$ and $i_k \in \{2,\ldots,n-1\}$. Here $e_i$ denotes the $i$th unit vector. We have $\dim(V) = m$, but clearly
\begin{equation*}
V^+ = \{ \omega \in \R^{n-1} \backslash \{0 \} : ( \frac{- \omega}{|\omega|},1) \in V \} = \{ - \mu e_1 : \mu > 0 \} 
\end{equation*}
is always one-dimensional.

In the variable coefficient case, we see that if $V^+$ is tangent to $\mathcal{C}_x = \{ \partial_x \phi^\lambda(\bar{x};\omega) \}$ 
%(\textit{or is the graph parametrization more convenient?}) 
up to an angle $R^{-\delta_m}$, then $N_{R^{-\frac{1}{2}+\delta_m}} V^+ \cap \mathcal{C}_x$ is a $O(R^{-\delta_m})$-neighbourhood of $O(1)$ radial lines. Hence, the case of narrow space-time frequencies does not contribute to the $k$-broad norm. Otherwise, we refer to this as \emph{case of broad space-time frequencies}, and we shall see that we find quantitative transversality to hold. We can deduce transverse equidistribution estimates similar to the paraboloid case (or its variable coefficient counterpart). In the constant coefficient case, but for arbitrary cones, this was recently investigated by Gao \emph{et al.} in \cite{GaoLiuMiaoXi2023}. We shall see how the arguments extend to the variable coefficient case. 

In this section we aim to prove the following estimate for $g$ concentrated on wave packets tangential to $Z$ in the case of broad space-time frequencies:
\begin{equation*}
%\label{eq:TransverseEquidistributionEstimateI}
\int_{B \cap N_{\rho^{\frac{1}{2}+\delta_m}}(Z)} |T^\lambda g|^2 \lesssim R^{\frac{1}{2}+O(\delta_m)} \big( \frac{\rho}{R} \big)^{\frac{n-m}{2}} \| g \|^2_{L^2}.
\end{equation*}
The precise statement with additional assumptions on $g$ is provided in Lemma \ref{lem:TransverseEquidistributionEstimate}.

\subsection{Geometric preliminaries: narrow and broad space-time frequencies}
\label{subsection:GeometricPreliminaries}
In this subsection we quantify the above cases of narrow and broad space-time frequencies. There are two linearizations involved. The transverse complete intersection is linearized and below $V$ plays the role of the tangent space of $Z$. Moreover, the phase function is linearized such that we can apply arguments from the constant-coefficient case due to Ou--Wang \cite{OuWang2022} and Gao \emph{et al.} \cite{GaoLiuMiaoXi2023}.

We consider an $m$-dimensional linear subspace $V \subseteq \R^n$. Let $A = (a_{i,j})_{i,j} \in \R^{(n-m) \times n}$ be a matrix of maximal rank such that
\begin{equation*}
V = \{ x \in \R^n : Ax = 0 \}.
\end{equation*}
We consider the intersection of $V$ with the hyperplane $\{x_n = 0\}$ perceived as subspace of $\R^{n-1}$:
\begin{equation*}
V^{-} = \{ x' \in \R^{n-1}: A(x',0) = 0 \}.
\end{equation*}
With $V$ playing the role of the tangent space of $Z$, which is close to the values of the generalized Gauss map, which are not contained in the hyperplane $\{ x_n = 0 \}$, we suppose that $\dim V^{-} = \dim V - 1$.

\medskip

The linearization is easier to carry out after changing to graph parametrization in $u$-frequencies via $\Psi^\lambda$, which we recall is implicitly defined by
\begin{equation*}
\partial_{x'} \phi^\lambda(\bar{x};\Psi^\lambda(\bar{x};u)) = u.
\end{equation*}
We use short-hand notation $\Psi(u) = \Psi^\lambda(\bar{x};u)$. Note that $\Psi$ is $1$-homogeneous like $\partial_{x'} \phi^\lambda$ and the identity mapping because
\begin{equation*}
\partial_{x'} \phi^\lambda(\bar{x};\Psi(\mu u)) = \mu u = \mu \partial_{x'} \phi^\lambda(\bar{x};\Psi(u)) = \partial_{x'} \phi^\lambda(\bar{x};\mu \Psi(u)). 
\end{equation*}
By substituting $\tilde{\phi}(u) = h_{\bar{x}}(u) = \partial_{x_n} \phi^\lambda(\bar{x};\Psi(u))$ the arguments due to Gao \emph{et al.} \cite{GaoLiuMiaoXi2023} apply. We define a set
\begin{equation*}
L = \{ u \in B_{n-1}(0,2) \backslash B_{n-1}(0,1/2) : A (-\nabla_u \tilde{\phi}(u),1) = 0  \}.
\end{equation*}
The set $\{ (u,\tilde{\phi}(u)) : u \in L \}$ describes the points on the generalized cone, which have a normal in $V$. The \emph{case of narrow space-time frequencies} gives a negligible contribution to the broad norm:
\begin{lemma}[{\cite[Lemma~4.5]{GaoLiuMiaoXi2023}}]
\label{lem:NegligibleContributionBall}
Let $\eta \in \mathbb{S}^{n-2} \subseteq \R^{n-1}$. If $\eta \in L$ and $\angle(\eta,V^-) > \frac{\pi}{2} - K^{-2}$, then $L$ is contained in the set $\{ \xi \in \R^{n-1} \backslash \{ 0 \}: \angle(\xi,\eta) \lesssim K^{-2} \}$.
\end{lemma}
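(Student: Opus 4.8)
The statement is a rigidity/convexity fact about the zero set $L$ of the linear constraint $A(-\nabla_u\tilde\phi(u),1)=0$ on the (perturbed) cone. The plan is to exploit the homogeneity of $\tilde\phi$ and the nondegeneracy/convexity condition $C2'')$ exactly as in the constant-coefficient cone argument of Ou--Wang and Gao \emph{et al}. First I would record that since $\phi$ is $1$-homogeneous in $\omega$ and $\Psi$ is $1$-homogeneous, the function $\tilde\phi(u)=\partial_{x_n}\phi^\lambda(\bar x;\Psi(u))$ is $1$-homogeneous; hence $\nabla_u\tilde\phi$ is $0$-homogeneous and $\nabla^2_{uu}\tilde\phi(u)$ annihilates the radial direction $u$. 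Thus the constraint $A(-\nabla_u\tilde\phi(u),1)=0$ only depends on $u/|u|$, so it is natural to parametrize $L$ by directions on $\mathbb{S}^{n-2}$, which is why the hypothesis is phrased for $\eta\in\mathbb{S}^{n-2}$.

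Next I would differentiate the defining relation of $L$. If $\eta\in L$ and $\xi\in L$ with $\xi$ near $\eta$ on the sphere, then writing $\xi=\eta+v$ with $v\perp\eta$ small and using a second-order Taylor expansion of $u\mapsto A(-\nabla_u\tilde\phi(u),1)$ at $\eta$, the first-order term forces $A(-\nabla^2_{uu}\tilde\phi(\eta)v,0)=0$, i.e. $\nabla^2_{uu}\tilde\phi(\eta)v\in V^-$ modulo the radial kernel. By $C2'')$ the Hessian $\nabla^2_{\omega'\omega'}\partial_{x_n}\phi$ is, after the reductions, a small $C^N$-perturbation of $I_{n-2}/\omega_{n-1}$, hence in the graph coordinates $u$ the restriction of $\nabla^2_{uu}\tilde\phi(\eta)$ to $\eta^\perp$ is a definite, invertible quadratic form of size $\sim 1$. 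Therefore $v$ must lie in the $O(1)$-preimage of $V^-$ under this isomorphism; combined with the transversality hypothesis $\angle(\eta,V^-)>\tfrac\pi2-K^{-2}$, which says $\eta$ is almost orthogonal to $V^-$, this shows that the only directions $v$ available are those making $\xi$ lie in the narrow cone $\{\angle(\xi,\eta)\lesssim K^{-2}\}$. Making this quantitative — tracking how the $K^{-2}$ in the hypothesis propagates through the invertible Hessian with constants uniform in $\lambda$ — is the routine part, handled exactly as in \cite[Lemma~4.5]{GaoLiuMiaoXi2023}; the perturbative bounds $C1''),\,C2'')$ guarantee the relevant constants do not blow up.

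The main obstacle is precisely the point flagged in the outline: in the homogeneous setting the set $L$ need not be $m$-dimensional even when $V$ is, because the Gauss map of a cone is itself $0$-homogeneous and can be ``degenerate'' along $V$. Concretely, one must rule out that $L$ contains a full lower-dimensional piece transverse to $\eta$; this is where the hypothesis $\dim V^-=\dim V-1$ (imposed in the surrounding text) and the almost-orthogonality $\angle(\eta,V^-)>\tfrac\pi2-K^{-2}$ enter crucially. The almost-orthogonality means that $V^-$ is nearly contained in $\eta^\perp$, so pulling $V^-$ back through the nondegenerate Hessian keeps the admissible $v$'s confined to a $K^{-2}$-neighbourhood of the line $\mathbb{R}\eta$ inside the sphere's tangent space — i.e. $\xi$ stays within angle $\lesssim K^{-2}$ of $\eta$. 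I would then conclude by an elementary continuity/connectedness argument: the subset of $L\cap\{\angle(\cdot,\eta)\le K^{-2}\}$ is relatively open by the above, and since it is also closed and nonempty, it exhausts the connected component of $\eta$ in $L$; after checking $L$ is connected (or arguing componentwise, which suffices since the conclusion is a containment statement), one obtains $L\subseteq\{\xi:\angle(\xi,\eta)\lesssim K^{-2}\}$.
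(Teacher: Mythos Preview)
The paper does not give its own proof of this lemma; it simply cites \cite[Lemma~4.5]{GaoLiuMiaoXi2023}. So the relevant question is whether your argument stands on its own, and it does not: there is a genuine gap in the linearization step.

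Your first-order analysis correctly identifies the tangent space of $L$ at $\eta$ (inside $\eta^\perp$) as the preimage of $V^-$ under the nondegenerate map $\nabla^2_{uu}\tilde\phi(\eta)\big|_{\eta^\perp}$. But this preimage is an $(m-1)$-dimensional subspace of $\eta^\perp$, not a set of size $K^{-2}$. The hypothesis $\angle(\eta,V^-)>\tfrac{\pi}{2}-K^{-2}$ says $V^-$ is nearly contained in $\eta^\perp$; pulling this back through an isomorphism of $\eta^\perp$ still gives a full $(m-1)$-dimensional subspace, not something confined near the radial line. So the sentence ``pulling $V^-$ back through the nondegenerate Hessian keeps the admissible $v$'s confined to a $K^{-2}$-neighbourhood of $\mathbb{R}\eta$'' is false as stated, and the open/closed connectedness argument built on it cannot close.

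The phenomenon you are trying to capture is second-order, not first-order. The clean way to see it (and this is essentially what the cited constant-coefficient argument does) is global: the defining condition $A(-\nabla_u\tilde\phi(u),1)=0$ says $\nabla_u\tilde\phi(u)$ lies in a fixed affine translate $V^-+w_0$ of the linear space $V^-$. Restricting to $\mathbb{S}^{n-2}$, the map $u\mapsto\nabla_u\tilde\phi(u)$ is a diffeomorphism onto a uniformly convex hypersurface (by $C2'')$ and the reduced-phase bounds), and the hypothesis $\angle(\eta,V^-)>\tfrac{\pi}{2}-K^{-2}$ says precisely that this affine plane is tangent to the image surface at $\nabla\tilde\phi(\eta)$ up to angle $K^{-2}$. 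A uniformly convex surface meets a nearly-tangent affine plane in a cap of diameter $\lesssim K^{-2}$; pulling back through the diffeomorphism gives the conclusion. In the model case $\tilde\phi(u)=|u|$ this is the elementary computation: for $\xi,\eta\in\mathbb{S}^{n-2}$ with $\xi-\eta\in V^-$ one has $|\xi-\eta|^2=-2\eta\cdot(\xi-\eta)\le 2K^{-2}|\xi-\eta|$, hence $|\xi-\eta|\lesssim K^{-2}$. Your Taylor/connectedness strategy never invokes this curvature comparison, which is the actual mechanism.
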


\begin{remark}
\label{rem:NarrowFrequencyLocalization}
It is important to note that, contrary to the case of broad space-time frequencies analyzed below, the lemma does not hinge on a stronger localization of $\eta$. For later purposes, note that balls $B(\bar{x};R^{\frac{1}{2}+\delta_m})$, for which Lemma \ref{lem:NegligibleContributionBall} applies, do not contribute in the $k$-broad norm.
\end{remark}

\medskip

We turn to the more involved case of \emph{broad space-time frequencies}: In general $\{(u,\tilde{\phi}(u)) : u \in L\}$ does not lie in an affine subspace because $L$ is not an affine subspace. We start by linearizing $L$ at $\eta \in \mathbb{S}^{n-2}$. By taking the orthogonal complement of a suitable extension of the tangent space of $L$ at $\eta$ we shall construct $W$, which is quantitatively transverse to $V$. 
Let $\eta \in L \cap \mathbb{S}^{n-2}$ with $\angle(\eta,V^-) < \frac{\pi}{2} - K^{-2}$. Let
\begin{equation*}
L_i = \{ u \in A^{n-1} \, : \, \sum_{j=1}^{n-1} a_{i, j} \partial_j \tilde{\phi}(\eta) - a_{i,n} = 0 \}, \quad i =1,\ldots,n-m
\end{equation*}
such that 
\begin{equation*}
L = \bigcap_{i=1}^{n-m} L_i.
\end{equation*}

We compute the normal $n^{(i)}$ of $L_i$ at $\eta$ to be
\begin{equation*}
n^{(i)}_k = \sum_{j=1}^{n-1} a_{i,j} \partial^2_{jk} \tilde{\phi}(\eta), \quad n^{(i)} = \partial^2_{\eta \eta} \tilde{\phi}(\eta) \alpha_i \text{ with } \alpha_i = (a_{i,1},\ldots,a_{i,n-1}), \; i =1,\ldots,n-m.
\end{equation*}
The tangent space of $L_i$ at $\eta$ is given by 
\begin{equation*}
T_\eta L_i = (n^{(i)})^{\perp} = \{ \xi' \in \R^{n-1} : (n^{(i)}, \xi') = 0 \} = \{ \xi' \in \R^{n-1} \,:\, (\partial^{2}_{\eta \eta} \tilde{\phi}(\eta) \alpha_i, \xi') = 0 \} .
\end{equation*}

We argue that the normals are linearly independent: Let $\tilde{V} = \langle n^{(1)}, \ldots, n^{(n-m)} \rangle$.
\begin{lemma}
$\tilde{V} \subseteq \R^{n-1}$ is a subspace of dimension $n-m$.
\end{lemma}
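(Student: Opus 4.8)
The plan is to show that the linear map $\alpha \mapsto \partial^2_{\eta\eta}\tilde\phi(\eta)\,\alpha$ sends the span $\langle \alpha_1,\dots,\alpha_{n-m}\rangle \subseteq \R^{n-1}$ isomorphically onto $\tilde V$, and that this span has dimension $n-m$. The second fact is immediate: $A = (a_{i,j}) \in \R^{(n-m)\times n}$ has maximal rank $n-m$ by hypothesis, and we are assuming $\dim V^- = \dim V - 1 = m-1$, which means that deleting the last column of $A$ does not drop the rank, i.e. the truncated rows $\alpha_i = (a_{i,1},\dots,a_{i,n-1})$ remain linearly independent. Indeed, $V^- = \{x' : A(x',0) = 0\}$ has dimension $n-1 - \mathrm{rank}(\alpha_1,\dots,\alpha_{n-m})$, so $\dim V^- = m-1$ forces $\mathrm{rank}(\alpha_1,\dots,\alpha_{n-m}) = n-m$. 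Hence $\langle \alpha_1,\dots,\alpha_{n-m}\rangle$ is $(n-m)$-dimensional.

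The key remaining point is that $\partial^2_{\eta\eta}\tilde\phi(\eta)$, restricted to (or at least not annihilating) this span, is injective. Here I would invoke the reduction of Subsection \ref{subsection:BasicReductions}: after passing to the graph parametrization in $u$-frequencies, $\tilde\phi = h_{\bar x}$ and the perturbation estimate \eqref{eq:HPerturbation} (in the rescaled form) shows that $\partial^2_{\omega'\omega'} h_{\bar x}(\omega)$ is within $O(c_{\mathrm{cone}})$ of $I_{n-1}/\omega_{n-1}$ in operator norm. By $1$-homogeneity of $\tilde\phi$, the full Hessian $\partial^2_{\eta\eta}\tilde\phi(\eta)$ has $\eta$ in its kernel (Euler's relation differentiated), and on the orthogonal complement $\eta^\perp$ it agrees with the nondegenerate block up to the same $O(c_{\mathrm{cone}})$ error, hence is invertible there for $c_{\mathrm{cone}}$ small. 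So $\partial^2_{\eta\eta}\tilde\phi(\eta)$ has exactly a one-dimensional kernel, namely $\langle\eta\rangle$. Therefore the map $\alpha \mapsto \partial^2_{\eta\eta}\tilde\phi(\eta)\alpha$ is injective on any subspace of $\R^{n-1}$ that meets $\langle\eta\rangle$ only in $0$.

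It thus remains to check that $\langle\alpha_1,\dots,\alpha_{n-m}\rangle \cap \langle\eta\rangle = \{0\}$. Suppose $\sum_i c_i \alpha_i = t\eta$ for some scalars $c_i, t$. Pairing with $\partial^2_{\eta\eta}\tilde\phi(\eta)\eta = 0$ is not quite the move; instead, recall $\eta \in L$ means $A(-\nabla_u\tilde\phi(\eta),1) = 0$, i.e. for each $i$, $\langle\alpha_i, \nabla\tilde\phi(\eta)\rangle = a_{i,n}$, which by itself does not force $\alpha_i \parallel \eta$. The cleaner route: if $\sum c_i\alpha_i = t\eta$ with $t \neq 0$, then $\eta \in \langle\alpha_1,\dots,\alpha_{n-m}\rangle = (V^-)^\perp$ (within $\R^{n-1}$), so $\angle(\eta, V^-) = \pi/2$, contradicting the standing hypothesis $\angle(\eta,V^-) < \frac\pi2 - K^{-2}$ under which the lemma is stated. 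Hence $t = 0$, and by independence of the $\alpha_i$ all $c_i = 0$.

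Assembling: the $\alpha_i$ are independent and span a subspace transverse to $\ker \partial^2_{\eta\eta}\tilde\phi(\eta) = \langle\eta\rangle$, so their images $n^{(i)} = \partial^2_{\eta\eta}\tilde\phi(\eta)\alpha_i$ are independent, giving $\dim\tilde V = n-m$. The main obstacle, I expect, is the bookkeeping around homogeneity: confirming that the Hessian $\partial^2_{\eta\eta}\tilde\phi(\eta)$ has precisely the one-dimensional radial kernel and is nondegenerate on $\eta^\perp$, using the reduced-phase estimate \eqref{eq:HPerturbation} together with the degeneracy in the radial direction that is repeatedly exploited elsewhere in the paper; once that is pinned down the rank count is linear algebra driven by the $\angle(\eta,V^-) < \frac\pi2 - K^{-2}$ hypothesis.
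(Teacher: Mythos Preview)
Your proof is correct and follows essentially the same approach as the paper's: the Hessian $\partial^2_{\eta\eta}\tilde\phi(\eta)$ has one-dimensional kernel $\langle\eta\rangle$ by homogeneity and the convexity condition, the $\alpha_i$ span $(V^-)^\perp$ and are independent since $\dim V^- = m-1$, and the angle hypothesis $\angle(\eta,V^-)<\frac{\pi}{2}-K^{-2}$ rules out $\eta\in(V^-)^\perp$. The paper's proof is a terse two-sentence version of exactly this argument; your write-up simply spells out the linear algebra (particularly the step $\langle\alpha_1,\dots,\alpha_{n-m}\rangle\cap\langle\eta\rangle=\{0\}$) more carefully.
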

\begin{proof}
It is key to observe that $\partial^{2}_{\eta \eta} \tilde{\phi}(\eta) \alpha_i$, $i=1,\ldots,n-m$ are linearly independent due to the angle condition $\angle (\eta, V^-) < \frac{\pi}{2} - K^{-2}$. Indeed, the Hessian is degenerate only in the direction of $\eta$, but $\alpha_i$ is orthogonal to $V^-$, and $A$ has maximal rank.
\end{proof}
Therefore, the tangent space of $L$ is given by the intersection of $T_\eta L_i$:
\begin{equation*}
T_\eta L = \gamma_1^{\perp} \cap \ldots \cap \gamma_{n-m}^{\perp} = \langle \gamma_1, \ldots , \gamma_{n-m} \rangle^{\perp}.
\end{equation*}

We have $T_\eta L = \bar{V}^- = \tilde{V}^\perp$ is the orthogonal complement of $\tilde{V}$ in $\R^{n-1}$ such that
\begin{equation*}
\R^{n-1} = \tilde{V} \oplus \bar{V}^-.
\end{equation*}

Let $\bar{V} = \langle \bar{V}^-, e_n \rangle$ be the linear subspace spanned by $\bar{V}^-$ and $e_{n}$. We let $W= \bar{V}^\perp$ be the orthogonal complement of $\bar{V}$ in $\R^{n}$:
\begin{equation*}
\R^{n} = \bar{V} \oplus W.
\end{equation*}
As pointed out in \cite{GaoLiuMiaoXi2023}, all the linear subspaces depend on the choice of $\eta$. We have the following quantitative transversality:
\begin{lemma}[{\cite[Lemma~4.6]{GaoLiuMiaoXi2023}}]
\label{lem:QuantitativeTransverse}
Let $\eta \in \mathbb{S}^{n-2} \cap L$. If $\angle(\eta,V^-) \leq \frac{\pi}{2} - K^{-2}$, then $W$ and $V$ are transverse in the sense that $\angle(V,W) \gtrsim K^{-4}$.
\end{lemma}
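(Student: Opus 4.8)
The plan is to prove Lemma \ref{lem:QuantitativeTransverse} by exploiting the precise way $W$ was constructed from the Hessian $\partial^2_{\eta\eta}\tilde\phi(\eta)$, together with the quantitative non-degeneracy of that Hessian transverse to the radial direction (the convexity condition $C2'')$ carried through to $h_{\bar x}$ via \eqref{eq:HPerturbation}), and the standing hypothesis that $A$ has maximal rank with $\dim V^- = \dim V - 1$. First I would reduce to showing that $\tilde V = \langle n^{(1)},\dots,n^{(n-m)}\rangle \subseteq \R^{n-1}$ satisfies a quantitative lower bound on the angle it makes with $V^-$; concretely, that $\angle(\tilde V, V^-)\gtrsim K^{-c}$ for some fixed power $c$, where the implied constant depends only on $n$, $c_{cone}$, and the ellipticity of the perturbed Hessian $I_{n-1}/\eta_{n-1}+O(c_{cone})$. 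Since $n^{(i)} = \partial^2_{\eta\eta}\tilde\phi(\eta)\,\alpha_i$ with $\alpha_i\perp V^-$ and $\alpha_i$ spanning $(V^-)^\perp$ inside $\R^{n-1}$, and the Hessian is (after the reduction) uniformly elliptic in all but the radial direction $\eta$, the map $\alpha\mapsto \partial^2_{\eta\eta}\tilde\phi(\eta)\alpha$ is a quantitative isomorphism modulo $\langle\eta\rangle$; the only way $\tilde V$ could fail to be quantitatively transverse to $V^-$ is if $(V^-)^\perp$ were nearly aligned with $\langle\eta\rangle$, i.e. if $V^-$ were nearly orthogonal to $\eta$ — but that is exactly excluded by the hypothesis $\angle(\eta,V^-)\le \frac\pi2 - K^{-2}$.

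The key steps, in order, would be: (i) record that after the reductions of Subsection \ref{subsection:BasicReductions}, $\partial^2_{\eta\eta}\tilde\phi(\eta)$ restricted to the orthogonal complement of $\eta$ in $\R^{n-1}$ is uniformly elliptic, say with eigenvalues in $[c_0,C_0]$ for dimensional constants once $c_{cone}$ is small (this is where \eqref{eq:HPerturbation} and homogeneity-in-$\eta$ relations as in Lemma \ref{lem:ReducedPhaseTimeDerivatives} are used); (ii) write each $\alpha_i = \alpha_i^\eta + \alpha_i^\perp$ with $\alpha_i^\eta\in\langle\eta\rangle$, $\alpha_i^\perp\perp\eta$, and note that the hypothesis $\angle(\eta,V^-)\le\frac\pi2-K^{-2}$ together with $\alpha_i\perp V^-$ and $A$ of maximal rank forces the $\alpha_i^\perp$ to span an $(n-m)$-dimensional space that is quantitatively transverse to $\langle\eta\rangle$ — quantitatively $\gtrsim K^{-2}$ — so the Gram matrix of the $\alpha_i^\perp$ is $\gtrsim K^{-C}$-nondegenerate; (iii) apply the uniformly elliptic isomorphism from (i) to conclude $\tilde V = \partial^2_{\eta\eta}\tilde\phi(\eta)\langle\alpha_1,\dots\rangle$ is $\gtrsim K^{-C}$-transverse to $\langle\eta\rangle$, hence well-separated and of full dimension $n-m$ (reproving the preceding Lemma quantitatively); (iv) translate this into a lower bound on $\angle(\tilde V, V^-)$ — since $\tilde V\ni n^{(i)}$ are images under the elliptic Hessian of vectors $\alpha_i\perp V^-$, and $V^-$ itself is ``almost preserved'' by the Hessian up to the $c_{cone}$-perturbation, we get $\angle(\tilde V, V^-)\gtrsim K^{-C}$; (v) finally unwind the definitions: $\bar V^- = \tilde V^\perp$ in $\R^{n-1}$, $\bar V = \langle\bar V^-, e_n\rangle$, $W = \bar V^\perp$ in $\R^n$, and $V = \{x : Ax=0\}$ with $\dim V^- = \dim V - 1$. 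A linear-algebra computation shows $V\subseteq \bar V + \langle\text{small correction}\rangle$ and $W$ contains the image of $\tilde V$ lifted appropriately, so the transversality $\angle(V,W)$ is controlled below by a power of $\angle(\tilde V, V^-)$ times possibly a further $K^{-2}$ coming from the $e_n$-direction bookkeeping, yielding $\angle(V,W)\gtrsim K^{-4}$.

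The main obstacle I expect is step (v), the bookkeeping of the two linear-algebra passages ($\R^{n-1}\to\R^n$ via adjoining $e_n$, and the two orthogonal complements) while keeping every angle estimate quantitative: one must carefully track how the hypothesis $\dim V^- = \dim V - 1$ (i.e. $V$ is transverse to $\{x_n=0\}$, so $V$ has a distinguished ``$e_n$-component'') interacts with the construction $\bar V = \langle \bar V^-, e_n\rangle$, and verify that the $e_n$-direction is not accidentally swallowed in a way that collapses the angle. The cleanest route is probably to choose an adapted orthonormal basis at $\eta$ — $\{\eta, \theta^1_\perp,\dots,\theta^{n-2}_\perp\}$ for $\R^{n-1}$ as in Subsection \ref{section:NecessaryConditions} — write the Hessian as $D = \operatorname{diag}(0; D')$ with $D'$ uniformly elliptic, and just compute. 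Since Gao--Liu--Miao--Xi \cite{GaoLiuMiaoXi2023} prove exactly this statement for their (constant-coefficient, general-cone) phases, the substance of the argument is already there; what remains is to check that the only inputs it uses are (a) uniform ellipticity of the Hessian transverse to the radial direction and (b) the homogeneity relation making $\Psi$ radial, both of which survive verbatim in the present reduced variable-coefficient setting by \eqref{eq:HPerturbation} and the homogeneity of $\Psi$ established just above the lemma. Hence I would present the proof as: reproduce the constant-coefficient argument of \cite[Lemma~4.6]{GaoLiuMiaoXi2023}, replacing the flat Hessian by $\partial^2_{\eta\eta}h_{\bar x}(\eta)$ and invoking \eqref{eq:HPerturbation} wherever ellipticity or near-preservation of $V^-$ is needed, and observe that every step goes through with constants degraded only by absolute factors.
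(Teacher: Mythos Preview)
The paper does not supply its own proof of this lemma; it is quoted verbatim from \cite[Lemma~4.6]{GaoLiuMiaoXi2023}. Your plan---rerun the Gao--Liu--Miao--Xi argument with $\tilde\phi = h_{\bar x}$ and invoke \eqref{eq:HPerturbation} wherever uniform ellipticity of the Hessian on $\eta^\perp$ is needed---is exactly what is required, and is consistent with how the paper treats the statement.

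Two remarks on your outline. First, step (v) is easier than you fear: since $\bar V \supseteq \langle e_n\rangle$ one has directly $W = \bar V^\perp = \tilde V \times \{0\}$, and then a short computation (any unit $w\in W$ has $w_n=0$, so a nearby unit $v\in V$ must have small $v_n$ and hence $v'$ nearly in $V^-$) gives $\angle(V,W)\sim\angle(\tilde V,V^-)$ with constants depending only on the normalisation of $A$; there is no ``small correction'' subspace to track. Second, the heuristic in (iv) that ``$V^-$ is almost preserved by the Hessian'' is not the right picture---$H$ annihilates $\eta$, which typically has a nontrivial $V^-$-component, so $H$ does not nearly fix $V^-$. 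The clean mechanism is the one you name at the very end: in an orthonormal basis $\{f_k\}$ of $(V^-)^\perp$, the matrix $\bigl(\langle f_j, H f_k\rangle\bigr)_{j,k}$ equals $\bigl(\langle f_j^\perp, H f_k^\perp\rangle\bigr)_{j,k}$ (with $f_k^\perp = f_k - \langle f_k,\eta\rangle\eta$, using $H\eta=0$ and symmetry of $H$), and this is bounded below by $c_0$ times the Gram matrix $I - cc^T$ of the $f_k^\perp$, whose smallest eigenvalue is exactly $1-|c|^2 = |\mathrm{proj}_{V^-}\eta|^2\gtrsim K^{-4}$ by the hypothesis. Combined with the trivial upper bound on the Gram matrix of $\{Hf_k\}$, this yields $\angle(\tilde V,V^-)\gtrsim K^{-4}$ directly.
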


\medskip

\subsection{Verifying the transverse equidistribution estimate: broad space-time frequencies}

\smallskip

We turn to the key equidistribution estimate in case of broad space-time frequencies. 

\begin{remark}
\label{remark:Flatness}
We can suppose that $Z$ in $2B$ is $K$-flat, i.e., for any $z,z' \in Z \cap 2B$ we have
\begin{equation*}
\angle(T_z Z, T_{z'} Z) \lesssim K^{-5}
\end{equation*}
with $K \lesssim R^\delta \ll \rho^{\delta_m}$. If there is a wave packet with $\angle (G^\lambda(x;\omega_\theta), T_z Z) \lesssim R^{-\frac{1}{2}+\delta_m}$ for all $z \in Z \cap 2 B$, it follows from the triangle inequality for small angles that
\begin{equation*}
\angle (T_z Z, T_{z'} Z) \lesssim R^{-\frac{1}{2}+\delta_m}.
\end{equation*}
If there is no wave packet such that the above holds, then there is nothing to show.
\end{remark}

\begin{lemma}
\label{lem:TransverseEquidistributionEstimate}
Let $Z$ be a transverse complete intersection with $\dim Z = m$, $\overline{\text{deg}} \; Z \lesssim_\varepsilon 1$, $B=B(\bar{x},R^{\frac{1}{2}+\delta_m})$ a ball of radius $R^{\frac{1}{2}+\delta_m}$, and let $g$ be concentrated on wave packets in $\mathbb{T}_{Z,B,\tau}$. Suppose that with the notations of Subsection \ref{subsection:GeometricPreliminaries}, with $\tilde{\phi} = h_{\bar{x}}$, and for some $\eta \in \Psi^{-1}(\tau) \cap \mathbb{S}^{n-2}$ we are in the situation of Lemma \ref{lem:QuantitativeTransverse}, that is in the case of broad space-time frequencies. Then, for any $R^{\frac{1}{2}} \ll \rho \leq R$,
\begin{equation}
\label{eq:TransverseEquidistributionEstimateI}
\int_{B \cap N_{\rho^{\frac{1}{2}+\delta_m}}(Z)} |T^\lambda g|^2 \lesssim R^{\frac{1}{2}+O(\delta_m)} \big( \frac{\rho}{R} \big)^{\frac{n-m}{2}} \| g \|^2_{L^2}.
\end{equation}
\end{lemma}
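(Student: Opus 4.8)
The plan is to follow the scheme of transverse equidistribution as developed by Guth \cite{Guth2018} and Guth--Hickman--Iliopoulou \cite{GuthHickmanIliopoulou2019} in the paraboloid setting, but using the linearization of both the variety and the phase function set up in Subsection \ref{subsection:GeometricPreliminaries}. First I would reduce to the case where $Z$ is replaced in $2B$ by its tangent plane $z_0 + V$ at a fixed point $z_0 \in Z \cap B$: by Remark \ref{remark:Flatness} we may assume $Z$ is $K$-flat in $2B$, and since $\rho^{1/2+\delta_m} \ll R^{1/2+\delta_m}$ while the curvature of $Z$ causes it to deviate from its tangent plane by $\lesssim K^{-5} R^{1/2+\delta_m} \ll \rho^{1/2+\delta_m}$ only if $\rho$ is not too small, one checks $N_{\rho^{1/2+\delta_m}}(Z) \cap B \subseteq N_{C\rho^{1/2+\delta_m}}(z_0 + V) \cap B$. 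Thus it suffices to bound $\int_{B \cap N_{\rho^{1/2+\delta_m}}(z_0+V)} |T^\lambda g|^2$.

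Next I would pass to the frequency side. Using the stationary phase description from Subsection \ref{subsection:OutlineTransverseEquidistribution}, $\widehat{\eta_B T^\lambda g}$ is concentrated (up to $\mathrm{RapDec}(R)$) in an $O(R^{1/2+\delta_m})$-neighbourhood of the piece of the surface $\Sigma = \{\partial_x\phi^\lambda(\bar x;\omega)\}$ coming from the sector $\tau$. Because $g$ is concentrated on wave packets in $\mathbb{T}_{Z,B,\tau}$, the tangentiality condition \eqref{eq:TangentialConditionII} forces $G^\lambda(\bar x;\omega_\theta)$ to make angle $\lesssim \bar c_{tang} R^{-1/2+\delta_m}$ with $V = T_{z_0}Z$; passing through the graph parametrization $\Psi$ and $\tilde\phi = h_{\bar x}$ as in Subsection \ref{subsection:GeometricPreliminaries}, this localizes the relevant frequencies $u = \partial_{x'}\phi^\lambda(\bar x;\omega_\theta)$ to an $O(R^{-1/2+\delta_m})$-neighbourhood of the set $L$. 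Since we are in the broad case, Lemma \ref{lem:QuantitativeTransverse} gives a subspace $W$ with $\angle(V,W) \gtrsim K^{-4}$, and $L$ near $\eta$ is, after linearization, contained in an $O(R^{-1/2+\delta_m})$-neighbourhood of the $(m-1)$-dimensional space $\bar V^-$; lifting to the cone the frequency support of $g$ lies in an $O(\rho^{-1/2+\delta_m})$-neighbourhood (after the further $\rho$-localization by the definition of $\tau$) of an $m$-dimensional plane $\Pi$ parallel to $\bar V$, with $\Pi$ quantitatively transverse to the spatial subspace $V$. This is exactly the geometric configuration — frequency support near an $m$-plane, spatial localization to a neighbourhood of an $m$-plane transverse to it — in which the transverse equidistribution mechanism operates.

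At this point I would invoke the core transverse equidistribution inequality: if a function $F$ on $\R^n$ has Fourier support in the $\sigma$-neighbourhood of an $m$-plane $\Pi$, and $V$ is an $m$-subspace with $\angle(\Pi,V) \gtrsim K^{-4}$, then for $\rho \geq \sigma^{-1}$ (appropriately scaled) the $L^2$ mass of $F$ on $B \cap N_{\rho^{1/2+\delta_m}}(V)$ is smaller than its total $L^2$ mass on $B$ by a factor $\lesssim K^{O(1)}(\rho/R)^{(n-m)/2}$; this is the statement in \cite[Section~8]{Guth2018} and \cite[Section~8]{GuthHickmanIliopoulou2019}, whose proof is a Fourier-analytic argument on each fibre of $V^\perp$ (uncertainty principle: $F$ restricted to such a fibre has Fourier support of size $\lesssim \sigma$, hence cannot concentrate on a set of measure less than $\sigma^{-1}$ along that fibre). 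Applying this with $F = \eta_B T^\lambda g$, $\sigma \sim R^{-1/2+O(\delta_m)}$ (sharpened to the $\rho$-scale by the sectorial localization), and combining with the $L^2$ bound $\|T^\lambda g\|_{L^2(B)}^2 \lesssim R^{1/2+O(\delta_m)}\|g\|_{L^2}^2$ from Lemma \ref{lem:Hoermander} (or rather the fixed-time estimate \eqref{eq:FixedTimeL2Estimate} integrated over the $O(R^{1/2+\delta_m})$-thickness of $B$ in $x_n$), yields the claimed bound \eqref{eq:TransverseEquidistributionEstimateI}, absorbing all $K$-powers and lower-order $R^{O(\delta_m)}$ factors.

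The main obstacle I anticipate is the passage from the curved surface $\Sigma$ and the curved set $L$ to a genuine $m$-plane with honest transversality, uniformly in $\bar x$ and with errors that are truly negligible at the scale $\rho^{1/2+\delta_m}$: one must check that over the ball $B$ of radius $R^{1/2+\delta_m}$ the second derivatives of $\tilde\phi = h_{\bar x}$ (controlled by \eqref{eq:HPerturbation} and the reduced-phase bounds) produce deviations of $\Sigma$ from its tangent plane of size $\lesssim R^{-1/2+O(\delta_m)}$ in the relevant directions, so that the frequency localization genuinely improves to the $\rho$-scale as in the constant-coefficient argument of Gao \emph{et al.} \cite{GaoLiuMiaoXi2023}, rather than merely to the $R$-scale. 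The homogeneity of $\phi$ enters here in an essential way — the Hessian $\partial^2_{\omega\omega}\phi$ is degenerate in the radial direction, so the degenerate (radial) direction of $L$ must be tracked separately and shown to be exactly the direction absorbed into the "null" direction $e_n$ when forming $\bar V$, which is why $\dim \bar V^- = \dim V - 1$ rather than $\dim V$; keeping this bookkeeping consistent with the dimension count $(n-m)$ in the exponent of $(\rho/R)$ is the delicate point, and is where the deviation from \cite{GuthHickmanIliopoulou2019} announced in Subsection \ref{subsection:OutlineTransverseEquidistribution} is located.
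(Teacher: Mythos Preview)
Your proposal captures the correct overall strategy---linearize the variety via its tangent space $V$, use the broad--space-time-frequency hypothesis to produce the quantitatively transverse subspace $W$ from Lemma~\ref{lem:QuantitativeTransverse}, localize the Fourier support of $\eta_B T^\lambda g$ near an $m$-plane, and apply a fibrewise uncertainty-principle argument together with the H\"ormander $L^2$-bound. This is essentially what the paper does, and you correctly flag the genuine new issue (the null direction in the Hessian of the homogeneous phase, handled via the $e_n$-extension $\bar V = \langle \bar V^-, e_n\rangle$).

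There is one methodological difference worth noting. You replace $Z\cap B$ outright by its tangent plane $z_0+V$, arguing via Remark~\ref{remark:Flatness} that $N_{\rho^{1/2+\delta_m}}(Z)\cap B \subseteq N_{C\rho^{1/2+\delta_m}}(z_0+V)\cap B$; this is legitimate here (the angular flatness $\lesssim R^{-1/2+\delta_m}$ integrated over the radius $R^{1/2+\delta_m}$ gives deviation $\lesssim R^{2\delta_m} \ll \rho^{1/2+\delta_m}$). The paper instead keeps $Z$ and, after establishing the slicewise estimate on planes $\Pi\parallel W$ (Lemma~\ref{lem:TransverseEquidistributionEstimateLinearSubspace}), uses the geometric containment $\Pi\cap N_{\rho^{1/2+\delta_m}}(Z)\cap B \subseteq N_{CK^4\rho^{1/2+\delta_m}}(\Pi\cap Z)\cap 2B$ together with Wongkew's theorem to cover $\Pi\cap N_{\rho^{1/2+\delta_m}}(Z)\cap B$ by $O(R^{O(\delta_m)})$ balls of radius $\rho^{1/2+\delta_m}$. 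Your shortcut avoids Wongkew and is slightly more elementary; the paper's route follows \cite{GuthHickmanIliopoulou2019} more closely and is more robust (it would still work if the flatness were weaker).

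One point to tighten: when you say ``frequency support \ldots\ in an $O(\rho^{-1/2+\delta_m})$-neighbourhood \ldots\ of an $m$-dimensional plane $\Pi$ parallel to $\bar V$, with $\Pi$ quantitatively transverse to the spatial subspace $V$'', the correct statement (and what the paper proves as \eqref{eq:DistanceFrequencySubspace}) is that the frequency support lies in an $O(R^{-1/2+\delta_m})$-neighbourhood of the affine $m$-plane $A_\xi$, and it is $W=\bar V^{\perp}$, not $\bar V$ itself, that is quantitatively transverse to $V$. The equidistribution mechanism is that the \emph{projection onto $W$} of the frequency support has diameter $\lesssim R^{-1/2+\delta_m}$, hence $T^\lambda g$ is spread along $W$; the transversality $\angle(V,W)\gtrsim K^{-4}$ then converts this into the desired gain over $N_{\rho^{1/2+\delta_m}}(z_0+V)$. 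Also, the scale $R^{-1/2+\delta_m}$ (not $\rho^{-1/2+\delta_m}$) arises because the second-order Taylor error in linearizing $L\cap\Psi^{-1}(\tau)$ over a sector of aperture $\rho^{-1/2+\delta_m}$ is $O(\rho^{-1+2\delta_m})=O(R^{-1/2+\delta_m})$---this is exactly where the hypothesis $\rho\gg R^{1/2}$ enters, as the paper notes.
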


The proof relies on quantifying the uncertainty principle from \cite[Subsection~8.2]{GuthHickmanIliopoulou2019}.
Let $G: \R^n \to \C$  be such that $\text{supp} (\hat{G}) \subseteq B(c,r)$. Then we have essentially for $0<\rho < r^{-1}$:
\begin{equation}
\label{eq:QuantificationUncertaintyPrinciple}
\dashint_{B(x_0,\rho)} |G|^2 \lesssim \dashint_{B(x_0,r^{-1})} |G|^2.
\end{equation}

As a first step in the proof of Lemma \ref{lem:TransverseEquidistributionEstimate}, we consider wave packets tangential to linear subspaces: In the following transverse equidistribution estimates are considered with respect to some fixed linear subspace $V \subseteq \R^n$. Recall that $B$ is a ball of radius $R^{\frac{1}{2}+\delta_m}$ with centre $\bar{x} \in \R^n$, and define
\begin{equation*}
\mathbb{T}_{V,B} = \{(\theta,v) : \angle(G^\lambda(\bar{x};\omega_\theta),V) \lesssim R^{-\frac{1}{2}+\delta_m} \text{ and } T_{\theta,v} \cap B \neq \emptyset \}.
\end{equation*}
Recall that $R^{\frac{1}{2}} < \rho < R$ and, for $\tau \subseteq \R^{n-1}$ a sector of aperture $O(\rho^{-\frac{1}{2}+\delta_m})$ centred around a point in $A^{n-1}$, define
\begin{equation*}
\mathbb{T}_{V,B,\tau} =  \{ (\theta,v) \in \mathbb{T}_{V,B} : \theta \cap \big( \frac{\tau}{10} \big) \neq \emptyset \}.
\end{equation*}

\begin{lemma}
\label{lem:TransverseEquidistributionEstimateLinearSubspace}
If $V \subseteq \R^n$ is a linear subspace, then there exists a linear subspace $W$ with the following properties:
\begin{itemize}
\item[(1)] $\dim V + \dim W = n$;
\item[(2)] $V$ and $W$ are quantitatively transverse: we have $\angle(V,W) \gtrsim K^{-4}$;
\item[(3)] if $g$ is concentrated on wave packets from $\mathbb{T}_{V,B,\tau}$, and there is $\eta \in \Psi^{-1}(\tau)$ such that for $\tilde{\phi} = h_{\bar{x}}$ the assumptions of Lemma \ref{lem:QuantitativeTransverse} are valid, $\Pi$ is any plane parallel to $W$ and $x_0 \in \Pi \cap B$, then the inequality
\begin{equation*}
\int_{\Pi \cap B(x_0,\rho^{\frac{1}{2}+\delta_m})} |T^\lambda g|^2 \lesssim_\delta R^{O(\delta_m)} \big( \frac{\rho}{R} \big)^{\frac{\dim W}{2}} \| g \|^{2\delta / (1+\delta)}_{L^2} \big( \int_{\Pi \cap 2B} |T^\lambda g|^2 \big)^{\frac{1}{1+\delta}}
\end{equation*}
holds, up to inclusion of $\text{RapDec}(R) \| g \|_{L^2}$ on the right-hand side.

\end{itemize}
\end{lemma}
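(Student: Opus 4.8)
\textbf{Proof plan for Lemma \ref{lem:TransverseEquidistributionEstimateLinearSubspace}.}

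The plan is to take $W$ to be the subspace constructed in Subsection \ref{subsection:GeometricPreliminaries}: starting from $V$ and the chosen $\eta \in \Psi^{-1}(\tau) \cap \mathbb{S}^{n-2}$ satisfying the angle condition of Lemma \ref{lem:QuantitativeTransverse}, we form $\tilde{V} = \langle n^{(1)},\ldots,n^{(n-m)}\rangle \subseteq \R^{n-1}$, its orthogonal complement $\bar{V}^- = T_\eta L$, then $\bar{V} = \langle \bar{V}^-, e_n\rangle$ and $W = \bar{V}^\perp$. Property (1), $\dim V + \dim W = n$, is immediate from the splittings $\R^{n-1} = \tilde{V} \oplus \bar{V}^-$ and $\R^n = \bar{V} \oplus W$ together with $\dim V^- = \dim V - 1$; property (2), the quantitative transversality $\angle(V,W) \gtrsim K^{-4}$, is exactly Lemma \ref{lem:QuantitativeTransverse}. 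So the substance is property (3).

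For (3), the first step is a Fourier-support analysis of $T^\lambda g$ on the ball $2B$ of radius $\sim R^{1/2+\delta_m}$. Since $g$ is concentrated on wave packets from $\mathbb{T}_{V,B,\tau}$, a stationary-phase computation (as in \cite[Subsection~8.2]{GuthHickmanIliopoulou2019}) shows that $\widehat{\eta_B \cdot T^\lambda g}$ is essentially supported in an $R^{-1/2+\delta_m}$-neighbourhood of the piece of the cone-surface $\Sigma = \{\partial_x\phi^\lambda(\bar x;\omega) : \omega \in \tau\}$ corresponding to $\tau$, with $\omega_\theta$ ranging over sectors whose Gauss directions make angle $\lesssim R^{-1/2+\delta_m}$ with $V$. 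The linearization via $\Psi$ and the substitution $\tilde\phi = h_{\bar x}$ from Subsection \ref{subsection:GeometricPreliminaries} converts this into a statement about a graph over $u$-frequencies: the relevant frequencies $u$ lie in an $R^{-1/2+\delta_m}$-neighbourhood of $L$, and after linearizing $L$ at $\eta$ they lie in an $R^{-1/2+\delta_m}$-slab around the affine space $\eta + \bar V^-$ (up to the $K$-flatness error, controlled by Remark \ref{remark:Flatness} since $K \ll \rho^{\delta_m}$). Translating back to space-time frequencies $\xi = (u,\tilde\phi(u))$, the Fourier support of $\eta_B T^\lambda g$ sits in an $R^{-1/2+\delta_m}$-neighbourhood of an affine translate of $\bar V$. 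This is precisely the input needed to exploit the uncertainty principle in the $W$-direction: for a plane $\Pi$ parallel to $W = \bar V^\perp$, restricting to $\Pi$ means we are looking at a function whose frequencies, after projecting onto $W$, live in a ball of radius $\lesssim R^{-1/2+\delta_m}$, hence at scale $\rho^{1/2+\delta_m} < (R^{-1/2+\delta_m})^{-1}$ the function is essentially constant by \eqref{eq:QuantificationUncertaintyPrinciple}.

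The second step is to upgrade this pointwise ``locally constant'' statement to the $L^2$-averaged inequality with the gain $(\rho/R)^{\dim W/2}$ and the $L^2$-interpolation exponent $1/(1+\delta)$. One covers $\Pi \cap 2B$ by $\sim (R^{1/2+\delta_m}/\rho^{1/2+\delta_m})^{\dim W}$ balls of radius $\rho^{1/2+\delta_m}$; on each such ball the uncertainty principle gives $\dashint_{B(x_0,\rho^{1/2+\delta_m})}|T^\lambda g|^2 \lesssim \dashint_{B(x_0,R^{1/2+\delta_m})}|T^\lambda g|^2$, and summing over the cover with the $\ell^1 \hookrightarrow \ell^{1/(1+\delta)}$-type bound (using $\|g\|_{L^2}$ to absorb the loss from Hölder across the cover, exactly as in the paraboloid argument \cite[Section~8]{GuthHickmanIliopoulou2019}) produces the stated right-hand side. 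The counting of the cover is what produces the exponent $\dim W/2$ in $(\rho/R)^{\dim W/2}$. Minor care is needed for the $R^{O(\delta_m)}$ losses (from the $\delta_m$-thickenings and finite overlap) and the $\text{RapDec}(R)\|g\|_{L^2}$ tail coming from the wave-packet concentration hypothesis.

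\textbf{Main obstacle.} The delicate point, and the place where the homogeneous setting genuinely differs from \cite{GuthHickmanIliopoulou2019}, is the first step: controlling the Fourier support of $\eta_B T^\lambda g$ by an affine translate of $\bar V$ rather than of $V$ itself. Because the phase is $1$-homogeneous, the Hessian $\partial^2_{\eta\eta}\tilde\phi(\eta)$ is degenerate in the radial direction $\eta$, so $L$ need not be a plane and its linearization $\bar V^-$ is one dimension smaller than one would naively expect; the construction must go through $\tilde V$ and the tangent space $T_\eta L$ to recover the correct transverse complement $W$. Keeping track of the two linearizations simultaneously — of the variety $Z$ (replacing $T_z Z$ by the fixed $V$, legitimate by $K$-flatness) and of the phase (replacing $\phi^\lambda$ by its quadratic model at $\bar x$, with errors $O(R^{-\delta})$ absorbed into the $R^{O(\delta_m)}$) — while ensuring all error terms stay below the $R^{-1/2+\delta_m}$ threshold and do not interfere with the quantitative transversality $\angle(V,W)\gtrsim K^{-4}$, is the technical heart of the argument. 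Once the Fourier support is correctly localized, the uncertainty-principle and covering steps are essentially as in the Carleson–Sjölin case.
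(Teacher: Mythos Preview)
Your overall strategy matches the paper's: construct $W$ via the geometric preliminaries, establish the Fourier localization $\text{dist}(\xi_\theta, A_\xi) \lesssim R^{-1/2+\delta_m}$ for the affine translate $A_\xi$ of $\bar V$, and conclude by the uncertainty principle as in \cite{GuthHickmanIliopoulou2019}. Two points, however, are misattributed or glossed over and would leave genuine gaps if written as stated.

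First, your appeal to Remark~\ref{remark:Flatness} and ``$K$-flatness'' is out of place here. In this lemma $V$ is already a fixed linear subspace, so there is no variety whose tangent planes need to be flattened. The error in linearizing $L$ at $\eta$ comes from a different source: for $u_* \in L \cap \Psi^{-1}(\tau)$ one parametrizes the slice $L \cap \Psi^{-1}(\tau) \cap (|u_*|\,\mathbb{S}^{n-2})$ as a graph $w \mapsto \psi(w)$ over an open set of size $O(\rho^{-1/2+\delta_m})$ in $T_\eta L$, and second-order Taylor expansion gives $|\psi(w)| = O(\rho^{-1+2\delta_m}) = O(R^{-1/2+\delta_m})$. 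The last equality is where the hypothesis $R^{1/2} < \rho \ll R$ enters, and the paper explicitly flags this as a constraint absent in the constant-coefficient case. Your write-up should replace the $K$-flatness reference by this Taylor argument and the scale hypothesis.

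Second, you do not address the structural difference from \cite{GuthHickmanIliopoulou2019} that the paper singles out at the start of its proof: here $\tau$ is a \emph{sector} of aperture $O(\rho^{-1/2+\delta_m})$, hence of radial extent $\sim 1$, not a ball of that radius. Your claim that the relevant $u$-frequencies lie in an $R^{-1/2+\delta_m}$-slab around $\eta + \bar V^-$ would fail on its face over a sector unless one exploits the null direction of the $1$-homogeneous phase. The paper does this twice: once in bounding $\text{dist}(u_\theta, L \cap \Psi^{-1}(\tau))$ via Lemma~\ref{lem:ConsequencesReductionGaussMap} (angles, not distances, control the Gauss map), and again through the identity $\text{dist}(u_*, A_u) = \text{dist}(u_*, A_{\bar u})$ with $\bar u = (|u_*|/|u|)\,u$, which reduces to the spherical slice before the graph parametrization. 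You need to make this use of homogeneity explicit; otherwise the Fourier-support step is unjustified.
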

\begin{proof}
The proof largely follows \cite{GuthHickmanIliopoulou2019}. The difference is that we are dealing with the larger sectors $\tau$ here having an aperture $O(\rho^{-\frac{1}{2}+\delta_m})$ in contrast to balls of radius $O(\rho^{-\frac{1}{2}+\delta_m})$. This will be compensated by the null direction of the conic surface.

\medskip

\textbf{Constructing the subspace $W$:} Recall that $h_{\bar{x}}(u) = \partial_{x_n} \phi^\lambda(\bar{x};\Psi(u))$ with $\partial_{x'} \phi^\lambda(\bar{x};\Psi(u)) = u$ such that $(u,h_{\bar{x}}(u))$ is a graph parametrization of $\partial_{x} \phi^\lambda(\bar{x}; \cdot)$ in $u$-frequencies with $\omega = \Psi(u)$. If $L \cap \Psi^{-1}(\tau) = \emptyset$, then $\Psi(L) \cap \tau = \emptyset$, but then, by Lemma \ref{lem:ConsequencesReductionGaussMap} we had $\mathbb{T}_{V,B,\tau} = \emptyset$ and there is nothing to show. 

Suppose $L \cap \Psi^{-1}(\tau) \neq \emptyset$ in the following and fix $\eta \in L \cap \Psi^{-1}(\tau)$. Note that
\begin{equation*}
\partial^2_{x u_1} \phi^\lambda(\bar{x};\Psi(u)) \wedge \ldots \wedge \partial^2_{x u_{n-1}} \phi^\lambda(\bar{x};\Psi(u)) = G_0(\bar{x};\omega) \cdot \det J \Psi(u).
\end{equation*}
Hence, we can construct $W$ around $\eta \in L \cap \Psi^{-1}(\tau)$ as in Subsection \ref{subsection:GeometricPreliminaries}. $W$ and $V$ are quantitatively transverse as in (2) by Lemma \ref{lem:QuantitativeTransverse}.

\medskip

\textbf{Verifying the transverse equidistribution estimate:} Recall that $g$ is concentrated on wave packets $\mathbb{T}_{V,B,\tau}$, $B$ is a $R^{\frac{1}{2}+\delta_m}$-ball, and $\tau$ is a $O(\rho^{-\frac{1}{2}+\delta_m})$-sector. Let $\eta_B(x) = \eta((x-\bar{x})/R^{\frac{1}{2}+\delta_m})$ denote a Schwartz cutoff, which satisfies $\eta(x) = 1$ for $x \in B(0,2)$. 

Let $\Sigma(\omega) = \partial_x \phi^\lambda(\bar{x};\omega)$. If $\omega \in \text{supp} (g_{\theta,v})$, then $|\omega- \omega_\theta| < R^{-\frac{1}{2}}$, and so $|\Sigma(\omega) - \xi_\theta| \lesssim R^{-\frac{1}{2}}$, where $\xi_\theta = \Sigma(\omega_\theta)$. By non-stationary phase, we find like in \cite{GuthHickmanIliopoulou2019}
\begin{equation*}
|(\eta_B \cdot T^\lambda g_{\theta,v}) \vert_{\Pi} \widehat \; (\xi) | \lesssim_N R^{O(1)} w_{B(\text{proj}_{W} \xi_\theta, R^{-\frac{1}{2}})}(\xi) \| g_{\theta,v} \|_{L^2}.
\end{equation*}
Let
\begin{align*}
L = \{ u \in A^{n-1} : (-\nabla h_{\bar{x}}(u),1) \in V \}, \quad S_\omega = \{ \omega \in A^{n-1} : G^\lambda(\bar{x};\omega) \in V \}.
\end{align*}
Let $A_u = \eta + T_\eta L$ denote the affine variant of the linear subspace $T_\eta L$, and $A_\xi = A_u \times \R$. Let $V_\xi$ be the linear subspace corresponding to $A_\xi$. We have $V_\xi^\perp = W$.

Next, we shall show that
\begin{equation}
\label{eq:DistanceFrequencySubspace}
\text{dist}(\xi_\theta, A_\xi) \lesssim R^{-\frac{1}{2}+\delta_m}.
\end{equation}
Once the above is proved, the proof can then be concluded like in \cite{GuthHickmanIliopoulou2019}. Since the space-time frequencies of $\eta_B \cdot T^\lambda g \vert_\Pi$ are concentrated in a ball of radius $R^{-\frac{1}{2}+\delta_m}$, we can then use a rigorous version of \eqref{eq:QuantificationUncertaintyPrinciple} to conclude.

\smallskip

We turn to the proof of \eqref{eq:DistanceFrequencySubspace}: Fix $(\theta,v) \in \mathbb{T}_{V,B,\tau}$ and let
\begin{equation*}
u_\theta =  (\Sigma(\omega_\theta)_1,\ldots,\Sigma(\omega_\theta)_{n-1}).
\end{equation*}
%Recall that $\bar{V} = \bar{V}^- \times \R$. 
The triangle inequality yields
\begin{equation}
\label{eq:TriangleInequality}
\text{dist}(\xi_\theta,A_\xi) = \text{dist}(u_\theta,A_u) \leq \text{dist}(u_\theta, L \cap \Psi^{-1}(\tau)) + \sup_{u_* \in L \cap \Psi^{-1}(\tau)} \text{dist}(u_*,A_u).
\end{equation}
Furthermore, by Lemma \ref{lem:ConsequencesReductionGaussMap}, taking advantage of the null direction,
\begin{equation*}
\text{dist}(u_\theta, L \cap \Psi^{-1}(\tau)) \sim \text{dist}(\omega_\theta,S_\omega \cap \tau) \lesssim \angle(G^\lambda(\bar{x};\omega_\theta),V) \lesssim R^{-\frac{1}{2}+\delta_m},
\end{equation*}
where the last inequality is by the definition of $\mathbb{T}_{V,B,\tau}$.

We turn to the estimate of the second term in \eqref{eq:TriangleInequality}: Fix $u_* \in L \cap \Psi^{-1}(\tau)$. We note that $\text{dist}(u_*,A_u) = \text{dist}(u_*,A_{\bar{u}})$ for $\bar{u} = \frac{ | u_* |}{ | u |} u$ by null direction. Let $A_{\bar{u}} = u_0 + V_u$ for some linear subspace $V_u$. Now we note that the surface $L \cap \Psi^{-1}(\tau) \cap ( | u_* | \cdot \mathbb{S}^{n-2} )$, provided $\rho$ is large enough, can be written as a subset of the graph of a function $\psi: \mathcal{W} \to V_u^{\perp}$, where $\mathcal{W} \subseteq V_u$ is a subset around the origin of size $O(\rho^{-\frac{1}{2}+\delta_m})$. More precisely, we may write
\begin{equation*}
L \cap \Psi^{-1}(\tau) \cap ( \| u_* \| \cdot \mathbb{S}^{n-2} ) \subseteq \{ w +\psi(w) : w \in \mathcal{W} \} + u_0
\end{equation*}
with $\psi(0) = 0$ and $\nabla \psi(0) = 0$. The estimate now follows from Taylor expansion:
\begin{equation*}
| \psi(w) | = O(\rho^{-1+2 \delta_m}) = O(R^{-\frac{1}{2}+\delta_m}).
\end{equation*}
Here we used the hypothesis $R^{\frac{1}{2}} < \rho \ll R$ (cf. \cite{GuthHickmanIliopoulou2019}), which is absent for constant-coefficients.
\end{proof}

For the proof of the transverse equidistribution estimate in Lemma \ref{lem:TransverseEquidistributionEstimate}, we have to extend the estimate from a fixed vector space to a variety. The argument follows \cite[Section~8.4]{GuthHickmanIliopoulou2019} with the difference that the quantitative transversality (see \cite[Definition~8.11]{GuthHickmanIliopoulou2019}) mildly depends on the scale. This is due to the necessity of distinguishing between broad and narrow space-time frequencies and reflected in Lemma \ref{lem:QuantitativeTransverse}. The vector space $W$ constructed in Subsection \ref{subsection:GeometricPreliminaries} now satisfies
\begin{equation*}
\angle(v,w) \gtrsim K^{-4} \text{ for all } 0 \neq v \in T_{z} Z \text{ and } 0 \neq w \in W.
\end{equation*}

\begin{proof}[Proof of Lemma \ref{lem:TransverseEquidistributionEstimate}]
First, by Lemma \ref{lem:ConsequencesReductionGaussMap}, we have
\begin{equation*}
|G^\lambda(\bar{x};\theta) - G^\lambda(x;\theta)| \lesssim |x-\bar{x}|/\lambda \lesssim R^{-\frac{1}{2}+\delta_m}.
\end{equation*}
This yields
\begin{equation*}
\angle(G^\lambda(\bar{x};\theta), T_z Z) \lesssim R^{-\frac{1}{2}+\delta_m} \text{ for all } z \in Z \cap 2B.
\end{equation*}
Letting $V= T_z Z$, we have
\begin{equation*}
\mathbb{T}_{Z,B,\tau} \subseteq \mathbb{T}_{V,B,\tau}.
\end{equation*}
We can apply Lemma \ref{lem:TransverseEquidistributionEstimateLinearSubspace} to find a subspace $W$ such that
\begin{equation}
\label{eq:TransversalityVW}
\angle (V,W) \gtrsim K^{-4}
\end{equation}
and
\begin{equation}
\label{eq:TransverseEquidistributionSubspace}
\int_{\Pi \cap B(x_0,\rho^{\frac{1}{2}+\delta_m})} |T^\lambda g|^2 \lesssim_\delta R^{O(\delta_m)} \big( \frac{\rho}{R} \big)^{\frac{\dim W}{2}} \| g \|^{\frac{2 \delta}{1+\delta}}_{L^2} \big( \int_{\Pi \cap 2B} |T^\lambda g|^2 \big)^{\frac{1}{1+\delta}}
\end{equation}
for every affine subspace $\Pi$ parallel to $W$. By Remark \ref{remark:Flatness}, $(T_z Z, W)$ is a quantitatively transverse pair for all $z \in Z \cap 2B$. We have by (the proof of) \cite[Lemma~8.13]{GuthHickmanIliopoulou2019}
\begin{equation*}
\Pi \cap N_{\rho^{\frac{1}{2}+\delta_m}}(Z) \cap B \subseteq N_{C K^4 \rho^{\frac{1}{2}+\delta_m}}(\Pi \cap Z) \cap 2B.
\end{equation*}
Here is a minor change compared to \cite{GuthHickmanIliopoulou2019} as the size of the neighbourhood of $\Pi \cap Z$ is now increased by $K^4$. However, this factor can be absorbed into $R^{O(\delta_m)}$.
Since $\Pi \cap Z$ is a transverse complete intersection of dimension $\dim W + \dim Z - n$, a result due to Wongkew \cite{Wongkew1993} yields that $\Pi \cap N_{\rho^{\frac{1}{2}+\delta_m}}(Z) \cap B$ can be covered by
\begin{equation*}
O \big( R^{O(\delta_m)} \big( \frac{R}{\rho}\big)^{(\dim W + \dim Z - n)/2} \big) = O(R^{O(\delta_m)})
\end{equation*}
balls of radius $\rho^{\frac{1}{2}+\delta_m}$ because $K \lesssim R^\delta \ll \rho^{\delta_m}$. Now the argument is concluded by integrating over $\Pi$ parallel to $W$ and H\"ormander's $L^2$-bound (cf. Lemma \ref{lem:Hoermander}). For details we refer to \cite[p.~318]{GuthHickmanIliopoulou2019}.
\end{proof}

\section{Main inductive argument: Proof of the $k$-broad estimate}
\label{section:MainInductiveArgument}
The $k$-broad estimate is a consequence of the following claim, which is amenable to induction on scales and dimension. Let
\begin{equation*}
\bar{p}(k,n) = 2 \cdot \frac{n+k}{n+k-2}.
\end{equation*}
\begin{theorem}
\label{thm:MainInductionTheorem}
For $\varepsilon > 0$ sufficiently small, there are
\begin{equation*}
0<\delta \ll \delta_n \ll \delta_{n-1} \ll \ldots \ll \delta_1 \ll \varepsilon
\end{equation*}
and large dyadic parameters $\bar{A}_\varepsilon$, $\bar{C}_\varepsilon$, $D_{m,\varepsilon} \lesssim_\varepsilon 1$ and $\theta_m < \varepsilon$ such that the following holds. Suppose $Z=Z(P_1,\ldots,P_{n-m})$ is a transverse complete intersection with $\overline{\deg} Z \leq D_{m,\varepsilon}$. For all $2 \leq k \leq n$, $1 \leq A \leq \bar{A}_\varepsilon$ dyadic and $1 \leq K \leq R \leq \lambda$, the inequality
\begin{equation}
\label{eq:MainInductionEstimate}
\| T^\lambda f \|_{BL^p_{k,A}(B(0,R))} \lesssim_\varepsilon K^{\bar{C}_\varepsilon} R^{\theta_m + \delta(\log \bar{A}_\varepsilon - \log A) - e_{k,n}(p) + \frac{1}{2}} \| f \|_{L^2(A^{n-1})}
\end{equation}
holds whenever $f$ is concentrated on wave packets from $\mathbb{T}_Z$ and
\begin{equation}
\label{eq:Inductionp}
2 \leq p \leq \bar{p}_0(k,m) = 
\begin{cases}
\bar{p}(k,m), \quad \text{if } k <m, \\
\bar{p}(m,m) + \delta, \quad \text{if } k=m.
\end{cases}
\end{equation}
Above,
\begin{equation*}
e_{k,n}(p) = \frac{1}{2} \big( \frac{1}{2} - \frac{1}{p} \big) (n+k).
\end{equation*}
\end{theorem}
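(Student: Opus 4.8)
# Proof proposal for Theorem 1.7 (the Main Induction Theorem)

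The plan is to follow the polynomial partitioning scheme of Guth--Hickman--Iliopoulou, running an induction on the pair $(m, R)$ (dimension of the ambient variety and radius), with the key modification being the use of the transverse equidistribution estimate from Section 4 in its present $1$-homogeneous form. We argue by contradiction/minimality: assume the estimate \eqref{eq:MainInductionEstimate} holds for all smaller values of $m$, and for the same $m$ at all radii $\leq R/2$, and aim to prove it at scale $R$. First I would apply the polynomial partitioning theorem (Theorem 3.4) to the weight $W = |T^\lambda f|^p$ restricted to $B(0,R)$, with a degree $D = D_{m,\varepsilon}$ chosen large but admissible, producing $\sim D^n$ cells $O_i$ on each of which the broad norm is $\sim D^{-n}$ of the total, together with the ``wall'' $N_{R^{1/2+\delta}}(Z(P))$. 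The standard dichotomy then splits into the \textbf{cellular case} (the broad norm is dominated by the contribution of cells) and the \textbf{algebraic case} (it is dominated by the wall).

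In the cellular case, each cell is contained in a ball of radius $R/2$, and a wave packet $T_{\theta,v}$ meets $O(D)$ cells (by a Bézout-type bound, since the core curve is close to a polynomial curve of controlled degree via Lemma 3.8), so after pigeonholing one reduces to scale $R/2$ with the same $m$ and applies the induction hypothesis; summing the $D^n$ cellular estimates and using that $e_{k,n}(p) \geq 0$ for $p \leq \bar p(k,n)$ together with the gain $D^{-n}$ from equidistribution, the powers of $D$ and $R$ balance provided $p \leq \bar p_0(k,m)$ — exactly as in \cite[Section~9]{GuthHickmanIliopoulou2019}. In the algebraic case, one further pigeonholes: either many wave packets are \emph{transverse} to $Z(P)$ at the relevant scale — handled by Lemma 3.11 on transverse interactions between curved tubes and varieties, which again forces a geometric gain — or the dominant contribution is from wave packets \emph{tangent} to a transverse complete intersection $Z' \subseteq Z(P)$ of dimension $m-1$, i.e.\ $f$ becomes concentrated on $\mathbb{T}_{Z'}$, and one applies the induction hypothesis in dimension $m-1$. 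Here is where the \textbf{transverse equidistribution estimate of Lemma 4.5} enters: it controls the $L^2$-mass of $T^\lambda g$ on $\rho^{1/2+\delta_m}$-neighbourhoods of $Z$ by the factor $(\rho/R)^{(n-m)/2}$, and this extra decay is what upgrades the naive bound to the exponent $e_{k,n}(p)$. Crucially, in the present homogeneous setting one must first discard the \emph{case of narrow space-time frequencies} (Lemma 4.2 / Remark 4.4), which contributes nothing to the $k$-broad norm because the relevant frequency set collapses to $O(1)$ radial lines; only on the complementary \emph{broad} region does the quantitative transversality $\angle(V,W) \gtrsim K^{-4}$ of Lemma 4.4 hold, and there Lemma 4.5 applies.

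The bookkeeping of parameters is where most of the care lies but is essentially mechanical: the admissible degrees satisfy $D_{m,\varepsilon} \ll D_{m-1,\varepsilon}$ up the induction, the tangency parameters obey $\delta \ll \delta_n \ll \cdots \ll \delta_1 \ll \varepsilon$ so that losses of size $R^{O(\delta_m)}$ at each of the $\leq n$ stages sum to at most $R^{\theta_m}$ with $\theta_m < \varepsilon$, and the constant $\bar A_\varepsilon$ is chosen so large that the term $\delta(\log \bar A_\varepsilon - \log A)$ always provides the slack needed to absorb the triangle-inequality cost of splitting $A = A_1 + A_2$ for the $k$-broad norm when one passes between the cellular and algebraic pieces (using Lemmas 2.15--2.17). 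One also needs Hörmander's $L^2$-bound (Lemma 2.13) to convert $L^2$-information back into $L^p$ in the tangential subcase, and the $\ell^p$-orthogonality / almost-orthogonality of the wave packets from Section 2.3.

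\textbf{The main obstacle} I expect is the algebraic/tangential case, specifically making the transverse equidistribution machinery of Section 4 interface cleanly with the induction on dimension. Two points are delicate. First, the set $S_\xi$ (equivalently $L$) on the generalized cone is genuinely curved and may be ``close to one-dimensional'' without being exactly so, so one has to fix a quantitative threshold (tied to $K^{-2}$, as in the narrowed broad norm) separating the narrow and broad regimes, and verify that wave packets in the narrow regime truly drop out of $\mu_{T^\lambda f}(B_{K^2})$ — this is where the choice of angle threshold $K^{-2}$ rather than $K^{-1}$ in \eqref{eq:BroadNorm} is forced. Second, the scale restriction $R^{1/2} \ll \rho \ll R$ in Lemma 4.5 — which, unlike the constant-coefficient case, one genuinely needs because the error term $|\psi(w)| = O(\rho^{-1+2\delta_m})$ must be $\lesssim R^{-1/2+\delta_m}$ — has to be compatible with the radii arising in the polynomial-partitioning iteration; one must check that the intermediate scale $\rho$ chosen when passing to $Z'$ of dimension $m-1$ always lies in this window, which constrains how aggressively the degree $D$ can be taken and feeds back into the hierarchy $D_{m,\varepsilon} \ll D_{m-1,\varepsilon}$. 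Once these two compatibility checks are in place, the exponent arithmetic closing the induction is identical in form to \cite[Sections~9--10]{GuthHickmanIliopoulou2019}, with $n-1$ principal curvatures there replaced by $n-2$ here and the extra null direction accounted for by the gain already built into Lemma 4.5.
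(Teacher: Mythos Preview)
Your approach matches the paper's, but you have swapped the roles of the two algebraic sub-cases. In the paper, the \emph{tangential} sub-case (wave packets tangent to the lower-dimensional $Y^l$ produced by the partitioning) is handled by direct dimensional induction --- no transverse equidistribution is needed there. It is the \emph{transverse} sub-case that carries the main work and is not ``handled by Lemma~\ref{lem:TransverseIntersectionPolynomialApproximant}'' alone: one breaks $f_{B,\mathrm{trans}}$ into pieces $g_{ess,b}$ tangent to translates $Z+b$ of the \emph{original} $m$-dimensional variety, applies the \emph{radial} (not dimensional) induction hypothesis to each piece at scale $\rho$, and then invokes Lemma~\ref{lem:TransverseEquidistributionEstimate} to supply the factor $(\rho/R)^{(n-m)/2}$ needed to reassemble via \eqref{eq:TransverseEquidistribution}. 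Lemma~\ref{lem:TransverseIntersectionPolynomialApproximant} enters only afterwards, to establish the almost-orthogonality $\sum_B \|f_{B,\mathrm{trans}}\|_2^2 \lesssim \|f\|_2^2$.

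The paper's genuine novelty over \cite{GuthHickmanIliopoulou2019}, which you correctly flag as the main obstacle, sits precisely inside this transverse sub-case: before decomposing into translates one sorts the $R^{1/2+\delta_m}$-balls covering $B_j$ into Case~I (narrow space-time frequencies, which by Lemma~\ref{lem:NegligibleContributionBall} contribute nothing to the $k$-broad norm) and Case~II (broad, where Lemma~\ref{lem:QuantitativeTransverse} gives $\angle(V,W)\gtrsim K^{-4}$), passes to the essential part $g_{ess}$ via sorting into medium tubes, and only then runs the translate decomposition and transverse equidistribution on $g_{ess}$. With this correction to the placement of Lemma~\ref{lem:TransverseEquidistributionEstimate}, your outline is the paper's proof.
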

For future reference we denote by $E_{m,A}(R)$ the constant on the right-hand side of \eqref{eq:MainInductionEstimate}:
\begin{equation}
\label{eq:InductionQuantity}
E_{m,A}(R) = C_{m,\varepsilon} K^{\bar{C}_\varepsilon} R^{\varepsilon - c_n \delta_m + \delta(\log \bar{A}_\varepsilon - \log A) - e_{k,n}(p) + \frac{1}{2}}.
\end{equation}

\medskip

In the first step we reduce to $R \lesssim_\varepsilon \lambda^{1-\varepsilon}$ by covering $B(0,\lambda)$ with balls of radius $\lambda^{1-\varepsilon}$. The technical details are provided in \cite[Lemma~10.2]{GuthHickmanIliopoulou2019}. This reduction is necessary for polynomial approximation of the core curve $\gamma^\lambda_{\omega,v}$ uniformly in $R$.

Next, we set up the induction argument for $1 \leq R \lesssim_\varepsilon \lambda^{1-\varepsilon}$. For $\varepsilon >0$ sufficiently small, it is enough to consider $K \leq R^\delta$ by choosing $\bar{C}_\varepsilon$ sufficiently large (as the claim then follows from the trivial $L^1$-$L^\infty$-estimate and crude summation). We let furthermore
\begin{equation}
\label{eq:Parameters}
\begin{split}
D_{m,\varepsilon} = \varepsilon^{-\delta^{-(2n-m)}}, \quad \theta_m(\varepsilon) = \varepsilon - c_n \delta_m, \quad \bar{A}_\varepsilon = \lceil e^{\frac{10n}{\delta}} \rceil, \\
\delta_i = \delta_i(\varepsilon) = \varepsilon^{2i + 1} \text{ for all } i=1, \ldots, n, \text{ and } \delta = \delta(\varepsilon) \ll \delta_{n}. 
\end{split}
\end{equation}
The base case is given by $m \leq k-1$, and $A \geq 2^{10}$. For details we refer to \cite[Subsection~10.3]{GuthHickmanIliopoulou2019}.

\subsection{Inductive step} Let $2 \leq k \leq n-1$, $k \leq m \leq n$, and $K \leq R^\delta$. Assume, by way of induction hypothesis, that \eqref{eq:MainInductionEstimate} holds whenever $\text{dim} Z \leq m-1$ or the radial parameter is at most $\frac{R}{2}$. Fix $\varepsilon >0$, $1 < A \leq \bar{A}_\varepsilon$ and a transverse complete intersection $Z = Z(P_1,\ldots,P_{n-m})$ with $\overline{\deg} Z \leq D_{m,\varepsilon}$, where $\bar{A}_\varepsilon$ and $D_{m,\varepsilon}$ are as in \eqref{eq:Parameters}. Let $f$ be concentrated on wave packets from $\mathbb{T}_Z$. It suffices to show \eqref{eq:MainInductionEstimate} for $p = \bar{p}_0(k,m)$ by interpolation with the trivial $L^2$-bound. We recall the two cases to be analyzed:

\medskip

\textbf{The cellular case:} For any transverse complete intersection $Y^{l} \subseteq Z$ of dimension $1 \leq l \leq m-1$ and maximum degree at most $(D_{m,\varepsilon})^n$, the inequality
\begin{equation}
\label{eq:CellularCase}
\| T^\lambda f \|^p_{BL^p_{k,A}(N_{R^{\frac{1}{2} + \delta_m}/4}(Y^l) \cap B(0,R))} < c_{alg} \| T^\lambda f \|^p_{BL^p_{k,A}(B(0,R))}
\end{equation}
holds.

\medskip

\textbf{The algebraic case:} There exists a transverse complete intersection $Y^l \subseteq Z$ of dimension $1 \leq l \leq m-1$ of maximum degree at most $(D_{m,\varepsilon})^n$ such that
\begin{equation}
\label{eq:AlgebraicCase}
\| T^\lambda f \|^p_{BL^p_{k,A}(N_{R^{\frac{1}{2}+\delta_m}/4}(Y^l) \cap B(0,R))} \geq c_{alg} \| T^\lambda f \|^p_{BL^p_{k,A}(B(0,R))}.
\end{equation}
Here $c_{alg}>0$ depends on $n$ and $\varepsilon$.

\subsubsection{Cellular case}
The cellular case is as usually treated by induction on the radius. Via polynomial partitioning the $BL^p_{k,A}$-norm is equidistributed among the cells, and the induction closes.
This case is handled as in \cite[Section~10.5]{GuthHickmanIliopoulou2019}. We omit the details.

\subsubsection{Algebraic case}
 The algebraic case is more involved: $T^\lambda f$ can be regarded as concentrated near a low-dimensional and low degree variety $Y^l$ (for an oversimplification, one can think of a hyperplane). In the \emph{tangential subcase} the wave packets from $f$ are also tangential to this variety, then we can use induction on the dimension to conclude. In the \emph{transverse subcase}, many wave packets are transverse to $Y^l$. Then we can conclude via transverse equidistribution estimates. For the homogeneous phase functions the transverse equidistribution estimates are different from \cite[Section~10.6]{GuthHickmanIliopoulou2019}, and we give the details. 

\medskip
 
 Fix a transverse complete intersection $Y^l$ of dimension $1 \leq l \leq m-1$ of maximum degree $\overline{\deg} Y^l \leq (D_{m,\varepsilon})^n$, which satisfies \eqref{eq:MainInductionEstimate}. Let $R^{\frac{1}{2}} \ll \rho \ll R$ be such that $\rho^{\frac{1}{2}+\delta_l} = R^{\frac{1}{2}+\delta_m}$, and note that
\begin{equation*}
R \leq R^{2 \delta_l} \rho \text{ and } \rho \leq R^{-\delta_l/2} R.
\end{equation*}
Let $\mathcal{B}_\rho$ be a finitely overlapping cover of $B(0,R)$ by $\rho$-balls, and for each $B \in \mathcal{B}_\rho$ define
\begin{equation*}
\mathbb{T}_B = \{(\theta,v) \in \mathbb{T}: T_{\theta,v} \cap N_{R^{\frac{1}{2} + \delta_m}/4}(Y^l) \cap B \neq \emptyset \}
\end{equation*}
and
\begin{equation*}
f_B:= \sum_{(\theta,v) \in \T_B} f_{\theta,v}.
\end{equation*}
We have by the triangle inequality for broad norms
\begin{equation*}
\| T^\lambda f \|^p_{BL^p_{k,A}(B(0,R))} \lesssim \sum_{B \in \mathcal{B}_\rho} \| T^\lambda f_B \|^p_{BL^p_{k,A}(N_{R^{\frac{1}{2}+\delta_m}/4}(Y^l) \cap B)}
\end{equation*}
up to $\text{RapDec}(R) \| f \|^p_{L^2}$ on the right-hand side by the rapid decay of the wave packets away from the tubes.

For $B=B(y,\rho) \in \mathcal{B}_\rho$, let $\mathbb{T}_{B,tang}$ denote the set of all $(\theta,v) \in \mathbb{T}_B$ with the property that, whenever $x \in T_{\theta,v}$ and $z \in Y^l \cap B(y,2 \rho)$ satisfy $|x-z| \leq 2 \bar{C}_{tang} \rho^{\frac{1}{2}+\delta_l}$, it follows that
\begin{equation*}
\angle (G^\lambda(x;\omega_\theta), T_z Y^l) \leq \frac{1}{2} \bar{c}_{tang} \rho^{-\frac{1}{2}+\delta_l},
\end{equation*} 
where $\bar{C}_{tang}$ and $\bar{c}_{tang}$ are the constants appearing in the definition of tangency. Furthermore, let $\mathbb{T}_{B,trans} = \mathbb{T}_B \backslash \mathbb{T}_{B,tang}$ and define
\begin{equation*}
f_{B,tang} = \sum_{(\theta,v) \in \mathbb{T}_{B,tang}} f_{\theta,v} \text{ and } f_{B,trans} = \sum_{(\theta,v) \in \mathbb{T}_{B,trans}} f_{\theta,v}.
\end{equation*}
It follows that $f_B = f_{B,tang} + f_{B,trans}$ and, by the triangle inequality for broad norms, we conclude that
\begin{equation*}
\| T^\lambda f \|^p_{BL^p_{k,A}(B(0,R))} \lesssim \sum_{B \in \mathcal{B}_\rho} \| T^\lambda f_{B,tang} \|^p_{BL^p_{k,A/2}(B)} + \sum_{B \in \mathcal{B}_\rho} \| T^\lambda f_{B,trans} \|^p_{BL^p_{k,A/2}(B)}.
\end{equation*}
Either the tangential or the transverse contribution to the above sum dominates, and each case is treated separately.

\medskip

\textbf{Tangential subcase:} Suppose that the tangential term dominates and we have
\begin{equation}
\| T^\lambda f \|^p_{BL^p_{k,A}(B(0,R))} \lesssim \sum_{B \in \mathcal{B}_\rho} \| T^\lambda f_{B,tang} \|^p_{BL^p_{k,A/2}(B)}.
\end{equation}
This case can be handled by applying the dimensional induction hypothesis. We refer to \cite[pp.~345-346]{GuthHickmanIliopoulou2019} for the details.

\medskip

\textbf{Transverse sub-case:} In this case, we have
\begin{equation*}
\| T^\lambda f \|^p_{BL^p_{k,A}(B(0,R))} \lesssim \sum_{B \in \mathcal{B}_\rho} \| T^\lambda f_{B,trans} \|^p_{BL^p_{k,A/2}(B)}.
\end{equation*}

The strategy in the transverse case is to use induction on radius to show that for some $\overline{c}_\varepsilon > 0$ one has (redenoting $f_j$ for $f_{B_j,trans}$)
\begin{equation}
\label{eq:TransverseSubcaseKeyEstimate}
\| T^\lambda f_{j} \|_{BL^p_{k,A/2}(B)} \leq \overline{c}_\varepsilon E_{m,A}(R) \| f_{j} \|_{L^2(A^{n-1})}
\end{equation}
for all $B_j \in \mathcal{B}_\rho$.

Provided $\overline{c}_\varepsilon > 0$ is chosen sufficiently small, depending only on $n$ and $\varepsilon$, the proof is concluded by almost orthogonality of $f_{B,\text{trans}}$ (see \cite[Eq.~(10.23)]{GuthHickmanIliopoulou2019}):
\begin{equation*}
\| T^\lambda f \|_{BL^p_{k,A/2}(B(0,R))} \lesssim_\varepsilon \bar{c}_\varepsilon E_{m,A}(R) \| f \|^{1-\frac{2}{p}}_{L^2} \big( \sum_{B \in \mathcal{B}_\rho} \| f_{B,trans} \|^2_{L^2} \big)^{\frac{1}{p}} \leq E_{m,A}(R) \| f \|_{L^2}.
\end{equation*}
This estimate relies on Lemma \ref{lem:TransverseIntersectionPolynomialApproximant}.

The main obstacle to \eqref{eq:TransverseSubcaseKeyEstimate} is that we cannot expect $f_{j}$ to satisfy the hypothesis of Theorem \ref{thm:MainInductionTheorem} at scale $\rho$. The remedy is to break $f_{j}$ into pieces $f_{j,b}$, which are $\rho^{\frac{1}{2}+\delta_m}$-tangent to a translated variety of $Z+b$. To obtain a favorable estimate when reassembling the pieces, we use transverse equidistribution estimates. This still follows \cite{GuthHickmanIliopoulou2019}. Recall however, as argued in Section \ref{section:TransverseEquidistribution}, that after restricting to a ball of size $R^{\frac{1}{2}+\delta_m}$, the transverse equidistribution estimates do not always hold true. But if these do not hold true, then the contribution can actually be neglected. The first step is to separate the essentially and non-essentially contributing balls.

\bigskip

\textbf{Separating essentially and non-essentially contributing $R^{\frac{1}{2}+\delta_m}$-balls}: We cover $B_j$ by finitely overlapping $R^{\frac{1}{2}+\delta_m}$-balls $B_{j,k}$. Let $(\theta,v) \in \T_{Z,B_{j,k}}$ and $x \in T_{\theta,v} \cap N_{R^{\frac{1}{2}+\delta_m}}(Z) \cap B_{j,k}$, $z \in Z$ with $|x-z| \leq \bar{C}_{tang} R^{\frac{1}{2}+\delta_m}$. By definition of tangency, we have
\begin{equation*}
\angle (G^\lambda(x;\omega_\theta),T_{z} Z) \lesssim R^{-\frac{1}{2}+\delta_m}.
\end{equation*}
Let $V= T_z Z$. By Lemma \ref{lem:ConsequencesReductionGaussMap}, we have
\begin{equation*}
\angle (G^\lambda(\bar{x};\omega_\theta),V) \lesssim R^{-\frac{1}{2}+\delta_m}.
\end{equation*}
Now we consider the linearization $\tilde{\phi}(u) = \partial_{x_n} \phi^\lambda (\bar{x};\Psi^\lambda(u))$ around $\bar{x}$, the centre of $B_{j,k}$. We consider like in Section \ref{subsection:GeometricPreliminaries} a matrix $A \in \R^{(n-m) \times n}$ of maximal rank such that
\begin{equation*}
V= \{ x \in \R^n : A x = 0 \}, \quad L = \{ u \in B_{n-1}(0,2) \backslash B_{n-1}(0,1/2) : A(-\nabla_u \tilde{\phi}(u),1) = 0 \},
\end{equation*}
where $L$ denotes the set of $u$-frequencies with normal in $V$.

We apply the dichotomy of Section \ref{subsection:GeometricPreliminaries}: 
\begin{itemize}
\item[Case~I:] $L$ is contained in $O(1)$ sectors of size $1 \times K^{-2} \times \ldots \times K^{-2}$ by Lemma \ref{lem:NegligibleContributionBall}. This is referred to as Case I. In Subsection \ref{subsection:GeometricPreliminaries} this was called the case of narrow space-time frequencies. Note that if $L$ is contained in $O(1)$ $1 \times K^{-2} \times \ldots \times K^{-2}$-sectors, then so is $\bigcup \theta$ with $\angle (G^\lambda(\bar{x};\omega_\theta),V) \lesssim R^{-\frac{1}{2}+\delta_m}$ by Lemma \ref{lem:ConsequencesReductionGaussMap}. Consequently, by $2 \leq k \leq m$, Case I-balls can be neglected in the $k$-broad norm (see \eqref{eq:EssentialContribution} below).

\item[Case~II:] This was referred to as case of broad space-time frequencies. We consider the further refinement $\T_{V,B_{j,k},\tau}$ with $\tau$ a $\rho^{-\frac{1}{2}}$-sector. By Lemma \ref{lem:QuantitativeTransverse}, there is a quantitatively transverse subspace $W$ with $\bar{V} \oplus W = \R^n$ and
\begin{equation*}
\angle (V,W) \gtrsim K^{-4}.
\end{equation*}
$\bar{V}$ denotes the extension of a tangent space of $L$ from Subsection \ref{subsection:GeometricPreliminaries}.
\end{itemize}
We let $X_I$ and $X_{II}$ denote the union of balls $B_{j,k}$ from Cases I and II.

\bigskip

\textbf{Sorting into medium tubes:} Redenote $g = f_{j,trans}$.
Next, we use the sorting into medium tubes as in \cite[Section~9]{GuthHickmanIliopoulou2019}\footnote{The arguments in \cite[Section~9]{GuthHickmanIliopoulou2019} clearly only depend on the non-degeneracy condition $C1)$.}. The idea is to carry out a wave packet decomposition at smaller scale $\rho \ll R$ on the $\rho$-ball. This requires us to mildly modify the phase function, amplitude function, and input function $\phi,a,g \to \tilde{\phi},\tilde{a},\tilde{g}$ to center the $\rho$-ball at the origin. The wave packets at scale $\rho$ are denoted by $\tilde{\mathbb{T}}$ with indices $\tilde{\theta}$ (a $\rho^{-\frac{1}{2}}$-cap) and $\tilde{v} \in \rho^{\frac{1+\delta}{2}} \Z^{n-1}$. We can break the $R$-wave packet $(\theta,v) \in \mathbb{T}$ into smaller scale $\rho$-wave packets essentially contributing to $(\theta,v)$. These $\rho$-wave packets are denoted by $\tilde{\mathbb{T}}_{\theta,v}$ (see \cite[Lemma~9.1]{GuthHickmanIliopoulou2019}).

 We use the following facts from \cite{GuthHickmanIliopoulou2019}:
Firstly, we recall how the tangency properties are inherited. The $\rho$-wave packets obtained from a wave packet $R^{-\frac{1}{2}+\delta_m}$-tangential to a variety $Z$ are $\rho^{-\frac{1}{2}+\delta_m}$-tangential to a translate $Z-b$ (cf. \cite[Proposition~9.2]{GuthHickmanIliopoulou2019}).

There is a sorting $\big( \mathcal{T}_{\tilde{\theta},w} \big)_{\tilde{\theta},w}$, $\mathcal{T}_{\tilde{\theta},w} \subseteq \mathbb{T}$ with $\tilde{\theta}$ a $\rho^{-\frac{1}{2}}$-cap and $w \in R^{\frac{1+\delta}{2}} \Z^{n-1}$, which partitions $\mathbb{T}$ (see \cite[Section~9.3]{GuthHickmanIliopoulou2019}). The corresponding \emph{medium tubes} of length $\rho$ and width $R^{\frac{1}{2}+\delta}$ are given by
\begin{equation*}
T_{\tilde{\theta},w}= \big( \bigcup_{(\theta,v) \in \mathcal{T}_{\tilde{\theta},w}} T_{\theta,v} \big) \cap B(y,\rho).
\end{equation*}
We have the almost orthogonality:
\begin{equation*}
\| g \|_2^2 \sim \sum_{(\tilde{\theta},w) \in \mathcal{T}} \| g_{\tilde{\theta},w} \|_{2}^2,
\end{equation*}
and we have that for every $(\theta,v) \in \mathcal{T}_{\tilde{\theta},w}$ it holds
\begin{equation*}
\text{dist}_H(T_{\theta,v} \cap B(y,\rho), T_{\tilde{\theta},w} ) \lesssim R^{\frac{1}{2}+\delta}.
\end{equation*}

We can also consider the $\rho$-wave packet decomposition of $(g_{\tilde{\theta},w}) \tilde{\;}$. This is concentrated on scale $\rho$-wave packets belonging to $\bigcup_{(\theta,v) \in \mathcal{T}_{\tilde{\theta},w}} \tilde{\mathbb{T}}_{\theta,v}$. We obtain a covering $(\tilde{\mathcal{T}}_{\tilde{\theta},w})_{\tilde{\theta},w}$ of $\tilde{\mathbb{T}}$ by almost disjoint sets. Hence, the sorting into medium tubes is suitable to switch between $R$ and $\rho$-wave packets.

 We define the essential part like in \cite[p.~3582]{OuWang2022}:
\begin{equation*}
g_{ess} = \sum_{(\tilde{\theta},w) \in \mathcal{T}_{ess}} g_{\tilde{\theta},w} = g - \sum_{(\tilde{\theta},w) \in \mathcal{T}_{tail}} g_{\tilde{\theta},w},
\end{equation*}
where
\begin{equation*}
\begin{split}
\mathcal{T}_{ess} &= \{ (\tilde{\theta},w) : \, \exists (\theta,v) \in \mathcal{T}_{\tilde{\theta},w} :  T_{\theta,v} \cap X_{II} \neq \emptyset \}, \\
\mathcal{T}_{tail} &= \{ (\tilde{\theta},w) : \, \forall (\theta,v) \in \mathcal{T}_{\tilde{\theta},w} : T_{\theta,v} \cap X_{II} = \emptyset \}.
\end{split}
\end{equation*}
Like in \cite{OuWang2022}, we infer that
\begin{equation}
\label{eq:EssentialContribution}
\| T^\lambda g \|_{BL^p_{k,A/2}(B_j)} \leq \| T^\lambda g_{ess} \|_{BL^p_{k,A/4}(B_j)} + \text{RapDec}(R) \| f \|_{L^2}.
\end{equation}

\medskip

\textbf{Breaking the essential contribution into smaller neighbourhoods of translates:} As in \cite{GuthHickmanIliopoulou2019}, we choose a set of translates $\mathcal{B}$, so that we can write
\begin{equation}
\label{eq:TransverseSubcaseKeyEstimateII}
\| T^\lambda g_{ess} \|_{BL^p_{k,A/4}(B_j)} \lesssim (\log R)^2 \big( \sum_{b \in \mathcal{B}} \| T^\lambda g_{ess,b} \|^p_{BL^p_{k,A/4}(B_j)} \big)^{\frac{1}{p}},
\end{equation}
where each piece $g_{ess,b}$ is defined so that it is concentrated on scale $\rho$ wave packets, which are tangential to some translate $Z-y+b$ of $Z$. This construction is provided in \cite[Lemma~10.5]{GuthHickmanIliopoulou2019}.
The set of translates $\mathcal{B}$ are chosen so that effectively
\begin{itemize}
\item[(i)] $N_{R^{\frac{1}{2}+\delta_m}}(Z) \subseteq \bigcup_{b \in \mathcal{B}} N_{\rho^{\frac{1}{2}+\delta_m}}(Z-y+b)$,
\item[(ii)] $N_{\rho^{\frac{1}{2}+\delta_m}}(Z-y+b)$ are essentially disjoint.
\end{itemize}
Technically, this decomposition also involves to ``tune" the wave packets of $g_{ess,b}$ to a collection of balls $\mathcal{B}' \subseteq  B_{K^2} $. This quantifies (i) and (ii) and is necessary due to the definition of the broad norm. In the following we take the resulting almost orthogonality \eqref{eq:AlmostOrthogonalityTranslates} for granted (see \cite[Lemma~9.6]{GuthHickmanIliopoulou2019}).

\medskip

By modifying $T^\lambda$ and $g_{ess,b}$, we can center the ball $B_j$ at the origin:
\begin{equation*}
\| T^\lambda g_{ess,b} \|_{BL^p_{k,A/4}(B_j)} \lesssim (\log R)^2 \big( \sum_{b \in \mathcal{B}} \| \tilde{T}^\lambda (g_{ess,b}) \tilde{\;}  \|^p_{BL^p_{k,A/4}(B(0,\rho))} \big)^{\frac{1}{p}}.
\end{equation*}
Now we can apply the radial induction hypothesis:
\begin{equation*}
\big( \sum_{b \in \mathcal{B}} \| \tilde{T}^\lambda (g_{ess,b}) \tilde{\;} \|^p_{BL^p_{k,A/4}(B(0,\rho))} \big)^{\frac{1}{p}} \leq E_{m,A/4}(\rho) \big( \sum_{b \in \mathcal{B}} \| g_{ess,b} \|^p_{L^2} \big)^{\frac{1}{p}}.
\end{equation*}
Once we have proved
\begin{equation}
\label{eq:TransverseEquidistribution}
\big( \sum_{b \in \mathcal{B}} \| \tilde{g}_{ess,b} \|^p_{L^2} \big)^{\frac{1}{p}} \lesssim R^{O(\delta_m)} \big( \frac{\rho}{R} \big)^{(n-m) \big( \frac{1}{4}- \frac{1}{2p} \big)} \| g_{ess} \|_2,
\end{equation}
the computation to conclude the proof is like in \cite{GuthHickmanIliopoulou2019}. \eqref{eq:TransverseEquidistribution} is proved via interpolation between $p=2$ and $p=\infty$.

\medskip

\textbf{Reassembling the set of translates:} For $p=2$ we can argue like in \cite{GuthHickmanIliopoulou2019} based on the almost orthogonality of the $\rho$-wave packets contributing to $(g_{ess,b}) \tilde{\;}$ (here we also use the essential disjointness of the translates in (ii) above):
\begin{equation}
\label{eq:AlmostOrthogonalityTranslates}
\big( \sum_{b \in \mathcal{B}} \| g_{ess,b} \|^2_{L^2} \big)^{\frac{1}{2}} \lesssim \| g_{ess} \|_2.
\end{equation}

We turn to the proof of \eqref{eq:TransverseEquidistribution} for $p=\infty$, which requires an additional argument compared to \cite{GuthHickmanIliopoulou2019}. The crucial ingredient remains transverse equidistribution. By almost orthogonality of $(\tilde{\theta},w) \in \mathcal{T}$ and the definition of $\mathcal{T}_{ess}$ we have
\begin{equation*}
\| g_{ess,b} \|^2_{L^2} \sim \sum_{(\tilde{\theta},w) \in \mathcal{T}_{ess}} \| (g_{ess,\tilde{\theta},w})_b \tilde{\;} \|^2_{L^2}.
\end{equation*}

By construction of $g_{ess,\tilde{\theta},w}$ there is $(\theta,v) \in \mathcal{T}_{\tilde{\theta},w}$ such that $T_{\theta,v}$ intersects $X_{II}$. Let $B= B(\bar{x};R^{\frac{1}{2}+\delta_m})$ denote the corresponding ball in $X_{II}$. Since the Hausdorff distance between $T_{\theta_1,v_1}$ for further $(\theta_1,v_1) \in \mathcal{T}_{\tilde{\theta},w}$ is $\lesssim R^{\frac{1}{2}+\delta}$, we can apply the reverse H\"ormander estimate from \cite{GuthHickmanIliopoulou2019} at scale $\rho$ with $r \sim R^{\frac{1}{2}+\delta_m}$ to find that
\begin{equation}
\label{eq:InverseHoermanderApplication}
\| ( g_{ess,\tilde{\theta},w} )_b \tilde{\;} \|_{L^2}^2 \lesssim R^{-\frac{1}{2}-\delta_m} \| \tilde{T}_*^\lambda  (g_{ess,\tilde{\theta},w})_b \tilde{\;} \|_{L^2(B(\bar{x}-y;10 R^{\frac{1}{2}+\delta_m}))}^2.
\end{equation}
The reverse H\"ormander estimate \cite[Lemma~9.5]{GuthHickmanIliopoulou2019} relies on every $\rho$-wave packet of $(g_{ess,\tilde{\theta},w})_b \tilde{\;}$ intersecting the $R^{\frac{1}{2}+\delta_m}$-ball $B$. The proof follows from Fourier series expansion and the constant-coefficient analog. This yields a slightly different oscillatory integral operator $\tilde{T}_*^\lambda$, whose data however inherits the properties of $(\phi,a)$. We refer to \cite{GuthHickmanIliopoulou2019}.

Next, we can apply Lemma \ref{lem:TransverseEquidistributionEstimate} to find that\footnote{This holds up to $\text{RapDec}(R)$. Note the shift and localization to $N_{\rho^{\frac{1}{2}+\delta_m}}(Z+b)$ on the left-hand side, which is explained in \cite[Equation~(9.10)]{GuthHickmanIliopoulou2019}.}
\begin{equation}
\label{eq:TransverseEquidistributionApplication}
\| T_*^\lambda g_{ess,b,\tilde{\theta},w}  \|^2_{L^2(B(\bar{x};10 R^{\frac{1}{2}+\delta_m}) \cap N_{\rho^{\frac{1}{2}+\delta_m}}(Z+b))} \lesssim R^{\frac{1}{2}+O(\delta_m)} \big( \frac{\rho}{R} \big)^{\frac{n-m}{2}} \| ( g_{ess,\tilde{\theta},w} )_b \tilde{\;} \|^2_{L^2}.
\end{equation}
Now we can use the almost orthogonality induced by $\tilde{\mathcal{T}}_{\tilde{\theta},w}$ to find after taking \eqref{eq:InverseHoermanderApplication} and \eqref{eq:TransverseEquidistributionApplication} together:
\begin{equation*}
\begin{split}
\sum_{(\tilde{\theta},w) \in \mathcal{T}_{ess}} \| ( g_{ess,\tilde{\theta},w} )_b \tilde{\;} \|_{L^2}^2 &\lesssim R^{O(\delta_m)} \big( \frac{\rho}{R} \big)^{\frac{n-m}{2}} \sum_{(\tilde{\theta},w) \in \mathcal{T}_{ess}} \| (g_{ess,\tilde{\theta},w})_b \tilde{\;} \|_{L^2}^2 \\
&\lesssim R^{O(\delta_m)} \big( \frac{\rho}{R} \big)^{\frac{n-m}{2}} \| g_{ess,b} \|_{L^2}^2 \\
&\lesssim R^{O(\delta_m)} \big( \frac{\rho}{R} \big)^{\frac{n-m}{2}} \| g_{ess} \|_{L^2}^2.
\end{split}
\end{equation*}
This finishes the proof of \eqref{eq:TransverseEquidistribution}, and we can argue that induction closes like in \cite{GuthHickmanIliopoulou2019}.

$\hfill \Box$

\section{From $k$-broad to linear estimates}
\label{section:LinearEstimates}
In this section we deduce the linear estimates from the $k$-broad estimates by applying the Bourgain--Guth argument \cite{BourgainGuth2011}. We show the following proposition:
\begin{proposition}
\label{prop:LinearFromBroadEstimates}
Suppose that for all $K \geq 1$ and all $\varepsilon > 0$ any oscillatory integral operator $T^\lambda$ built from reduced data $(\phi,a)$ obeys the $k$-broad inequality
\begin{equation}
\label{eq:HypothesisKBroad}
\| T^\lambda f \|_{BL^p_{k,A}(B(0,R))} \lesssim_\varepsilon K^{C_\varepsilon} R^\varepsilon \| f \|_{L^p(A^{n-1})}
\end{equation}
for some fixed $k$, $A=A_\varepsilon$, $p$, $C_\varepsilon$, and all $R \geq 1$. If
\begin{equation}
\label{eq:ConditionsP}
p(k,n) \leq p \leq \frac{2n}{n-2}, \qquad p(k,n) = 
\begin{cases}
2 \cdot \frac{n-1}{n-2} \quad \text{ if } 2 \leq k \leq 3, \\
2 \cdot \frac{2n-k+1}{2n-k-1} \quad \text{ if } k > 3,
\end{cases}
\end{equation}
then any oscillatory integral operator with $C1)$ and $C2^+)$ phase $\phi$ and amplitude $a$ satisfies
\begin{equation}
\label{eq:LinearEstimateFromBroad}
\| T^\lambda f \|_{L^p(\R^n)} \lesssim_{\phi,\varepsilon,a} \lambda^\varepsilon \| f \|_{L^p(A^{n-1})}.
\end{equation}
\end{proposition}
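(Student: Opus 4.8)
The plan is to run the Bourgain--Guth broad-narrow iteration, using the hypothesized $k$-broad estimate \eqref{eq:HypothesisKBroad} to control the broad part and a narrow $\ell^p$-decoupling estimate plus induction on the scale $\lambda$ to control the narrow part. First I would perform the reductions of Section \ref{subsection:BasicReductions}: by partitioning the support of $a$ and applying parabolic rescaling, it suffices to prove \eqref{eq:LinearEstimateFromBroad} for $T^\lambda$ built from \emph{reduced} data $(\phi,a)$, since reduced phases are $C^N$-perturbations of $\phi_*$ and the rescaling is $L^p$-invariant up to harmless constants. (As usual, such reductions are only safe with a fixed final value of $p$; one should also fix $N=N(\varepsilon)$ large enough to accommodate the decoupling step, per the parameter discussion in Section \ref{section:Preliminaries}.) One then sets up an induction on $\lambda$: assume \eqref{eq:LinearEstimateFromBroad} holds (with an implicit constant of the form $C_\varepsilon \lambda^\varepsilon$) for all scales $\leq \lambda/2$, and prove it at scale $\lambda$.

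The core of the argument is the pointwise broad-narrow inequality on each $K^2$-ball $B_{K^2} \subseteq B(0,\lambda)$, namely \eqref{eq:BroadNarrowEstimate}:
\begin{equation*}
\| T^\lambda f\|^p_{L^p(B_{K^2})} \lesssim_A K^{O(1)} \mu_{T^\lambda f}(B_{K^2}) + \sum_{a=1}^A \Big\| \sum_{\tau \in V_a} T^\lambda f_\tau \Big\|^p_{L^p(B_{K^2})},
\end{equation*}
where the $V_a \in Gr(k-1,n)$ are chosen to realize the minimum in the definition \eqref{eq:BroadNorm} of $\mu_{T^\lambda f}(B_{K^2})$, and $\tau$ ranges over the $K^{-1}$-sectors of $A^{n-1}$. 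Summing over $B_{K^2}$, the first term is exactly $K^{O(1)}\|T^\lambda f\|^p_{BL^p_{k,A}(B(0,\lambda))}$, which by the hypothesis \eqref{eq:HypothesisKBroad} with $R = \lambda$ is bounded by $K^{O(1)}K^{pC_\varepsilon}\lambda^{p\varepsilon}\|f\|_p^p$; choosing $K = K(\varepsilon)$ appropriately (a power of $\lambda$ that is $\lambda^{o(1)}$ in the end, or rather choosing $K$ a large constant and iterating — the standard bookkeeping) absorbs this into the acceptable loss. For the narrow term, each $V_a$ contains only those $\tau$ whose Gauss images cluster near a $(k-1)$-plane; after the homogeneous wave-packet localization the relevant sectors $\tau \in V_a$ number at most $\sim K^{k-2}$ (the ``reduced number of sectors'' alluded to after \eqref{eq:BroadNarrowEstimate}), because of the extra null direction in the conic geometry — this is where the exponent $p(k,n)$ in \eqref{eq:ConditionsP}, with its $2n-k$ rather than $n+k$, comes from.

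Next I would apply a variable-coefficient narrow $\ell^p$-decoupling inequality: for $f$ with Fourier support decomposed into the $\leq K^{k-2}$ sectors $\tau \in V_a$, one has on $B_{K^2}$ (and after rescaling each $\tau$-piece, which is a $K^{-1}\times\cdots\times K^{-1}\times 1$-plate, to a ball of radius $\sim 1$ on a surface of the same reduced type) an estimate of the shape
\begin{equation*}
\Big\| \sum_{\tau \in V_a} T^\lambda f_\tau \Big\|_{L^p(B_{K^2})} \lesssim_\varepsilon K^{\varepsilon} K^{\beta(k,n,p)} \Big( \sum_{\tau \in V_a} \| T^\lambda f_\tau \|_{L^p(w_{B_{K^2}})}^p \Big)^{1/p},
\end{equation*}
where $\beta(k,n,p)\geq 0$ vanishes precisely at the endpoint $p = p(k,n)$; this follows from Bourgain--Demeter $\ell^p L^p$ decoupling for the cone/its reduced perturbations (cf. \cite{BourgainDemeter2015,BeltranHickmanSogge2020}), the margin condition \eqref{eq:MarginCondition} being exactly what makes the variable-coefficient decoupling applicable on each plate. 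Each piece $T^\lambda f_\tau$, after parabolic rescaling of the $\tau$-plate, becomes an operator of the same reduced type at scale $\lambda/K^2 \le \lambda/2$, so the induction hypothesis gives $\|T^\lambda f_\tau\|_{L^p(w_{B_{K^2}})} \lesssim C_\varepsilon (\lambda/K^2)^\varepsilon \|f_\tau\|_p$ up to admissible weight factors. Summing over $B_{K^2}$ and over $\tau$, using $\ell^p$-orthogonality of the $f_\tau$ in frequency and that there are $\leq A$ choices of $a$, yields the narrow contribution $\lesssim_\varepsilon A K^{\beta(k,n,p)+\varepsilon} C_\varepsilon (\lambda/K^2)^\varepsilon \|f\|_p$. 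At the endpoint $p = p(k,n)$ we have $\beta = 0$; for $p > p(k,n)$, $\beta>0$ but is beaten by the gain $K^{-2\varepsilon}$ from the induction provided $\varepsilon$ is small relative to $\beta$ — alternatively one first proves the endpoint and interpolates up to $p = 2n/(n-2)$ against the trivial $L^2$- or $L^\infty$-bounds. Combining broad and narrow and choosing $K$ large (a constant) so that $AK^{-2\varepsilon} < 1/2$ closes the induction, giving \eqref{eq:LinearEstimateFromBroad}.

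The main obstacle, and the only place real work beyond bookkeeping is needed, is the narrow decoupling step: one must verify that the reduced homogeneous phases of Section \ref{subsection:BasicReductions} are close enough to $\phi_*$ — in the relevant $C^N$ topology, and with the margin condition \eqref{eq:MarginCondition} in force — that the Bourgain--Demeter cone decoupling applies on each $\tau$-plate after parabolic rescaling, and that the rescaled phase is again of reduced type so that the induction hypothesis is legitimately invoked at the smaller scale. This is exactly the point flagged in the introduction as requiring ``additional considerations'' relative to the constant-coefficient treatments in \cite{OuWang2022,GaoLiuMiaoXi2023}; the homogeneity (degeneracy of $\partial^2_{\omega\omega}\phi$ in the radial direction) must be used both to get the reduced sector count $\sim K^{k-2}$ and to ensure the rescaling is compatible with the class of reduced phases, and the details parallel the variable-coefficient decoupling arguments of Beltran--Hickman--Sogge \cite{BeltranHickmanSogge2020}. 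The remaining steps — the broad-narrow decomposition \eqref{eq:BroadNarrowEstimate}, summation over $K^2$-balls, and the induction-on-scales bookkeeping — are by now standard following \cite{BourgainGuth2011,GuthHickmanIliopoulou2019}.
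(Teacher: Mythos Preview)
Your overall strategy --- Bourgain--Guth broad-narrow, hypothesis \eqref{eq:HypothesisKBroad} for the broad part, narrow decoupling plus parabolic rescaling plus induction for the narrow part --- is correct and is exactly what the paper does. But your bookkeeping for the narrow term is off in a way that, as written, prevents the induction from closing.

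The narrow $\ell^p$-decoupling contributes a \emph{loss} $K^{(k-3)(1/2-1/p)}$ (for $k\geq 3$), which is never zero except at $p=2$; it does not vanish at $p=p(k,n)$. The gain that makes $p(k,n)$ the critical exponent is the Jacobian in parabolic rescaling (Lemma \ref{lem:ParabolicRescaling}): rescaling a $K^{-1}$-sector to unit aperture produces the factor $K^{2(n-1)/p-(n-2)}$, which you have buried under ``admissible weight factors''. The combined narrow exponent
\[
(k-3)\Big(\tfrac12-\tfrac1p\Big)\;+\;\tfrac{2(n-1)}{p}-(n-2)
\]
vanishes at $p=p(k,n)$ and is \emph{negative} for $p>p(k,n)$ --- the opposite of your claim $\beta>0$. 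With the sign corrected the induction closes directly for every $p>p(k,n)$; the scale-reduction gain $K^{-2\varepsilon}$ is only there to absorb small $R^{\delta_2}$-losses from the rescaling lemma, not to beat a positive $\beta$. (Your fallback ``prove the endpoint and interpolate'' cannot work either: at the endpoint you would have $\beta=0$ and only the $K^{-2\varepsilon}$ gain, which gives no room once you then try to interpolate \emph{upward} in $p$.)

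A second point: the paper does not induct on $\lambda$ for a single operator with $K$ a constant, but on $R$ over a \emph{class} of reduced, $\lambda^{\tilde\delta}$-flat data via the quantity $Q_{p,\tilde\delta}(R)$, taking $K=K_0 R^{\eta}$ a small power of $R$. This is forced because the narrow decoupling (Proposition \ref{prop:DecouplingNarrowTerm}, via the constant-coefficient approximation of Lemma \ref{lem:LinearApproximationLemma}) outputs operators $T^\lambda_*$ with amplitudes drawn from an $O_N(1)$-family, and parabolic rescaling alters the phase; one must check the rescaled data remains in the class (the $K$-flatness machinery, Lemma \ref{lem:ReducedPhaseFunction}). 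You correctly flag this as ``the main obstacle'', but it means the induction must be set up over a class of operators rather than for a single fixed $(\phi,a)$.
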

From this proposition Theorem \ref{thm:LpLpEstimatesVariableCoefficients} is immediate by choosing $k = \frac{n+1}{2}$ for $n$ odd and $k = \frac{n}{2} + 1$ for $n$ even as $\max(p(k,n),\bar{p}(k,n))$ gives the lower bound for $p$ in Theorem \ref{thm:LpLpEstimatesVariableCoefficients}.
\subsection{Outline of the argument}
 For the proof we use induction on scales: $Q_{p,\delta}(R)$ will denote the infimum over all constants $C$ for which the estimate
\begin{equation*}
\| T^\lambda f \|_{L^p(B(0,r))} \leq C \| f \|_{L^p(A^{n-1})}
\end{equation*}
holds for $1 \leq r \leq R$ and all oscillatory integral operators built from a suitable class of phase functions. This class is supposed to be invariant under rescaling and amenable to narrow decoupling, which is explained below.

With this definition, it remains to prove that for $p$ as in Proposition \ref{prop:LinearFromBroadEstimates}
\begin{equation*}
Q_{p,\delta}(R) \lesssim_\varepsilon R^\varepsilon
\end{equation*}
for all $\varepsilon >0$ and $1 \leq R\leq \lambda$.\\
To this end, we decompose $B(0,R)$ into finitely overlapping balls $B_{K^2}$ of radius $K^2$ and estimate $\| T^\lambda f \|_{L^p(B_{K^2})}$. $f$ is decomposed into ``broad" and ``narrow" term. The narrow term is of the form
\begin{equation}
\label{eq:NarrowTerm}
\sum_{\substack{\tau \in V_a \\ \text{for some } a}} f_\tau, 
\end{equation}
consisting of contributions to $f$ from sectors for which $G^\lambda(\bar{x};\tau)$ makes a small angle with some member of a family of $(k-1)$-planes. Here $\bar{x}$ denotes the centre of $B_{K^2}$. The broad term consists of contributions to $f$ from the remaining sectors. One may choose the planes $V_1, \ldots , V_A$ so that the broad term can be bounded by the $k$-broad inequality. Thus, $f$ of the form  \eqref{eq:NarrowTerm} has to be analyzed. This is accomplished by narrow $\ell^p$-decoupling and rescaling. 
\subsection{Narrow decoupling and the induction quantity}
In this subsection we shall find an estimate for $V \subseteq \R^n$ an $m$-dimensional linear subspace:
\begin{equation*}
\big\| \sum_{\tau \in V} T^\lambda g_\tau \big\|_{L^p(B_{K^2})} \lesssim_\delta \max(1,K^{(m-2) \big( \frac{1}{2}-\frac{1}{p} \big)}) K^\delta \big( \sum_{\tau \in V} \| T^\lambda g_\tau \|^p_{L^p(w_{B_{K^2}})} \big)^{\frac{1}{p}} + l.o.t..
\end{equation*}
The additional assumptions to be imposed for the above estimate to hold will motivate our choice of induction quantity below.

In the translation-invariant case, e.g., with $\mathcal{E}$ as in \eqref{eq:ConeExtensionOperator}, this follows from the $\ell^2$-decoupling
\begin{equation*}
\big\| \sum_\tau \mathcal{E} g_\tau \big\|_{L^p(B_{K^2})} \lesssim_\delta K^\delta \big( \sum_{\tau \in V} \| \mathcal{E} g_\tau \|^2_{L^p(w_{B_{K^2}})} \big)^{\frac{1}{2}}
\end{equation*}
for $2 \leq p \leq \frac{2n}{n-2}$ and by counting the sectors $\tau$ such that $\angle(G(\tau),V) \leq K^{-2}$. This is carried out in \cite{OuWang2022}; see also \cite[Lemma~2.2]{Harris2019}. It appears that the argument from \cite{Harris2019,OuWang2022} does not suffice to count the sectors already for general translation-invariant phases under a convexity assumption, for which reason an additional technical assumption is introduced.

 Let $\phi: \R^{n-1} \backslash \{ 0 \} \to \R$ be a $1$-homogeneous and smooth function with $\text{supp}(\phi) \subseteq \Xi$.\footnote{Recall that $\Xi \subseteq A^{n-1}$ denotes a sector with small aperture $c \ll 1$ centred at $e_{n-1}$.} By Taylor expansion we find
 \begin{equation}
 \label{eq:TaylorExpansionHomogeneousPhase}
\begin{split}
\phi(\omega',\omega_{n-1}) &= \omega_{n-1} \phi(\omega'/\omega_{n-1},1) \\
 &= \omega_{n-1} \phi(e_{n-1}) + \partial_{\omega'} \phi(e_{n-1}) \omega' + \frac{\langle \partial^2_{\omega' \omega'} \phi(e_{n-1}) \omega', \omega' \rangle}{2 \omega_{n-1}} + K^{-4} E(\omega),
 \end{split}
\end{equation}
with $E(\omega)$ $1$-homogeneous. By comparison with Taylor's formula we have
\begin{equation*}
K^{-4} E(\omega) = \sum_{|\alpha| = 3} \frac{3}{\alpha !} \int_0^1 (1-s)^2 (\partial_{\omega'}^\alpha \phi)(\frac{s \omega'}{\omega_{n-1}},1) ds \frac{(\omega')^\alpha}{\omega_{n-1}^2}.
\end{equation*}

Gao \emph{et al.} \cite{GaoLiuMiaoXi2023} showed that if $E$ is bounded in $C^N$ one can recover the narrow decoupling of general homogeneous phases satisfying a convexity condition in the constant coefficient case. This leads to the following notion of $K$-flatness.
\begin{definition}
Let $\phi$ be like above. We say that $\phi$ is $K$-flat if $E$ like in \eqref{eq:TaylorExpansionHomogeneousPhase}
satisfies
\begin{equation*}
\| \partial^\alpha E \|_{L^\infty(\Xi)} \leq C_{flat}
\end{equation*}
for $0 \leq |\alpha| \leq N_0$.
\end{definition}
In this definition $N_0$ and $C_{flat}$ are universal constants, which allow us to extend the arguments of \cite{Harris2019,OuWang2022} for the circular cone to $K$-flat phase functions as shown in \cite{GaoLiuMiaoXi2023}.

\medskip

To apply narrow decoupling for the variable coefficient operator on a small $K^2$-ball with $K^2 \lesssim \lambda^{\frac{1}{2}-\varepsilon}$, we approximate the variable coefficient phase with a constant coefficient phase. Beltran--Hickman--Sogge \cite{BeltranHickmanSogge2020} worked out that this is possible by Taylor expansion.

We need the following notations: Let $\phi$ be a reduced phase and $\bar{x} \in \R^n$, which will be the centre of the small ball on which we want to apply decoupling. Recall that $u \mapsto \partial_x \phi^\lambda(\bar{x};\Psi^\lambda(\bar{x};u))$ is a graph parametrization of the hypersurface $\Sigma_{\bar{u}}$. We have
\begin{equation*}
\langle x, (\partial_x \phi^\lambda) (\bar{x};\Psi^\lambda(\bar{x};u)) \rangle = \langle x',u \rangle + x_n h_{\bar{x}}(u)
\end{equation*}
for all $x = (x',x_n) \in \R^n$ with $h_{\bar{x}}(u) = (\partial_{x_n} \phi^\lambda) (\bar{x};\Psi^\lambda(\bar{x};u))$. Recall that we can suppose $a(x;\omega) = a_1(x) a_2(\omega)$ by Fourier series expansion. Let $E_{\bar{x}}$ denote the extension operator associated to $\Sigma_{\bar{x}}$, given by
\begin{equation*}
E_{\bar{x}} g(x) = \int_{\R^{n-1}} e^{i(\langle x',u \rangle + x_n h_{\bar{x}}(u))} a_{\bar{x}}(u) g(u) du \text{ for all } x \in \R^n,
\end{equation*}
where $a_{\bar{x}}(u) = a_2 \circ \Psi^\lambda(\bar{x};u) |\det \partial_u \Psi^\lambda(\bar{x};u)|$. We recall how $T^\lambda$ is approximated by $E_{\bar{x}}$: Let $x \in B(\bar{x};K^2) \subseteq B(0,3\lambda/4)$. By change of variables $\omega = \Psi^\lambda(\bar{x};u)$ and a Taylor expansion of $\phi^\lambda$ around $\bar{x}$, we have
\begin{equation*}
T^\lambda f(x) = \int_{\R^{n-1}} e^{i( \langle x-\bar{x}, (\partial_x \phi^\lambda)(\bar{x};\Psi^\lambda(\bar{x};u)) \rangle + \mathcal{E}^\lambda_{\bar{x}}(x-\bar{x};u))} a_1^\lambda(x) a_{\bar{x}}(u) f_{\bar{z}}(u) du
\end{equation*} 
with $f_{\bar{x}} = e^{i \phi^\lambda(\bar{x};\Psi^\lambda(\bar{x};\cdot))} f \circ \Psi^\lambda(\bar{x};\cdot)$ and by Taylor expansion
\begin{equation*}
\mathcal{E}^\lambda_{\bar{x}}(v;u) = \frac{1}{\lambda} \int_0^1 (1-r) \langle (\partial_{xx} \phi)((\bar{x}+rv)/\lambda;\Psi^\lambda(\bar{x};u)) v, v \rangle dr.
\end{equation*}
Let $N \gg 1$ be a large constant to be specified. By the derivative bounds
\begin{equation*}
\sup_{(v;u) \in B(0,K^2) \times \text{supp} a_{\bar{x}}} | \partial^\beta_\omega \mathcal{E}_{\bar{x}}^\lambda(v;u) | \lesssim_N 1
\end{equation*}
and Fourier series expansion, the oscillation of $\mathcal{E}^\lambda_{\bar{x}}$ can be neglected. This yields the following lemma:
\begin{lemma}[{\cite[Lemma~2.6]{BeltranHickmanSogge2020}}]
\label{lem:LinearApproximationLemma}
Let $T^\lambda$ be an oscillatory integral operator built from reduced data $(\phi,a)$. Let $0 < \delta \leq 1/2$, $1 \leq K^2 \leq \lambda^{\frac{1}{2}-\delta}$ and $\bar{x}/\lambda \in X$ so that $B(\bar{x};K^2) \subseteq B(0,3\lambda/4)$.
\begin{itemize}
\item Then
\begin{equation}
\label{eq:LinearApproximationI}
\| T^\lambda f \|_{L^p(w_{B(\bar{x};K^2)})} \lesssim_N \| E_{\bar{x}} f_{\bar{x}} \|_{L^p(w_{B(0;K^2)})} + \lambda^{-\frac{\delta N}{2}} \| f \|_{L^2}
\end{equation}
holds provided that $N$ is sufficiently large depending on $n$, $\delta$, and $p$, and $w_{B_{K^2}} = (1 + K^{-2} |x-\bar{x}|)^{-N}$ is a rapidly decaying weight off $B_{K^2}$.

\item Suppose that $|\bar{x}| \leq \lambda^{1-\delta'}$. There exists a family of operators $\mathbf{T}^\lambda$ all with phase $\phi$ and of type $(1,1,C)$ data (see Definition \ref{def:TypeData}) such that
\begin{equation}
\label{eq:LinearApproximationII}
\| E_{\bar{x}} f_{\bar{x}} \|_{L^p(w_{B(0,K^2)})} \lesssim_N \| T^\lambda_* f \|_{L^p(w_{B(\bar{x};K^2)})} + \lambda^{- \frac{N \min(\delta,\delta')}{2}} \| f \|_2
\end{equation}
holds for some $T^\lambda_* \in \mathbf{T}^\lambda$. The family $\mathbf{T}^\lambda$ has cardinality $O_N(1)$ and is independent of $B(\bar{x};K^2)$.
\end{itemize}
\end{lemma}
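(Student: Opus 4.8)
The plan is to freeze the spatial variable of $T^\lambda$ at the centre $\bar{x}$ via a Taylor expansion of the phase and to dispose of the resulting quadratic remainder by a Fourier series argument, following \cite{BeltranHickmanSogge2020}. Writing $x = \bar{x}+v$ and Taylor expanding $\phi^\lambda$ in its first slot,
\begin{equation*}
\phi^\lambda(\bar{x}+v;\omega) = \phi^\lambda(\bar{x};\omega) + \langle v,(\partial_x\phi^\lambda)(\bar{x};\omega)\rangle + \mathcal{E}^\lambda_{\bar{x}}(v;\omega),
\end{equation*}
one changes variables $\omega = \Psi^\lambda(\bar{x};u)$ so that the linear term becomes $\langle v',u\rangle + v_n h_{\bar{x}}(u)$, i.e.\ the phase of $E_{\bar{x}}$, while the $\omega$-independent term $\phi^\lambda(\bar{x};\Psi^\lambda(\bar{x};u))$ is absorbed into the unimodular factor defining $f_{\bar{x}}$; this gives the representation of $T^\lambda f$ recorded just before the statement, with amplitude $a_1^\lambda(x)\,a_{\bar{x}}(u)$ and $\mathcal{E}^\lambda_{\bar{x}}$ the explicit remainder integral.

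First I would show that on $B(\bar{x};K^2)$ the remainder is essentially inert. Since $K^2 \le \lambda^{1/2-\delta}$, each $u$-derivative of $\mathcal{E}^\lambda_{\bar{x}}(v;\cdot)$ costs only bounded derivatives of $\Psi^\lambda(\bar{x};\cdot)$ (uniformly bounded after the reductions of Subsection \ref{subsection:BasicReductions}) and of $\partial_{xx}\phi$ (controlled by $D2)$), together with the gain $|v|^2/\lambda \le \lambda^{-2\delta}$, so that $\sup_{(v;u)\in B(0,K^2)\times\text{supp}\,a_{\bar{x}}}|\partial_u^\beta\mathcal{E}^\lambda_{\bar{x}}(v;u)| \lesssim_N 1$ for $|\beta|\le N$. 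Taking a bump $\tilde\chi\equiv 1$ on $\text{supp}\,a_{\bar{x}}$ and expanding in Fourier series in $u$,
\begin{equation*}
e^{i\mathcal{E}^\lambda_{\bar{x}}(v;u)}\tilde\chi(u) = \sum_{k\in\Z^{n-1}} c_k(v)\,e^{2\pi i\langle k,u\rangle},\qquad \sup_{|v|\le K^2}|\partial_v^\gamma c_k(v)| \lesssim_{N,\gamma}(1+|k|)^{-N}\quad(|\gamma|\le N),
\end{equation*}
converts each mode $e^{2\pi i\langle k,u\rangle}$ into a spatial translation by $(k,0)$ inside $E_{\bar{x}}$, so that $T^\lambda f(\bar{x}+v) = a_1^\lambda(\bar{x}+v)\sum_k c_k(v)(E_{\bar{x}}f_{\bar{x}})(v'+k,v_n)$ up to a contribution supported where $|v|$ leaves the range in which the derivative bounds hold, on which $w_{B_{K^2}}\le\lambda^{-\delta N/2}$ and $|T^\lambda f|\lesssim\|f\|_2$ crudely. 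Taking $L^p(w_{B(\bar{x};K^2)})$-norms, the triangle inequality in $k$ together with the translation of the weight (which costs a factor $(1+|k|)^{N}$, absorbed by the decay of $c_k$) yields \eqref{eq:LinearApproximationI}.

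For the reverse inequality \eqref{eq:LinearApproximationII} I would move the remainder to the other side: rewriting the phase of $E_{\bar{x}}$ as $\phi^\lambda(\bar{x}+v;\omega)-\phi^\lambda(\bar{x};\omega)-\mathcal{E}^\lambda_{\bar{x}}(v;\omega)$ with $\omega=\Psi^\lambda(\bar{x};u)$, and Fourier expanding $e^{-i\mathcal{E}^\lambda_{\bar{x}}(v;u)}\tilde\chi(u)=\sum_k d_k(v)e^{2\pi i\langle k,u\rangle}$, undoing the change of variables expresses $E_{\bar{x}}f_{\bar{x}}$ on $B(0,K^2)$ as a sum over $k$ of oscillatory integral operators whose phase is the translate $v\mapsto\lambda\phi(\bar{x}/\lambda+v/\lambda;\omega)$ of $\phi$ and whose amplitude carries the extra smooth factors $d_k(v)$, $e^{2\pi i\langle k,(\partial_{x'}\phi^\lambda)(\bar{x};\omega)\rangle}$ and the Jacobian of $\Psi^\lambda$. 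Retaining only the $O_N(1)$ modes with $|k|\lesssim_N 1$ — for which $e^{2\pi i\langle k,(\partial_{x'}\phi^\lambda)(\bar{x};\cdot)\rangle}$ is a genuine amplitude with uniformly bounded derivatives — produces the finite family $\mathbf{T}^\lambda$ of operators with phase $\phi$ and type $(1,1,C)$ data in the sense of Definition \ref{def:TypeData}, the hypothesis $|\bar{x}|\le\lambda^{1-\delta'}$ keeping the translate $\phi(\bar{x}/\lambda+\cdot\,;\omega)$ comfortably inside the reduction domain $X$; the discarded tail $|k|\gtrsim_N 1$, together with the region of large $|v|$, costs $\le\lambda^{-N\min(\delta,\delta')/2}\|f\|_2$.

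The main obstacle throughout is the uniform $N$-th order control of $\mathcal{E}^\lambda_{\bar{x}}$ in both $v$ and $u$: this is exactly where the reduced-phase bounds $D1)$, $D2)$ and the uniform bounds on the graph map $\Psi^\lambda$ are used, and it is what forces the scale restriction $K^2\le\lambda^{1/2-\delta}$ (so that $|v|^2/\lambda$ and its derivatives stay $\lesssim 1$). Everything else is bookkeeping with rapidly decaying Fourier tails and translates of the rapidly decaying weight, and with checking that multiplication of the amplitude by the bounded smooth factors $d_k$ and $e^{2\pi i\langle k,(\partial_{x'}\phi^\lambda)(\bar{x};\cdot)\rangle}$ preserves the product form, the margin condition and the derivative bounds defining type $(1,1,C)$ data.
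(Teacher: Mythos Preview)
Your approach is essentially the one the paper indicates (and which is carried out in \cite{BeltranHickmanSogge2020}): freeze the phase at $\bar{x}$ via Taylor expansion, pass to the graph parametrisation $u\mapsto\Psi^\lambda(\bar{x};u)$, and dispose of the quadratic remainder $\mathcal{E}^\lambda_{\bar{x}}$ by Fourier series expansion, using the uniform $C^N$-bounds on $\mathcal{E}^\lambda_{\bar{x}}$ that come from the reduced-phase hypotheses and the scale restriction $K^2\le\lambda^{1/2-\delta}$. The first bullet is handled exactly as you describe.

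There is one point in your treatment of the second bullet that needs adjusting. After Fourier expanding $e^{-i\mathcal{E}^\lambda_{\bar{x}}(v;u)}$ in $u$ and undoing the change of variables, the amplitude picks up the factor $e^{2\pi i\langle k,(\partial_{x'}\phi^\lambda)(\bar{x};\omega)\rangle}$, which depends on $\bar{x}$. With this choice the resulting operators $T^\lambda_*$ would vary with the ball $B(\bar{x};K^2)$, whereas the lemma asserts that the finite family $\mathbf{T}^\lambda$ is \emph{independent} of $\bar{x}$ --- a feature that is actually used in the application (one pigeonholes among $O_N(1)$ operators uniformly over all $B_{K^2}\subseteq B(0,R)$). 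The fix is simple: perform the Fourier expansion of $e^{-i\mathcal{E}^\lambda_{\bar{x}}}$ in the $\omega$-variable rather than in $u$ (after first undoing the change of variables, so that the integrand is $e^{i\phi^\lambda(\bar{x}+v;\omega)}e^{-i\tilde{\mathcal{E}}(v;\omega)}a_2(\omega)f(\omega)$). The Fourier modes then contribute factors $e^{2\pi i\langle k,\omega\rangle}$, and the resulting amplitudes $a_{*,k}(\omega)=e^{2\pi i\langle k,\omega\rangle}a_2(\omega)$ for $|k|\lesssim_N 1$ are manifestly independent of $\bar{x}$. The $\bar{x}$-dependence survives only in the scalar coefficients $d_k(v)$, which are harmless multiplicative factors with uniform bounds. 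The hypothesis $|\bar{x}|\le\lambda^{1-\delta'}$ enters, as you say, to keep the recentred data inside the reduction domain and to control the $v$-derivatives of $d_k$; it does not by itself make the $\bar{x}$-dependent amplitude acceptable.
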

To apply the narrow decoupling to $E_{\bar{x}} f_{\bar{x}}$, we need the constant coefficient phase
\begin{equation*}
h_{\bar{x}}(u) = \partial_{x_n} \phi^\lambda(\bar{x};\Psi^\lambda(\bar{x};u))
\end{equation*}
to be $K$-flat.
\begin{definition}
Let $K \gg 1$. We say that a reduced homogeneous phase $\phi: \R^n \times \R^{n-1} \backslash \{ 0 \} \to \R$ is $K$-flat, if all its constant-coefficient approximations $h_{\bar{x}}$ for $\bar{x} \in X$ are $K$-flat and $\phi$ satisfies
\begin{itemize}
\item for $1 \leq k \leq n-1$ and $\beta = (\beta',\beta_{n-1}) \in \N_0^{n-1}$ with $|\beta| \leq N_0+5$ and $|\beta'| \geq 2$:
\begin{equation}
\label{eq:KFlatPhaseI}
 \| \partial_{x_k} \partial^\beta_{\omega} \phi \|_{L^\infty(X \times \Xi)} \leq K^{-4},
\end{equation}
\item for $\beta = (\beta',\beta_{n-1}) \in \N_0^{n-1}$ with $|\beta| \leq N_0+5$ and $|\beta'| \geq 3$:
\begin{equation}
\label{eq:KFlatPhaseII}
\| \partial_{x_n} \partial^{\beta}_{\omega} \phi \|_{L^\infty(X \times \Xi)} \leq K^{-4}.
\end{equation}
\end{itemize}
\end{definition}

The derivative bounds are used to facilitate induction: They will allow us to argue that the rescaled phase function becomes ``flatter". The choice of size of $\beta'$ in \eqref{eq:KFlatPhaseI} and \eqref{eq:KFlatPhaseII} will become clear from the formula for the phase function after rescaling. The size conditions \eqref{eq:KFlatPhaseI} and \eqref{eq:KFlatPhaseII} (but not the number of derivatives!) are more restrictive than the definition of a reduced phase function. We remark that with this definition, Proposition \ref{prop:DecouplingNarrowTerm} now follows from the constant-coefficient decoupling and the approximation by constant-coefficient operators provided by the previous lemma.

Regarding the choice of $N$ in Lemma \ref{lem:LinearApproximationLemma}: We choose $N = N(\delta) = N(\varepsilon) \geq N_0+5$ (since $\delta = \delta(\varepsilon)$) such that the error term $\lambda^{-\delta N/2} \| f \|_2$ becomes manageable when we carry out the induction on scales.

%For these constant-coefficient operators, Harris's argument \cite[Lemma~2.2]{Harris2019} of sector counting applies. 

\begin{proposition}[Narrow~variable~coefficient~decoupling]
\label{prop:DecouplingNarrowTerm}
Suppose that $T^\lambda$ is an oscillatory integral operator with reduced $C1)$ and $C2^+)$ phase $\phi$, which is $K$-flat, and let $B_{K^2} \subseteq B(0,\lambda^{1-\delta})$ with $1 \leq K^2 \leq \lambda^{\frac{1}{2}-\delta}$, $0<\delta \leq 1/2$. If $V \subseteq \R^n$ is an $m$-dimensional linear subspace, then for $2 \leq p \leq \frac{2n}{n-2}$ and any $\delta>0$ one has
\begin{equation*}
\begin{split}
\big\| \sum_{\tau \in V} T^\lambda g_\tau \big\|_{L^p(B_{K^2})} &\lesssim_{\delta,N} \max(1, K^{(m-2)\big( \frac{1}{2}- \frac{1}{p} \big)}) K^\delta \big( \sum_{\tau \in V} \| T_*^\lambda g_\tau \|_{L^p(w_{B_{K^2}})}^p \big)^{\frac{1}{p}} \\
&\quad + \lambda^{-\frac{\delta N}{2}} \| g \|_{L^2}
\end{split}
\end{equation*}
provided that $N$ is chosen sufficiently large depending on $n$, $\delta$, and $p$. Here, the sum ranges over sectors $\tau$ for which $\angle(G^\lambda(\bar{x};\tau),V) \leq K^{-2}$, where $\bar{x}$ is the centre of $B_{K^2}$ and $w_{B_{K^2}} = (1+K^{-2} |x-\bar{x}|)^{-N}$ is a rapidly decaying weight off $B_{K^2}$. $T^\lambda_*$ is an oscillatory integral operator with phase $\phi$ and some amplitude $a_*$ chosen from a family of $O_N(1)$ many amplitudes. The amplitudes $a_*$ are reduced after uniform decomposition of the support of $a_*$.
\end{proposition}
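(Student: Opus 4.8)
The plan is to reduce the variable-coefficient statement to the known constant-coefficient narrow decoupling, using Lemma \ref{lem:LinearApproximationLemma} as the bridge. First I would invoke \eqref{eq:LinearApproximationI} from Lemma \ref{lem:LinearApproximationLemma}: since $1 \leq K^2 \leq \lambda^{1/2-\delta}$ and $B_{K^2} \subseteq B(0,\lambda^{1-\delta}) \subseteq B(0,3\lambda/4)$, we may replace $\| \sum_{\tau \in V} T^\lambda g_\tau \|_{L^p(B_{K^2})}$ by $\| \sum_{\tau \in V} E_{\bar x} (g_\tau)_{\bar x} \|_{L^p(w_{B(0,K^2)})}$ at the cost of an admissible error $\lambda^{-\delta N/2}\|g\|_2$, where $\bar x$ is the centre of $B_{K^2}$ and $(g_\tau)_{\bar x} = e^{i\phi^\lambda(\bar x;\Psi^\lambda(\bar x;\cdot))} g_\tau \circ \Psi^\lambda(\bar x;\cdot)$. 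Since $\phi$ is $K$-flat, the phase $h_{\bar x}(u) = \partial_{x_n}\phi^\lambda(\bar x;\Psi^\lambda(\bar x;u))$ is $K$-flat in the sense of the definition above; hence the constant-coefficient narrow decoupling of Gao--Liu--Miao--Xi \cite{GaoLiuMiaoXi2023} (extending Harris \cite{Harris2019} and Ou--Wang \cite{OuWang2022} from the circular cone to $K$-flat phases) applies to $E_{\bar x}$. This gives, for $2 \leq p \leq \frac{2n}{n-2}$,
\begin{equation*}
\big\| \sum_{\tau \in V} E_{\bar x}(g_\tau)_{\bar x} \big\|_{L^p(w_{B(0,K^2)})} \lesssim_\delta \max(1, K^{(m-2)(\frac12-\frac1p)}) K^\delta \big( \sum_{\tau \in V} \| E_{\bar x}(g_\tau)_{\bar x} \|^p_{L^p(w_{B(0,K^2)})} \big)^{1/p},
\end{equation*}
where the factor $\max(1,K^{(m-2)(1/2-1/p)})$ arises by passing from the $\ell^2$ to the $\ell^p$ norm after counting the number of sectors $\tau$ with $\angle(G^\lambda(\bar x;\tau),V) \leq K^{-2}$, which for an $m$-dimensional $V$ is $O(K^{m-2})$ by the $K$-flatness (the null direction of the cone accounting for the reduction of one dimension, exactly as in \cite{OuWang2022,Harris2019}).

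It then remains to transport each individual piece $\| E_{\bar x}(g_\tau)_{\bar x} \|_{L^p(w_{B(0,K^2)})}$ back to a variable-coefficient quantity. Here I would apply the reverse estimate \eqref{eq:LinearApproximationII} from Lemma \ref{lem:LinearApproximationLemma} with $\delta' = \delta$ (valid since $|\bar x| \leq \lambda^{1-\delta}$): this produces, for each $\tau$, an operator $T^\lambda_{*,\tau}$ drawn from a family $\mathbf{T}^\lambda$ of cardinality $O_N(1)$, all with phase $\phi$ and type $(1,1,C)$ data, such that $\| E_{\bar x}(g_\tau)_{\bar x}\|_{L^p(w_{B(0,K^2)})} \lesssim_N \| T^\lambda_{*,\tau} g_\tau \|_{L^p(w_{B(\bar x;K^2)})} + \lambda^{-N\delta/2}\|g_\tau\|_2$. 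Since the family $\mathbf{T}^\lambda$ is independent of $\bar x$ and finite, by pigeonholing we may fix a single $T^\lambda_* \in \mathbf{T}^\lambda$ serving all $\tau$ simultaneously (absorbing the finite loss into the implicit constant, or alternatively keeping $T^\lambda_{*,\tau}$ $\tau$-dependent since only the $\ell^p$-sum over $\tau$ appears on the right). Summing the $\lambda^{-N\delta/2}\|g_\tau\|_2$ errors over the $O(K^{m-2}) = O(\lambda^{O(\delta)})$ relevant sectors and using almost-orthogonality $\sum_\tau \|g_\tau\|_2^2 \lesssim \|g\|_2^2$ keeps the total error of the form $\lambda^{-\delta N/2}\|g\|_{L^2}$ after enlarging $N$; the type $(1,1,C)$ data can be made reduced after a uniform finite decomposition of the support of $a_*$ and parabolic rescaling (Lemma \ref{lem:ParabolicRescaling}), which does not disturb the $L^p$-bounds. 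Assembling these three steps yields the claimed inequality.

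The main obstacle is the sector-counting underlying the factor $\max(1,K^{(m-2)(1/2-1/p)})$: one must verify that for a $K$-flat (constant-coefficient approximating) phase the number of $K^{-2}$-sectors $\tau$ whose Gauss image makes angle $\leq K^{-2}$ with a fixed $m$-plane $V$ is $O(K^{m-2})$, and crucially that the one-dimensional radial degeneracy of the conic surface --- which for the circular cone lets \cite{Harris2019,OuWang2022} gain one dimension over the paraboloid --- persists under the $K$-flatness hypothesis. This is precisely the point where the technical derivative conditions \eqref{eq:KFlatPhaseI}--\eqref{eq:KFlatPhaseII} enter: they guarantee that $h_{\bar x}$ is a sufficiently small $C^{N_0}$ perturbation of the model $\langle u', u'\rangle/(2u_{n-1})$ so that the geometric counting argument of \cite{GaoLiuMiaoXi2023} goes through verbatim. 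A secondary, more bookkeeping-type difficulty is ensuring the admissibility of the error terms after summation over $O(K^{m-2})$ sectors, which is handled by choosing $N = N(\varepsilon) \geq N_0 + 5$ large enough relative to $\delta = \delta(\varepsilon)$, as noted in the remark preceding the proposition.
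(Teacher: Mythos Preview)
Your proposal is correct and follows essentially the same route as the paper: apply \eqref{eq:LinearApproximationI} to pass to the constant-coefficient operator $E_{\bar x}$, invoke the narrow decoupling for $K$-flat constant-coefficient phases from \cite{GaoLiuMiaoXi2023}, then use \eqref{eq:LinearApproximationII} and pigeonhole over the $O_N(1)$-family $\mathbf{T}^\lambda$ to return to a single $T^\lambda_*$. The paper's proof is slightly terser on the error bookkeeping and does not explicitly carry out the reduction of the type $(1,1,C)$ amplitude (this is simply recorded in the statement), but the argument is the same.
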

\begin{proof}
We apply \eqref{eq:LinearApproximationI} from Lemma \ref{lem:LinearApproximationLemma} to approximate $T^\lambda$ with $E_{\bar{x}}$:
\begin{equation}
\label{eq:ApproximationA}
\| \sum_{\tau \in V} T^\lambda f_\tau \|_{L^p(B_{K^2})} \lesssim_N \| E_{\bar{x}} \sum_{\tau \in V} f_{\bar{x},\tau} \|_{L^p(w_{B_{K^2}})} + \lambda^{-\frac{\delta N}{2}} \| f \|_2.
\end{equation}
Since by our assumption on $\phi$, the underlying phase $h_{\bar{x}}$ for $E_{\bar{x}}$ is $K$-flat. Moreover, the transformed functions $f_{\bar{x},\tau}$ are disjoint sectors by diffeomorphism property of $\Psi^\lambda(\bar{x};\cdot)$. Hence, we can apply the narrow decoupling for $K$-flat constant-coefficient phase functions by \cite{GaoLiuMiaoXi2023}:
\begin{equation}
\label{eq:NarrowDecouplingConstantCoefficients}
\| \sum_{\tau \in V} E_{\bar{x}} f_{\bar{x},\tau} \|_{L^p(w_{B_{K^2}})} \lesssim_\delta K^\delta \max(1,K^{(m-2) \big( \frac{1}{2}-\frac{1}{p} \big)}) \big( \sum_{\tau \in V} \| E_{\bar{x}} f_{\bar{x},\tau} \|^p_{L^p(w_{B_{K^2}})} \big)^{\frac{1}{p}}.
\end{equation}
We can apply \eqref{eq:LinearApproximationII} from Lemma \ref{lem:LinearApproximationLemma} to find:
\begin{equation}
\label{eq:ApproximationB}
\| E_{\bar{x}} f_{\bar{x},\tau} \|_{L^p(w_{B_{K^2}})} \lesssim_{\delta,N} \| T^\lambda_* f_\tau \|_{L^p(w_{B_{K^2}})} + \lambda^{-\frac{\delta N}{2}} \| f_\tau \|_{L^2}.
\end{equation}
Recall that $T^\lambda_*$ is a variable-coefficient extension operator from a family $\mathbf{T}^\lambda$, which is of size $O_N(1)$. The operators from $\mathbf{T}^\lambda$ have phase $\phi$, but possibly different amplitude $a_*$, which is independent of $\bar{x}$, but still satisfies the bounds
\begin{equation*}
|\partial_\xi^\alpha a_* | \lesssim_N 1 \text{ for } \alpha \in \N_0^{n-1}, \; |\alpha| \leq N.
\end{equation*}
Hence, taking the $p$-th power, summing over the sectors, and pigeonholing in $T^\lambda_*$ yields
\begin{equation*}
\big( \sum_\tau \| E_{\bar{x}} f_{\bar{x},\tau} \|_{L^p(w_{B_{K^2}})}^p \big)^{\frac{1}{p}} \lesssim_{\delta,N} \big( \sum_{\tau} \| T^\lambda_* f_\tau \|_{L^p(w_{B_{K^2}})}^p \big)^{\frac{1}{p}} + \lambda^{-\frac{\delta N}{2}} \| f \|_{L^p}.
\end{equation*}
We take \eqref{eq:ApproximationA}, \eqref{eq:NarrowDecouplingConstantCoefficients}, and \eqref{eq:ApproximationB} together to find
\begin{equation*}
\begin{split}
\big\| \sum_{\tau \in V} T^\lambda f_\tau \big\|_{L^p(B_{K^2})} &\lesssim_{N,\delta} K^\delta \max(1,K^{(m-2) \big( \frac{1}{2}-\frac{1}{p} \big)}) \big( \sum_\tau \| T^\lambda_* f_\tau \|^p_{L^p(w_{B_{K^2})})} \big)^{\frac{1}{p}} \\
&\quad + \lambda^{\frac{n}{2}-\frac{\delta N}{2}} \| f \|_p.
\end{split}
\end{equation*}
\end{proof}

We choose $N=5n/\delta$, which keeps the error term $\lambda^{-\frac{\delta N}{2}} \| g \|_{L^2}$ manageable even after summing over the balls. 

\medskip

The technicality of dealing with different amplitude functions is handled by considering an appropriate class of data $(\phi,a)$, for which the induction on scales is carried out:
\begin{definition}
For $1 \leq p \leq \infty$ and $R \geq 1$ let $Q_{p,\delta}(R)$ denote the infimum over all constants $C$ for which the estimate
\begin{equation*}
\| T^\lambda f \|_{L^p(B(0,r))} \leq C \| f \|_{L^p(A^{n-1})}
\end{equation*}
holds for $1 \leq r \leq R \leq \lambda$ and all oscillatory integral operators $T^\lambda$ built from reduced data $(\phi,a)$ with $\lambda^{\delta}$-flat phase function.
\end{definition}

\medskip

\subsection{Parabolic rescaling}

In this section we shall see how parabolic rescaling flattens the phase function. Moreover, we observe how for functions supported on a small sector, we can apply the estimate from the previous definition on the smaller scale.

\subsubsection{Change of spatial variables}
For the parabolic rescaling we also make use of a change of variables on the spatial side.

\begin{lemma}[Change~of~spatial~variables]
\label{lem:ChangeSpatialVariables}
Let $x = (x'',x_{n-1},x_n) \in \R^{n-2} \times \R \times \R$ and $\phi$ be a reduced phase function. For $x_n \in T$, $\omega \in \Xi$ there is a smooth mapping $\Upsilon(\cdot,x_n;\omega)$, which satisfies
\begin{equation*}
\partial_{\omega'} \phi(\Upsilon(x^\prime,x_n;\omega),x_n;\omega) = x^{\prime \prime} \text{ and } \phi(\Upsilon(x;\omega),x_n;\omega) = x_{n-1}.
\end{equation*}
Moreover, we have the uniform derivative bounds:
\begin{equation}
\label{eq:DerivativeBoundUpsilon}
|\partial_{x'} \Upsilon| \leq C_\Upsilon.
\end{equation}
\end{lemma}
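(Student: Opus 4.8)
The plan is to build $\Upsilon$ by a single application of the implicit function theorem, reading the two displayed identities as one $\R^{n-1}$-valued equation for the unknown vector $\Upsilon(x',x_n;\omega) \in \R^{n-1}$. For $(x_n,\omega) \in T \times \Xi$ introduce the smooth map $F(\cdot;x_n,\omega) : \R^{n-1} \to \R^{n-2} \times \R$,
\begin{equation*}
F(y;x_n,\omega) = \big( \partial_{\omega'} \phi(y,x_n;\omega),\, \phi(y,x_n;\omega) \big),
\end{equation*}
where $y$ plays the role of the $x'$-variable of $\phi$. The two conditions in the lemma are then exactly $F(\Upsilon(x',x_n;\omega);x_n,\omega) = (x'',x_{n-1}) = x'$, so it suffices to invert $F(\cdot;x_n,\omega)$ with uniform control; this is entirely in the spirit of the construction of $\Phi$ and $\Psi$ in Subsection \ref{subsection:BasicReductions}.

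The heart of the matter is that $\partial_y F$ is a perturbation of the identity, uniformly in the parameters. Its first $n-2$ rows are the first $n-2$ rows of $\partial^2_{\omega x'}\phi$, which by $C1'')$ (with $A_1 = 1$, since $\phi$ is reduced) lie within $c_{\mathrm{cone}}$ of $(I_{n-2} \mid 0)$ in Hilbert--Schmidt norm; its last row is $\partial_{x'}\phi$, and differentiating Euler's relation $\langle \omega, \partial_\omega \phi \rangle = \phi$ (valid by the $1$-homogeneity \eqref{eq:HomogeneityPhaseFunction}) in $x'$ gives $\partial_{x'}\phi = \omega^{T}\, \partial^2_{\omega x'}\phi$, whence, using $C1'')$ once more together with $\omega = e_{n-1} + O(c)$ (as $\Xi$ is a sector of aperture $c \ll 1$ centred at $e_{n-1}$), $\partial_{x'}\phi = e_{n-1}^{T} + O(c + c_{\mathrm{cone}})$. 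Therefore $\partial_y F = I_{n-1} + O(\max(c,c_{\mathrm{cone}}))$ on $X' \times T \times \Xi$, and after shrinking $\Xi$ (and $X'$, $T$) at the outset it is invertible with $\|(\partial_y F)^{-1}\| \le 2$. By the global inverse function theorem --- exactly as used for $\Phi$ and $\Psi$ --- the map $F(\cdot;x_n,\omega)$ is then a diffeomorphism onto its image, its inverse is jointly smooth in $(x',x_n,\omega)$, and since the corresponding map for the model phase $\phi_*$ is explicit and maps a neighbourhood of the origin into $X'$, a continuity argument shows $\Upsilon(x',x_n;\omega) := F(\cdot;x_n,\omega)^{-1}(x')$ is defined on the relevant domain with values in $X'$.

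It remains to record the derivative bound: differentiating $F(\Upsilon(x',x_n;\omega);x_n,\omega) = x'$ in $x'$ gives $(\partial_y F)\,\partial_{x'}\Upsilon = I_{n-1}$, so $\partial_{x'}\Upsilon = (\partial_y F)^{-1}\big|_{y = \Upsilon(x',x_n;\omega)}$, and the uniform bound $\|(\partial_y F)^{-1}\| \le 2$ yields \eqref{eq:DerivativeBoundUpsilon} with $C_\Upsilon$ a purely dimensional constant. The only genuine care needed is in making the inversion global over the relevant range of $x'$ and keeping $\Upsilon$ inside the coordinate patch $X'$; both are handled by the same shrinking of $X'$, $T$, $\Xi$ already built into the reductions of Section \ref{section:Preliminaries}, and no idea beyond the near-identity structure of $\partial_y F$ enters. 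The main (mild) obstacle is thus essentially bookkeeping --- verifying that $C1'')$ plus homogeneity alone, with no input from the convexity condition $C2'')$, already force $\partial_y F$ to be close to $I_{n-1}$.
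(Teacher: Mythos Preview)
Your approach is essentially identical to the paper's: both invert the same map $F(y;x_n,\omega)=(\partial_{\omega'}\phi,\phi)$ via the implicit function theorem, both use Euler's relation $\partial_{x'}\phi=\omega^{T}\partial^2_{\omega x'}\phi$ to handle the last row of the Jacobian, and both read off the derivative bound from $\partial_{x'}\Upsilon=(\partial_y F)^{-1}$. One small imprecision: since $\omega_{n-1}$ ranges over an interval of length $\sim 1$ in $\Xi$, one only has $\partial_y F \approx \mathrm{diag}(1,\ldots,1,\omega_{n-1})$ rather than $I_{n-1}$ (the paper records $\det \partial_y F = \omega_{n-1}\det\partial^2_{\omega x'}\phi$), but this matrix is still uniformly invertible and the rest goes through unchanged.
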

\begin{proof}
We shall apply the implicit function theorem. We consider the defining equations
\begin{equation*}
F(\Upsilon(x',x_n;\omega),x_n;\omega) = 
\begin{pmatrix}
\partial_{\omega'} \phi(\Upsilon(x',x_n;\omega),x_n;\omega) \\
\phi(\Upsilon(x;\omega),x_n;\omega)
\end{pmatrix}
=
\begin{pmatrix}
x'' \\ x_{n-1}
\end{pmatrix}
\end{equation*}
and differentiate:
\begin{equation*}
\underbrace{\begin{pmatrix}
\partial^2_{x' \omega} \phi(\Upsilon(x',x_n;\omega),x_n;\omega) \\ \partial_{x'} \phi( \Upsilon( x',x_n;\omega), x_n;\omega)
\end{pmatrix}}_{A}
\partial_{x'} \Upsilon(x',x_n;\omega) = 1_{n-1}.
\end{equation*}
By $1$-homogeneity we have
\begin{equation*}
\partial_{x'} \phi(x;\omega) = \sum_{j=1}^{n-1} \omega_j \cdot \partial^2_{x' \omega_j} \phi(x;\omega).
\end{equation*}
Thus, for each $x_n \in T$ and $\omega \in \Xi$, the Jacobian determinant of the map $x' \mapsto ((\partial_{\omega'} \phi(x;\omega),\phi(x;\omega))$ is given by $\omega_{n-1} \cdot \det \partial^2_{\omega x'} \phi(x;\omega)$ and hence is non-vanishing. In fact, it is uniformly bounded from above and below for reduced phases.

Hence, $\partial_{x'} F(\Upsilon(x',x_n;\omega),x_n;\omega)$ is invertible, and therefore $\Upsilon(\cdot,x_n;\omega)$ exists for $x_n \in T$, $\omega \in \Xi$. We obtain by Cramer's rule
\begin{equation*}
\partial_{x'} \Upsilon(x',x_n;\omega) = 
\begin{pmatrix}
\partial^2_{x' \omega} \phi(\Upsilon(x',x_n;\omega)) \\ \partial_{x'} \phi(x',x_n;\omega)
\end{pmatrix}
^{-1} = \frac{\text{ad} (A)}{\det (A)}.
\end{equation*}
By reducedness of $\phi$, we have $\det A \sim 1$. Secondly, the components of $\partial^2_{x' \omega} \phi$ are uniformly bounded for a reduced phase function and by homogeneity,
\begin{equation*}
\partial_{x'} \phi(x',x_n;\omega) = \sum_{i=1}^{n-1} \omega_i \partial^2_{x' \omega_i} \phi(x',x_n;\omega).
\end{equation*}
This also gives a uniform estimate on $\partial_{x'} \phi$ for a reduced phase function.
\end{proof}

\subsubsection{Phase and amplitude function after rescaling}

We carry out the parabolic rescaling $T^\lambda g$ for a function $g$ supported in a $\rho^{-1}$-sector. In \cite{BeltranHickmanSogge2020} the phase function was computed. It was shown how after rescaling we find the bounds for higher order derivatives introduced in Section \ref{subsection:BasicReductions} to hold. For an arbitrary phase function $\phi$, $\rho$ has to be chosen large enough depending on $\phi$. For phase functions, which are reduced before rescaling, $\rho$ can be chosen uniform; see also Lemma \ref{lem:ReducedPhaseFunction} and its proof. Since we need some expressions from \cite{BeltranHickmanSogge2020} to verify that the phase is ``flattened" in the sense of \eqref{eq:KFlatPhaseI} and \eqref{eq:KFlatPhaseII} upon rescaling, some details are repeated. Furthermore, the amplitude and its derivatives satisfy stronger bounds aswell. However, the scale of the amplitude exceeds the scale of the phase function, but this can be remedied by an argument due to Guth--Hickman--Iliopoulou \cite{GuthHickmanIliopoulou2019}; see the end of the proof of Lemma \ref{lem:ParabolicRescaling}.

\medskip

Let $\omega \in B_{n-2}(0,1)$ with $(\omega,1)$ the centre of the $\rho^{-1}$-sector encasing the support of $g$:
\begin{equation*}
\text{supp} (g) \subseteq \{ (\xi',\xi_{n-1}) \in \R^{n-1} : \, 1/2 \leq \xi_{n-1} \leq 2 \text{ and } \big| \frac{\xi'}{\xi_{n-1}} - \omega \big| \leq \rho^{-1} \}.
\end{equation*}
We perform the change of variables 
\begin{equation*}
(\xi',\xi_{n-1}) = (\eta_{n-1} \omega + \rho^{-1} \eta', \eta_{n-1}),
\end{equation*}
after which follows
\begin{equation*}
T^\lambda g(x) = \int_{\R^{n-1}} e^{i \phi^\lambda(x;\eta_{n-1} \omega + \rho^{-1} \eta', \eta_{n-1})} a^\lambda(x;\eta_{n-1} \omega + \rho^{-1} \eta', \eta_{n-1}) \tilde{g}(\eta) d\eta,
\end{equation*}
where $\tilde{g}(\eta) = \rho^{-(n-2)} g(\eta_{n-1} \omega + \rho^{-1} \eta',\eta_{n-1})$ and $\text{supp} (\tilde{g}) \subseteq \Xi$. By Taylor expansion and homogeneity of the phase, we find
\begin{equation*}
\begin{split}
\phi(x;\eta_{n-1} \omega + \rho^{-1} \eta', \eta_{n-1}) &= \phi(x;\omega,1) \eta_{n-1} + \rho^{-1} \langle \partial_{\omega'} \phi (x;\omega,1), \eta' \rangle \\
&\quad + \rho^{-2} \int_0^1 (1-r) \langle \partial^2_{\omega' \omega'} \phi(x;\eta_{n-1} \omega + r \rho^{-1} \eta', \eta_{n-1}) \eta', \eta' \rangle dr.
\end{split}
\end{equation*}
Let $\Upsilon_\omega(y',y_n) = (\Upsilon(y',y_n;\omega,1),y_{n})$ and $\Upsilon^\lambda_\omega(y',y_n) = \lambda \Upsilon_\omega(y'/\lambda,y_n/\lambda)$ and consider anisotropic dilations
\begin{equation*}
D_\rho( y'', y_{n-1}, y_n) = (\rho y'',y_{n-1}, \rho^2 y_n) \text{ and } D'_{\rho^{-1}}(y'',y_{n-1}) = (\rho^{-1} y'', \rho^{-2} y_{n-1})
\end{equation*}
on $\R^n$ and $\R^{n-1}$, respectively. By definition of $\Upsilon$, we find
\begin{equation}
\label{eq:ChangeVariablesRescaling}
T^\lambda g \circ \Upsilon^\lambda_\omega \circ D_\rho = \tilde{T}^{\lambda/\rho^2} \tilde{g}
\end{equation}
where
\begin{equation*}
\tilde{T}^{\lambda/\rho^2} \tilde{g}(y) = \int_{\R^{n-1}} e^{i \tilde{\phi}^{\lambda/\rho^2}(y;\eta)} \tilde{a}^\lambda(y;\eta) \tilde{g}(\eta) d\eta
\end{equation*}
for the phase $\tilde{\phi}(y;\eta)$ given by
\begin{equation*}
\langle y',\eta \rangle + \int_0^1 (1-r) \langle \partial^2_{\xi' \xi'} \phi(\Upsilon_\omega(D'_{\rho^{-1}} y',y_{n}); \eta_{n-1} \omega + r \rho^{-1} \eta', \eta_{n-1}) \eta', \eta' \rangle dr
\end{equation*}
and the amplitude $\tilde{a}(y;\eta) = a(\Upsilon_\omega(D'_{\rho^{-1}}y',y_n);\eta_{n-1} \omega + \rho^{-1} \eta', \eta_{n-1})$. 

We make a harmless linear change of variables: Let $L \in GL(n-1;\R)$ be such that $L e_{n-1} = e_{n-1}$ and
\begin{equation*}
\partial^2_{\eta' \eta'} \partial_{y_n} \tilde{\phi}_L (0,0;e_{n-1}) = I_{n-2},
\end{equation*}
where
\begin{equation*}
\tilde{\phi}_L(y;\eta) = \tilde{\phi}(L^{-1} y',y_n; L \eta).
\end{equation*}
It suffices to analyze $\tilde{T}^{\lambda/\rho^2}_L \tilde{g}_L$ with $\tilde{T}^{\lambda/\rho^2}_L$ defined with respect to the data $(\tilde{\phi}_L,\tilde{a}_L)$ for $\tilde{\phi}_L$ as above, $\tilde{a}_L(y;\eta) = \tilde{a}(L^{-1} y',y_n;L \eta)$ and $\tilde{g}_L = | \det L| \tilde{g} \circ L$.\\
To see that $\tilde{\phi}_L$ is still a reduced phase, we use the representations
\begin{equation}
\label{eq:RepresentationTildePhiI}
\tilde{\phi}_L(y;\eta) = \rho^2 \phi(\Upsilon_\omega(D'_{\rho^{-1}} \circ L^{-1} y', y_n), y_n; \eta_{n-1} \omega + \rho^{-1} L' \eta', \eta_{n-1})
\end{equation}
and
\begin{equation}
\label{eq:RepresentationTildePhiII}
\langle y',\eta \rangle + \int_0^1 (1-r) \langle \partial^2_{\omega' \omega'} \phi(\Upsilon_\omega(D'_{\rho^{-1}} \circ L^{-1} y',y_n);\eta_{n-1} \omega + r \rho^{-1} L' \eta', \eta_{n-1}) L' \eta', L' \eta' \rangle dr,
\end{equation}
where $L'$ denotes the $(n-2) \times (n-2)$-submatrix of $L$, containing the first $n-2$ rows and columns. We can argue like in \cite{BeltranHickmanSogge2020}, that $\tilde{\phi}_L$ is again a reduced phase provided that $\phi$ was a reduced phase. Note that the notion of reduced phase is slightly weaker in \cite{BeltranHickmanSogge2020}. We have strengthened the second condition in $D1)$ for technical reasons.
%For sake of simplicity, suppose that $L=1$ in the following as taking derivatives only gives additional components of $L$. 
For reduced phase functions the components of $L$ are uniformly bounded (see \cite{BeltranHickmanSogge2020}). Regarding $K$-flatness of the rescaled phase functions, we have the following:

\begin{lemma}
\label{lem:ReducedPhaseFunction}
Suppose that $\phi$ is a reduced $K$-flat phase function supported in a sector of aperture $\sim \rho^{-1}$. Then $\tilde{\phi}_L$ is a reduced $c K \rho^{\frac{1}{4}}$-flat phase function provided that $\rho$ is chosen large enough.
\end{lemma}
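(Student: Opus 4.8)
The plan is to verify the two defining properties of a reduced $K'$-flat phase for $\tilde\phi_L$ with $K' = cK\rho^{1/4}$: first, that $\tilde\phi_L$ is again reduced (which is asserted in the excerpt to follow from \cite{BeltranHickmanSogge2020}, with the strengthened $D1)$ preserved by the same computation), and second, that the $K$-flatness size conditions \eqref{eq:KFlatPhaseI}, \eqref{eq:KFlatPhaseII} survive the rescaling with the claimed gain. The reducedness is essentially quoted, so the heart of the proof is tracking the $\rho$-powers in the flatness bounds. I would work from the representations \eqref{eq:RepresentationTildePhiI} and \eqref{eq:RepresentationTildePhiII} of $\tilde\phi_L$, since these display exactly how derivatives in $y$ and $\eta$ of $\tilde\phi_L$ relate to derivatives of $\phi$.

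First I would set up the book-keeping. Differentiating \eqref{eq:RepresentationTildePhiII} in $\eta'$ brings down factors of $\rho^{-1}L'$ from the inner argument $\eta_{n-1}\omega + r\rho^{-1}L'\eta'$, while differentiating in $y'$ brings down factors of $D'_{\rho^{-1}}\circ L^{-1}$, i.e.\ additional $\rho^{-1}$ (for the $x''$-slots) or $\rho^{-2}$ (for the $x_{n-1}$-slot), and differentiating in $y_n$ costs nothing. Crucially, each $\partial_{\eta'}$ beyond the two already present in the integrand, and each $\partial_{y'}$, produces a genuinely \emph{negative} power of $\rho$; since $L$ has uniformly bounded entries for reduced phases, the only obstruction to an improvement is that a derivative might hit a slot that does not generate a $\rho^{-1}$. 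For the conditions \eqref{eq:KFlatPhaseI} and \eqref{eq:KFlatPhaseII} we are taking $\partial_{x_k}\partial^\beta_\omega$ with $|\beta'|\ge 2$ (resp.\ $|\beta'|\ge 3$), so at least two (resp.\ three) $\eta'$-derivatives act; combined with the one spatial derivative $\partial_{y_k}$ this guarantees several $\rho^{-1}$-factors, more than enough to absorb the overall $\rho^2$ prefactor appearing in \eqref{eq:RepresentationTildePhiI} and to leave a net gain. One then checks that the worst case — which dictates the exponent $\tfrac14$ — arises from the minimal admissible $|\beta|$ together with the reduced-phase bound $c_{cone}A_i$ that one starts from, so that after rescaling the bound is at most $cK^{-4}\rho^{-1}\cdot(\text{bounded})\le (cK\rho^{1/4})^{-4}$; here the $\rho^{1/4}$ is chosen as the largest power making all the relevant inequalities close simultaneously, and "$\rho$ large enough" is exactly what is needed to dominate the $O(1)$ constants (including those coming from $\Upsilon$, via the bound \eqref{eq:DerivativeBoundUpsilon}, and from $L$).

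Concretely the steps are: (1) invoke the excerpt's assertion that $\tilde\phi_L$ is reduced whenever $\phi$ is, noting the strengthened $D1)$ is preserved; (2) fix a multi-index $(\beta',\beta_{n-1})$ with $|\beta|\le N_0+5$ and $|\beta'|\ge 2$ and an index $k\le n-1$, expand $\partial_{y_k}\partial^\beta_\eta\tilde\phi_L$ using \eqref{eq:RepresentationTildePhiII} via the Faà di Bruno / multilinearity structure, and collect the $\rho$-weights as above; (3) bound each resulting term using the hypothesis that $\phi$ is reduced and $K$-flat (so that $\|\partial_{x_j}\partial^\gamma_\omega\phi\|_\infty\le K^{-4}$ for the relevant $\gamma$ with $|\gamma'|\ge 2$) together with $\|\partial_{x'}\Upsilon\|_\infty\le C_\Upsilon$ and the uniform bound on $L$; (4) observe the net power of $\rho$ is $\le -1$ and hence the product is $\le c\,K^{-4}\rho^{-1}$, which for $\rho$ large is $\le (cK\rho^{1/4})^{-4}$; (5) repeat for $\partial_{y_n}\partial^\beta_\eta\tilde\phi_L$ with $|\beta'|\ge 3$, using \eqref{eq:RepresentationTildePhiI} and that the $x_n$-derivative of a $1$-homogeneous-in-$\eta$ expression still carries the three $\rho^{-1}$'s from the $\eta'$-slots; (6) finally check that the constant-coefficient approximations $h_{\bar x}$ of $\tilde\phi_L$ are $cK\rho^{1/4}$-flat, which follows because $h_{\bar x}$ is built from $\partial_{x_n}\tilde\phi_L\circ\Psi^{\lambda/\rho^2}$ and the cubic-and-higher Taylor coefficients in \eqref{eq:TaylorExpansionHomogeneousPhase} are governed precisely by the $\partial^\beta_\omega$-derivatives with $|\beta'|\ge 3$ just estimated, the diffeomorphism $\Psi$ contributing only bounded factors.

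The main obstacle I expect is step (6) together with the precise tracking in step (2): one must be careful that the linear change $L$ and the map $\Upsilon_\omega\circ D'_{\rho^{-1}}$ do not reintroduce $\rho$-growth through the \emph{chain rule on the spatial variables} — a $\partial_{y_n}$ is harmless, but one must confirm no hidden $y_{n-1}$- or $y''$-derivatives are forced by the structure of $\Upsilon$, and that the $D'_{\rho^{-1}}$ scaling in the $x_{n-1}$-slot ($\rho^{-2}$) is not somehow cancelled. Equally delicate is pinning down the exponent $\tfrac14$ rather than something larger: this requires identifying which single term is extremal (it is the one where $\partial_{y_k}$ lands on the integrand factor that generates only one $\rho^{-1}$ while the remaining derivatives are distributed so as to minimize the number of $\eta'$-hits beyond the mandatory two), and verifying that with $|\beta|\le N_0+5$ there is always at least one such $\rho^{-1}$ to spare after covering the $\rho^2$ prefactor — i.e.\ a net $\rho^{-1}$, giving $\rho^{-1}=(\rho^{1/4})^{-4}$. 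Once that extremal case is isolated, the rest is the routine multi-index accounting already carried out for reducedness in \cite{BeltranHickmanSogge2020}.
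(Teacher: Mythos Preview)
Your plan for steps (1)--(5) matches the paper's argument closely: differentiate the representations \eqref{eq:RepresentationTildePhiI}--\eqref{eq:RepresentationTildePhiII}, count $\rho$-weights, and observe a net gain of at least $\rho^{-1}$, which becomes $(cK\rho^{1/4})^{-4}$. That part is fine.

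The genuine gap is in step (6). You write that ``the diffeomorphism $\Psi$ contribut[es] only bounded factors,'' but boundedness is not enough. Recall $h_{\bar x}(u) = \partial_{x_n}\tilde\phi_L(\bar x;\Psi(u))$, and for $K'$-flatness you must control $\partial^\alpha_{u'} h_{\bar x}$ for $|\alpha|\ge 3$. By the chain rule this produces terms of the form
\[
\big(\partial^2_{\omega'\omega'}\partial_{x_n}\tilde\phi_L\big)\big|_{\Psi}\cdot(\partial^2_u\Psi)(\partial_u\Psi)
\quad\text{and}\quad
\big(\partial_{\omega'}\partial_{x_n}\tilde\phi_L\big)\big|_{\Psi}\cdot\partial^3_u\Psi,
\]
and by $C2'')$ the factor $\partial^2_{\omega'\omega'}\partial_{x_n}\tilde\phi_L$ is of size $\sim 1$ (it is close to $I_{n-2}/\omega_{n-1}$), \emph{not} of size $K^{-4}\rho^{-1}$. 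So if $\partial^2_u\Psi$ were merely bounded, that term would be $O(1)$ and destroy the flatness gain entirely. What the paper actually proves---and what you are missing---is that for the $\Psi$ associated to the \emph{rescaled} phase one has
\[
|\partial_u\Psi|\lesssim 1\quad\text{and}\quad|\partial^\alpha_u\Psi|\lesssim \rho^{-1}\ \text{ for }2\le|\alpha|\le N_0+3.
\]
This is established by a short bootstrap: differentiating the defining relation $\partial_{x'}\tilde\phi_L(\bar x;\Psi(u))=u$ and using the already-proved bound \eqref{eq:KFlatPhaseI} for $\tilde\phi_L$ (together with the $\Upsilon$ and $L$ bounds) yields an identity of the form $\partial^\alpha_u\Psi = O(\rho^{-1})\cdot\partial^\alpha_u\Psi + O(\rho^{-1})$, from which the smallness follows by induction on $|\alpha|$. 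Only with this extra input do the chain-rule terms above become $O(\rho^{-1})$, giving the claimed $cK\rho^{1/4}$-flatness of the constant-coefficient approximations.
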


\begin{proof}
We begin with the verification of \eqref{eq:KFlatPhaseI}. Let 
\begin{equation*}
C_{m,\rho^{-1}} = \begin{cases}
\rho^{-1}, \quad m=1,\ldots,n-2,\\
\rho^{-2}, \quad m=n-1.
\end{cases}
\end{equation*}
From the representation \eqref{eq:RepresentationTildePhiI} we find for $k=1,\ldots,n-1$
\begin{equation*}
\begin{split}
\partial_{y_k} \tilde{\phi}_L &= \rho^2 \sum_{\ell = 1}^{n-1} \partial_{y_\ell} \phi(\Upsilon_\omega(D'_{\rho^{-1}} \circ L^{-1} y',y_n);\eta_{n-1} \omega + \rho^{-1} L' \eta', \eta_{n-1}) \\
&\qquad \times \frac{\partial}{\partial y_k} \Upsilon^\ell (D'_{\rho^{-1}} \circ L^{-1} y',y_n;\omega,1) \\
&= \rho^2 \sum_{\ell,m=1}^{n-1} \partial_{y_\ell} \phi(\Upsilon_\omega(D'_{\rho^{-1}} \circ L^{-1} y',y_n);\eta_{n-1} \omega + \rho^{-1} L \eta', \eta_{n-1}) \\
&\qquad \times \frac{\partial \Upsilon^\ell}{\partial y_m}(D'_{\rho^{-1}} \circ L^{-1} y',y_n;\omega,1) C_{m,\rho^{-1}} L^{-1}_{mk}.
\end{split} 
\end{equation*}
We take derivatives in $\eta'$, which gives factors of $\rho^{-1}$ and components of $L$: Recall that these are uniformly bounded. We obtain by $C_{m,\rho^{-1}} \leq \rho^{-1}$ and $|\partial \Upsilon| \leq C_\Upsilon$ for $2 \leq |\alpha| \leq N_0 + 5$ by hypothesis:
\begin{equation*}
\begin{split}
|\partial_{y_k} \partial^\alpha_{\eta'} \tilde{\phi}_L| &\leq \rho^{2} \rho^{-1} \rho^{-|\alpha|} C_L^{|\alpha|+1} \sum_{|\tilde{\alpha}| = |\alpha|} \sum_{\ell = 1}^{n-1} |\partial_{y_\ell} \partial_{\eta'}^{\tilde{\alpha}} \phi| \\
&\leq \rho^{1-|\alpha|} C_L^{|\alpha|+1} C_\Upsilon C_{N_0} K^{-4}.
\end{split}
\end{equation*}
Taking derivatives in $\eta_{n-1}$ only changes the result by a constant (use e.g. homogeneity in $\eta_{n-1}$): For $2 \leq |\alpha| \leq N_0 + 5$ and $0 \leq |\beta| \leq N_0 + 5 - |\alpha|$ we obtain
\begin{equation*}
|\partial_{y_k} \partial^\alpha_{\eta'} \partial_{\eta_{n-1}}^\beta \tilde{\phi}_L| \leq \rho^{1-|\alpha|} C_L^{|\alpha|+|\beta|+1} C_\Upsilon C_{N_0} K^{-4}.
\end{equation*}
For $\rho = \rho(N_0,n,C_\Upsilon)$ large enough this verifies \eqref{eq:KFlatPhaseI} with $c \rho^{\frac{1}{4}} K$ instead of $K$.

\medskip

We turn to the proof of \eqref{eq:KFlatPhaseII}. By the chain rule
\begin{equation*}
\begin{split}
\partial_{y_n} \tilde{\phi}_L(y;\eta) &= \rho^2 \sum_{k=1}^{n-1} \partial_{y_k} \phi(\Upsilon_\omega(D'_{\rho^{-1}} \circ L^{-1} y',y_n); \eta_{n-1} \omega + \rho^{-1} L' \eta', \eta_{n-1}) \\
&\quad \quad \times \frac{\partial \Upsilon^k_\omega}{\partial x_n}(D'_{\rho^{-1}} \circ L^{-1} y',y_n) \\
&\quad + \rho^2 \partial_{y_n} \phi(\Upsilon_\omega(D'_{\rho^{-1}} \circ L^{-1} y',y_n); \eta_{n-1} \omega + \rho^{-1} L' \eta', \eta_{n-1})
\end{split}
\end{equation*}
We find when taking derivatives in $\eta'$:
\begin{equation*}
|\partial^\alpha_{\eta'} \partial_{y_n} \tilde{\phi}_L(y;\eta)| \leq \rho^2 C_L^{|\alpha|} \rho^{-|\alpha|} C_\Upsilon \sum_{k=1}^n |\partial_{x_k} \partial^\alpha_{\eta'} \phi|.
\end{equation*}
For $|\alpha| \geq 3$ we find by hypothesis
\begin{equation*}
|\partial^\alpha_{\eta'} \partial_{y_n} \tilde{\phi}_L(y;\eta)| \leq \rho^{2-|\alpha|} C_L^{|\alpha|} C_\Upsilon C_{N_0,n} K^{-4}.
\end{equation*}
This suffices to conclude like above. Likewise the argument with additional derivatives in $\eta_n$ applies.

\medskip

To show $K$-flatness for the constant-coefficient approximations, we first suppose that $\psi(\bar{x};u) = u$. Then we find
\begin{equation*}
\begin{split}
h_{\bar{x}}(\eta) &= \eta_{n-1} \rho^2 \sum_{k=1}^{n-1} \partial_{x_k} \phi^\lambda(\Upsilon^\lambda_\omega(D'_{\rho^{-1}} \circ L^{-1} y',y_n);\omega + \rho^{-1} L' \frac{\eta'}{\eta_{n-1}},1) \\
&\qquad \times \frac{\partial \Upsilon^k_\omega}{\partial_{x_n}}(D'_{\rho^{-1}} \circ L^{-1} y',y_n) \\
&\quad + \eta_{n-1} \rho^2 \partial_{x_n} \phi^\lambda(\Upsilon^\lambda_\omega(D'_{\rho^{-1}} \circ L^{-1} y',y_n);\omega + \rho^{-1} \frac{L' \eta'}{\eta_{n-1}}, 1).
\end{split}
\end{equation*}
We need to prove boundedness of
\begin{equation*}
E_{h_{\bar{x}}}(\eta) = \rho K^4 \sum_{|\alpha| = 3} \frac{3}{\alpha!} \int_0^1 (1-s)^2 (\partial^\alpha_{\eta'} h_{\bar{x}})( \frac{s \eta'}{\eta_{n-1}}, 1) ds \frac{(\eta')^\alpha}{\eta_{n-1}^2}.
\end{equation*}
By \eqref{eq:KFlatPhaseI} and \eqref{eq:KFlatPhaseII} we find
\begin{equation}
\label{eq:KFlatnessApproximation}
|\partial^\alpha_\eta E_{h_{\bar{x}}}(\eta)| \lesssim 1 \text{ for } 0 \leq |\alpha| \leq N_0.
\end{equation}
However, in the general case $\psi(\bar{x};\cdot)$ is not the identity mapping, and we need to prove bounds for the derivatives up to order $N_0 + 3$.

We show that
\begin{equation*}
|\partial_u \psi(\bar{x};u)| \lesssim 1 \text{ and } |\partial_u^\alpha \psi(\bar{x};u)| \lesssim \rho^{-1} \text{ for } 2 \leq |\alpha | \leq N_0 +3.
\end{equation*}
The first estimate is immediate from
\begin{equation*}
\partial_{x'} \phi^\lambda(\bar{x};\psi(\bar{x};u)) = u \Rightarrow \partial^2_{x' \omega} \phi(\bar{x};\psi(\bar{x};u)) \partial_u \psi(\bar{x};u) = 1_{n-1}.
\end{equation*}
Hence, $\partial_u \psi(\bar{x};u)= (\partial^2_{x' \omega} \phi^\lambda(\bar{x};\psi(\bar{x};u)))^{-1}$. This proves the first estimate since $\phi$ is a reduced phase.

For the second estimate we consider
\begin{equation*}
\begin{split}
u_i &= \psi_i(\bar{x};u) + \sum_{\ell} C_{\ell,\rho^{-1}} C_L \int_0^1 (1-r) \sum_{j,k=1}^{n-1} \partial_{x_{\ell}'} \partial^2_{\omega_j' \omega_k'} \phi(\Upsilon_\omega(D'_{\rho^{-1}} \circ L^{-1} y',y_{n}); \\
&\quad \psi_{n-1} \omega + r \rho^{-1} \psi'(\bar{x};u) \psi_{n-1}(\bar{x};u)) \frac{\partial \Upsilon^{\ell}_\omega}{\partial x_i}(D'_{\rho^{-1}} \circ L^{-1} y',y_n) \psi_j(\bar{x};u) \psi_k(\bar{x};u) dr.
\end{split}
\end{equation*}
From this expression we can argue by induction that
\begin{equation*}
\partial_u^\alpha \psi_i(\bar{x};u) = O(\rho^{-1}) \partial_u^\alpha \psi_i(\bar{x};u) + O(\rho^{-1}) \text{ for } 2 \leq |\alpha| \leq N_0 + 3.
\end{equation*}
Here we use \eqref{eq:KFlatPhaseI}, Lemma \ref{lem:ChangeSpatialVariables}, and the chain rule. This yields $|\partial_u^\alpha \psi_i(\bar{x};u)| \lesssim_{\Upsilon,N_0,n} \rho^{-1}$ and extends \eqref{eq:KFlatnessApproximation} to the general case.
\end{proof}

\begin{lemma}[Parabolic~rescaling]
\label{lem:ParabolicRescaling}
Let $\text{supp} (f) \subseteq \Xi$ be supported in a $\rho^{-1}$-sector and $\phi$ be a reduced phase, that is $\lambda^{\delta}$-flat. Then, for any $1 \leq \rho \leq R \leq \lambda$:
\begin{equation}
\label{eq:LorentzRescaling}
\| T^\lambda f \|_{L^p(B(0,R))} \lesssim_{\delta'} R^{\delta'} Q_{p,\delta}(R/\rho) \rho^{\frac{2(n-1)}{p}-(n-2)} \| f \|_{L^p}.
\end{equation}
\end{lemma}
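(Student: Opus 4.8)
The plan is to deduce \eqref{eq:LorentzRescaling} from the definition of $Q_{p,\delta}$ by means of the parabolic rescaling identity \eqref{eq:ChangeVariablesRescaling}. First I would dispose of the case where $\rho$ is below the threshold demanded by Lemma~\ref{lem:ReducedPhaseFunction}: there $\Upsilon^\lambda_\omega \circ D_\rho$ is a bi-Lipschitz change of coordinates with constants depending only on $n$ and $\phi$, the $\rho$-powers on the right of \eqref{eq:LorentzRescaling} are $\sim 1$, and the claim follows at once from $Q_{p,\delta}$. So assume $\rho$ is large. After the harmless linear normalisation $L$, the identity recorded above the statement gives $T^\lambda f \circ \Upsilon^\lambda_\omega \circ D_\rho = \tilde T^{\lambda/\rho^2}_L \tilde f_L$, and by Lemma~\ref{lem:ReducedPhaseFunction} the phase $\tilde\phi_L$ is a reduced, $c\lambda^\delta\rho^{1/4}$-flat phase; since $c\lambda^\delta\rho^{1/4} \geq (\lambda/\rho^2)^\delta$ once $\rho$ is large, $\tilde\phi_L$ is in particular $(\lambda/\rho^2)^\delta$-flat, so that $\tilde T^{\lambda/\rho^2}_L$ belongs to the class of operators for which $Q_{p,\delta}$ is defined, at spatial parameter $\lambda/\rho^2$.

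Next I would track the change of variables at the level of $L^p$-norms. The Jacobian of $\Upsilon^\lambda_\omega \circ D_\rho$ is comparable to $\rho^n$ (the map $\Upsilon^\lambda_\omega$ being a bounded-Jacobian diffeomorphism by Lemma~\ref{lem:ChangeSpatialVariables}), while the frequency substitution $(\xi',\xi_{n-1}) = (\eta_{n-1}\omega + \rho^{-1}\eta',\eta_{n-1})$ producing $\tilde f_L$ has Jacobian $\rho^{-(n-2)}$, whence $\|\tilde f_L\|_{L^p} \sim \rho^{(n-2)(1/p - 1)}\|f\|_{L^p}$. Combining the spatial factor $\rho^{n/p}$ (from pulling back the $p$-th power of the spatial norm) with $\rho^{(n-2)(1/p-1)}$ yields exactly the prefactor $\rho^{2(n-1)/p - (n-2)}$. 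One subtlety here is that the rescaled amplitude $\tilde a_L$, though it enjoys stronger derivative bounds, has support whose scale in the $y_{n-1}$-variable is dilated relative to the scale $\lambda/\rho^2$ of $\tilde\phi_L$; following Guth--Hickman--Iliopoulou \cite{GuthHickmanIliopoulou2019} one partitions this support into $O(1)$-sized pieces (using a Fourier series expansion to preserve the product structure of the amplitude), applies the estimate to each, and sums in $\ell^p$, the resulting losses being absorbed into the $R^{\delta'}$-factor.

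The principal obstacle lies in the geometry of the pulled-back region. Since $D_\rho$ leaves the $x_{n-1}$-coordinate unscaled, $(\Upsilon^\lambda_\omega \circ D_\rho)^{-1}(B(0,R))$ is not a ball of radius $R/\rho$ but an anisotropic slab of dimensions roughly $R/\rho \times \cdots \times R/\rho \times R \times R/\rho^2$, elongated in the null direction $x_{n-1}$. To still invoke $Q_{p,\delta}(R/\rho)$ one covers this slab by finitely overlapping $R/\rho$-balls; a crude sum over the $\sim\rho$ balls lining the null direction would lose a factor $\rho^{1/p}$, which is too much. The remedy—and this is exactly where the homogeneity of the phase, rather than merely the convexity used in \cite{GuthHickmanIliopoulou2019}, enters, as in Ou--Wang \cite{OuWang2022} and Gao et al.\ \cite{GaoLiuMiaoXi2023}—is that the rescaled conic frequency surface is contained in an $O(c)$-neighbourhood of a single radial line, so that $\tilde T^{\lambda/\rho^2}_L\tilde f_L$ is slowly varying transverse to the null direction; combined with the rapid decay of the scale-$R/\rho$ wave packets off their tubes, which, being almost parallel to $e_n$, meet only $O(1)$ of the $R/\rho$-balls arranged along $x_{n-1}$, one pigeonholes so that the sum over the null-direction balls costs only an $R^{\delta'}$-factor. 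On each $R/\rho$-ball a translation in $x_{n-1}$—which merely modulates $\tilde f_L$ and leaves $\|\tilde f_L\|_{L^p}$ unchanged, as $\tilde\phi_L$ depends on $y_{n-1}$ only through $y_{n-1}\eta_{n-1}$—reduces matters to $B(0,R/\rho)$, where $Q_{p,\delta}(R/\rho)$ applies; one uses tacitly that $R\rho \lesssim \lambda$ (so that $R/\rho \leq \lambda/\rho^2$), which holds throughout the applications of this lemma, in particular after the reduction to $R \lesssim_\varepsilon \lambda^{1-\varepsilon}$ of Section~\ref{section:MainInductiveArgument}. Assembling these bounds with the Jacobian bookkeeping and the amplitude decomposition gives \eqref{eq:LorentzRescaling}.
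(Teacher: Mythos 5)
Your first half coincides with the paper's proof: the identity $T^\lambda f\circ\Upsilon^\lambda_\omega\circ D_\rho=\tilde T^{\lambda/\rho^2}_L\tilde f_L$, the appeal to Lemma~\ref{lem:ReducedPhaseFunction} to see that $\tilde\phi_L$ is reduced and $(\lambda/\rho^2)^\delta$-flat for $\rho$ large, and the Jacobian bookkeeping $\rho^{n/p}\cdot\rho^{(n-2)(1/p-1)}=\rho^{\frac{2(n-1)}{p}-(n-2)}$ are exactly the steps taken there. The gap is in the final step, which is the substantive content of the proof: how to apply the definition of $Q_{p,\delta}$ when the pulled-back region is the anisotropic slab of dimensions $\sim R/\rho\times\cdots\times R/\rho\times R\times R/\rho^2$, possibly not contained in $B(0,\lambda/\rho^2)$. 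First, you impose ``tacitly'' that $R\rho\lesssim\lambda$; the lemma is stated for all $1\le\rho\le R\le\lambda$, and the paper's proof needs no such restriction --- it is precisely the failure of containment in $B(0,\lambda/\rho^2)$ (together with the mismatch between the scales of $\tilde a_L$ and $\tilde\phi_L$) that is handled by importing the ellipsoid estimate of Guth--Hickman--Iliopoulou \cite[Section~11.2]{GuthHickmanIliopoulou2019}, i.e.\ discretization via the essentially-constant property and orthogonality between the balls on which the smaller-scale estimate is applied.

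Second, and more seriously, even granting $R\rho\lesssim\lambda$, your mechanism for summing over the $\sim\rho$ balls of radius $R/\rho$ lining the long $x_{n-1}$-direction is not a proof. The space-time frequency support of $\tilde T^{\lambda/\rho^2}_L\tilde f_L$ lies in an $O(c)$-neighbourhood of a radial line with $c\sim 1$ a fixed constant, so the resulting locally-constant property is at scale $O(1)$, not at scale $R/\rho$, and cannot merge those balls. The observation that scale-$(R/\rho)$ wave packets meet only $O(1)$ of the balls is the right kind of input, but to conclude you need an $L^p$ almost-orthogonality statement of the form $\sum_j\|f_j\|_{L^p}^p\lesssim\|\tilde f_L\|_{L^p}^p$ for the wave-packet sums $f_j$ attached to the balls; this does not follow from ``pigeonholing'' and is not supplied (for $p>2$, packets sharing a cap $\theta$ are not disjointly supported in $\omega$, so a genuine orthogonality/interpolation argument is required --- this is exactly the ``orthogonality between balls'' the paper delegates to \cite{GuthHickmanIliopoulou2019}). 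Finally, your recentering step rests on the claim that $\tilde\phi_L$ depends on $y_{n-1}$ only through $y_{n-1}\eta_{n-1}$; this holds for the model $\phi_*$ but is false in the variable-coefficient case, since the nonlinear term depends on $y_{n-1}$ through the argument $\rho^{-2}y_{n-1}$ of $\Upsilon_\omega(D'_{\rho^{-1}}y',y_n)$, so one must verify that the translated data are again of reduced, flat type (this is where the margin condition on the amplitude enters). In short, the routine reductions are in place, but the core of the lemma --- the adaptation of the GHI Section~11.2 argument to the present slab geometry --- is replaced in your proposal by heuristics that, as stated, do not close.
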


The proof combines arguments from \cite{BeltranHickmanSogge2020} and \cite{GuthHickmanIliopoulou2019}. With many preliminaries already settled in Lemma \ref{lem:ReducedPhaseFunction}, we shall be brief.

\begin{proof}
By a change of spatial variables, we find from \eqref{eq:RescalingChangeSpatialVariables}
\begin{equation}
\label{eq:RescalingChangeSpatialVariables}
\| T^\lambda g \|_{L^p(B_R)} \lesssim \rho^{\frac{n}{p}} \| \tilde{T}^{\lambda/\rho^2} \tilde{g} \|_{L^p((\Upsilon^\lambda_\omega \circ D_\rho)^{-1}(B_R))}.
\end{equation}
We have proved in Lemma \ref{lem:ReducedPhaseFunction} that the phase $\tilde{\phi}_L$ used for the operator $\tilde{T}$ is still reduced and $c \rho^{\frac{1}{4}} \lambda^\delta$ flat with $c$ a universal constant. In particular, for $\rho$ large enough, the phase function is $(\lambda/\rho^2)^\delta$-flat.

We want to use the definition of $Q_{p,\delta}(R/\rho^2)$ with $\lambda/\rho^2$ playing the role of $\lambda$. However, $(\Upsilon^\lambda_\omega \circ D_{\rho})^{-1} (B_R)$ is an approximate ellipsoid of dimensions $\sim R/\rho \times \ldots \times R/\rho \times R  \times R/\rho^2$ and possibly not contained anymore in $B(0,\lambda/\rho^2)$. Nonetheless, in \cite{GuthHickmanIliopoulou2019} was argued for a different class of phase functions that the estimate
\begin{equation*}
\| T^\lambda f \|_{L^p(D_{\textbf{R}})} \lesssim_{\delta'} Q_{p,\delta}(R) R^{\delta'} \| f \|_{L^p}
\end{equation*}
still holds for an ellipsoid with $1 \leq R' \leq R$ and $R \leq \lambda$:
\begin{equation*}
D_{\textbf{R}} = \{ x \in \R^n : \big( \frac{|x'|}{R'} \big)^2 + \big( \frac{|x_n|}{R} \big)^2 \leq 1 \}.
\end{equation*}
The argument is based on discretization by the essentially constant property and orthogonality between balls of size $R$, on which the estimate at smaller scale can be applied. In the present context the balls of size $R/\rho^2$ are centered at the origin such that the containment in $B(0,\lambda/\rho^2)$ is achieved.
The arguments from \cite[Section~11.2]{GuthHickmanIliopoulou2019} extend without significant change to the present case, for which reason the details are omitted. This completes the proof.
\end{proof}

\subsection{Proof of Proposition \ref{prop:LinearFromBroadEstimates}}

With the narrow decoupling and parabolic rescaling at hand, we can now derive linear estimates from broad estimates:
\begin{proof}[Proof of Proposition \ref{prop:LinearFromBroadEstimates}]
Let $\varepsilon > 0$. By interpolation it suffices to prove the linear estimate for $p$ satisfying the additional constraint
\begin{equation*}
p(k,n) < p.
\end{equation*}

In the first step, for $\lambda \gg 1$, we carry out a parabolic rescaling depending on the phase so that we can reduce to $\lambda^{\tilde{\delta}}$-flat phase functions by Lemma \ref{lem:ReducedPhaseFunction}. This loses a factor $C_\phi \lambda^{10 n \tilde{\delta} }$ by partitioning $\Xi$ into sectors, which will be admissible provided that 
\begin{equation}
\label{eq:InitialRescaling}
10 n \, \tilde{\delta} \leq \varepsilon/10.
\end{equation}
We shall next prove that $Q_{p,\tilde{\delta}}(R) \leq C_\varepsilon R^{\frac{3\varepsilon}{4}}$. In the following let $K=K_0 R^\eta \leq \lambda^{\tilde{\delta}}$ with $K_0$ and $\eta$ to be determined (see \eqref{eq:ChoiceK}).

\medskip

\noindent By the assumed $k$-broad estimate, we find
\begin{equation}
\label{eq:kBroadEstimateI}
\sum_{\substack{B_{K^2} \in \mathcal{B}_{K^2}, \\ B_{K^2} \cap B(0,R) \neq \emptyset}} \min_{V_1,\ldots,V_A} \max_{\tau \notin V_a} \int_{B_{K^2}} |T^\lambda f_\tau |^p \leq \tilde{C}_\varepsilon K^{C_\varepsilon} R^{\frac{p \varepsilon}{2}} \| f \|_{L^p(A^{n-1})}^p,
\end{equation}
where $V_1$,...,$V_A$ are $(k-1)$-planes and $\tau \notin V_a$ is short-hand for
\begin{equation*}
\angle (G^\lambda(\bar{x};\tau),V_a) > K^{-2},
\end{equation*}
with $\bar{x}$ being the centre of $B_{K^2}$. Moreover, we can suppose that $R \leq \lambda^{1-\delta_1}$ by covering $B(0,\lambda)$ with $R$-balls and losing an additional factor $\lambda^{10 n \delta_1}$, which is admissible provided that
\begin{equation}
\label{eq:ConstraintDeltaPrime}
10 n \delta_1 \leq \frac{\varepsilon}{10}.
\end{equation}

We choose $V_1$,...,$V_A$ for each $B_{K^2}$, which attains the minimum in \eqref{eq:kBroadEstimateI}. By this, we may write
\begin{equation*}
\int_{B_{K^2}} |T^\lambda f |^p \leq K^{10n} \max_{\tau \notin V_a} \int_{B_{K^2}} |T^\lambda f_\tau|^p + \sum_{a=1}^A \int_{B_{K^2}} \big| \sum_{\tau \in V_a} T^\lambda f_\tau \big|^p.
\end{equation*}
By summing over $B_{K^2}$ and using \eqref{eq:kBroadEstimateI}, we find
\begin{equation*}
\int_{B(0,R)} |T^\lambda f|^p \leq K^{10 n} \tilde{C}_\varepsilon K^{C_\varepsilon} R^{p \varepsilon/2} \| f \|^p_{L^p} + \sum_{\substack{B_{K^2} \in \mathcal{B}_{K^2}, \\ B_{K^2} \cap B(0,R) \neq \emptyset}} \sum_{a=1}^A \int_{B_{K^2}} \big| \sum_{\tau \in V_a} T^\lambda f_\tau \big|^p.
\end{equation*}
By the decoupling result Proposition \ref{prop:DecouplingNarrowTerm}, we find for any $\delta_1 > 0$, provided that $K \leq \lambda^{\tilde{\delta}}$ and $B_{K^2} \subseteq B(0,\lambda^{1-\delta_1})$,
\begin{equation*}
\int_{B_{K^2}} \big| \sum_{\tau \in V_a} T^\lambda f_\tau \big|^p \lesssim_{\delta_1} K^{\max((k-3)(\frac{p}{2}-1),0) + \delta_1} \sum_{\tau \in V_a} \int_{\R^n} |T^\lambda_* f_\tau|^p w_{B_{K^2}} + \lambda^{p \big( \frac{n}{2}- \frac{\delta_1 N}{2} \big)} \| f \|^p_{L^p}
\end{equation*}
and summing over $a$ and $B_{K^2}$, we find 
\begin{equation}
\label{eq:NarrowAfterDecoupling}
\begin{split}
\sum_{B_{K^2} \in \mathcal{B}_{K^2}} \sum_{a=1}^A \int_{B_{K^2}} \big| \sum_{\tau \in V_a} T^\lambda f_\tau \big|^p &\lesssim_{\delta_1} K^{\max((k-3)(p/2-1),0)+ \delta_1} \sum_{\tau: K^{-1}} \int_{B(0,2R)} |T_*^\lambda f_\tau |^p \\
&\quad + \| f \|^p_{L^p}.
\end{split}
\end{equation}
Here we make use of the choice $N=N(\delta_1)$ large enough. We shall choose $\delta_1 = \delta_1(\varepsilon,p,k,n)$. This requires to possibly increase $N$ above the needs of Section \ref{section:MainInductiveArgument}.

The separated expressions $T_*^\lambda f_\tau$ are amenable to Lemma \ref{lem:ParabolicRescaling}, which gives
\begin{equation}
\label{eq:RescaledVersions}
\int_{B(0,2R)} |T^\lambda_* f_\tau|^p \lesssim_{\delta_2} (Q_{p,\tilde{\delta}}(R))^p R^{\delta_2} K^{2(n-1)-(n-2)p} \| f_\tau \|^p_{L^p}.
\end{equation}
Let
\begin{equation*}
-e(k,p) = 2(n-1) - (n-2)p + \max((k-3) \big( \frac{p}{2} - 1 \big), 0).
\end{equation*}

Plugging \eqref{eq:RescaledVersions} into \eqref{eq:NarrowAfterDecoupling}, we find
\begin{equation*}
\int_{B(0,R)} |T^\lambda f|^p \leq (K^{10 n} \tilde{C}_\varepsilon K^{C_\varepsilon} R^{p \varepsilon/2} + C_{\delta_1,\delta_2} (Q_{p,\tilde{\delta}}(R))^p R^{\delta_2} K^{-e(k,p)+\delta_1} ) \| f \|^p_{L^p(A^{n-1})}.
\end{equation*}
This yields
\begin{equation}
\label{eq:InductionLoop}
(Q_{p,\tilde{\delta}}(R))^p \leq K^{10 n} \tilde{C}_\varepsilon K^{C_\varepsilon} R^{p \varepsilon/2} + C_{\delta,\delta'} (Q_{p,\tilde{\delta}}(R))^p R^\delta K^{-e(k,p)+\delta'}.
\end{equation}
Since $p$ is as in \eqref{eq:ConditionsP}, we find $e(k,p)>0$, and may choose $\delta_1 = \min( e(k,p)/2, \frac{\varepsilon}{100 n})$, so that the $K$ exponent in the second term on the right-hand side is negative and \eqref{eq:ConstraintDeltaPrime} is satisfied.

Moreover, we can choose $\delta_2$ small enough such that 
\begin{equation}
\label{eq:ChoiceDelta}
\frac{2 \delta_2}{e(k,p)} C_\varepsilon \leq \frac{p \varepsilon}{8} \text{ and } \frac{10n \delta_2}{2 e(k,p)} \leq \frac{\varepsilon}{60} \leq \frac{p \varepsilon}{8}.
\end{equation}
If
\begin{equation}
\label{eq:ChoiceK}
K=K_0 R^{\frac{2 \delta_2}{e(k,p)}}
\end{equation}
 for a sufficiently large $K_0$, depending on $\varepsilon$, $\delta_2=\delta_2(\varepsilon)$, $p$ and $n$, \eqref{eq:ChoiceDelta} ensures for the first term on the right-hand side of \eqref{eq:InductionLoop}:
\begin{equation*}
K^{10 n} K^{C_\varepsilon} R^{p \varepsilon/2} \leq \tilde{C}_\varepsilon K_0^{10n + C_\varepsilon} R^{\frac{ 3p \varepsilon}{4}} = \tilde{D}_\varepsilon R^{\frac{ 3p \varepsilon}{4}}.
\end{equation*}
We find for the second term on the right-hand side of \eqref{eq:InductionLoop}:
\begin{equation*}
\begin{split}
C_{\delta_1,\delta_2} (Q_{p,\tilde{\delta}}(R))^p R^\delta K^{-e(k,p) + \delta_1} &\leq C_{\delta_1,\delta_2} (Q_{p,\tilde{\delta}}(R))^p R^{\delta_2} (K_0 R^{\frac{2 \delta_2}{e(k,p)}} )^{-\frac{e(k,p)}{2}} \\
&= C_{\delta_1,\delta_2} K_0^{-\frac{e(k,p)}{2}} (Q_{p,\tilde{\delta}}(R))^p \leq \frac{1}{2} (Q_{p,\tilde{\delta}}(R))^p.
\end{split}
\end{equation*}
The ultimate estimate follows from choosing $K_0=K_0(\delta_1,\delta_2,k,p,n) = K_0(\varepsilon)$ large enough.

It remains to make sure that $K \leq \lambda^{\tilde{\delta}}$ to apply the narrow decoupling result: By choosing $\tilde{\delta}= \frac{3 \delta_2}{e(k,p)}$ and $\lambda \geq \lambda_0(\varepsilon)$ such that $\lambda^{\tilde{\delta}} \geq K_0 \lambda^{\frac{2 \delta_2}{e(k,p)}}$, the proof is complete because \eqref{eq:InitialRescaling} is ensured by
 \begin{equation*}
10 n \tilde{\delta} = \frac{30 n \delta_2}{e(k,p)} = 6  \frac{10 n \delta_2}{2 e(k,p)} \leq \frac{\varepsilon}{10}.
 \end{equation*}
\end{proof}

\begin{remark}
\label{rem:RefinedLpLpEstimate}
We can similarly prove an estimate
\begin{equation*}
\| T^\lambda f \|_{L^p(B(0,R))} \lesssim_{\varepsilon,\phi,a} R^\varepsilon \| f \|_{L^p},
\end{equation*}
for which we have to modify the induction quantity to work with $R^{\tilde{\delta}}$-flat phase functions.
\end{remark}

\section{$\varepsilon$-removal away from the endpoint}
\label{section:epsRemoval}
In the following we prove the estimate
\begin{equation}
\label{eq:GlobalEstimate}
\| T^\lambda f \|_{L^p(\R^n)} \lesssim_{\phi,a} \| f \|_{L^p(A^{n-1})}
\end{equation}
for $p > p_n$ with $p_n$ defined in \eqref{eq:PolynomialPartitioningRange}. The argument is essentially well-known in the literature \cite{Tao1998,Tao1999,GuthHickmanIliopoulou2019} and we shall be brief. The detailed argument from \cite{GuthHickmanIliopoulou2019} cannot be applied directly because it relies on non-degenerate curvature properties $H2)$ of the phase function. However, we shall see that the partial non-degeneracy
\begin{equation}
\label{eq:OneNondegenerateCurvature}
\exists \text{ non-vanishing eigenvalue of } \partial^2_{\omega \omega} \langle \partial_x \phi^\lambda(x;\omega), G^\lambda(x;\omega_0) \rangle \vert_{\omega = \omega_0}
\end{equation}
suffices for the argument. In the following we suppose that the phase $\phi$ satisfies the non-degeneracy $C1)$ and \eqref{eq:OneNondegenerateCurvature}. We shall prove that, if for $\bar{p} \geq 2$ and for all $\varepsilon > 0$ the estimate
\begin{equation}
\label{eq:LocalEstimate}
\| T^\lambda f \|_{L^p(B_R)} \lesssim_{\varepsilon,\phi,a} R^\varepsilon \| f \|_{L^p(A^{n-1})}
\end{equation}
holds for all $p \geq \bar{p}$, all $R$-balls $B_R$, and any amplitude, then we find the global estimate \eqref{eq:GlobalEstimate} to hold for all $p > \bar{p}$. The following notion plays an important role in the argument:
\begin{definition}[Tao \cite{Tao1999}]
Let $R \geq 1$. A collection $\{B(x_j,R)\}_{j=1}^N$ of $R$-balls in $\R^d$ is sparse if $\{x_1,\ldots,x_N\}$ are $(RN)^{\bar{C}}$-separated. Here $\bar{C} \geq 1$ is a fixed constant, chosen large enough to satisfy the requirements of the forthcoming argument.
\end{definition}
Like in previous instances of the argument, it suffices to analyze sparse families of balls.
\begin{lemma}[{\cite[Lemma~12.2]{GuthHickmanIliopoulou2019}}]
\label{lem:SparseReduction}
To prove \eqref{eq:GlobalEstimate} for all $p > \bar{p}$, it suffices to show that for all $\varepsilon>0$ the estimate
\begin{equation}
\label{eq:SparseEstimate}
\| T^\lambda f \|_{L^{\bar{p}}(S)} \lesssim_{\varepsilon,\phi,a} R^\varepsilon \| f \|_{L^{\bar{p}}(A^{n-1})}
\end{equation}
holds whenever $R \geq 1$ and $S \subseteq \R^n$ is a union of $R$-balls belonging to a sparse collection, for any choice of amplitude function.
\end{lemma}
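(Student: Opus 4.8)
The plan is to reduce the global estimate \eqref{eq:GlobalEstimate} to the sparse estimate \eqref{eq:SparseEstimate} by a dyadic pigeonholing and Rubio de Francia / interpolation-type argument, exactly along the lines of Tao \cite{Tao1999} and \cite[Section~12]{GuthHickmanIliopoulou2019}; the only thing to check is that nothing in the reduction uses more than the non-degeneracy $C1)$ and \eqref{eq:OneNondegenerateCurvature}, which it does not, since the reduction is purely a measure-theoretic/combinatorial statement about the putative bound. First I would fix $p>\bar p$ and $f\in L^p(A^{n-1})$ with $\|f\|_{L^p}=1$, and by a standard limiting argument it suffices to bound $\|T^\lambda f\|_{L^p(\Omega)}$ for $\Omega\subseteq\R^n$ of finite measure, uniformly in $\Omega$ and $\lambda$. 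Decomposing $\Omega$ into the level sets $\Omega_\alpha=\{x:|T^\lambda f(x)|\sim\alpha\}$ and using that $|T^\lambda f|\lesssim 1$ pointwise (from $\|f\|_1\lesssim\|f\|_p\lesssim 1$ on the bounded set $A^{n-1}$, since the amplitude is bounded and compactly supported) together with an a priori very crude polynomial bound $\|T^\lambda f\|_{L^p(\Omega)}\lesssim_\phi\lambda^{O(1)}$ (say from the trivial $L^1$-$L^\infty$ bound and the compact support of $a$), one localizes to $O(\log\lambda)$ relevant values of $\alpha$, hence it suffices to bound $\alpha\,|\Omega_\alpha|^{1/p}$ for a single $\alpha$, losing only $\lambda^\varepsilon$.

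Next I would run the Tao iteration that replaces an arbitrary measurable set of $R$-balls by a sparse subcollection. Cover $\Omega_\alpha$ by a collection $\mathcal{B}$ of finitely overlapping $R$-balls with $R$ to be chosen (a dyadic value comparable to a large power of $\lambda$, optimized at the end). By a greedy/pigeonholing selection one extracts from $\mathcal{B}$ a sparse subcollection $\mathcal{B}'$, i.e. one whose centres are $(R\,\#\mathcal{B}')^{\bar C}$-separated, such that $\#\mathcal{B}'$ is not too small relative to $\#\mathcal{B}$: precisely, partitioning $\mathcal{B}$ into $\lesssim\#\mathcal{B}$ many sparse families and keeping the largest, or iterating the separation condition, yields $\#\mathcal{B}\lesssim (\#\mathcal{B})^{O(1)}\#\mathcal{B}'$ up to acceptable powers. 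Applying the hypothesized sparse estimate \eqref{eq:SparseEstimate} to $S=\bigcup\mathcal{B}'$ gives $\|T^\lambda f\|_{L^{\bar p}(S)}\lesssim_\varepsilon R^\varepsilon$, hence a bound on the number of $R$-balls in $\mathcal{B}'$ on which $|T^\lambda f|\gtrsim\alpha$, and therefore on $\#\mathcal{B}$, of the form $\#\mathcal{B}\lesssim_\varepsilon (R^\varepsilon\alpha^{-1})^{\bar p\cdot O(1)}$. Interpolating this information against the trivial bound $\#\mathcal{B}\lesssim\lambda^{O(1)}$ and summing geometrically over the dyadic scales $R$ from $1$ up to $\lambda^{O(1)}$ (here one uses $p>\bar p$ strictly, so the geometric series in $R^{(\bar p-p)c}$ converges) produces the desired $L^p(\Omega_\alpha)$ bound with a constant independent of $\Omega$, $\lambda$. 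Undoing the level-set decomposition yields \eqref{eq:GlobalEstimate}.

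The main obstacle, and the place where one must be slightly careful, is verifying that the "wave packets / tails" input used in the standard sparse-reduction lemma only requires $C1)$ and \eqref{eq:OneNondegenerateCurvature}: specifically, one needs that the Schwartz tails of $T^\lambda$ decay rapidly off the tubes $T_{\theta,v}$ (which is \cite[Lemma~5.4]{GuthHickmanIliopoulou2019}, depending only on $C1')$), and that on a sparse family the contributions of tubes passing through distant balls are genuinely negligible — this is where the $(RN)^{\bar C}$-separation is used, and it is a soft estimate that does not see curvature at all. The single non-vanishing eigenvalue in \eqref{eq:OneNondegenerateCurvature} is needed only to ensure that the operator is not degenerate to the point of violating a pointwise $\lambda^{O(1)}$ operator bound uniformly, i.e. to have some a priori finite global $L^p$ norm to bootstrap from; it enters through the local estimate \eqref{eq:LocalEstimate} being assumed, not through the reduction itself. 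So the proof is essentially a citation of \cite[Lemma~12.2]{GuthHickmanIliopoulou2019} (which is Lemma~\ref{lem:SparseReduction}) together with the remark that its proof, being a combinatorial/measure-theoretic argument on $R$-ball collections plus the $C1)$-dependent wave packet tail bounds, transfers verbatim to the present class of phases.
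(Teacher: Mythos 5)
Your overall strategy (level-set decomposition, cover by $R$-balls, reduce to sparse families, use $p>\bar{p}$ to absorb losses, and otherwise defer to Tao/GHI) is the same route the paper takes, and you are right that the reduction only sees $C1)$ and not the curvature hypothesis. But your sketch has a genuine gap exactly at the step the paper singles out as the key ingredient. You propose to extract a sparse subcollection $\mathcal{B}'\subseteq\mathcal{B}$ at a single scale $R$ by a greedy/pigeonhole selection, with the bound $\#\mathcal{B}\lesssim(\#\mathcal{B})^{O(1)}\#\mathcal{B}'$ ``up to acceptable powers''; as written this inequality is vacuous, and no single-scale selection can work with acceptable losses. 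The obstruction is that sparseness is self-referential: a family of $N$ balls of radius $R$ is sparse only if its centres are $(RN)^{\bar{C}}$-separated, so the required separation grows with the cardinality of the family you are trying to keep. If all the balls of $\mathcal{B}$ cluster inside one ball of radius $(R\,\#\mathcal{B})^{\bar{C}}$, any partition into sparse families degenerates into $\approx\#\mathcal{B}$ singletons, and summing the sparse estimate over them costs a full power $(\#\mathcal{B})^{1/\bar{p}}\approx|E|^{1/\bar{p}}$, which cannot be absorbed by $p>\bar{p}$. This is precisely why the argument needs Tao's covering lemma (Lemma \ref{lem:CoveringLemma} in the paper): one works with a hierarchy of radii $R_0=1$, $R_j=(R_{j-1}|E|)^{\bar{C}}$, so that clusters at one scale are swallowed by few balls at the next scale, and after $N$ steps one covers $E$ by sparse families whose number at each scale is only $O(|E|^{1/N})$. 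The admissible losses are then the powers $|E|^{1/N}$ and $R_N^{\varepsilon}\approx|E|^{C(N)\varepsilon}$, which are absorbed using $p>\bar{p}$ by taking $N$ large and $\varepsilon$ small; there is no geometric series over dyadic values of $R$ as in your sketch, since the scales $R_j$ are dictated by $|E|$ rather than summed over.

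Two smaller points. First, the paper itself proves the lemma by stating the covering lemma and citing the proof of \cite[Lemma~12.2]{GuthHickmanIliopoulou2019}, so your final ``essentially a citation'' remark is consistent with the paper, but a blind proof should at least identify the covering lemma (or an equivalent multi-scale device), since that is the non-trivial content; your text never does. Second, your explanation of the role of \eqref{eq:OneNondegenerateCurvature} is slightly off: it is not used to secure an a priori polynomial bound, but enters only in the subsequent lemma, where the sparse estimate \eqref{eq:SparseEstimate} itself is proved via a Van der Corput bound on $\check{G}_{j_1,j_2}$; for the reduction lemma at hand it plays no role, as you correctly suspected.
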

The key ingredient in the proof of Lemma \ref{lem:SparseReduction} is the following covering lemma due to Tao \cite{Tao1998}:
\begin{lemma}[Covering~lemma,~\cite{Tao1998,Tao1999}]
\label{lem:CoveringLemma}
Suppose that $E \subseteq \R^n$ is a finite union of $1$-cubes and $N \geq 1$. Define the radii $R_j$ inductively by $R_0 =1$ and $R_j=(R_{j-1} |E|)^{\bar{C}}$ for $1 \leq j \leq N$. Then, for each $1 \leq j \leq N$, there exists a family of sparse collections $(\mathcal{B}_{j,\alpha})_{\alpha \in A_j}$ of balls of radius $R_j$ such that the index sets $A_k$ have cardinality $O(|E|^{1/N})$ and
\begin{equation*}
E \subseteq \bigcup_{j=1}^{N} \bigcup_{\alpha \in A_j} S_{j,\alpha},
\end{equation*}
where $S_{j,\alpha}$ is the union of all the balls belonging to the family $\mathcal{B}_{j,\alpha}$.
\end{lemma}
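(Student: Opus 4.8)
The final statement to be proven is Lemma~\ref{lem:CoveringLemma} (the covering lemma of Tao), which is the combinatorial engine driving the $\varepsilon$-removal. Since this lemma is purely geometric and does not involve the oscillatory integral operator at all, the plan is to follow Tao's original argument \cite{Tao1998,Tao1999} essentially verbatim; the phase function, the amplitude, and the non-degeneracy conditions play no role here.

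The plan is as follows. First, I would set up a greedy/pigeonhole selection scheme at a single scale $R_j$. Given the set $E$ (a union of $1$-cubes), and having fixed the radii $R_0 = 1$, $R_j = (R_{j-1}|E|)^{\bar C}$, one covers $E$ by a maximal $(R_jN')$-separated collection of $R_j$-balls for the appropriate separation exponent; the key observation is that the number of such balls needed is at most $O(|E|)$, since each $R_j$-ball that meets $E$ must contain at least one unit cube and these are disjoint. Then one partitions this collection of $R_j$-balls into sparse subcollections: two balls lie in the same subcollection only if their centres are $(R_jN')^{\bar C}$-separated. A standard pigeonholing / graph-colouring argument shows that $O(|E|^{1/N})$ colours suffice at each scale, because the "conflict graph" (balls that are too close to be placed in the same sparse family) has bounded clustering controlled by the separation at the next-larger scale; this is exactly where the recursive definition $R_j = (R_{j-1}|E|)^{\bar C}$ is used, ensuring the scales are well-separated enough that the count telescopes. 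Iterating over $j = 1,\dots,N$ and taking the union of all the resulting sparse families $\mathcal{B}_{j,\alpha}$ produces a cover of $E$, since every unit cube of $E$ lies in some $R_j$-ball for some $j$ by construction.

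The main obstacle—though it is really a bookkeeping obstacle rather than a conceptual one—is tracking the interplay of the three parameters: the number of scales $N$, the separation exponent $\bar C$, and the size $|E|$, so that the index sets $A_j$ genuinely have cardinality $O(|E|^{1/N})$ and the radii grow at the prescribed polynomial rate without the constants blowing up. One has to choose $\bar C$ large enough (depending only on $n$) that at each stage the "bad" balls—those forced to collide with previously chosen balls at the same scale—can be pushed to the next scale, and that the sparse subdivision at scale $R_j$ introduces only a multiplicative factor of $O(|E|^{1/N})$. The cleanest way to organize this is the dyadic-pigeonhole induction on $j$ that Tao uses: at each scale absorb the balls that are "close together" into larger balls at the next scale, so that what remains is automatically sparse up to a bounded number of translates.

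Since the statement is quoted directly as \cite[Lemma~12.2]{GuthHickmanIliopoulou2019} / \cite{Tao1998,Tao1999} and the excerpt explicitly says "The argument is essentially well-known in the literature \cite{Tao1998,Tao1999,GuthHickmanIliopoulou2019} and we shall be brief," the honest proof here is a citation together with the remark that the argument is purely set-theoretic and therefore transfers to the present setting without any modification—the partial non-degeneracy \eqref{eq:OneNondegenerateCurvature} is needed only later, in the passage from Lemma~\ref{lem:SparseReduction} and Lemma~\ref{lem:CoveringLemma} back to the global estimate \eqref{eq:GlobalEstimate}, not in the covering lemma itself. I would therefore present a short proof sketch recalling the greedy selection at each scale, the pigeonhole subdivision into sparse families, and the recursion on the radii, and refer to \cite{Tao1998} for the complete details.
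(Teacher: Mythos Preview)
Your proposal is correct and matches the paper's approach: the paper does not prove Lemma~\ref{lem:CoveringLemma} at all but simply states it with attribution to \cite{Tao1998,Tao1999}, so your plan to cite Tao and include a brief sketch of the greedy selection / pigeonhole argument is, if anything, slightly more detailed than what the paper itself does. Your observation that the lemma is purely combinatorial and independent of the phase-function hypotheses is exactly the point.
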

With Lemma \ref{lem:CoveringLemma} at hand, the proof of Lemma \ref{lem:SparseReduction} from \cite{GuthHickmanIliopoulou2019} applies. It remains to establish the estimates for $T^\lambda$ over sparse collections of $R$-balls.
\begin{lemma}
Let $\phi$ satisfy $C1)$ and \eqref{eq:OneNondegenerateCurvature}. If $p \geq \bar{p}$, then the estimate
\begin{equation*}
\| T^\lambda f \|_{L^p(S)} \lesssim_{\varepsilon,\phi,a} R^\varepsilon \| f \|_{L^p}
\end{equation*}
holds for all $ \varepsilon > 0$ whenever $S \subseteq \R^n$ is a union of $R$-balls belonging to a sparse collection.
\end{lemma}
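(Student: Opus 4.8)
The strategy is the standard $\varepsilon$-removal over sparse collections, following Tao \cite{Tao1998,Tao1999} and the implementation in \cite[Section~12]{GuthHickmanIliopoulou2019}, adapted to the weaker curvature hypothesis \eqref{eq:OneNondegenerateCurvature}. Write $S=\bigcup_{j=1}^N B(x_j,R)$ with $\{x_j\}$ an $(RN)^{\bar C}$-separated set. First I would split the operator as $T^\lambda f=\sum_{j=1}^N T^\lambda f\cdot\chi_{B(x_j,R)}$ and estimate the diagonal and off-diagonal contributions separately. The diagonal part is immediate from the local estimate \eqref{eq:LocalEstimate}: by disjointness of the balls and the trivial inequality $\sum_j a_j^{p/\bar p}\le(\sum_j a_j)^{p/\bar p}$ is not needed; rather one simply has $\sum_j\|T^\lambda f\|_{L^{\bar p}(B(x_j,R))}^{\bar p}\lesssim_\varepsilon R^{\varepsilon\bar p}\sum_j\|f\|_{L^{\bar p}}^{\bar p}$ after a $TT^*$/locally-constant reduction that replaces $f$ by a piece adapted to each ball, but more efficiently one decomposes $f$ according to the wave packets meeting each $B(x_j,R)$ and uses almost orthogonality. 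The clean way, exactly as in \cite{GuthHickmanIliopoulou2019}, is to pass to $|T^\lambda f|^{\bar p}$ and exploit that on $B(x_j,R)$ the function $T^\lambda f$ essentially depends only on the wave packets whose tubes meet that ball.

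Second, the key point is the off-diagonal decay. One forms the kernel $K(x,y)=\int e^{i(\phi^\lambda(x;\omega)-\phi^\lambda(y;\omega))}a^\lambda(x;\omega)\overline{a^\lambda(y;\omega)}\,d\omega$ of $T^\lambda(T^\lambda)^*$ and shows a dispersive bound: using $C1)$ together with the single non-vanishing eigenvalue in \eqref{eq:OneNondegenerateCurvature}, non-stationary phase in the full variable plus stationary phase in one direction yields
\begin{equation*}
|K(x,y)|\lesssim_N (1+|x-y|)^{-1/2}\,(1+R^{-1}|x-y|)^{-N}\quad\text{for }x,y\in B(0,\lambda),
\end{equation*}
or at least the weaker $(1+|x-y|)^{-1/2}$ decay with rapid decay once $|x-y|\gg R$ after localizing frequencies to $R^{-1/2}$-caps and summing. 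This is where the hypothesis \eqref{eq:OneNondegenerateCurvature} enters: it guarantees $\partial^2_{\omega\omega}$ of the phase difference is non-degenerate in at least one direction on the relevant cap, giving the $|x-y|^{-1/2}$ gain which, after the $(RN)^{\bar C}$-separation of the balls and choosing $\bar C$ large, overwhelms the number $N$ of balls. Then a Schur-test / interpolation argument ($L^2\to L^2$ from the decay of $K$, $L^1\to L^\infty$ trivially, interpolate to $L^{\bar p'}\to L^{\bar p}$) bounds the cross terms by $(RN)^{-\bar C\delta_0}$ for some $\delta_0>0$, which is summable against the $N$-fold sum and the $R^\varepsilon$ loss from the diagonal.

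Third, I would assemble the two pieces: the diagonal gives $R^\varepsilon\|f\|_{L^{\bar p}}$ and the off-diagonal, being a geometrically small perturbation, gives $O(1)\|f\|_{L^{\bar p}}$, so that \eqref{eq:SparseEstimate} follows; then Lemma \ref{lem:SparseReduction} upgrades this to the global estimate \eqref{eq:GlobalEstimate} for all $p>\bar p$. The main obstacle is the kernel estimate: in \cite{GuthHickmanIliopoulou2019} the full non-degeneracy $H2)$ gives $|K(x,y)|\lesssim(1+|x-y|)^{-(n-1)/2}$ and one has ample room, whereas here one only extracts a single power of $1/2$, so care is needed to verify that this weaker decay still suffices — it does, because the sparseness parameter $\bar C$ is ours to choose, so any fixed positive power of decay can be leveraged. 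A secondary technical point is that $\phi^\lambda$ lives on the annulus $A^{n-1}$ and is $1$-homogeneous, hence $\partial^2_{\omega\omega}\phi$ is automatically degenerate in the radial direction; one must be careful to perform the stationary phase analysis in the $(n-2)$ angular directions (or, after the reductions of Section \ref{subsection:BasicReductions}, in graph parametrization) where \eqref{eq:OneNondegenerateCurvature} provides genuine curvature, exactly mirroring how $C2^+)$ was used throughout the paper. Since all of this is routine given the earlier reductions, I would keep the exposition brief, referring to \cite[Section~12]{GuthHickmanIliopoulou2019} for the parts that transfer verbatim and only spelling out the modified kernel bound.
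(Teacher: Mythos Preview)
Your identification of the key technical point is correct and matches the paper: the single non-degenerate eigenvalue from \eqref{eq:OneNondegenerateCurvature} yields only a $|x_{j_1}-x_{j_2}|^{-1/2}$ gain (as opposed to $|x_{j_1}-x_{j_2}|^{-(n-1)/2}$ under $H2)$), and this suffices because the sparseness exponent $\bar C$ is at our disposal. That is exactly the modification the paper makes relative to \cite{GuthHickmanIliopoulou2019}.

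However, the surrounding architecture you sketch is not what the paper (or \cite{GuthHickmanIliopoulou2019}) actually does, and your version has a gap. You propose to localise $f$ via wave packets meeting each $B(x_j,R)$ and then handle cross terms by a Schur test on the spatial $TT^*$ kernel $K(x,y)$. The problem is the diagonal step: the pieces $f_j$ built from wave packets are $L^2$-almost orthogonal, but there is no direct $L^p$-almost orthogonality $\sum_j\|f_j\|_p^p\lesssim\|f\|_p^p$ for $p>2$, so you cannot sum the local estimates without losing a factor of $N$. The paper instead follows Tao's projection scheme: with $R_1=CNR$ one sets $P_j f=\hat\eta_{R_1}*[e^{i\phi^\lambda(x_j;\cdot)}\psi f]$, observes that $T^\lambda f\approx T^\lambda(e^{-i\phi^\lambda(x_j;\cdot)}P_jf)$ on $B(x_j,R)$ so that the local estimate applies with a modified amplitude, and then reduces everything to the vector-valued bound $\big(\sum_j\|P_jf\|_p^p\big)^{1/p}\lesssim\|f\|_p$. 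This is proved by interpolating $p=\infty$ (Young's inequality, since $\|\hat\eta_{R_1}\|_1\lesssim 1$) against $p=2$, where by duality one must control
\[
\sum_{j_1,j_2}\int_{\R^{n-1}}\overline{G_{j_1,j_2}(\omega)}\,(\hat\eta_{R_1}*g_{j_1})(\omega)\,\overline{(\hat\eta_{R_1}*g_{j_2})(\omega)}\,d\omega,\qquad G_{j_1,j_2}(\omega)=e^{i(\phi^\lambda(x_{j_1};\omega)-\phi^\lambda(x_{j_2};\omega))}\psi^2(\omega).
\]
The off-diagonal terms are handled via the bound $|\check G_{j_1,j_2}(z)|\lesssim|x_{j_1}-x_{j_2}|^{-1/2}$, valid \emph{uniformly in} $z$, obtained from the Van der Corput lemma applied in the one direction in which the Hessian is non-degenerate. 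Your kernel estimate $|K(x,y)|\lesssim(1+|x-y|)^{-1/2}$ is essentially the special case $z=0$; the paper needs the full family of bounds in $z$, which is why Van der Corput (rather than stationary phase) is invoked. So the decay mechanism you describe is the right one, but it is deployed inside Tao's projection framework rather than a spatial $TT^*$/Schur argument.
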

\begin{proof}
Let $(B(x_j,R))_{j=1}^N$ be a sparse collection of balls. We can suppose that $R \ll \lambda$ and that all $B(x_j,R)$ intersect the $x$-support of $a^\lambda$. 
Fix $\eta \in C^\infty_c(\R^{n-1})$ satisfying $0 \leq \eta \leq 1$, $\text{supp}(\eta) \subseteq B^{n-1}$ and $\eta(z) = 1$ for all $z \in B(0,1/2)$. For $R_1 := CNR$, where $C \geq 1$ is a large constant, define $\eta_{R_1}(z) = \eta(z/R_1)$. Let $\psi \in C^\infty_c(\R^{n-1})$ satisfy $0 \leq \psi \leq 1$, $\text{supp}(\psi) \subseteq \Omega$ and $\psi(\omega) = 1$ for $\omega$ belonging to the $\omega$-support of $a^\lambda$. Fix $1 \leq j \leq N$ and write
\begin{equation*}
e^{i \phi^\lambda(x_j;\cdot)}\psi f = P_j f + (e^{i \phi^\lambda(x_j;\cdot)} \psi f - P_j f) =: P_j f + f_{j,\infty},
\end{equation*}
where $P_j f = \hat{\eta}_{R_1} * [e^{i \phi^\lambda(x_j;\cdot)} \psi f]$. 
Arguing like in \cite{GuthHickmanIliopoulou2019}, it suffices to show that
\begin{equation*}
\big( \sum_{j=1}^N \| P_j f \|^p_{L^p} \big)^{\frac{1}{p}} \lesssim \| f \|_{L^p}.
\end{equation*}
This follows via interpolation between $p=2$ and $p= \infty$. For $p= \infty$, this is a consequence of Young's inequality. The estimate for $p=2$ is by duality equivalent to
\begin{equation*}
\big\| \sum_{j=1}^N e^{-2 \pi i \phi^\lambda(x_j;\cdot)} \psi \cdot [\hat{\eta}_{R_1} * g_j] \big\|_{L^2(\R^{d-1})} \lesssim \big( \sum_{j=1}^N \| g_j \|^2_{L^2} \big)^{\frac{1}{2}}.
\end{equation*}
By squaring the left-hand side, we find
\begin{equation*}
\begin{split}
\sum_{j_1,j_2=1}^N \int_{\R^{d-1}} \overline{G_{j_1,j_2}(\omega)} \hat{\eta}_{R_1} * g_{j_1}(\omega) \overline{\hat{\eta}_{R_1} * g_{j_2}(\omega)} d\omega \\
 \text{ with } G_{j_1,j_2}(\omega) = e^{i (\phi^\lambda(x_{j_1};\omega) - \phi^\lambda(x_{j_2};\omega))} \psi^2(\omega).
 \end{split}
\end{equation*}
By the Van der Corput-lemma \cite[Proposition~5,~p.~342]{Stein1993}, we still find due to \eqref{eq:OneNondegenerateCurvature}
\begin{equation*}
| \check{G}_{j_1,j_2}(z)| \lesssim |x_{j_2}-x_{j_1}|^{-\frac{1}{2}}.
\end{equation*}
This suffices to estimate the absolute value and straight-forward summation of the off-diagonal terms. The contribution of the diagonal terms is easier to estimate. For details we refer to \cite{GuthHickmanIliopoulou2019}.

\end{proof}

\section{Improved local smoothing for Fourier integral operators}
\label{section:LocalSmoothing}

In this section we improve $L^p$-smoothing estimates for solutions to wave equations on compact Riemannian manifolds $(M,g)$ with $\dim (M) \geq 3$: We consider
\begin{equation}
\label{eq:WaveEquationCompactManifold}
\left\{ \begin{array}{cl}
\partial_t^2 u - \Delta_g u &= 0, \quad (x,t) \in \R \times M, \\
u(\cdot,0) &= f_0, \qquad \dot{u}(\cdot,0) = f_1
\end{array} \right.
\end{equation}
with the solution $u$ to \eqref{eq:WaveEquationCompactManifold} given by
\begin{equation*}
u(t) = \cos(t \sqrt{- \Delta_g}) f_0 + \frac{\sin(t \sqrt{- \Delta_g})}{\sqrt{- \Delta_g}} f_1.
\end{equation*}

Parametrices for the half-wave equation are provided by Fourier integral operators (FIOs); see below.

\medskip

 Gao \emph{et al.} \cite{GaoLiuMiaoXi2023} improved the previous results on Euclidean local smoothing for $d \geq 3$ and $p \leq \frac{2(d+1)}{d-1}$ due to Bourgain--Demeter by a broad--narrow iteration. Presently, we extend their arguments to the variable coefficient case.
  
Let $d \geq 3$ and
\begin{equation}
\label{eq:RangeSmoothingFIO}
p_d =
\begin{cases}
2 \cdot \frac{3d+5}{3d+1} \text{ for } d \text{ odd}, \\
2 \cdot \frac{3d+6}{3d+2} \text{ for } d \text{ even}.
\end{cases}
\end{equation}
We show the following:
\begin{theorem}[Improved local smoothing on compact manifolds]
\label{thm:ImprovedLocalSmoothing}
Let $(M,g)$ be a compact Riemannian manifold with $\dim(M) \geq 3$. Let $\bar{s}_p=(d-1)|\frac{1}{2}-\frac{1}{p}|$, $p_d \leq p < \infty$ with $p_d$ as in \eqref{eq:RangeSmoothingFIO} and $\sigma < \frac{2}{p} - \frac{1}{2}$. Let $u$ be a solution to \eqref{eq:WaveEquationCompactManifold}.
Then, the following estimate holds:
\begin{equation}
\label{eq:LocalSmoothingCompactManifold}
\| u \|_{L_t^p([1,2], L_x^p(M))} \lesssim_{M,g,p,\sigma} \| f_0 \|_{L^p_{\bar{s}_p-\sigma}(M)} + \| f_1 \|_{L^p_{\bar{s}_p-\sigma+1}(M)}.
\end{equation}
\end{theorem}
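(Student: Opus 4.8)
The strategy is to reduce the local smoothing estimate on $(M,g)$ to the oscillatory integral estimates for $1$-homogeneous phase functions established in Theorem~\ref{thm:LpLpEstimatesVariableCoefficients} (more precisely, their localized refinements), following the Euclidean scheme of Gao--Liu--Miao--Xi~\cite{GaoLiuMiaoXi2023} but now carried out for variable coefficients. First I would use the standard parametrix representation: writing $u(t) = \sum_{\pm}\big( \cos(t\sqrt{-\Delta_g}) f_0 \mp i \tfrac{\sin(t\sqrt{-\Delta_g})}{\sqrt{-\Delta_g}}\,\sqrt{-\Delta_g}^{-1}\cdots\big)$, one has, after a partition of unity on $M$ and microlocalization, that $u$ restricted to a coordinate chart and to frequencies $|\xi|\sim 2^k$ is (up to harmless error terms controlled by the fixed-time estimates of Seeger--Sogge--Stein~\cite{SeegerSoggeStein1991}) a sum of Fourier integral operators
\begin{equation*}
\mathcal{F}^{2^k} f(x,t) = \int_{\R^d} e^{i\phi(x,t;\xi)} a(x,t;\xi)\,\widehat{f}(\xi)\,d\xi
\end{equation*}
with $\phi$ $1$-homogeneous in $\xi$ of degree one, satisfying the cone conditions $C1)$ and $C2^+)$ (the $d-1$ nonvanishing eigenvalues of the same sign for the wave equation; here the space-time dimension is $n = d+1$). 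After a Littlewood--Paley decomposition in frequency it suffices, by the usual square-function/orthogonality reduction, to prove for each dyadic $\mu = 2^k$ a fixed-frequency estimate
\begin{equation*}
\| \mathcal{F}^{\mu} f \|_{L^p([1,2]\times M)} \lesssim_\varepsilon \mu^{\bar{s}_p - \sigma + \varepsilon}\,\| f \|_{L^p},
\end{equation*}
which, after rescaling $x \mapsto \mu x$ (i.e. $\lambda = \mu$), becomes exactly an estimate of the form $\| T^\lambda f\|_{L^p(B(0,\lambda))} \lesssim_\varepsilon \lambda^\varepsilon \|f\|_{L^p}$ with a gain coming from the mismatch between $\bar{s}_p$ and the exponent forced by the $L^p$-scaling. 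One then invokes the localized estimate of Remark~\ref{rem:RefinedLpLpEstimate} (with $R = \lambda$), which is precisely the local $L^p$-$L^p$ bound for reduced $1$-homogeneous phases. Tracking the scaling exponents, the allowed range $p \geq p_n$ with $n = d+1$ translates into the range $p_d$ of \eqref{eq:RangeSmoothingFIO}, and the $\varepsilon$-loss is absorbed into $\sigma < \tfrac{2}{p}-\tfrac12$ (note $\bar{s}_p - (\tfrac{2}{p}-\tfrac12)$ matches $\bar{s}_p - \tfrac1p$ shifted appropriately; the extra room $\tfrac12-\tfrac1p$ beyond the Euclidean conjectural $\sigma<\tfrac1p$ is the content of the improvement, since for non-flat $(M,g)$ one only expects this weaker range below $p_{d,+}$).

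The key steps, in order, are: (1) microlocal reduction of $e^{it\sqrt{-\Delta_g}}$ to a finite sum of FIOs with homogeneous phases satisfying $C1)$, $C2^+)$, using the Hadamard/geometric-optics parametrix valid for $t$ in a compact interval away from conjugate points after further decomposition, plus control of the remainder by Sobolev embedding and the sharp fixed-time bounds; (2) Littlewood--Paley decomposition in $\xi$ and reduction to a single dyadic block, handling the low-frequency part trivially; (3) parabolic rescaling $\lambda = \mu = 2^k$ converting the fixed-frequency FIO into $T^\lambda$ on $B(0,\lambda)$ and verifying that after the reductions of Section~\ref{subsection:BasicReductions} the phase is of the reduced, $\lambda^\delta$-flat type required; (4) application of the local oscillatory integral estimate (Remark~\ref{rem:RefinedLpLpEstimate}) for $p \geq p_n$, $n=d+1$; (5) bookkeeping of exponents to recover \eqref{eq:LocalSmoothingCompactManifold} with the claimed $\sigma$ and the claimed range $p_d$, and summing the dyadic pieces using the $\varepsilon$-gain together with almost orthogonality in $t$ (Plancherel in the time variable, as $\widehat{u}(\cdot,\tau)$ is frequency-localized to $|\tau|\sim\mu$).

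\textbf{Main obstacle.} I expect the principal difficulty to be step (1): producing, on a general compact manifold, a parametrix whose phase is genuinely $1$-homogeneous and globally (on the relevant chart and time window) satisfies the convexity hypothesis $C2^+)$ with the \emph{same sign} for all $d-1$ nonvanishing eigenvalues, uniformly after the rescaling and localization. On $(M,g)$ the phase is the solution of the eikonal equation and its Hessian in $\xi$ is governed by the second fundamental form of the unit cosphere bundle; one must check that the reductions of Subsection~\ref{subsection:BasicReductions} (rotation, rescaling to reduced form, ensuring $\lambda^\delta$-flatness and the margin condition \eqref{eq:MarginCondition}) can be performed uniformly in the dyadic parameter and across a finite cover of $M$, and that the cutoffs separating the contributions near conjugate points are harmless (they contribute lower-order terms by finite propagation speed and the structure of the Lax parametrix). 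A secondary technical point is confirming that the exponent arithmetic genuinely yields the range $p_d$ in \eqref{eq:RangeSmoothingFIO} rather than merely $p_n$; this is a direct but careful computation relating $\bar{s}_p$, the $L^p$-scaling exponent of $T^\lambda$, and the shift-by-one $n = d+1$, and it is exactly where the counterexamples of Minicozzi--Sogge~\cite{MinicozziSogge1997} show the result is sharp.
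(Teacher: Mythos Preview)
Your step~(4) does not work as stated, and this is the genuine gap. The oscillatory integral estimate of Theorem~\ref{thm:LpLpEstimatesVariableCoefficients} (or Remark~\ref{rem:RefinedLpLpEstimate}) bounds $\|T^\lambda f\|_{L^p}$ by $\|f\|_{L^p(A^{n-1})}$, where $f$ is the \emph{Fourier-side} input. For the FIO you have $\mathcal{F}^\lambda g = T^\lambda \hat{g}$, so a direct application yields a bound by $\|\hat{g}\|_{L^p(A^d)}$, not $\|g\|_{L^p(\R^d)}$; there is no cheap conversion between these for $p>2$. Relatedly, your exponent arithmetic in step~(5) is incorrect: with $n=d+1$ the threshold $p_n$ from \eqref{eq:PolynomialPartitioningRange} is \emph{strictly larger} than $p_d$ in \eqref{eq:RangeSmoothingFIO} (e.g.\ $d=3$: $p_4=3$ versus $p_3=14/5$), so even if the conversion were free your approach would not reach the claimed range.

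The paper does not invoke Theorem~\ref{thm:LpLpEstimatesVariableCoefficients} at all here. Instead it runs a \emph{separate} Bourgain--Guth iteration directly for $\mathcal{F}^\lambda$ (Proposition~\ref{prop:NarrowIterationSmoothingFIO}), with induction quantity measured by $\|g\|_{L^p}$ on the physical side. The broad part uses Theorem~\ref{thm:kBroadEstimate}, which is an $L^2$-based estimate and therefore \emph{can} be converted to physical-side $L^p$ via Plancherel plus finite speed of propagation (Lemma~\ref{lem:FiniteSpeedPropagation}) and H\"older. The narrow part uses decoupling plus an FIO-specific parabolic rescaling (Lemma~\ref{lem:RescalingFIOs}) whose gain in $\rho$ is $\rho^{2(d+1)/p-d}$ rather than $\rho^{2d/p-(d-1)}$; this produces a different exponent function $e(p,k,d)$, permitting a choice of $k$ one unit larger than in Proposition~\ref{prop:LinearFromBroadEstimates} ($k=(d+5)/2$ resp.\ $(d+4)/2$), which is precisely what yields the improved threshold $p_d<p_{d+1}$.
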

Like in \cite{GaoLiuMiaoXi2023}, we can interpolate with the trivial $L^2$-estimate and the sharp local smoothing estimates for $p \geq \frac{2(d+1)}{d-1}$ following from decoupling due to Beltran--Hickman--Sogge \cite{BeltranHickmanSogge2020} to obtain an extended range of estimates. This gives the same range of estimates like in \cite[Corollary~1.3]{GaoLiuMiaoXi2023}.

\medskip

It is well-known (cf. \cite[Chapter~4]{Sogge2017}, \cite[p.~224]{MinicozziSogge1997}) that local parametrices for \eqref{eq:WaveEquationCompactManifold} take the form of FIOs
\begin{equation}
\label{eq:FIO}
(\mathcal{F} f)(x,t) = \int_{\R^d} e^{i \phi(x,t;\xi)} a(x,t;\xi) \hat{f}(\xi) d\xi
\end{equation}
with phase functions $\phi \in C^\infty(\R^{d+1} \times \R^d \backslash \{ 0 \})$, which are $1$-homogeneous in $\xi$ and satisfy $C1)$ and $C2^+)$. $a \in S^0(\R^{d+1})$ is a symbol of order zero. Recall that this means $a \in C^\infty(\R^{d+1} \times \R^{d+1})$ and
\begin{equation*}
\forall \alpha, \beta \in \N_0^{d+1}: \; |\partial_{(x,t)}^\alpha \partial_\xi^\beta a(x,t,\xi)| \lesssim_{\alpha,\beta} \langle \xi \rangle^{-|\beta|}.
\end{equation*}
Moreover, we assume that $a$ is compactly supported in $(x,t) \in \R^{d+1}$.
 
\medskip 
 
It turns out that for the proof of Theorem \ref{thm:ImprovedLocalSmoothing}, it suffices to prove bounds for rescaled operators
\begin{equation}
\label{eq:RescaledFIO}
(\mathcal{F}^\lambda f)(x,t) = \int_{\R^d} e^{i \phi^\lambda(x,t;\xi)} a^\lambda(x,t;\xi) \hat{f}(\xi) d\xi
\end{equation}
with $a^\lambda$ and $\phi^\lambda$ defined like in previous sections. Theorem \ref{thm:ImprovedLocalSmoothing} is a consequence of the following (cf. \cite[Section~3]{BeltranHickmanSogge2020}):

\begin{proposition}
\label{prop:SmoothingFIO}
Let $\mathcal{F}$ be an FIO as in \eqref{eq:FIO} and $p_d$ as in \eqref{eq:RangeSmoothingFIO}. Then, the following local smoothing estimate holds for $p_d \leq p < \infty$:
\begin{equation}
\label{eq:SmoothingFIOs}
\| \mathcal{F}^\lambda f \|_{L^p_{t,x}(\R^{d+1})} \lesssim_{\varepsilon,\phi,a} \lambda^{d \big( \frac{1}{2} - \frac{1}{p} \big) + \varepsilon} \| f \|_{L^p(\R^d)}.
\end{equation}
\end{proposition}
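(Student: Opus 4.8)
The plan is to deduce Proposition \ref{prop:SmoothingFIO} from the $L^p$-$L^p$ bounds for oscillatory integral operators established in Theorem \ref{thm:LpLpEstimatesVariableCoefficients}, applied in $d+1$ space-time dimensions (so $n = d+1$), combined with a Littlewood--Paley decomposition and the $L^p$-improving effect of integrating in time over the ``cone'' direction. First I would perform a dyadic frequency decomposition $f = \sum_{\lambda} f_\lambda$ with $\hat f_\lambda$ supported in $\{|\xi| \sim \lambda\}$, reducing \eqref{eq:SmoothingFIOs} to the single-frequency estimate for $\mathcal F^\lambda$ acting on functions with frequency support at scale $\lambda$; by the $\varepsilon$-loss in \eqref{eq:SmoothingFIOs} and the standard square-function/Littlewood--Paley argument in $L^p$ for $p \ge 2$ (losing only $\lambda^\varepsilon$), it suffices to prove the fixed-$\lambda$ bound. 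On the frequency annulus $|\xi|\sim\lambda$, rescaling $\xi \mapsto \lambda\xi$ and $(x,t)\mapsto (x,t)/\lambda$ puts the operator into the normalized form \eqref{eq:OscillatoryIntegralOperators}: after removing the amplitude's $\lambda$-dependence and decomposing $\hat f$ into unit-scale pieces on the sphere $|\xi|=1$, one is left with an operator $T^\lambda$ with a $1$-homogeneous phase satisfying $C1)$ and $C2^+)$ — exactly the class covered by Theorem \ref{thm:LpLpEstimatesVariableCoefficients}, since the FIO phase $\phi$ for the wave equation on a Riemannian manifold satisfies precisely these hypotheses (as recalled in Section \ref{section:Introduction}). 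The point is that the conical hypersurface in $n = d+1$ variables has $n-2 = d-1$ nonvanishing principal curvatures, matching the $C2^+)$ convexity condition.

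Next I would track the normalizations carefully. Theorem \ref{thm:LpLpEstimatesVariableCoefficients} gives $\|T^\lambda f\|_{L^p(\mathbb R^n)} \lesssim_{\varepsilon} \lambda^\varepsilon \|f\|_{L^p(A^{n-1})}$ for $p \ge p_n$ with $n = d+1$; one checks that $p_{d+1}$ in \eqref{eq:PolynomialPartitioningRange} equals $p_d$ in \eqref{eq:RangeSmoothingFIO}, so the range matches. Undoing the parabolic/dyadic rescaling converts the $L^p(A^{n-1})$-norm of the profile back to $\|\hat f\|_{L^p}$ on the annulus, and the Jacobian factors from the two rescalings (in $\xi$ and in $(x,t)$) produce exactly the power $\lambda^{d(1/2 - 1/p)}$: schematically, the $\xi$-scaling contributes $\lambda^{d/p'}$ from the measure $d\xi$, the space-time scaling contributes $\lambda^{n/p} = \lambda^{(d+1)/p}$, the phase rescaling $\phi^\lambda = \lambda\phi(\cdot/\lambda;\cdot)$ is designed to be norm-preserving, and the input-side conversion from $\|f\|_{L^p}$ to the profile on $A^{n-1}$ via Bernstein/Plancherel on the annulus accounts for the rest, assembling to $d(1/2-1/p)$. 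The Sobolev exponent bookkeeping ($\bar s_p$, the $-\sigma$ with $\sigma < 2/p - 1/2$, the $+1$ for $f_1$) is then routine: at frequency $\lambda$, $\|f_0\|_{L^p_{\bar s_p - \sigma}} \sim \lambda^{\bar s_p - \sigma}\|f_0\|_{L^p}$, and one verifies $d(1/2-1/p) < \bar s_p - \sigma + \varepsilon$ precisely when $\sigma < \bar s_p - d(1/2-1/p) = 2/p - 1/2$ (using $\bar s_p = (d-1)(1/2-1/p)$ for $p\ge 2$), so summing the geometric series in $\lambda$ converges. The global-in-space estimate $L^p_{t,x}(\mathbb R^{d+1})$ follows because the amplitude $a$ has compact space-time support, or alternatively from the $\varepsilon$-removed global bound of Section \ref{section:epsRemoval} when $p > p_d$.

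The main obstacle I anticipate is the passage from the FIO $\mathcal F^\lambda$ with a symbol $a \in S^0$ (which has genuine $\lambda$-dependence through the symbol estimates $|\partial_\xi^\beta a| \lesssim \langle\xi\rangle^{-|\beta|}$) to the clean normalized operator $T^\lambda$ with a \emph{reduced} amplitude as required by the theorems — this requires the Beltran--Hickman--Sogge-type reductions (partition of the support of $a$, parabolic rescaling, Fourier-series expansion of the amplitude into product form) invoked in Section \ref{subsection:BasicReductions}, and one must check these reductions interact correctly with the dyadic structure and do not spoil the power of $\lambda$. A secondary technical point is handling the non-smoothness of $|\xi|$ at the origin and the low-frequency part $|\xi| \lesssim 1$, but the latter is trivially controlled by the compact support of $a$ in $(x,t)$ together with Hausdorff--Young or a crude $L^2 \hookrightarrow L^p$ bound. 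Once the reduction to Theorem \ref{thm:LpLpEstimatesVariableCoefficients} is in place, the rest is a matter of carefully organized exponent arithmetic, so I would present the dyadic decomposition and rescaling in detail and then cite \cite{BeltranHickmanSogge2020} for the symbol-to-reduced-amplitude step and \cite{GaoLiuMiaoXi2023} for the template of the local-smoothing bookkeeping.
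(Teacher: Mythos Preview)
Your approach has two genuine gaps.

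First, the arithmetic claim that ``$p_{d+1}$ in \eqref{eq:PolynomialPartitioningRange} equals $p_d$ in \eqref{eq:RangeSmoothingFIO}'' is false. For instance, with $d=3$ one has $p_{d+1}=p_4 = 2\cdot\tfrac{12}{8}=3$ from \eqref{eq:PolynomialPartitioningRange}, while $p_d = 2\cdot\tfrac{14}{10}=2.8$ from \eqref{eq:RangeSmoothingFIO}; in general $p_d < p_{d+1}$. So even if the rest of the argument went through, you would only reach the strictly weaker range $p\ge p_{d+1}$, not the claimed $p\ge p_d$.

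Second, and more seriously, Theorem~\ref{thm:LpLpEstimatesVariableCoefficients} bounds $\|T^\lambda g\|_{L^p}$ by $\|g\|_{L^p(A^{n-1})}$, and under the identification $\mathcal{F}^\lambda f = T^\lambda \hat f$ the input is $g=\hat f$. There is no way to control $\|\hat f\|_{L^p(A^d)}$ by $\|f\|_{L^p(\R^d)}$ for $p>2$ with the correct power of $\lambda$; ``Bernstein/Plancherel on the annulus'' does not produce such an inequality. Interpolating the crude bounds $\|\hat f\|_{L^2}=\|f\|_{L^2}$ and $\|\hat f\|_{L^\infty}\le\|f\|_{L^1}$ with spatial localisation to $B_\lambda$ gives at best $\|\hat f\|_{L^p}\lesssim \lambda^{d(1-2/p)}\|f\|_{L^p}$, which is strictly worse than the target $\lambda^{d(1/2-1/p)}$.

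The paper does \emph{not} reduce to Theorem~\ref{thm:LpLpEstimatesVariableCoefficients}. Instead it re-runs the Bourgain--Guth broad--narrow iteration directly for $\mathcal{F}^\lambda$ (Proposition~\ref{prop:NarrowIterationSmoothingFIO}), feeding in the $L^2\to BL^p_{k,A}$ broad estimate from Theorem~\ref{thm:kBroadEstimate}. The key extra ingredient is finite speed of propagation (Lemma~\ref{lem:FiniteSpeedPropagation}): on each ball $B_R$ the operator only sees $f$ restricted to an $R^{1+\delta}$-ball, so Plancherel plus H\"older convert $\|\hat f\|_{L^2}$ into $R^{d(1/2-1/p)}\|f\|_{L^p}$ locally. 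This spatial almost-orthogonality is used again inside the parabolic rescaling (Lemma~\ref{lem:RescalingFIOs}), which yields a different cost for the narrow step than Lemma~\ref{lem:ParabolicRescaling} did; the narrow threshold becomes $\bar p(k,d)=2\cdot\tfrac{2d-k+5}{2d-k+3}$ rather than $p(k,d+1)=2\cdot\tfrac{2d-k+3}{2d-k+1}$. Optimising in $k$ then gives exactly the improved endpoint $p_d$.
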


Proposition \ref{prop:SmoothingFIO} improves on the previously best estimates due to Beltran--Hickman--Sogge \cite{BeltranHickmanSogge2020}, which read
\begin{equation*}
\| \mathcal{F}^\lambda f \|_{L^p_{t,x}(\R^{d+1})} \lesssim_{\varepsilon, \phi, a} \lambda^{\frac{d-1}{2} \big( \frac{1}{2} - \frac{1}{p} \big)+ \frac{1}{p} + \varepsilon} \| f \|_{L^p(\R^d)}
\end{equation*}
for $2 \leq p \leq \frac{2(d+1)}{d-1}$. Beltran--Hickman--Sogge \cite{BeltranHickmanSogge2020} extended the decoupling inequalities in the constant coefficient case \cite{BourgainDemeter2015} to variable coefficients. This argument also yields local smoothing estimates for FIOs, which do not satisfy the convexity condition $C2^+)$. Indeed, the FIOs, for which decoupling yields the sharp smoothing estimates (cf. \cite[Section~4]{BeltranHickmanSogge2020}), are the ones with $d$ odd, and
\begin{equation*}
\partial^2_{\xi \xi} \langle \partial_{(x,t)} \phi(x,t;\xi), G_0(x,t;\xi_0) \rangle \vert_{\xi = \xi_0}
\end{equation*}
having $\frac{d-1}{2}$ positive and $\frac{d-1}{2}$ negative eigenvalues. Recall that the generalized Gauss map is defined by
\begin{equation*}
G_0(x,t;\xi) = \partial^2_{(x,t) \, \xi_1} \phi(x,t;\xi) \wedge \ldots \wedge \partial^2_{(x,t) \, \xi_d} \phi(x,t;\xi) \in \bigwedge_{i=1}^d \R^{d+1} \simeq \R^{d+1}.
\end{equation*}

For the proof of Proposition \ref{prop:SmoothingFIO}, we run almost the same iteration as in the proof of Theorem \ref{thm:LpLpEstimatesVariableCoefficients}. The following lemma based on finite speed of propagation allows us to convert $L^2$-estimates for $T^\lambda$ into $L^p$-estimates for $\mathcal{F}^\lambda$ by Plancherel's theorem and H\"older's inequality (see \cite[Section~2.2]{GaoLiuMiaoXi2023}):
\begin{lemma}
\label{lem:FiniteSpeedPropagation}
Let $(\phi,a)$ be reduced data and $\psi \in \mathcal{S}(\R^d)$ such that $\text{supp} (\hat{\psi}) \subseteq B(0,1)$, $\sum_{\ell \in \Z^d} \psi(x-\ell) \equiv 1$ for any $x \in \R^d$. Assume $\text{supp} (\hat{f}) \subseteq A^d$. Then, for any $\delta > 0$, the following estimate holds true:
\begin{equation}
\label{eq:FiniteSpeedPropagation}
\begin{split}
|\mathcal{F}^\lambda f(x,t)| &\lesssim_{\delta} |\mathcal{F}^\lambda (\psi_{R^{1+\delta}(x_0)} f)(x,t)| \\
&\quad + \text{RapDec}(R) \sum_{|\ell| > R^{\delta}} (1+|\ell|)^{-M} \| f |\psi_{\ell}(\cdot-x_0)|^{\frac{1}{2}} \|_{L^p(w_{B_R^d(x_0)})}
\end{split}
\end{equation}
for $(x,t) \in B(x_0,R) \times [-R,R]$, $1<p<\infty$, where
\begin{equation*}
\psi_{R^{1+\delta}(x_0)}(x) = \sum_{|\ell| < R^\delta} \psi(R^{-1}(x-x_0) - \ell).
\end{equation*}
\end{lemma}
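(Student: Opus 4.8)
The statement to prove is Lemma~\ref{lem:FiniteSpeedPropagation}, the finite speed of propagation estimate for $\mathcal{F}^\lambda$. The plan is to exploit that, after decomposing the spatial input $f$ via the partition of unity $\sum_\ell \psi(\,\cdot\,-\ell) \equiv 1$, the contribution of far-away pieces to $\mathcal{F}^\lambda f$ on $B(x_0,R)\times[-R,R]$ is rapidly decaying. The key geometric input is the standard observation that a reduced phase function $\phi$ satisfies $|\partial_\xi \phi(x,t;\xi)| \lesssim |(x,t)|$ uniformly (this follows from $1$-homogeneity in $\xi$ together with the bounds $C1'')$, $C2'')$, $D1)$ and \eqref{eq:XiXnDerivativeBound} on the derivatives of $\phi$), and hence after rescaling $\phi^\lambda(x,t;\xi) = \lambda\phi((x,t)/\lambda;\xi)$ one still has $|\partial_\xi \phi^\lambda(x,t;\xi)| \lesssim |(x,t)|$ with constant depending only on $n=d+1$ and $c_{\mathrm{cone}}$. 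This means that the Schwartz kernel of $\mathcal{F}^\lambda$, as a function of $(x,t)$ and the spatial variable of $f$, is concentrated on the set where the spatial argument lies within $O(|(x,t)|)$ of $x$.

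The main steps would be as follows. First, write $f = \sum_{\ell\in\Z^d} (\psi(\cdot-\ell) f)$, and more precisely set $f_\ell = \psi_{\ell}(\cdot - x_0)\cdot f$ where $\psi_\ell(y) = \psi(R^{-1}y - \ell)$ (after recentering so that the cube scale is $R$); then $\mathcal{F}^\lambda f = \mathcal{F}^\lambda(\psi_{R^{1+\delta}(x_0)} f) + \sum_{|\ell|>R^\delta} \mathcal{F}^\lambda f_\ell$. Second, fix $(x,t)\in B(x_0,R)\times[-R,R]$ and estimate $\mathcal{F}^\lambda f_\ell(x,t)$ for $|\ell|>R^\delta$. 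Write it as $\int e^{i\phi^\lambda(x,t;\xi)} a^\lambda(x,t;\xi) \widehat{f_\ell}(\xi)\,d\xi$; since $\widehat{f_\ell} = \widehat{\psi_\ell(\cdot-x_0)} * \hat f$ has Fourier support essentially in $A^d$ (thickened by $O(R^{-1})$, which is harmless) and $f_\ell$ is spatially localized near $x_0 + R\ell$, pass to the physical side: $\mathcal{F}^\lambda f_\ell(x,t) = \int K^\lambda(x,t;y) f_\ell(y)\,dy$ where $K^\lambda(x,t;y) = \int e^{i(\phi^\lambda(x,t;\xi) - y\cdot\xi)} a^\lambda(x,t;\xi)\,\tilde\chi(\xi)\,d\xi$ for a cutoff $\tilde\chi$ adapted to $A^d$. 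On the support of $f_\ell$ one has $|y - x_0|\sim R|\ell| \gg R^{1+\delta} \gtrsim |(x,t)| \cdot R^{\delta}$, while $|\partial_\xi \phi^\lambda(x,t;\xi)| \lesssim |(x,t)| \lesssim R$; hence the phase $\phi^\lambda(x,t;\xi) - y\cdot\xi$ has $\xi$-gradient of size $\gtrsim R|\ell|$ on $\mathrm{supp}\,\tilde\chi$, and repeated non-stationary phase integration by parts in $\xi$ gives $|K^\lambda(x,t;y)| \lesssim_M (R|\ell|)^{-M}$ for every $M$. Third, apply Cauchy–Schwarz (or Hölder) to $\int K^\lambda(x,t;y) f_\ell(y)\,dy$ using $|f_\ell| = |f|\,|\psi_\ell(\cdot-x_0)| \le |f|\,|\psi_\ell(\cdot-x_0)|^{1/2}$ and the fact that $\psi_\ell$ is supported in a set of measure $O(R^d)$; this converts the pointwise kernel decay into the claimed bound $\mathrm{RapDec}(R)(1+|\ell|)^{-M} \|f|\psi_\ell(\cdot-x_0)|^{1/2}\|_{L^p(w_{B_R^d(x_0)})}$ after absorbing the factors of $R$ into $\mathrm{RapDec}(R)$ and using that $w_{B_R^d(x_0)}$ is comparable to $1$ on the relevant region. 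Summing over $|\ell|>R^\delta$ and over the $O(R^{(1+\delta)d})$ translates comprising $\psi_{R^{1+\delta}(x_0)}$ gives the stated inequality.

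The main obstacle is the bookkeeping of which localizations land on which side: one must be careful that $\psi_{R^{1+\delta}(x_0)} f$ retains (essentially) the Fourier support in $A^d$ needed so that $\mathcal{F}^\lambda$ makes sense, and that the thickening of $\mathrm{supp}\,\widehat{f_\ell}$ by $O(R^{-1})$ caused by convolution with $\widehat{\psi_\ell}$ does not interfere with the non-stationary phase argument — it does not, because the bound $|\partial_\xi\phi^\lambda|\lesssim R$ is stable under $O(R^{-1})$ perturbations of the frequency support and the gradient lower bound $\gtrsim R|\ell| \gg R$ still dominates. A secondary point requiring care is the passage between the $\ell^2(\Z^d)$-type summation over cubes and the weighted $L^p$-norm: one uses that for fixed $(x,t)$ only $O(1)$ cubes $\psi_\ell$ overlap, so the rapidly decaying kernel bound, after Hölder, produces a geometrically summable series in $|\ell|$, and the weight $w_{B_R^d(x_0)}$ absorbs the difference between the cube centered at $x_0 + R\ell$ and the reference ball. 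The case $\phi^\lambda$ only reduced (rather than the specific wave phase) is no different since, as noted, the only property used is $|\partial_\xi\phi^\lambda| \lesssim |(x,t)|$, valid for all reduced phases by homogeneity and the derivative bounds of Section~\ref{subsection:BasicReductions}; this is why the lemma is stated for reduced data, matching the usage in \cite[Section~2.2]{GaoLiuMiaoXi2023}.
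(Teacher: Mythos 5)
Your overall strategy --- writing $\mathcal{F}^\lambda f_\ell$ against the kernel $K^\lambda(x,t;y)=\int e^{i(\phi^\lambda(x,t;\xi)-\langle y,\xi\rangle)}a^\lambda(x,t;\xi)\,d\xi$, running non-stationary phase in $\xi$ on the far cubes, and summing --- is exactly the paper's. But the estimate you feed into the integration by parts has a genuine gap. You use only the magnitude bound $|\partial_\xi\phi^\lambda(x,t;\xi)|\lesssim|(x,t)|$ together with the claim $|(x,t)|\lesssim R$. The latter presupposes $|x_0|\lesssim R$: the lemma is stated for an arbitrary centre $x_0$, and in its applications (e.g.\ in the proof of Lemma \ref{lem:RescalingFIOs}, where a region of dimensions $\sim R/\rho\times\cdots\times R\times R/\rho^2$ is tiled by balls of radius $R/\rho^2$) the centres lie far from the origin, so $|(x,t)|$ can be as large as $\lambda\gg R$. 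More importantly, even granting $|\partial_\xi\phi^\lambda|\le C|(x,t)|$, this gives no lower bound on $|\partial_\xi\phi^\lambda(x,t;\xi)-y|$ for $y$ in a cube centred at $x_0+R\ell$ with $R^\delta<|\ell|\lesssim|x_0|/R$: both vectors may have size $\sim|x_0|$, and nothing in a pure size bound prevents the stationary point $\partial_\xi\phi^\lambda(x,t;\xi)$ from landing inside such a far cube. So your asserted gradient lower bound $\gtrsim R|\ell|$ is unjustified except when $x_0$ is essentially the origin.

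What the finite-speed mechanism actually requires --- and what the paper uses --- is that $\partial_\xi\phi^\lambda(x,t;\xi)$ tracks the spatial point $x$ up to an error $O(|t|)$: since $\partial_\xi\phi^\lambda(x,0;\xi)=x$ and, by Lemma \ref{lem:ReducedPhaseTimeDerivatives}, $\|\partial_t\partial_\xi^\beta\phi\|_{L^\infty}\lesssim 1$ for reduced phases, one has $|\partial_\xi\phi^\lambda(x,t;\xi)-x|\lesssim|t|\le R$ uniformly in $x_0$ and $\lambda$. Then for $y$ in the support of $f_\ell$ with $|\ell|>R^\delta$ one gets $|\partial_\xi\phi^\lambda(x,t;\xi)-y|\ge|x-y|-O(R)\gtrsim R|\ell|$, and your integration by parts, the H\"older step, and the summation over $\ell$ go through as you describe (those later steps are fine, and match the paper's uniform kernel bound $|K^\lambda(x,t;y)|\le C_N(1+R|x-y|)^{-N}$ coming from the reduced amplitude and phase bounds). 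So the proof is repairable with the correct geometric input, but as written the central non-stationary-phase step rests on an estimate that is too weak and on the unstated assumption $|x_0|\lesssim R$.
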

\begin{proof}
The claim follows from a kernel estimate. We have
\begin{equation*}
\begin{split}
\mathcal{F}^\lambda(x,t) &= \int e^{i \phi^\lambda(x,t;\xi)} a^\lambda(x,t;\xi) \hat{f}(\xi) d\xi \\
&= \frac{1}{(2 \pi)^d} \int e^{i(\phi^\lambda(x,t;\xi) - \langle y,\xi \rangle)} a^\lambda(x,t;\xi) f(y) dy d\xi.
\end{split}
\end{equation*}
We set $K^\lambda(x,t;y) = \int e^{i(\phi^\lambda(x,t;\xi) - \langle y, \xi \rangle)} a^\lambda(x,t;\xi) d\xi$. Let $\Phi^\lambda(x,y,\xi,t) = \phi^\lambda(x,t;\xi) - \langle y, \xi \rangle$. We have
\begin{equation*}
\begin{split}
\nabla_\xi \Phi^\lambda(x,y,\xi,t) &= \nabla_\xi \phi^\lambda(x,t;\xi) - y, \\
\nabla_\xi \phi^\lambda(x,0;\xi) = x, &\quad \nabla_\xi \phi^\lambda(x,t;\xi) = \frac{1}{\lambda} \nabla_\xi \int_0^{\lambda t} \partial_t \phi^\lambda(x,s;\xi) ds + x.
\end{split}
\end{equation*}
By $|\nabla_\xi \partial_t \phi(x,t;\xi)| \lesssim 1$ for a reduced phase function, we find for $|t| \leq R$ and $|x-y| \geq R^{1+\delta}$ rapid decay by non-stationary phase. We have the estimate
\begin{equation*}
|K^\lambda (x,t;y)| \leq C_N (1+R|x-y|)^{-N}.
\end{equation*}
For reduced data $(\phi,a)$, $C_N$ can be chosen uniformly.
This follows from estimates, which hold because $a$ is reduced and due to Lemma \ref{lem:ReducedPhaseTimeDerivatives}:
\begin{equation*}
\| \partial_\xi^\alpha a(x,\xi) \|_{L^\infty(X \times \Xi)} \leq C_{\text{amp}} \text{ and } \| \partial_t \partial_{\xi}^{\alpha} \phi \|_{L^\infty(X \times \Xi)} \lesssim_{N} 1 \text{ for } \alpha \in \N_0^d, \, |\alpha| \leq N.
\end{equation*}
 This yields \eqref{eq:FiniteSpeedPropagation}.
%\begin{equation*}
%\| \mathcal{F}^\lambda f \|_{L^p(B(0,R))} \leq \| \mathcal{F}^\lambda (f \chi_{B(0,R^{1+\varepsilon})}) \|_{L^p(B(0,R))} + \text{RapDec}(R) \| f \|_{L^p}.
%\end{equation*}
\end{proof}

By the same arguments as in Section \ref{section:LinearEstimates}, we can show the following narrow decoupling (cf. Proposition~\ref{prop:DecouplingNarrowTerm}):
\begin{proposition}
\label{prop:NarrowDecoupling}
Let $B_{K^2} \subseteq B(0,\lambda^{1-\delta'}) \subseteq \R^{d+1} $ be a $K^2$-ball centred at $\bar{x} \in \R^{d+1}$. Let $(\phi,a)$ be reduced and $\phi$ be a $K$-flat phase function. Let $k \geq 3$ and $V$ be a $(k-1)$-dimensional vector space. Suppose that $\text{supp}(\hat{f}) \subseteq \bigcup_\nu S_{\nu}$, where $(S_\nu)_{\nu}$ is a family of $K^{-1}$-sectors $S_{\nu}$ centred at $\tau_\nu$ such that $\angle(G^\lambda(\bar{x};\tau_\nu),V) \leq K^{-2}$. Let $\hat{f}_\nu$ be the localization of $\hat{f}$ to $S_\nu$. Then, the following estimate holds:
\begin{equation}
\label{eq:NarrowDecoupling}
\| T^\lambda \hat{f} \|_{L^p(B_{K^2})} \lesssim_\delta K^\delta K^{\max((k-3) \big( \frac{1}{2}-\frac{1}{p} \big),0)} \big( \sum_\nu \| T_*^\lambda \hat{f}_{\nu} \|^p_{L^p(w_{B_{K^2}})} \big)^{1/p} + \lambda^{- \frac{\min( \delta, \delta') N}{2}} \| f \|_{L^2}
\end{equation}
for $ 2 \leq p \leq \frac{2(k-1)}{k-3}$. Above $w_{B_{K^2}}$ denotes a weight, which decays polynomially off $B_{K^2}$. The operator $T_\lambda^*$ is built from a reduced amplitude function $a^*$ and the same phase $\phi$. $a^*$ can be chosen from a family of size $O_N(1)$. The family is uniform over balls $B_{K^2} \subseteq B(0,\lambda^{1-\delta'})$.
\end{proposition}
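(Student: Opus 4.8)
\textbf{Proof proposal for Proposition \ref{prop:NarrowDecoupling}.}

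The plan is to mirror exactly the proof of Proposition \ref{prop:DecouplingNarrowTerm}, since the only genuine difference is the ambient dimension ($d+1$ space-time variables here play the role of $n$ there) and the fact that $\hat{f}$ rather than $f$ carries the frequency localization; the Fourier side bookkeeping is cosmetic. First I would apply the approximation estimate \eqref{eq:LinearApproximationI} of Lemma \ref{lem:LinearApproximationLemma} on the ball $B_{K^2}$: since $K^2 \leq \lambda^{1/2-\delta'}$ and $\bar{x}/\lambda \in X$, the variable-coefficient operator $T^\lambda$ acting on $\hat f$ is comparable, up to an error $\lambda^{-\delta' N/2}\|f\|_{L^2}$, to the constant-coefficient extension operator $E_{\bar x}$ associated to the hypersurface $\Sigma_{\bar x}$ with phase $h_{\bar x}(u) = \partial_{x_n}\phi^\lambda(\bar x;\Psi^\lambda(\bar x;u))$. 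Under the change of variables $\omega = \Psi^\lambda(\bar x;\cdot)$, the sectors $S_\nu$ with $\angle(G^\lambda(\bar x;\tau_\nu),V)\le K^{-2}$ transform into genuinely disjoint sectors in $u$-frequency by the diffeomorphism property of $\Psi^\lambda(\bar x;\cdot)$ and Lemma \ref{lem:ConsequencesReductionGaussMap}, while the angle condition relative to $V$ is preserved up to constants (this is the same computation as in the proof of Lemma \ref{lem:TransverseEquidistributionEstimateLinearSubspace}, using $\partial^2_{xu_1}\phi^\lambda \wedge\cdots\wedge\partial^2_{xu_{n-1}}\phi^\lambda = G_0(\bar x;\omega)\det J\Psi(u)$).

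Next, because $\phi$ is assumed $K$-flat in the sense of the definition preceding Proposition \ref{prop:DecouplingNarrowTerm}, the approximating phase $h_{\bar x}$ is $K$-flat as a constant-coefficient homogeneous phase; hence the narrow $\ell^p$-decoupling for $K$-flat constant-coefficient cones established by Gao--Liu--Miao--Xi \cite{GaoLiuMiaoXi2023} applies and yields
\begin{equation*}
\Big\| \sum_\nu E_{\bar x} (f_{\bar x})_\nu \Big\|_{L^p(w_{B_{K^2}})} \lesssim_\delta K^\delta \max\big(1, K^{(k-3)(\frac12-\frac1p)}\big) \Big( \sum_\nu \| E_{\bar x}(f_{\bar x})_\nu\|^p_{L^p(w_{B_{K^2}})} \Big)^{1/p}
\end{equation*}
for $2\le p\le \frac{2(k-1)}{k-3}$, where the exponent $k-3$ reflects that we are decoupling into sectors lying in a $(k-1)$-plane and one dimension is lost to the null direction of the cone. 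Then I would invoke the reverse approximation \eqref{eq:LinearApproximationII} to return each piece $E_{\bar x}(f_{\bar x})_\nu$ to $T_*^\lambda \hat f_\nu$ for some $T_*^\lambda$ drawn from the finite family $\mathbf T^\lambda$; after raising to the $p$-th power, summing over $\nu$, and pigeonholing to fix a single $T_*^\lambda$ (at the cost of a factor $O_N(1)$), one obtains the right-hand side of \eqref{eq:NarrowDecoupling}. The error terms from the two applications of Lemma \ref{lem:LinearApproximationLemma} combine to $\lambda^{-\min(\delta,\delta')N/2}\|f\|_{L^2}$ once $N = N(\delta)$ is taken large enough, as discussed after that lemma.

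The main obstacle is not any single estimate but rather verifying that the $K$-flatness hypothesis genuinely transfers through the $\Psi^\lambda$-change of variables to the approximating phase $h_{\bar x}$, i.e.\ that the constant-coefficient approximation inherits the bounds \eqref{eq:KFlatPhaseI}--\eqref{eq:KFlatPhaseII}; this is precisely the content of the last part of the proof of Lemma \ref{lem:ReducedPhaseFunction} (the argument controlling $\partial_u^\alpha\psi(\bar x;u)$ and hence $E_{h_{\bar x}}$), and it is what licenses the use of the constant-coefficient result of \cite{GaoLiuMiaoXi2023} rather than merely the circular-cone decoupling of \cite{Harris2019,OuWang2022}. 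A secondary bookkeeping point is that here $\hat f$, not $f$, is localized, so the roles of $f$ and $\hat f$ in Lemma \ref{lem:LinearApproximationLemma} must be tracked carefully; but this is purely notational and does not affect the structure of the argument. With these points in place the proof is, as in Section \ref{section:LinearEstimates}, a concatenation of the two halves of Lemma \ref{lem:LinearApproximationLemma} around the constant-coefficient narrow decoupling, and I would simply state it as such.
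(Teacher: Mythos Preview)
Your proposal is correct and follows exactly the approach the paper takes: the paper does not give a separate proof of Proposition~\ref{prop:NarrowDecoupling} but simply states it follows ``by the same arguments as in Section~\ref{section:LinearEstimates}'' with a reference to Proposition~\ref{prop:DecouplingNarrowTerm}, and your outline is precisely the proof of that proposition transplanted to the FIO setting.
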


\begin{remark}
Note that \eqref{eq:NarrowDecoupling} can be written as
\begin{equation*}
\| \mathcal{F}^\lambda f \|_{L^p(B_{K^2})} \lesssim_\delta K^\delta K^{\max((k-3) \big( \frac{1}{2}-\frac{1}{p} \big),0)} \big( \sum_\nu \| \mathcal{F}^\lambda_* f_\nu \|^p_{L^p(w_{B_{K^2}})} \big)^{1/p} + \lambda^{- \frac{\min( \delta, \delta') N}{2}} \| f \|_{L^2}
\end{equation*}
simply by definition of $T^\lambda$ and $\mathcal{F}^\lambda$. We used the formulation in Proposition \ref{prop:NarrowDecoupling} to highlight the parallel to Proposition \ref{prop:DecouplingNarrowTerm}.
\end{remark}

The discrepancy in the amplitude functions will be remedied by carrying out the induction on scales for an appropriate class of phase and amplitude functions.

\begin{definition}
 Let $Q_{p,\delta}(R)$ be the infimum over all constants such that
\begin{equation*}
\| \mathcal{F}^\lambda f \|_{L^p(B(0,R))} \leq Q_{p,\delta}(R) R^{d \big( \frac{1}{2} - \frac{1}{p} \big)} \| f \|_{L^p}
\end{equation*}
for $1 \leq R \leq \lambda$ and all FIOs $\mathcal{F}$ with reduced $(\phi,a)$ and $\lambda^\delta$-flat phase $\phi$. 
\end{definition}

\medskip

As further ingredient we use the following parabolic rescaling for FIOs.

\begin{lemma}[Parabolic rescaling for FIOs]
\label{lem:RescalingFIOs}
Let $(\phi,a)$ be reduced with $\phi$ a $\lambda^\delta$-flat phase function and $\hat{g}$ supported in a $\rho^{-1}$-sector in $A^d$. Then, for any $1 \leq \rho \leq R \leq \lambda$, the following estimate holds:
\begin{equation}
\label{eq:ParabolicRescalingFIO}
\| \mathcal{F}^\lambda g \|_{L^p(B(0,R))} \lesssim_{\delta'} R^{\delta'} Q_{p,\delta}(R/\rho^2) R^{d \big( \frac{1}{2} - \frac{1}{p} \big)} \rho^{\frac{2(d+1)}{p} - d} \| g \|_{L^p}.
\end{equation}
\end{lemma}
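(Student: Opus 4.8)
The plan is to mirror the proof of parabolic rescaling for oscillatory integral operators (Lemma \ref{lem:ParabolicRescaling}), keeping careful track of the extra time-like variable that distinguishes an FIO on $\R^{d+1}$ from an oscillatory integral operator in the notation of the earlier sections. Concretely, I would set $n = d+1$, so that the spatial base variable $x$ becomes the space-time variable $(x,t) \in \R^{d}\times\R = \R^{n}$ and the frequency variable $\omega \in \R^{n-1}=\R^{d}$ corresponds to $\xi$; then $\mathcal{F}^\lambda$ is literally an operator of the form $T^\lambda$ studied before, acting on $\hat f$. The only discrepancy is that the target estimate for FIOs is stated with the normalizing weight $R^{d(\frac12-\frac1p)}$ built into $Q_{p,\delta}$, whereas in Section \ref{section:LinearEstimates} the quantity $Q_{p,\delta}$ was defined without such a weight; so the proof must produce exactly this weight from the anisotropic dilations.

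First I would perform the change of frequency variables $\xi = (\eta_{n-1}\omega_0 + \rho^{-1}\eta', \eta_{n-1})$ and the companion change of space-time variables via $\Upsilon^\lambda_{\omega_0}\circ D_\rho$, exactly as in the derivation leading to \eqref{eq:ChangeVariablesRescaling}: this identity reads $\mathcal{F}^\lambda g \circ \Upsilon^\lambda_{\omega_0} \circ D_\rho = \tilde{\mathcal{F}}^{\lambda/\rho^2}\tilde g$ with $\tilde g(\eta) = \rho^{-(d-1)} g(\eta_{n-1}\omega_0 + \rho^{-1}\eta',\eta_{n-1})$ after undoing the Fourier transform, and the new phase $\tilde\phi_L$ and amplitude $\tilde a_L$ are obtained after the further linear change of variables $L$. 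By Lemma \ref{lem:ReducedPhaseFunction}, $\tilde\phi_L$ is again a reduced phase function and is $c\rho^{1/4}\lambda^\delta$-flat, hence $(\lambda/\rho^2)^\delta$-flat for $\rho$ large; for small $\rho$ the estimate \eqref{eq:ParabolicRescalingFIO} is trivial by absorbing a $\rho$-dependent constant into the implied constant, using the local estimate at scale $R$ directly. Then I track the Jacobian: the change of space-time variables contributes a factor $\rho^{n/p}=\rho^{(d+1)/p}$ to the $L^p$-norm (as in \eqref{eq:RescalingChangeSpatialVariables}), while the frequency normalization contributes $\rho^{-(d-1)}$ from $\tilde g$, and converting $\|\tilde g\|_{L^p}$ back to $\|g\|_{L^p}$ costs $\rho^{(d-1)(1-1/p)}$ times the $L^{p'}$-type bookkeeping — the upshot being a net prefactor $\rho^{\frac{2(d+1)}{p}-d}$, which is precisely the exponent appearing in \eqref{eq:ParabolicRescalingFIO}. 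One has to be slightly careful because the image $(\Upsilon^\lambda_{\omega_0}\circ D_\rho)^{-1}(B_R)$ is not a ball of radius $R/\rho^2$ but an anisotropic ellipsoid of dimensions roughly $R/\rho\times\cdots\times R/\rho\times R\times R/\rho^2$; but the argument in \cite[Section~11.2]{GuthHickmanIliopoulou2019}, invoked already in the proof of Lemma \ref{lem:ParabolicRescaling}, shows that the estimate defining $Q_{p,\delta}$ persists over such ellipsoids at the cost of $R^{\delta'}$, via discretization by the locally-constant property and almost-orthogonality between balls of size $R/\rho^2$ centred at the origin (the centring is what guarantees containment in $B(0,\lambda/\rho^2)$).

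Combining these, I get $\|\mathcal{F}^\lambda g\|_{L^p(B_R)} \lesssim \rho^{(d+1)/p}\,\|\tilde{\mathcal{F}}^{\lambda/\rho^2}\tilde g\|_{L^p(D_{\mathbf R})} \lesssim_{\delta'} \rho^{(d+1)/p} R^{\delta'} Q_{p,\delta}(R/\rho^2)(R/\rho^2)^{d(\frac12-\frac1p)}\|\tilde g\|_{L^p}$, and then substituting $\|\tilde g\|_{L^p} \sim \rho^{-(d-1)+(d-1)(1-1/p)}\|g\|_{L^p} = \rho^{-(d-1)/p}\|g\|_{L^p}$ and collecting the powers of $\rho$ yields exactly $R^{d(\frac12-\frac1p)}\rho^{\frac{2(d+1)}{p}-d}$ as the prefactor. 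The main obstacle, and the only genuinely non-routine point, is verifying that the $(\lambda/\rho^2)^\delta$-flatness really does propagate: this is where the strengthened derivative bounds \eqref{eq:KFlatPhaseI}–\eqref{eq:KFlatPhaseII}, the uniform bounds on $\Upsilon$ from Lemma \ref{lem:ChangeSpatialVariables}, and the uniform boundedness of the components of $L$ for reduced phases all enter — but all of this is already packaged into Lemma \ref{lem:ReducedPhaseFunction}, which applies here verbatim since $\mathcal{F}^\lambda = T^\lambda$ after identifying $\hat f$ with the input function. The remaining bookkeeping of Jacobians and the ellipsoid-versus-ball issue is handled exactly as in the proof of Lemma \ref{lem:ParabolicRescaling}, so I would keep the write-up brief and refer to that proof and to \cite[Section~11.2]{GuthHickmanIliopoulou2019} for the details.
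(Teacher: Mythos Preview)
Your overall structure is correct and matches the paper closely: the same frequency change of variables, the same spatial change via $\Upsilon^\lambda_{\omega_0}\circ D_\rho$, the same use of Lemma~\ref{lem:ReducedPhaseFunction} to propagate flatness, and the same bookkeeping of Jacobians leading to the prefactor $\rho^{\frac{2(d+1)}{p}-d}$. The one substantive divergence is in how you handle the anisotropic ellipsoid $(\Upsilon^\lambda_\omega\circ D_\rho)^{-1}(B_R)$.

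You invoke \cite[Section~11.2]{GuthHickmanIliopoulou2019} exactly as in the proof of Lemma~\ref{lem:ParabolicRescaling}. The paper instead uses finite speed of propagation (Lemma~\ref{lem:FiniteSpeedPropagation}) at this step, and explicitly flags this as the point of departure from Lemma~\ref{lem:ParabolicRescaling}. The reason is that the induction quantity $Q_{p,\delta}$ for FIOs is normalized against the \emph{physical-space} norm $\|g\|_{L^p}$, not a frequency-space norm. The GHI~11.2 argument is tailored to oscillatory integral operators $T^\lambda f$ with $f$ the frequency-side input; its orthogonality mechanism does not directly transfer to the FIO setting. What does transfer is the following: since the time-extent of the rescaled ellipsoid is $R/\rho^2$, one covers it by space-time balls of radius $R/\rho^2$, and Lemma~\ref{lem:FiniteSpeedPropagation} lets one correspondingly localize $\tilde g$ in physical space to balls of radius $\sim R/\rho^2$ with almost-orthogonality between the pieces. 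Applying the definition of $Q_{p,\delta}$ on each ball and summing in $\ell^p$ over the essentially disjoint spatial pieces of $\tilde g$ then gives \eqref{eq:ParabolicRescalingFIO}. So the almost-orthogonality you need is on the \emph{input} side (via finite speed of propagation), not on the output side as in GHI~11.2. Your claim that GHI~11.2 ``applies here verbatim since $\mathcal{F}^\lambda = T^\lambda$ after identifying $\hat f$ with the input function'' overlooks that this identification swaps the role of the physical and frequency norms, which is precisely what forces the switch to Lemma~\ref{lem:FiniteSpeedPropagation}.
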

\begin{proof}
The proof has much in common with the proof of Lemma \ref{lem:ParabolicRescaling}. However, after rescaling, we use almost orthogonality in space-time, which comes from finite speed of propagation (cf. Lemma \ref{lem:FiniteSpeedPropagation}). Let $\omega \in B_{d-1}(0,1)$ with $(\omega,1)$ the centre of the $\rho^{-1}$-slab encasing the support of $\hat{g}$:
\begin{equation*}
\text{supp}(\hat{g}) \subseteq \{ (\xi',\xi_d) \in \R^d \, : \, 1/2 \leq \xi_d \leq 2 \text{ and } \big| \frac{\xi'}{\xi_d} - \omega \big| \leq \rho^{-1} \}.
\end{equation*}
We perform the change of variables:
\begin{equation*}
(\xi',\xi_d) = (\eta_d \omega + \rho^{-1} \eta', \eta_d),
\end{equation*}
after which it follows that
\begin{equation*}
(\mathcal{F}^\lambda g)(x,t) = \int_{\R^d} e^{i \phi^\lambda(x,t;\eta_d \omega + \rho^{-1} \eta', \eta_d)} a^\lambda(x,t;\eta_d \omega + \rho^{-1} \eta', \eta_d) \hat{\tilde{g}}(\eta) d\eta,
\end{equation*}
where $\hat{\tilde{g}}(\eta) = \rho^{-(d-1)} \hat{g}(\eta_d \omega + \rho^{-1} \eta', \eta_d)$ and $\text{supp}(\hat{\tilde{g}}) \subseteq \Xi$. By Taylor expansion and homogeneity of the phase, we find
\begin{equation*}
\begin{split}
\phi(x,t;\eta_d \omega + \rho^{-1} \eta', \eta_d) &= \phi(x,t;\omega,1) \eta_d + \rho^{-1} \langle \partial_{\xi'} \phi(x,t;\omega,1), \eta' \rangle \\
&\quad + \rho^{-2} \int_0^1 (1-r) \langle \partial^2_{\xi' \xi'} \phi(x,t;\eta_d \omega + r \rho^{-1} \eta', \eta_d) \eta', \eta' \rangle dr.
\end{split}
\end{equation*}
Let $\Upsilon_\omega(x,t) = (\Upsilon(x,t;\omega,1),x_d)$ and $\Upsilon^\lambda_\omega(x,t) = \lambda \Upsilon_\omega(x/\lambda,t/\lambda)$ and consider anisotropic dilations
\begin{equation*}
D_\rho (x',x_d,t) = (\rho x', x_d, \rho^2 t) \text{ and } D'_{\rho^{-1}} (x',x_d) = (\rho^{-1} x', \rho^{-2} x_d)
\end{equation*}
on $\R^{d+1}$ and $\R^d$, respectively. By definition of $\Upsilon$, we find
\begin{equation*}
\mathcal{F}^\lambda g \circ \Upsilon^\lambda_\omega \circ D_\rho = \tilde{\mathcal{F}}^{\lambda /\rho^2} \tilde{g},
\end{equation*}
where
\begin{equation*}
\tilde{\mathcal{F}}^{\lambda / \rho^2} \tilde{g}(y,\tau) = \int_{\R^d} e^{i \tilde{\phi}^{\lambda/\rho^2}(y,\tau;\eta)} \tilde{a}^{\lambda}(y,\tau;\eta) \hat{\tilde{g}}(\eta) d\eta
\end{equation*}
for the phase $\tilde{\phi}(y,\tau;\eta)$ given by
\begin{equation*}
\langle y, \eta \rangle + \int_0^1 (1-r) \langle \partial^2_{\xi' \xi'} \phi(\Upsilon_\omega(D'_{\rho^{-1}} y,\tau); \eta_d \omega + r \rho^{-1} \eta', \eta_d) \eta', \eta' \rangle dr
\end{equation*}
and the amplitude
\begin{equation*}
\tilde{a}(y,\tau;\eta) = a(\Upsilon_\omega (D'_{\rho^{-1}} y;\tau);\eta_d \omega + \rho^{-1} \eta', \eta_d).
\end{equation*}
By change of space-time variables, we find
\begin{equation}
\label{eq:RescalingFIOI}
\| \mathcal{F}^\lambda g \|_{L^p(B_R)} \lesssim \rho^{\frac{d+1}{p}} \| \tilde{\mathcal{F}}^{\lambda/\rho^2} \tilde{g} \|_{L^p((\Upsilon^\lambda_\omega \circ D_\rho)^{-1}(B_R))}.
\end{equation}
Note that $(\Upsilon^\lambda_\omega \circ D_\rho)^{-1}(B_R)= D_R$ is roughly a set of size $R/\rho \times \ldots \times R/\rho \times R \times R/\rho^2$. We want to apply an estimate at a smaller scale $R/\rho^2$, to which end we use finite speed of propagation: Since the time-scale is $R/\rho^2$, we can decompose $\sim R/\rho \times \ldots R/\rho \times R \times R/\rho^2$ into balls of size $R/\rho^2$ and correspondingly, the support of $\tilde{g}$ into balls of size $R/\rho^2$. We have almost orthogonality between the localized pieces by Lemma \ref{lem:FiniteSpeedPropagation}, which yields
\begin{equation}
\label{eq:RescalingFIOII}
\| \tilde{\mathcal{F}}^{\lambda/\rho^2} \tilde{g} \|_{L^p(D_R)} \lesssim_{\delta'} R^{\delta'} Q_{p,\delta}(R/\rho^2) (R/\rho^2)^{d \big( \frac{1}{2} - \frac{1}{p} \big) } \| \tilde{g} \|_{L^p}.
\end{equation}
Since $\tilde{g}(x) = g(\rho x', x_d - \omega x')$, we find
\begin{equation}
\label{eq:RescalingFIOIII}
\| \tilde{g} \|_{L^p} = \rho^{- \frac{d-1}{p}} \| g \|_{L^p}.
\end{equation}
Taking \eqref{eq:RescalingFIOI}, \eqref{eq:RescalingFIOII}, and \eqref{eq:RescalingFIOIII} together, 
\eqref{eq:ParabolicRescalingFIO} holds.
\end{proof}

We are ready for the proof of the following proposition:
\begin{proposition}
\label{prop:NarrowIterationSmoothingFIO}
Let $d \geq 3$, $2 \leq k \leq d$, and $\lambda \geq 1$. Let
\begin{equation*}
\bar{p}(k,d) \leq p \leq 
\begin{cases}
\infty, \quad 2 \leq k \leq 3, \\
2 \cdot \frac{k-1}{k-3}, \quad k \geq 4,
\end{cases}
 \text{ with }
\bar{p}(k,d) = 
\begin{cases}
\frac{2(d+1)}{d}, \quad &k=2, \\
2 \cdot \frac{2d-k+5}{2d-k+3}, \quad &k \geq 3.
\end{cases}
\end{equation*}
Suppose that for all $\varepsilon >0$ FIOs with reduced $(\phi,a)$ obey the following $k$-broad estimate for all $1 \leq K \leq R \leq \lambda$ and choice of $A= A_\varepsilon$
\begin{equation*}
\| \mathcal{F}^\lambda f \|_{BL^p_{k,A}(B_R)} \leq \tilde{C}_\varepsilon K^{C_\varepsilon} R^\varepsilon R^{d \big( \frac{1}{2} - \frac{1}{p} \big) } \| f \|_{L^p}.
\end{equation*}
Let $(\tilde{\phi},\tilde{a})$ with $\tilde{\phi}$ a $1$-homogeneous phase function that satisfies $C1)$ and $C2^+)$ and $\tilde{a} \in S^0(\R^{2d+1})$, which is compactly supported in $(x,t)$. There is $D_{\varepsilon,\tilde{\phi},\tilde{a}}$ such that for the FIO $\tilde{\mathcal{F}}$ built from $(\tilde{\phi},\tilde{a})$ the following estimate holds:
\begin{equation}
\label{eq:SmoothingFIONarrow}
\| \tilde{\mathcal{F}}^\lambda f \|_{L^p(\R^{d+1})} \leq D_{\varepsilon,\tilde{\phi},\tilde{a}} \lambda^{d \big( \frac{1}{2} - \frac{1}{p} \big) + \varepsilon} \| f \|_{L^p(\R^d)}.
\end{equation}
\end{proposition}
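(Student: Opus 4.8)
The plan is to prove Proposition~\ref{prop:NarrowIterationSmoothingFIO} by running the Bourgain--Guth broad--narrow iteration on scales exactly as in the proof of Proposition~\ref{prop:LinearFromBroadEstimates}, but with the $L^p$-$L^p$ estimate for $T^\lambda$ replaced by the $L^p$-smoothing estimate for $\mathcal{F}^\lambda$ and with the counting of space-time dimensions shifted by one ($n \rightsquigarrow d+1$ in the oscillatory-integral notation). First, I would make the standard reductions: by a partition of the frequency support and an initial parabolic rescaling depending on $\tilde\phi$ (Lemma~\ref{lem:RescalingFIOs}), reduce from a general FIO with $C1)$, $C2^+)$ phase to an FIO built from reduced, $\lambda^{\tilde\delta}$-flat data, losing an admissible factor $C_{\tilde\phi}\lambda^{O(n\tilde\delta)}$. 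Also reduce to $R \lesssim_\varepsilon \lambda^{1-\delta_1}$ by covering $B(0,\lambda)$ with $R$-balls. Then it suffices to prove $Q_{p,\tilde\delta}(R) \lesssim_\varepsilon R^{\varepsilon}$ for all $\varepsilon>0$, where $Q_{p,\delta}(R)$ is the induction quantity just defined for FIOs.

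The core of the iteration runs on a ball $B(0,R)$ decomposed into $K^2$-balls $B_{K^2}$ with $K = K_0 R^{\eta}$. On each $B_{K^2}$ use the broad--narrow inequality to split $\int_{B_{K^2}}|\mathcal{F}^\lambda f|^p$ into a $k$-broad piece, handled by the hypothesised $k$-broad estimate $\|\mathcal{F}^\lambda f\|_{BL^p_{k,A}(B_R)} \le \tilde C_\varepsilon K^{C_\varepsilon}R^\varepsilon R^{d(1/2-1/p)}\|f\|_{L^p}$, plus narrow terms $\sum_{a}\|\sum_{\tau\in V_a}\mathcal{F}^\lambda f_\tau\|^p_{L^p(B_{K^2})}$. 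To the narrow terms apply the narrow decoupling for FIOs (Proposition~\ref{prop:NarrowDecoupling}), which gains $K^{\max((k-3)(1/2-1/p),0)+\delta}$ and replaces $\mathcal{F}^\lambda$ by a separated operator $\mathcal{F}^\lambda_*$ with a reduced amplitude from a family of size $O_N(1)$, at the cost of a $\lambda^{-\min(\delta,\delta')N/2}$-error that is absorbed by choosing $N = N(\delta)$ large. Then each $\mathcal{F}^\lambda_* f_\tau$ is supported on a single $K^{-1}$-sector, so parabolic rescaling (Lemma~\ref{lem:RescalingFIOs}) applies with $\rho = K$, producing a factor $Q_{p,\tilde\delta}(R/K^2)R^{\delta_2}K^{2(d+1)/p-d}$. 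Summing over sectors, $a$, and $B_{K^2}$ (using almost-disjointness and the standard $\ell^p$ summation) yields a closed recursion of the form
\begin{equation*}
Q_{p,\tilde\delta}(R)^p \leq K^{O(n)}\tilde C_\varepsilon K^{C_\varepsilon} R^{p\varepsilon/2} + C_{\delta_1,\delta_2}\,Q_{p,\tilde\delta}(R)^p R^{\delta_2} K^{-e(k,p)+\delta_1},
\end{equation*}
where $-e(k,p) = 2(d+1)-dp + \max((k-3)(p/2-1),0)$ — note the shift by one in the first two terms compared with the $L^p$-$L^p$ case, reflecting the extra space-time variable. The range $\bar p(k,d)\le p$ is precisely the threshold at which $e(k,p)>0$, so choosing $\delta_1 = \min(e(k,p)/2,\varepsilon/(100n))$, then $\delta_2$ small with the analogue of \eqref{eq:ChoiceDelta}, then $K = K_0 R^{2\delta_2/e(k,p)}$ with $K_0$ large, makes the second term absorb into $\tfrac12 Q_{p,\tilde\delta}(R)^p$ and the first into $\tilde D_\varepsilon R^{3p\varepsilon/4}$; the constraint $K\le\lambda^{\tilde\delta}$ is met by taking $\tilde\delta = 3\delta_2/e(k,p)$ and $\lambda \ge \lambda_0(\varepsilon)$. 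This gives $Q_{p,\tilde\delta}(R) \lesssim_\varepsilon R^{\varepsilon}$ and hence \eqref{eq:SmoothingFIONarrow} after undoing the initial rescaling.

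The main obstacle — and the only genuinely new ingredient relative to Section~\ref{section:LinearEstimates} — is that the iteration must be carried out for the smoothing quantity $Q_{p,\delta}(R)$ rather than the $L^p$-$L^p$ quantity, which forces the use of \emph{space-time} almost-orthogonality in the parabolic rescaling step: after rescaling, the image $(\Upsilon^\lambda_\omega\circ D_\rho)^{-1}(B_R)$ is an anisotropic region of dimensions $\sim (R/\rho)^{d}\times R\times R/\rho^2$ whose time-extent $R/\rho^2$ is the relevant small scale, and one cannot simply invoke the definition of $Q_{p,\delta}$ on a ball. This is exactly what Lemma~\ref{lem:RescalingFIOs} supplies via the finite-speed-of-propagation Lemma~\ref{lem:FiniteSpeedPropagation}: decomposing the long directions into $R/\rho^2$-balls and the frequency support correspondingly, one gets almost-orthogonality between the localised pieces and may apply the estimate at scale $R/\rho^2$. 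A secondary technical point, handled as in \cite{GuthHickmanIliopoulou2019} and already settled in Proposition~\ref{prop:NarrowDecoupling}, is that the operators produced by narrow decoupling have a different amplitude; this is why $Q_{p,\delta}(R)$ is defined over the whole class of FIOs with reduced $(\phi,a)$ and $\lambda^\delta$-flat phase, so the induction is stable under passing to $\mathcal{F}^\lambda_*$. Beyond these, every step is a bookkeeping variant of the argument already given for Theorem~\ref{thm:LpLpEstimatesVariableCoefficients}, so the details are routine and I would omit most of the computation, pointing to \cite{GaoLiuMiaoXi2023} and Section~\ref{section:LinearEstimates}.
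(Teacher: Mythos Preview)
Your proposal is correct and follows essentially the same approach as the paper's proof: the same initial reduction to $\lambda^{\tilde\delta}$-flat phases, the same broad--narrow split on $K^2$-balls, narrow decoupling via Proposition~\ref{prop:NarrowDecoupling}, parabolic rescaling via Lemma~\ref{lem:RescalingFIOs}, and the same induction-closing computation with the exponent $-e(k,p)=2(d+1)-dp+\max((k-3)(p/2-1),0)$. The one point the paper makes slightly more explicit is the sector summation $(\sum_\tau\|f^\tau\|_p^p)^{1/p}\lesssim\|f\|_p$, which it proves by interpolation between $p=2$ (Plancherel) and $p=\infty$ (kernel estimate) rather than by ``almost-disjointness'', but this is routine.
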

The $k$-broad estimate is valid by Theorem \ref{thm:kBroadEstimate} (see also Remark \ref{rem:RefinedLpLpEstimate}) and an application of Lemma \ref{lem:FiniteSpeedPropagation}.

Proposition \ref{prop:SmoothingFIO} follows from Proposition \ref{prop:NarrowIterationSmoothingFIO} by choosing $k= \frac{d+5}{2}$ for $d$ odd and $k= \frac{d+4}{2}$ for $d$ even. Hence, the proof of Theorem \ref{thm:ImprovedLocalSmoothing} will be complete once Proposition \ref{prop:NarrowIterationSmoothingFIO} is proved.

\begin{proof}[Proof~of~Proposition~\ref{prop:NarrowIterationSmoothingFIO}]
The proof has much in common with the proof of Proposition \ref{prop:LinearFromBroadEstimates}, and we shall be brief. By one parabolic rescaling depending on the phase as in the beginning of the proof of Proposition \ref{prop:LinearFromBroadEstimates}, we can suppose that $(\phi,a)$ is $\lambda^{\tilde{\delta}}$-flat, and $R \leq \lambda^{1-\frac{\varepsilon}{10d}}$.

Recall that $Q_{p,\tilde{\delta}}(R)$ denotes the smallest constant such that for reduced $(\phi,a)$ with $\lambda^{\tilde{\delta}}$-flat phase functions, we have
\begin{equation*}
\| \mathcal{F}^\lambda f \|_{L^p(B(0,R))} \leq R^{d \big( \frac{1}{2} - \frac{1}{p} \big)} Q_{p,\tilde{\delta}}(R) \| f \|_{L^p}.
\end{equation*}
It suffices to prove that for any $\varepsilon > 0$ there is $C_\varepsilon > 0$ such that $Q_{p,\tilde{\delta}}(R) \leq C_\varepsilon R^\varepsilon$ for any $R \leq \lambda^{1-\frac{\varepsilon}{10 d}}$. We shall choose 
\begin{equation}
\label{eq:ChoiceKFIO}
K=K_0 R^{\eta}
\end{equation}
 with $K_0$ and $\eta$ to be determined later. This is similar to the argument in the proof of Proposition \ref{prop:LinearFromBroadEstimates}.

%By one parabolic rescaling depending on the
%By the reductions of the proof of Proposition \ref{prop:LinearFromBroadEstimates}, we can suppose that $R \leq \lambda^{1-\frac{\varepsilon}{10d}}$ and $(\phi,a)$ is a $\lambda^\delta$-flat phase function. Let $Q_{p,\delta}(R)$ denote the constant such that for all $\lambda^\delta$-flat phase functions, we have
%\begin{equation}
%\| \mathcal{F}^\lambda f \|_{L^p(B^{d+1}_R)} \leq C_\varepsilon R^{d \big( \frac{1}{2} - \frac{1}{p} \big)} Q_{p,\delta}(R) \| f \|_{L^p}.
%\end{equation}
For a given ball $B_{K^2} \subseteq B(0,R)$, let $V_1,\ldots,V_A$ be $(k-1)$-dimensional linear subspaces which achieve the minimum in the definition of the $k$-broad norm. Then,
\begin{equation*}
\begin{split}
\int_{B_{K^2}^{d+1}} | \mathcal{F}^\lambda f(x,t)|^p dx dt &\lesssim K^{O(1)} \max_{\tau \notin V_{\ell}} \int_{B_{K^2}^{d+1}} | \mathcal{F}^\lambda f^\tau(x,t)|^p dx dt \\
&\quad + \sum_{a=1}^A \int_{B_{K^2}^{d+1}} \big| \sum_{\tau \in V_{a}} \mathcal{F}^\lambda f^\tau(x,t) \big|^p dx dt.
\end{split}
\end{equation*}
Summing over a finitely overlapping family $\big( B_{K^2} \big) = \mathcal{B}_{K^2}$ covering $B(0,R)$ yields
\begin{equation*}
\begin{split}
\int_{B(0,R)} | \mathcal{F}^\lambda f(x,t)|^p dx dt &\lesssim K^{O(1)} \sum_{B_{K^2} \in \mathcal{B}_{K^2}} \min_{V_1,\ldots,V_A} \max_{\tau \notin V_{\ell}} \int_{B_{K^2}} | \mathcal{F}^\lambda f^\tau(x,t)|^p dx dt \\
&\quad + \sum_{B_{K^2} \in \mathcal{B}_{K^2}} \sum_{a=1}^A \int_{B_{K^2}} \big| \sum_{\tau \in V_{\ell}} \mathcal{F}^\lambda f^\tau(x,t)|^p dx dt.
\end{split}
\end{equation*}
By the broad norm estimate, we find
%, since we are dealing with a reduced phase function for some $A \gg 1$,
\begin{equation*}
\sum_{B_{K^2} \in \mathcal{B}_{K^2}} \min_{V_1,\ldots,V_A} \max_{\tau \notin V_{\ell}} \int_{B_{K^2}} | \mathcal{F}^\lambda f^\tau(x,t)|^p dx dt \leq \tilde{C}_\varepsilon K^{C_{\varepsilon}} R^{\frac{\varepsilon p}{2}} R^{dp \big( \frac{1}{2} - \frac{1}{p} \big)} \| f \|_{L^p}^p. 
\end{equation*}
The narrow contribution is estimated by Proposition \ref{prop:NarrowDecoupling}:\footnote{Here we omit the error term, which can be handled like in the proof of Proposition \ref{prop:LinearFromBroadEstimates}.}
\begin{equation}
\label{eq:NarrowEstimateI}
\begin{split}
\sum_{a=1}^A \int_{B_{K^2} \in \mathcal{B}_{K^2} } \big| \sum_{\tau \in V_{a}} \mathcal{F}^\lambda f^\tau(x,t) \big|^p dx dt &\leq C_{\delta_1} K^{\delta_1} K^{\max( (k-3)\big( \frac{1}{2} - \frac{1}{p} \big) p, 0)} \\
&\qquad \times \sum_{\tau} \int_{\R^{d+1}} w_{B_{K^2}} |\mathcal{F}^\lambda f^\tau(x,t)|^p dx dt.
\end{split}
\end{equation}
%where we have used the sector counting estimate
%\begin{equation*}
%\# \{ \tau : \tau \in V_{\ell} \} \lesssim \max(1,K^{k-3}).
%\end{equation*}
Summing over $B_{K^2} \in \mathcal{B}_{K^2}$ in \eqref{eq:NarrowEstimateI}, we find
\begin{equation*}
\begin{split}
\sum_{B_{K^2} \in \mathcal{B}_{K^2}} \sum_{a=1}^A \int_{B_{K^2}^{d+1}} \big| \sum_{\tau \in V_{\ell}} \mathcal{F}^\lambda f^\tau(x,t) \big|^p dx dt &\leq C_{\delta_1} K^{\delta_1} K^{\max( (k-3) \big( \frac{1}{2} - \frac{1}{p} \big) p, 0)} \\
&\quad \times \sum_\tau \int_{\R^{d+1}} w_{B(0,2R)} |\mathcal{F}^\lambda f^\tau(x,t) |^p dx dt.
\end{split}
\end{equation*}
%By the decay of the weight function, we find
%\begin{equation*}
%\int_{\R^{d+1}} w_{B_R^{d+1}} |\mathcal{F}^\lambda f^\tau(x,t) |^p dx dt \lesssim \int_{B_{R^{1+\delta_1}}^{d+1}} |\mathcal{F}^\lambda f^\tau(x,t)|^p dx dt + \text{RapDec}(R) \| f \|_p^p.
%\end{equation*}
By Lemma \ref{lem:RescalingFIOs}, we find
\begin{equation}
\label{eq:NarrowEstimateII}
\begin{split}
&\quad \int_{B(0,4R)} |\mathcal{F}^\lambda f^\tau(x,t) |^p dx dt \\
&\lesssim_{\delta_2} K^{-2d \big(\frac{1}{2} - \frac{1}{p} \big) p + 2 } Q_{p,\tilde{\delta}}^p (R/K^2) R^{dp \big( \frac{1}{2} - \frac{1}{p} \big) + \delta_2} \| f^\tau \|^p_p + \text{RapDec}(R) \| f \|_p^p.
\end{split}
\end{equation}
We have the following estimate for $2 \leq p \leq \infty$:
\begin{equation}
\label{eq:SectorSummation}
\big( \sum_\tau \| f^\tau \|^p_p \big)^{\frac{1}{p}} \lesssim \| f \|_p.
\end{equation}
For $p=2$ this holds by Plancherel's theorem, for $p=\infty$ by a kernel estimate, and the remaining cases are covered by interpolation.

Hence, summing \eqref{eq:NarrowEstimateII} over $\tau$ yields by \eqref{eq:SectorSummation}
\begin{equation*}
\begin{split}
\int_{B_R^{d+1}} |\mathcal{F}^\lambda f(x,t)|^p dx dt &\leq \tilde{C}_\varepsilon K^{O(1) + C_\varepsilon} R^{dp \big( \frac{1}{2}- \frac{1}{p} \big) + \frac{\varepsilon p}{2}} \| f \|_{L^p}^p \\
 &\quad + C_{\delta_1,\delta_2} K^{\delta_1} R^{dp \big( \frac{1}{2} - \frac{1}{p} \big) + \delta_2} K^{-e(p,k,d)} Q_{p,\tilde{\delta}}^p(R/K^2) \| f \|_{L^p}^p
 \end{split}
\end{equation*}
with
\begin{equation*}
e(p,k,d) = \min \{ 2d \big( \frac{1}{2} - \frac{1}{p} \big) p - 2, 2d \big( \frac{1}{2} - \frac{1}{p} \big) p - 2 - (k-3) \big( \frac{1}{2} - \frac{1}{p} \big) p \}.
\end{equation*}
We find $e(p,k,d) \geq 0$, if
\begin{equation*}
p \geq 
\begin{cases}
 \frac{2(d+1)}{d}, \quad &k=2, \\
 2 \cdot \frac{2d-k+5}{2d-k+3}, \quad &k \geq 3.
\end{cases}
\end{equation*}
By the definition of $Q_{p,\tilde{\delta}}$, we have
\begin{equation*}
Q_{p,\tilde{\delta}}^p(R) \leq K^{O(1)} \tilde{C}_{\varepsilon} K^{C_\varepsilon} R^{\frac{\varepsilon p}{2}} + C_{\delta_1,\delta_2} R^{\delta_2 } Q_{p,\tilde{\delta}}^p(R) K^{-e(p,k,d) + \delta_1}.
\end{equation*}
Similar to the end of the proof of Proposition \ref{prop:LinearFromBroadEstimates}, we can choose $\delta_1(\varepsilon)$, $\delta_2(\varepsilon)$, and $K_0$ and $\eta$ (from \eqref{eq:ChoiceKFIO}) to finish the proof.
%and additionally choosing $\delta=\delta(\varepsilon)$ and $\varepsilon_1=\varepsilon_1(\varepsilon)$ so small such that
%\begin{equation*}
%C_{\delta,\varepsilon_1} K^\delta R^{(d+1) \delta + \varepsilon_1} K^{\delta - \varepsilon_1 -2\varepsilon} \leq \frac{1}{2}.
%\end{equation*}
\end{proof}

\section*{Acknowledgements}

Funded by the Deutsche Forschungsgemeinschaft (DFG, German Research Foundation) -- Project-ID 258734477 -- SFB 1173. I am indebted to the anonymous referee for a very careful reading of the manuscript and insightful comments, which led to many improvements.

\bibliographystyle{plain}

\begin{thebibliography}{10}

\bibitem{Bejenaru2017}
Ioan Bejenaru.
\newblock The optimal trilinear restriction estimate for a class of
  hypersurfaces with curvature.
\newblock {\em Adv. Math.}, 307:1151--1183, 2017.

\bibitem{BeltranHickmanSogge2020}
David Beltran, Jonathan Hickman, and Christopher~D. Sogge.
\newblock Variable coefficient {W}olff-type inequalities and sharp local
  smoothing estimates for wave equations on manifolds.
\newblock {\em Anal. PDE}, 13(2):403--433, 2020.

\bibitem{BeltranHickmanSogge2018}
David Beltran, Jonathan Hickman, and Christopher~D. Sogge.
\newblock Sharp local smoothing estimates for fourier integral operators.
\newblock In {\em Geometric Aspects of Harmonic Analysis}, pages 29--105, Cham,
  2021. Springer International Publishing.

\bibitem{BennettCarberyTao2006}
Jonathan Bennett, Anthony Carbery, and Terence Tao.
\newblock On the multilinear restriction and {K}akeya conjectures.
\newblock {\em Acta Math.}, 196(2):261--302, 2006.

\bibitem{Bourgain1991}
J.~Bourgain.
\newblock {$L^p$}-estimates for oscillatory integrals in several variables.
\newblock {\em Geom. Funct. Anal.}, 1(4):321--374, 1991.

\bibitem{BourgainDemeter2015}
Jean Bourgain and Ciprian Demeter.
\newblock The proof of the {$l^2$} decoupling conjecture.
\newblock {\em Ann. of Math. (2)}, 182(1):351--389, 2015.

\bibitem{BourgainGuth2011}
Jean Bourgain and Larry Guth.
\newblock Bounds on oscillatory integral operators based on multilinear
  estimates.
\newblock {\em Geom. Funct. Anal.}, 21(6):1239--1295, 2011.

\bibitem{CarlesonSjoelin1972}
Lennart Carleson and Per Sj\"{o}lin.
\newblock Oscillatory integrals and a multiplier problem for the disc.
\newblock {\em Studia Math.}, 44:287--299. (errata insert), 1972.

\bibitem{GaoLiuMiaoXi2020}
Chuanwei {Gao}, Bochen {Liu}, Changxing {Miao}, and Yakun {Xi}.
\newblock {Square function estimates and Local smoothing for Fourier Integral
  Operators}.
\newblock {\em arXiv e-prints}, page arXiv:2010.14390, October 2020.

\bibitem{GaoLiuMiaoXi2023}
Chuanwei Gao, Bochen Liu, Changxing Miao, and Yakun Xi.
\newblock Improved local smoothing estimate for the wave equation in higher
  dimensions.
\newblock {\em J. Funct. Anal.}, 284(9):Paper No. 109879, 2023.

\bibitem{GarrigosSeeger2009}
Gustavo Garrig\'{o}s and Andreas Seeger.
\newblock On plate decompositions of cone multipliers.
\newblock {\em Proc. Edinb. Math. Soc. (2)}, 52(3):631--651, 2009.

\bibitem{Guth2016}
Larry Guth.
\newblock A restriction estimate using polynomial partitioning.
\newblock {\em J. Amer. Math. Soc.}, 29(2):371--413, 2016.

\bibitem{Guth2018}
Larry Guth.
\newblock Restriction estimates using polynomial partitioning {II}.
\newblock {\em Acta Math.}, 221(1):81--142, 2018.

\bibitem{GuthHickmanIliopoulou2019}
Larry Guth, Jonathan Hickman, and Marina Iliopoulou.
\newblock Sharp estimates for oscillatory integral operators via polynomial
  partitioning.
\newblock {\em Acta Math.}, 223(2):251--376, 2019.

\bibitem{GuthKatz2015}
Larry Guth and Nets~Hawk Katz.
\newblock On the {E}rd{\H{o}}s distinct distances problem in the plane.
\newblock {\em Ann. of Math. (2)}, 181(1):155--190, 2015.

\bibitem{GuthWangZhang2020}
Larry Guth, Hong Wang, and Ruixiang Zhang.
\newblock A sharp square function estimate for the cone in {$\Bbb {R}^3$}.
\newblock {\em Ann. of Math. (2)}, 192(2):551--581, 2020.

\bibitem{Harris2019}
Terence L.~J. Harris.
\newblock Improved decay of conical averages of the {F}ourier transform.
\newblock {\em Proc. Amer. Math. Soc.}, 147(11):4781--4796, 2019.

\bibitem{HickmanIliopoulou2022}
Jonathan Hickman and Marina Iliopoulou.
\newblock Sharp {$L^p$} estimates for oscillatory integral operators of
  arbitrary signature.
\newblock {\em Math. Z.}, 301(1):1143--1189, 2022.

\bibitem{Hoermander1973}
Lars H\"{o}rmander.
\newblock Oscillatory integrals and multipliers on {$FL\sp{p}$}.
\newblock {\em Ark. Mat.}, 11:1--11, 1973.

\bibitem{Lee2020}
Jungjin Lee.
\newblock A trilinear approach to square function and local smoothing estimates
  for the wave operator.
\newblock {\em Indiana Univ. Math. J.}, 69(6):2005--2033, 2020.

\bibitem{Lee2006}
Sanghyuk Lee.
\newblock Linear and bilinear estimates for oscillatory integral operators
  related to restriction to hypersurfaces.
\newblock {\em J. Funct. Anal.}, 241(1):56--98, 2006.

\bibitem{LeeVargas2012}
Sanghyuk Lee and Ana Vargas.
\newblock On the cone multiplier in {$\Bbb{R}^3$}.
\newblock {\em J. Funct. Anal.}, 263(4):925--940, 2012.

\bibitem{MinicozziSogge1997}
William~P. Minicozzi, II and Christopher~D. Sogge.
\newblock Negative results for {N}ikodym maximal functions and related
  oscillatory integrals in curved space.
\newblock {\em Math. Res. Lett.}, 4(2-3):221--237, 1997.

\bibitem{Miyachi1980}
Akihiko Miyachi.
\newblock On some estimates for the wave equation in {$L\sp{p}$} and
  {$H\sp{p}$}.
\newblock {\em J. Fac. Sci. Univ. Tokyo Sect. IA Math.}, 27(2):331--354, 1980.

\bibitem{MockenhauptSeegerSogge1993}
Gerd Mockenhaupt, Andreas Seeger, and Christopher~D. Sogge.
\newblock Local smoothing of {F}ourier integral operators and
  {C}arleson-{S}j\"{o}lin estimates.
\newblock {\em J. Amer. Math. Soc.}, 6(1):65--130, 1993.

\bibitem{OuWang2022}
Yumeng Ou and Hong Wang.
\newblock A cone restriction estimate using polynomial partitioning.
\newblock {\em J. Eur. Math. Soc. (JEMS)}, 24(10):3557--3595, 2022.

\bibitem{Peral1980}
Juan~C. Peral.
\newblock {$L\sp{p}$} estimates for the wave equation.
\newblock {\em J. Functional Analysis}, 36(1):114--145, 1980.

\bibitem{SeegerSoggeStein1991}
Andreas Seeger, Christopher~D. Sogge, and Elias~M. Stein.
\newblock Regularity properties of {F}ourier integral operators.
\newblock {\em Ann. of Math. (2)}, 134(2):231--251, 1991.

\bibitem{Sogge1991}
Christopher~D. Sogge.
\newblock Propagation of singularities and maximal functions in the plane.
\newblock {\em Invent. Math.}, 104(2):349--376, 1991.

\bibitem{Sogge1999}
Christopher~D. Sogge.
\newblock Concerning {N}ikod\'{y}m-type sets in {$3$}-dimensional curved
  spaces.
\newblock {\em J. Amer. Math. Soc.}, 12(1):1--31, 1999.

\bibitem{Sogge2017}
Christopher~D. Sogge.
\newblock {\em Fourier integrals in classical analysis}, volume 210 of {\em
  Cambridge Tracts in Mathematics}.
\newblock Cambridge University Press, Cambridge, second edition, 2017.

\bibitem{Stein1993}
Elias~M. Stein.
\newblock {\em Harmonic analysis: real-variable methods, orthogonality, and
  oscillatory integrals}, volume~43 of {\em Princeton Mathematical Series}.
\newblock Princeton University Press, Princeton, NJ, 1993.
\newblock With the assistance of Timothy S. Murphy, Monographs in Harmonic
  Analysis, III.

\bibitem{Taberner1985}
Bartolome~Barcelo Taberner.
\newblock On the restriction of the {F}ourier transform to a conical surface.
\newblock {\em Trans. Amer. Math. Soc.}, 292(1):321--333, 1985.

\bibitem{Tao2003}
T.~Tao.
\newblock A sharp bilinear restrictions estimate for paraboloids.
\newblock {\em Geom. Funct. Anal.}, 13(6):1359--1384, 2003.

\bibitem{Tao1998}
Terence Tao.
\newblock The weak-type endpoint {B}ochner-{R}iesz conjecture and related
  topics.
\newblock {\em Indiana Univ. Math. J.}, 47(3):1097--1124, 1998.

\bibitem{Tao1999}
Terence Tao.
\newblock The {B}ochner-{R}iesz conjecture implies the restriction conjecture.
\newblock {\em Duke Math. J.}, 96(2):363--375, 1999.

\bibitem{Tao2001}
Terence Tao.
\newblock Endpoint bilinear restriction theorems for the cone, and some sharp
  null form estimates.
\newblock {\em Math. Z.}, 238(2):215--268, 2001.

\bibitem{Wisewell2005}
L.~Wisewell.
\newblock Kakeya sets of curves.
\newblock {\em Geom. Funct. Anal.}, 15(6):1319--1362, 2005.

\bibitem{Wolff2000}
T.~Wolff.
\newblock Local smoothing type estimates on {$L^p$} for large {$p$}.
\newblock {\em Geom. Funct. Anal.}, 10(5):1237--1288, 2000.

\bibitem{Wolff2001}
Thomas Wolff.
\newblock A sharp bilinear cone restriction estimate.
\newblock {\em Ann. of Math. (2)}, 153(3):661--698, 2001.

\bibitem{Wongkew1993}
Richard Wongkew.
\newblock Volumes of tubular neighbourhoods of real algebraic varieties.
\newblock {\em Pacific J. Math.}, 159(1):177--184, 1993.

\end{thebibliography}

\end{document}